\newtheorem*{theorem*}{Theorem}
\newtheorem{theorem}{Theorem}[section]
\newtheorem{lemma}[theorem]{Lemma}
\newtheorem{proposition}[theorem]{Proposition}
\theoremstyle{definition}
\newtheorem{definition}[theorem]{Definition}
\newtheorem{remark}[theorem]{Remark}
\newtheorem{question}[theorem]{Question}
\newtheorem{example}[theorem]{Example}
\def\N{\mathbb{N}}
\def\R{\mathbb{R}}
\def\Z{\mathbb{Z}}
\def\Q{\mathbb{Q}}
\def\F{\mathbb{F}}
\def\CZhat{\widehat{\mathcal{C}}_\Z}
\def\CZ{{\mathcal{C}}_\Z}
\def\CF {\operatorname{CF}}
\def\CFKa {\widehat{\operatorname{CFK}}}
\def\HFKa {\widehat{\operatorname{HFK}}}
\def\CFp {\operatorname{CF}^+}
\def\CFm {\operatorname{CF}^-}
\def\CFi {\operatorname{CF}^\infty}
\def\CFa {\operatorname{\widehat{CF}}}
\def\HFa {\operatorname{\widehat{HF}}}
\def\CFpt {\ul{\operatorname{CF}}^+}
\def\CFit {\ul{\operatorname{CF}}^\infty}
\def\CFic {\mathbf{CF}^\infty}
\def\CFmc {\mathbf{CF}^-}
\def\CFK {\operatorname{CFK}}
\def\CFKm {\operatorname{CFK}^-}
\def\CFKi {\operatorname{CFK}^\infty}
\def\GR {\Gamma_m}
\def\spincs {\mathfrak{s}}
\def\spinct {\mathfrak{t}}
\def\spincu {\mathfrak{u}}
\def\spincv {\mathfrak{v}}
\def\spincx {\mathfrak{x}}
\def\spincy {\mathfrak{y}}
\def\DD {\mathcal{D}}
\def\II {\mathcal{I}}
\def\MM {\mathcal{M}}
\def\T{\mathbb{T}}
\def\X{\mathbb{X}}
\def\a {\mathbf{a}}
\def\b {\mathbf{b}}
\def\q {\mathbf{q}}
\def\r {\mathbf{r}}
\def\x {\mathbf{x}}
\def\y {\mathbf{y}}
\def\ul {\underline}
\newcommand{\abs}[1] {\left\lvert #1 \right\rvert}
\newcommand{\gen}[1] {\langle #1 \rangle}
\newcommand{\floor}[1] {\left\lfloor #1 \right\rfloor}
\newcommand{\ceil}[1] {\left\lceil #1 \right\rceil}
\def\minus{\smallsetminus}
\def\co{\colon\thinspace}
  \DeclareMathOperator{\sign}{sign}
 \DeclareMathOperator{\Spin}{Spin}
 \DeclareMathOperator{\nbd}{nbd}
\DeclareMathOperator{\PD}{PD} \DeclareMathOperator{\gr}{gr}
 \DeclareMathOperator{\Cone}{Cone}
\def\absgr{\operatorname{\widetilde{gr}}}
\def\conn{\mathbin{\#}}
\def\bconn{\mathbin{\natural}}
\renewcommand{\MR}[1]{}
\definecolor{darkgreen}{rgb}{0,0.5,0}
\definecolor{purple}{rgb}{0.5,0,0.5}
\def\Al {\tilde A}
\def\AlNorm {A}
\def\JJ {\mathcal{J}}
\def\MultComb {\mathcal{N}}
\def\Gens {\mathfrak{S}}
\numberwithin{equation}{section}
\begin{document}

\title{A Filtered Mapping cone formula for cables of the knot meridian}

\author{Hugo Zhou}
\thanks{The author was partially supported by NSF grant DMS-1552285.}
\address{School of Mathematics, Georgia Institute of Technology, Atlanta, GA 30332}
\email{hzhou@gatech.edu}

\allowdisplaybreaks

\begin{abstract}
We construct a filtered mapping cone formula that computes the knot Floer complex of the $(n,1)$--cable of the knot meridian in any rational surgery, generalizing Truong's result about the $(n,1)$--cable of the knot meridian in large surgery (\cite{Truong}) and Hedden-Levine's filtered mapping cone formula (\cite{HL}). As an application, we show that there exist knots in integer homology spheres with arbitrary $\varphi_{i,j}$ values for any $i>j\geq 0$, where $\varphi_{i,j}$ are the concordance homomorphisms defined in \cite{Homoconcor}.
\end{abstract}

\maketitle

\section{Introduction}\label{sec:intro}

Among all the applications of the Heegaard Floer package, the mapping cone formula proved by Ozsv\'ath and Szab\'o first for integer surgery \cite{integer} then for rational surgery \cite{rational}, is one of the most influential tools. It connects the Heegaard Floer theory with three and four manifold problems, and has seen applications in all aspects of low dimensional topology, to name a few examples, in the cosmetic surgery conjecture \cite{cosmetic}, surgery obstructions \cite{surgeryob1,surgeryob2}, the Berge conjecture \cite{berge1, berge2, bergehedden}, the cabling conjecture \cite{cabling} and  exceptional surgeries \cite{reducible,half-integer}.
 
There are a handful of generalizations of the original mapping cone formula, including the filtered mapping cone formula \cite{HL}, the link surgery formula \cite{link}, the involutive mapping cone formula \cite{invocone} and the involutive filtered mapping cone formula \cite{filteredinvolutive}.
Hedden and Levine's  filtered mapping cone formula defines a second filtration on the original mapping cone, and thus computes the knot Floer complex of the dual knot in the knot surgery. There has been quite a few success in utilizing this tool to understand topological questions, see for example \cite{homologyconcordance,hugo}. In the other direction, Truong proved the ``large surgery" theorem for the $(n,1)$--cable of the dual knot in \cite{Truong}. Her key observation was that the original diagram used by Ozsv\'ath and Szab\'o to compute the large surgery of a knot also specifies the $(n,1)$--cable of the dual knot, with the addition of a second basepoint.
Using this observation as an ingredient, we proved a filtered mapping cone formula for the $(n,1)$--cable of the knot meridian, generalizing both Hedden-Levine and Truong's results.

Moreover, same as Hedden and Levine's filtered mapping cone formula, our mapping cone agrees with Ozsv\'ath and Szab\'o's original mapping cone apart from the second filtration. Indeed, the $2$-handle cobordism maps in the exact surgery triangle are all the same in the above constructions, while the placement of an extra basepoint determines the second filtration. This perspective was adopted by Eftekhary as early as in \cite{Eftekhary1} and  \cite{Eftekhary2}. Comparing to Hedden-Levine's result, we merely shift the second basepoint, which results in a refiltering of the mapping cone. 
From this perspective, our result also can be seen as a generalization of the original mapping cone formula, where it further demonstrates how the different placement of a second basepoint affects the knot filtration on the mapping cone.  

On the other hand, our new formula is practically meaningful.  For the knots in $3$-manifolds other than $S^3$, even those in integer homology spheres, in the regard of knot Floer information there is very little known to us. 
 This is mainly due to a lack of computable examples. At present,  for the knots in homology spheres, the only effective tools for computing the knot Floer complex  are the filtered mapping cone formula and the knot lattice homology (proved invariant in \cite{invariance1,invariance2}). We hope that by studying the examples produced by this new filtered mapping cone formula, one can understand more about the properties of various different knots in homology spheres and as well as the manifolds themselves.

The construction of our mapping cone follows closely Hedden-Levine's framework in \cite{HL}, but it comes with its own challenges. Basically, there are a few choices to make when constructing a filtered mapping cone, and the choices that are suitable for generalization for our purpose do not always agree with those made in \cite{HL}. We will point out each time when we make a different choice.

\begin{figure}
\labellist
 
 \pinlabel $K_{n,\lambda}$ at 72 130

{ \Large
 \pinlabel $K$ at 117 30
}

\endlabellist
\includegraphics{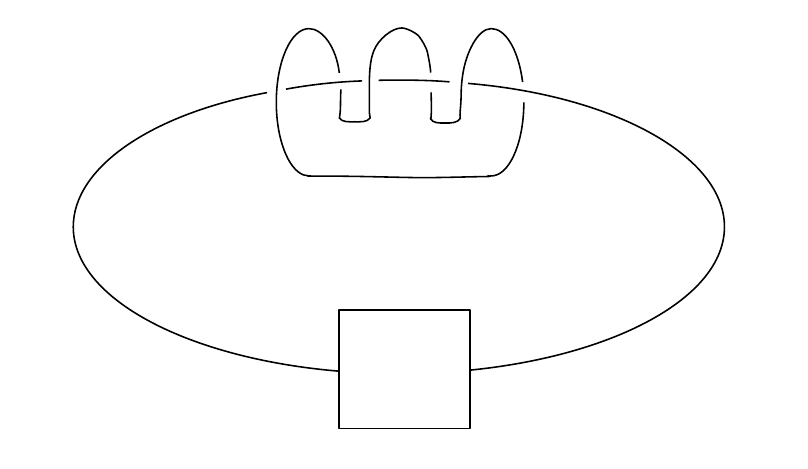}
\caption{The $(n,1)$--cable of a meridian of a knot $K$, where $n=3,$ inside the $\lambda$--framed surgery on $K$.}
\label{fig: knot}
\end{figure}

\subsection{Applications}
It turns out that the examples produced by the new filtered mapping cone formula are rich and abundant. We have
\begin{theorem}\label{thm: phi}
 For any integers $i$ and $j$ such that $i>j\geq 0,$ and any given $h\in \Z$,
 there exists $(Y,K)\in \CZhat$ such that $\varphi_{i,j}(K)=h$. 
 \end{theorem}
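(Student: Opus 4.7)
Since each $\varphi_{i,j}\colon \CZhat \to \Z$ is a concordance homomorphism, the theorem reduces to producing, for each fixed pair $(i,j)$ with $i>j\geq 0$, a single knot $(Y_{i,j}, K_{i,j}) \in \CZhat$ with $\varphi_{i,j}(K_{i,j}) = \pm 1$; the general value $h$ then follows by taking iterated connected sums (and a mirror to flip signs if needed). Moreover, since $\varphi_{i,j}$ only sees the $\F[U,V]$-local equivalence class of $\CFKm$, it suffices to exhibit a knot whose complex is locally equivalent to a carefully chosen standard complex $\mathcal{C}(a_1,\ldots,a_{2m})$ in the sense of \cite{Homoconcor}, tailored so that its defining sequence exhibits the $(i,j)$-pattern counted by $\varphi_{i,j}$.

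To construct such knots, the plan is to pick a parent knot $K_0 \subset S^3$ with a well-understood knot Floer complex --- the natural candidates are torus knots or iterated $L$-space cables whose $\CFKi$ is a staircase --- and a surgery slope of the form $\pm 1/q$ so that $Y = S^3_{\pm 1/q}(K_0)$ is an integer homology sphere. Then form the $(n,1)$-cable $\mu_{n,1}$ of the meridian inside $Y$, exactly as in Figure~\ref{fig: knot}. The main filtered mapping cone formula of this paper then produces an explicit bifiltered complex computing $\CFKi(Y, \mu_{n,1})$: as emphasized in the introduction, its underlying chain complex coincides with Ozsv\'ath--Szab\'o's original mapping cone, while the second filtration is governed by the cabling integer $n$ and the surgery slope. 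By tuning the three parameters $K_0$, $q$, and $n$, one aims to arrange the local equivalence class of the resulting complex to contain a standard complex whose sequence realizes the prescribed $(i,j)$-pattern.

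The hard part will be the explicit extraction of the standard complex decomposition from the mapping cone output. Even with the formula in hand, the bifiltered complex is typically large, and reducing it to its local equivalence representative requires combinatorial simplification with careful bookkeeping of both the algebraic and the Alexander filtrations. A further subtlety is that spurious standard complex summands may contribute to $\varphi_{i',j'}$ for other indices $(i',j')$, so rather than attempting to realize a single pattern in isolation, it is cleaner to proceed by induction on $(i,j)$ in some suitable (e.g.\ lexicographic) order, canceling unwanted summands by connected summing with previously constructed examples for smaller indices. Once a knot with $\varphi_{i,j}(K_{i,j}) = \pm 1$ is obtained for each pair, scaling by connected sum realizes every $h \in \Z$, proving the theorem.
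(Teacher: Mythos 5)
Your proposal outlines essentially the same high-level strategy as the paper: apply the filtered mapping cone formula to $(n,1)$-cables of the knot meridian in $\pm 1$-surgery on a torus knot, reduce the bifiltered complex to a standard complex, read off $\varphi_{i,j}$, and invoke the homomorphism property together with connected sums. But the proposal stops at exactly the step where the actual content lies. You correctly flag the extraction of the standard complex as ``the hard part,'' and then substitute a plan (``one aims to arrange the local equivalence class''), with a backup induction scheme, for an actual computation. No specific parent knot, surgery slope, or cabling integer is pinned down, and no value of $\varphi_{i,j}$ is established for any concrete knot. That computation is the theorem; without it the argument does not close.

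The paper closes the gap in Proposition~\ref{prop: phi}: for the family $J_{n,k}$ obtained by taking the $(n,1)$-cable of the meridian in $S^3_1(T_{2,4k+3})$, it proves $\varphi_{i,k}(J_{n,k})=-1$ when $i=k+n$ (and $0$ when $i>k+n$). The proof proceeds by writing down a reduced basis for the filtered mapping cone $X^\infty_n(T_{2,4k+3})$, using Lemma~\ref{le: localequi} to show that a small portion $X^\infty_n(T_{2,4k+3})\langle n\rangle$ of the cone carries the entire local equivalence class, translating to the ring $\mathbb{X}$ of \cite{Homoconcor}, and explicitly realizing a standard complex via a two-stage change of basis. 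Setting $k=j$ and $n=i-j$ then yields $\varphi_{i,j}(J_{i-j,j})=-1$, and arbitrary $h\in\Z$ follows from the homomorphism property. Two smaller remarks: the spurious-summand concern you raise is harmless for this theorem, since it only constrains $\varphi_{i,j}$ at a single pair $(i,j)$ and makes no demand on other indices (your cancellation-by-induction idea does, however, appear in the paper for the strengthened version prescribing $\varphi_{t,j}$ for all $j<t\le i$); and $\varphi_{i,j}$ is defined via the ring $\mathbb{X}$ rather than $\F[U,V]$, so the reduction to a standard complex over $\mathbb{X}$---the analogue of finding simultaneous vertically- and horizontally-simplified bases---is itself a nontrivial step that your outline elides.
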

 To explain the notations used here, the (smooth) homology concordance group $\CZhat$ is generated by pairs $(Y,K),$ where $Y$ is an integer homology sphere bounding a homology ball and $K$ a knot in $Y.$ Two classes  $(Y_1,K_1)$ and $(Y_2,K_2)$ are equivalent in $\CZhat$ if there is a homology cobordism from $Y_1$ to $Y_2$ in which $K_1$ and $K_2$ cobound a smoothly embedded annulus. The concordance invariants $\varphi_{i,j}$ defined in \cite{Homoconcor}
 are homomorphisms $\CZhat \rightarrow \Z,$ where $(i,j)\in (\Z \times \Z^{\geq 0}) - (\Z^{<0} \times \{0\})$. They generalize the concordance homomorphisms $\varphi_i$ defined in \cite{Moreconcor} and are used to prove the existence of a $\Z^\infty$ summand in $\CZhat/\CZ$, where $\CZ$ is the subgroup of $\CZhat$ generated by the knots in $S^3.$ The reader is referred to the original sources to learn more about those concordance homomorphisms. We also offer a brief review in Section \ref{ssec: ring}.
 
 Note that Theorem \ref{thm: phi} is in contrast to the examples that come from the original filtered mapping cone formula. It was shown according to \cite[Corollary 1.3]{filteredinvolutive} that for any knot $K\subset S^3,$ the knot meridian inside the $1/p$-surgery on $K$ for any integer $p$  will have $\varphi_{i,j}=0$ for all $\abs{i-j}>2.$ (\cite[Corollary 1.3]{filteredinvolutive} only proved the case for $p=\pm 1$, but the case for rational surgeries  follows similarly.)

 Theorem \ref{thm: phi} is the immediate consequence of the following computational result. Performing  $+1$-surgery on the torus knot $T_{2,4k+3}$,  let $J_{n,k}$ denote  the $(n,1)$--cable of the knot meridian in $S^3_1(T_{2,4k+3})$ connected sum with the unknot\footnote{ The invariants $\varphi_{i,j}$ were defined for knots in any integer homology spheres, so we could also talk about $\varphi_{i,j}$ of the $(n,1)$--cable of the knot meridians. Since connected summing with the unknot does not change $\varphi_{i,j}$, those $\varphi_{i,j}$ would have the same values as in Proposition \ref{prop: phi}.} in $-S^3_1(T_{2,4k+3})$. The ambient manifold  $S^3_1(T_{2,4k+3}) \conn -S^3_1(T_{2,4k+3})$ is homology cobordant to $S^3$. 
\begin{restatable}[]{proposition}{propphi}
\label{prop: phi}
  For any $k\geq 0$ and $n\geq 1$, we have
  \begin{align*}
      \varphi_{i,k}(J_{n,k})=  
      \begin{cases}
        -1  & i=k+n\\
        0   & i>k+n .
      \end{cases}
  \end{align*}
\end{restatable}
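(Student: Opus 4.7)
The plan is to extract $\varphi_{i,k}(J_{n,k})$ from the bifiltered knot Floer complex of $J_{n,k}$, which the new filtered mapping cone formula of this paper computes explicitly.

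First, I would apply the formula to the pair $(T_{2,4k+3},+1)$. Since $T_{2,4k+3}$ is an L-space knot, $\CFKa(T_{2,4k+3})$ is a staircase with unit-length arrows, and the $A_s$ and $B_s$ subcomplexes in the mapping cone are correspondingly small with one-generator homologies. The mapping cone for the dual knot, together with the refiltering by the second basepoint encoding the $(n,1)$-cable (the novel content of this paper relative to \cite{HL}, which is to shift the second basepoint by $n$), then produces a bifiltered complex that can be written down generator by generator. The output I expect to extract is a staircase-like summand in the local-equivalence class whose ``cable leg'' has length exactly $k+n$ in the Alexander direction controlled by the new filtration, while sitting at Alexander coordinate $k$ in the direction controlled by the standard knot filtration.

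Second, I would use additivity of $\varphi_{i,j}$ under connected sum, combined with the vanishing of $\varphi_{i,j}$ on the unknot in any homology sphere (its knot Floer complex is concentrated in a single Alexander grading, so it has trivial second filtration and contributes nothing). The connected summand with the unknot in $-S^3_1(T_{2,4k+3})$ is present only to guarantee that $J_{n,k}$ sits in a manifold homology cobordant to $S^3$, so that the $\varphi_{i,k}$ are defined. Hence $\varphi_{i,k}(J_{n,k})$ equals $\varphi_{i,k}$ of the $(n,1)$-cable of the meridian in $S^3_1(T_{2,4k+3})$ alone, which is the complex computed in step one.

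Third, I would evaluate $\varphi_{i,k}$ on this explicit bifiltered complex using the definition from \cite{Homoconcor}. The cable leg of length $k+n$ produces a single contribution of $-1$ to $\varphi_{k+n,k}$. For $i>k+n$ the staircase has already terminated, so no further contributions appear and $\varphi_{i,k}(J_{n,k})=0$.

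The main obstacle I anticipate lies in step three: one has to identify the correct representative in the local equivalence class of the bifiltered complex produced by the mapping cone, and carefully verify both the sign and the fact that no further summands contribute in bigrading $(i,k)$ with $i>k+n$. Concretely, this involves checking that the $(n,1)$-cable refiltering does not introduce unexpected long legs in the Alexander direction that could contaminate the count. Once the complex from step one is laid out explicitly, this reduces to a bounded combinatorial verification using the known behavior of $\varphi_{i,j}$ on staircases, rather than to any new structural input.
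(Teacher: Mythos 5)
Your outline matches the paper's strategy: apply the filtered mapping cone formula, pass to a standard complex over the ring from \cite{Homoconcor}, read off $\varphi_{i,k}$ from the top $\mathcal{R}_V$-edge, and note that connected summing with the unknot is only there to land in $\CZhat$ and does not change the invariants. You correctly identify step three as where all the work lies, but you should be aware that the paper makes this tractable through a truncation lemma (Lemma \ref{le: localequi}) showing that, up to acyclics, the full mapping cone can be cut down to the much smaller $X^\infty_n(T_{2,4k+3})\langle n\rangle$; without this reduction, carrying out the explicit change of basis to standard form (and verifying that no long legs contaminate bigrading $(i,k)$ with $i>k+n$) would be substantially more painful. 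The paper then reads off $\varphi_{i,k}$ from the explicit sequence of filtration shifts $\Delta_{\II,\JJ}(x_s,\beta_s)$ given in Lemma \ref{le: del_ij}, whose top value $(k,n+k)$ is exactly the ``cable leg of length $k+n$'' you predict.
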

   Proposition \ref{prop: phi} is proved in Section \ref{ssec: examples}, and we also refer the reader to Figure \ref{fig: complex_example} for one example of this infinite family. 

 Next, let $\widehat{\mathcal{C}}_{\Z,\text{top}}$ be the topological version of the homology concordance group, where two classes  $(Y_1,K_1)$ and $(Y_2,K_2)$ are equivalent if $K_1$ and $K_2$ cobound a locally flat annulus in a \emph{topological} homology cobordism from $Y_1$ to $Y_2$ (which does not need to have a smooth structure).  Let    $\psi \co \CZhat \rightarrow \widehat{\mathcal{C}}_{\Z,\text{top}}$ be the natural map that forgets the smooth structure. 
\begin{theorem}\label{thm: phiker}
The classes $(Y,K)$ in Theorem \ref{thm: phi} can be taken inside $\operatorname{ker}\psi.$
 \end{theorem}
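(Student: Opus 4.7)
The plan is to show, for each $J_{n,k}$ of Proposition~\ref{prop: phi}, that the class $[Y_k \conn -Y_k, J_{n,k}]$ vanishes in $\widehat{\mathcal{C}}_{\Z,\text{top}}$, where $Y_k := S^3_1(T_{2,4k+3})$. Since $\psi$ is a group homomorphism and, by construction, $J_{n,k}$ is the connect sum of $C_n(\mu) \subset Y_k$ (the $(n,1)$-cable of the dual meridian of $T_{2,4k+3}$) with the unknot $U \subset -Y_k$, it suffices to verify that each of these two factors is topologically trivial.

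The factor $(-Y_k, U)$ is disposed of by Freedman's theorem: the integer homology sphere $-Y_k$ bounds a contractible topological $4$-manifold $V$. An unknot in $-Y_k$ bounds a disk in a $3$-ball, which can be pushed slightly into $V$ to produce a locally flat $2$-disk. Removing a small $4$-ball from $V$ yields a topological homology cobordism from $-Y_k$ to $S^3$ in which the disk extends, via a standard tubing, to a locally flat annulus joining $U \subset -Y_k$ to an unknot in $S^3$. For the factor $(Y_k, C_n(\mu))$, I would let $V_k$ denote Freedman's contractible topological $4$-manifold bounded by $Y_k$ and let $X_k$ be the smooth trace of the $+1$-surgery on $T_{2,4k+3}$. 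Then $Z_k := X_k \cup_{Y_k} (-V_k)$ is a closed simply-connected topological $4$-manifold with intersection form $(+1)$, hence homeomorphic to $\CP^2$ or to the Chern manifold $\ast\CP^2$ by Freedman's classification. In either case the generator of $H_2(Z_k)$ is represented by a locally flat $2$-sphere $S$, which after isotopy can be made to meet $Y_k$ transversally in a single circle isotopic to $\mu$. The piece $S \cap V_k$ is then a locally flat disk in $V_k$ with boundary $\mu$, and a suitable cabled modification of this disk should provide a locally flat disk for $C_n(\mu)$ in $V_k$, using that the $(n,1)$-torus pattern is isotopic to the core inside $S^1 \times D^2$.

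The principal obstacle I expect to encounter is the framing mismatch between the topological disk produced for $\mu$ in $V_k$ and the Seifert framing of $\mu$ in $Y_k$. Since the $(n,1)$-cable depends on the framing, this mismatch may prevent the naive cabling of the disk from bounding $C_n(\mu)$ directly; instead, it will bound the $(n, 1-nf)$-torus pattern for some integral framing correction $f$, which is a nontrivial torus knot in general. I plan to resolve this by exploiting the flexibility in choosing the representative sphere $S \subset Z_k$ within its homology class to arrange $f = 0$, or, failing that, by invoking Freedman's topological disk embedding theorem within a region of $V_k$ whose complementary fundamental group is sufficiently controlled. This framing-handling step is the only place where genuinely $4$-dimensional topological machinery beyond the existence theorem for contractible fillings is required, and it is where the argument decisively exploits the topological, rather than smooth, category.
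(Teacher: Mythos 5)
Your proposal tries to show that the specific classes $J_{n,k}$ built from $T_{2,4k+3}$ lie in $\operatorname{ker}\psi$, but this is stronger than what the theorem asserts and it is not what the paper proves. Theorem~\ref{thm: phiker} only claims that representatives realizing Theorem~\ref{thm: phi} can be \emph{chosen} inside $\operatorname{ker}\psi$, and the paper exercises exactly that freedom. The key idea, which your argument misses entirely, is to replace the torus knot $T_{2,4k+3}$ by $D_k = (2k+1)D$, where $D$ is the positively-clasped untwisted Whitehead double of $T_{2,3}$: by \cite{twoTorsion}, $D$ has the same knot Floer complex as $T_{2,3}$ up to acyclic summands, so the filtered mapping cone formula produces identical values of $\varphi_{i,j}$, while $D_k$ has trivial Alexander polynomial and is therefore topologically slice by Freedman--Quinn. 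Topological sliceness then makes everything explicit and elementary: one surgers the $+1$-trace of $D_k$ along the sphere formed by the $2$-handle core and the topological slice disk to obtain a contractible topological $4$-manifold bounding $S^3_1(D_k)$ in which the dual knot bounds a locally flat disk, and one observes that the $(n,1)$-cable of a slice knot is slice.

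Beyond missing this substitution, your direct route through Freedman's classification of closed $4$-manifolds contains a concrete error. The generator of $H_2$ of the Chern manifold $\ast\CP^2$ is \emph{not} represented by a locally flat $2$-sphere: if it were, its normal disk bundle (which exists by Freedman--Quinn) would be the Euler-number-one disk bundle over $S^2$, and the complement would be a homotopy $4$-ball, giving $\ast\CP^2 \cong \CP^2$, a contradiction. This case genuinely arises in your construction. Since $X_k$ is smooth, the Kirby--Siebenmann invariant of $Z_k = X_k \cup_{Y_k}(-V_k)$ equals $\operatorname{KS}(V_k)$, which equals the Rokhlin invariant $\mu(Y_k) = \operatorname{Arf}(T_{2,4k+3}) \equiv k+1 \pmod 2$; thus $Z_k \cong \ast\CP^2$ for every even $k$, in particular for $k=0$. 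So your opening step fails for half the family. Even when $Z_k \cong \CP^2$, the framing issue you flag is substantive and left unresolved: the locally flat disk your construction produces for $\mu$ comes with a framing defect that changes the cable pattern (so you do not get $K_{n,\lambda}$ but a different cable), and ``exploiting the flexibility in choosing $S$'' or ``invoking the topological disk embedding theorem'' is a placeholder rather than an argument. The paper's Whitehead-double replacement sidesteps all of these difficulties at once.
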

 \begin{proof}
 The proof proceeds the same way as the proof of \cite[Theorem 1.7]{hugo}. For the convenience of the reader, we repeat the complete proof here. According to \cite[Proposition 6.1]{twoTorsion}, up to quotienting acyclic complexes, the positive-clasped untwisted Whitehead double of $T_{2,3}$, denoted by $D$, has the  same knot Floer complex as $T_{2,3}$. Thus instead of applying the filtered mapping cone formula to $T_{2,4k+3}$, applying it to the knot $D_{k} \coloneqq (2k+1)D$ yields a complex with identical concordance invariants.  
 
 On the other hand, by the work of Freedman and Quinn \cite{FQbook}, the knot $D_k$ has trivial Alexander polynomial, thus is topologically slice. Consider the $4$-manifold $W_k$ obtained by attaching a $+1$-framed two-handle to $B^4$ along $D_k$. Inside $W_k$, the core of the two-handle and the (topologically) slice disk of $D_k$ form a sphere; let $Z$ denote a tubular neibourhood of this sphere. Then $Z$ is a disk bundle with Euler number one, therefore has $S^3$ as its boundary. Deleting $Z$ and gluing back in a $B^4$, the resulting manifold $W'_k$ has the same homology as a point  according to Mayer-Vietoris, thus is contractible due to Whitehead Theorem. At the same time, in the $+1$-surgery of $D_K$, the dual knot is isotopic to a $0$-framed longitude, which bounds a locally flat disk in $B_4$ disjoint from the slice disk of $D_k$. Therefore $S^3_1(D_k)$ bounds a contractible manifold $W'_k$, in which the dual knot of $D_k$ bounds a locally flat disk. Finally notice that the $(n,1)$--cable of a slice knot is slice.
 \end{proof}
 
 \begin{remark}
 It is worth noting the contractible manifold that $S^3_1(D_k)$ bounds is a topological manifold. In fact, the $d$--invariant obstructs $S^3_1(D_k)$ from bounding  a contractible smooth manifold.
 \end{remark}
 
 So far, the examples that generate the $\Z^\infty$ summand in $\CZhat/\CZ$ are necessarily in distinct integer homology spheres. Answering a question raised in \cite{Homoconcor}, we have the following.

 \begin{theorem}
 The $\Z^\infty$ summand in $\CZhat/\CZ$ can be generated by knots inside one single integer homology sphere $Y$. 
 \end{theorem}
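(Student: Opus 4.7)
The plan is to realize the claimed summand by the family $\{J_{n,k}\}_{n \ge 1}$ produced by Proposition \ref{prop: phi} for a single fixed value of $k$. For every $n \ge 1$ the knot $J_{n,k}$ lives inside the one integer homology sphere
$$Y := S^3_1(T_{2,4k+3}) \;\#\; \bigl(-S^3_1(T_{2,4k+3})\bigr),$$
which is homology cobordant to $S^3$, so each $(Y, J_{n,k})$ represents a class in $\CZhat$. The task therefore reduces to showing that the classes $[(Y, J_{n,k})]_{n \ge 1}$ span a $\Z^\infty$ direct summand of $\CZhat/\CZ$.

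The crucial input is the triangular structure of Proposition \ref{prop: phi}. For fixed $k$ and varying $n \ge 1$ one has $\varphi_{k+n, k}(J_{n,k}) = -1$ and $\varphi_{i, k}(J_{n,k}) = 0$ for every $i > k+n$, so the matrix $M_{i,n} := \varphi_{k+i, k}(J_{n,k})$ is upper triangular with $-1$'s on the diagonal. Linear independence of the classes $[J_{n,k}]$ follows at once. Moreover, back-substitution on each finite upper-triangular block produces finite $\Z$-linear combinations $L_m = \sum_{n \le m} c_{m,n} J_{n,k}$ satisfying $\varphi_{k+i, k}(L_m) = \delta_{i,m}$ for every $i \ge 1$; no cancellations in higher rows are needed because $M_{i,n}$ already vanishes whenever $i > n$.

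To promote linear independence to a direct-summand statement, one packages these concordance homomorphisms into the map
$$\Phi \colon \CZhat/\CZ \longrightarrow \bigoplus_{i \ge 1} \Z, \qquad K \longmapsto \bigl(\varphi_{k+i, k}(K)\bigr)_{i \ge 1},$$
which is well defined and lands in the direct sum by the same argument as in \cite{Homoconcor}, where precisely this slice of the $\varphi_{i,j}$ is shown to vanish on $\CZ$ and to have finite support on any given knot. Since $\Phi(L_m) = e_m$, the restriction of $\Phi$ to the subgroup $S := \langle [J_{n,k}] : n \ge 1 \rangle = \langle [L_m] : m \ge 1 \rangle$ is an isomorphism onto $\bigoplus_{i \ge 1} \Z$. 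Hence $\CZhat/\CZ \cong S \oplus \ker \Phi$, exhibiting $S$ as a $\Z^\infty$ direct summand whose generators all lie in the single homology sphere $Y$.

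The main subtlety is the verification that the chosen slice of $\varphi_{i,j}$'s descends through $\CZhat \twoheadrightarrow \CZhat/\CZ$ and has finite support on each fixed knot; but this is exactly the content of the original $\Z^\infty$-summand argument in \cite{Homoconcor}, and the only genuinely new ingredient needed here is Proposition \ref{prop: phi}, which assembles all of the required generators inside the same $3$-manifold.
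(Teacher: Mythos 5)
Your proof is correct and follows essentially the same approach as the paper, which simply observes that $\bigoplus_{i>k}\varphi_{i,k}$ is surjective onto $\Z^\infty$ on the subgroup generated by $\{J_{n,k}\}_{n\geq 1}$ and lets the reader supply the (standard) details. You have spelled out precisely those details — the triangularity of the matrix $\varphi_{k+i,k}(J_{n,k})$, the back-substitution producing a dual basis, the descent of the $\varphi_{i,k}$ to $\CZhat/\CZ$, and the splitting — without introducing anything new beyond what the paper's one-line argument already implicitly relies on.
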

\begin{proof}
By fixing  $k$  and varying $n,$ we obtain an infinite family of knots $\{J_{n,k}\}_{n\in \N}$, which immediately implies that the homomorphism $\bigoplus_{i>k} \varphi_{i,k}$ is surjective onto $\Z^\infty$ by Proposition \ref{prop: phi}.  
\end{proof}

As a slight variant of the group $\CZhat$, \cite[Remark 1.13]{homologyconcordance} considered the subgroup of  $\CZhat$ consisting of all pairs $(Y,K)$ such that $Y$ bounds a homology $4$-ball in which $K$ is freely nullhomotopic (or equivalently, in which $K$ bounds an immersed disk). As they remarked,  this variant is arguably a more appropriate generalization of the concordance group than $\CZhat$,  as it measures the failure
of replacing immersed disks by embedded ones. Clearly, if $Y$ bounds a smooth $4$-manifold that is contractible, the above requirements are automatically satisfied for any knot $K\subset Y$. This prompts the question:

\begin{question}
Can the $\Z^\infty$ summand in $\CZhat/\CZ$ be generated by knots inside an integer homology sphere $Y$ that bounds a contractible smooth manifold? 
\end{question}

Another interesting aspect worth pointing out is that the new filtered mapping cone  captures all the  information from the input if $n$ is sufficiently large. As a comparison, recall that according to \cite[Theorem 3.1]{hugo},  up to local equivalence, for the $+1$-surgery on $L$--space knots, the knot Floer complex of the dual knot only depends on two parameters: the total length of horizontal edges in the top half of the complex, and the congruence class of the number of generators mod $4$. (This is more or less natural in view of \cite[Theorem 1.2]{filteredinvolutive}, which states that the dual knot complex admits a relatively small local model.) In contrast, we have the following.

\begin{proposition}\label{prop: intro_middle}
 Suppose knot $K\subset S^3$ is a knot of genus $g$. When $n\geq 2g$, there is a quotient complex of the filtered mapping cone of the $(n,1)$--cable of the knot meridian that is filtered homotopy equivalent to $\CFKi(S^3,K).$
\end{proposition}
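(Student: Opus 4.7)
The plan is to isolate a single extremal $A_{s_0}$-summand of the filtered mapping cone as a quotient complex, and then recognize it, with its induced second filtration, as $\CFKi(S^3,K)$.

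First I would recall the shape of the filtered mapping cone $\X$ constructed earlier in the paper: it has the form $\Cone(v+h: \bigoplus_s A_s \to \bigoplus_s B_s)$, where the $A_s$ and $B_s$ are the usual mapping-cone summands (truncations of $\CFKi(K)$) and carry a second filtration $\AlFilt$ determined by the placement of the extra basepoint specifying the $(n,1)$-cable. The $\AlFilt$-value assigned to each summand depends linearly on $s$, with a jump of order $n$ between consecutive $A_s$ (and between consecutive $B_s$). Crucially, for $|s|\geq g$ the truncation defining $A_s$ becomes trivial, and $A_s$ is identified, up to a grading shift, with the full complex $\CFKi(S^3,K)$; moreover the restriction of $\AlFilt$ to such an $A_s$ coincides (up to an overall shift) with the Alexander filtration of $K$.

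Next I would pick an extremal index $s_0$, namely the largest $s$ for which $A_s$ participates in $\X$, so that in particular $|s_0|\geq g$. Under the assumption $n\geq 2g$, I would show that the cone differential out of $A_{s_0}$ lands in strictly smaller $\AlFilt$-value: the arrows $v_{s_0}:A_{s_0}\to B_{s_0}$ and $h_{s_0}:A_{s_0}\to B_{s_0+1}$ both shift $\AlFilt$ by at most $2g$ in absolute value, since the support of the knot filtration of $K$ has width at most $2g$, whereas the jump in $\AlFilt$ between consecutive cable strands is $n\geq 2g$. Consequently the submodule spanned by all summands other than $A_{s_0}$ is closed under the cone differential and forms a filtered subcomplex of $\X$. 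Quotienting by this subcomplex leaves precisely $A_{s_0}$, which by the identification above is filtered homotopy equivalent to $\CFKi(S^3,K)$.

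The main obstacle is the filtration bookkeeping in the middle step: one has to pin down explicit inequalities relating the $\AlFilt$-value of $A_{s_0}$ to the $\AlFilt$-values of the images of its generators under $v_{s_0}$ and $h_{s_0}$, and check that no ``auxiliary'' low-order terms in the differential (arising from the extra basepoint moving past various intersection points in the Heegaard diagram) spoil the subcomplex property. This is where the hypothesis $n\geq 2g$ is used in a genuinely sharp way, rather than merely as a convenient largeness assumption; the verification hinges on the explicit formula for $\AlFilt$ worked out earlier, together with the fact that for $|s|\geq g$ the maps $v_s$ and $h_s$ simplify to their ``stable'' forms where their effect on the second filtration is transparent.
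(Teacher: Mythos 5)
Your argument breaks down at the choice of summand. You pick the extremal $A_{s_0}$ (largest $s$ in the cone) and claim that $|s_0|\geq g$ already forces the induced filtrations to coincide with the Alexander filtration; that claim is false, and the extremal choice does not work. Look at the formulas (specializing to $k=d=1$, as in Section~\ref{sec: examples}): on $A^\infty_s$ we have $\II=\max\{i,\,j-s\}$ and $\JJ=\max\{i-n,\,j-s\}+ns-\tfrac{n(n-1)}{2}$. For these to reduce to $(\II,\JJ)=(i,\,j+\text{const})$ you need \emph{both} maxes to degenerate: $i\geq j-s$ (forcing $\II=i$) and $i-n\leq j-s$ (forcing $\JJ=j-s+\text{const}$). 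Since $j-i$ ranges over $[-g,g]$, these are equivalent to $g\leq s\leq n-g$. The condition $s\geq g$ only takes care of $\II$. At the top of the cone, $s_0=g+n-1$, which is $>n-g$ as soon as $g\geq 1$, so on $A_{s_0}$ the second filtration $\JJ$ degenerates to $\max\{i-n,\,j-s_0\}$ which equals $i-n$ on most generators and $j-s_0$ only near the top Alexander grading; it does not match the Alexander filtration. (Try $T_{2,3}$ with $n=2$: on $A_2$ the three generators at $j-i=-1,0,1$ get $\JJ=1,1,2$, whereas the Alexander filtration is $-1,0,1$.) The right choice is a \emph{middle} summand $s_0=\lceil n/2\rceil$, which satisfies $g\leq s_0\leq n-g$ precisely because $n\geq 2g$; this is the paper's proof.

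Separately, your remarks about the filtration shift of $v_{s_0}$ and $h_{s_0}$ being $\leq 2g<n$ are not needed to see that $A_{s_0}$ is a filtered quotient: $\bigoplus_s B_s\oplus\bigoplus_{s\neq s_0}A_s$ is a filtered subcomplex of the cone for every $s_0$ and every $n$, simply because the cone differential only maps $A$-summands into $B$-summands. The entire content of the proposition lies in identifying the inherited filtrations on the quotient $A_{s_0}$ with the Alexander filtration, and that identification requires the middle choice of $s_0$, not an extremal one.
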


 See Proposition \ref{prop: app_middle} for the precise statement. In general,  up to local equivalence, the knot Floer complex of the $(n,1)$--cable of the dual knot depends on much more than the two parameters mentioned above. 

Moreover, there is a curious phenomenon which we shall name \emph{stabilization} that is associated to the behavior of the knot  Floer complex of the $(n,1)$--cable of the dual knot when $n\geq 2g$. Basically, when $n\geq 2g$ and as $n$ increases, the length of some edges of the complex will increase, but aside from that, the complex stops changing ``shape'', instead, a new copy of the complex $\CFKi(S^3,K)$ gets added to the complex ``in the middle'' each time $n$ increases by $1$. For more details, read the discussion below Proposition \ref{prop: app_middle}.

While the algebraic reason for stabilization is rather straightforward, it is somewhat mysterious why this phenomenon happens geometrically. Are $(n,1)$--cables of the knot meridian for $n\geq 2g$ in fact special topologically? We ask the following question:
\begin{question}
What is the geometrical explanation for stabilization?
\end{question}
In the subsection that follows, we present the statement of the new filtered mapping cone formula.

\subsection{Statement of the main theorem.}\label{ssec: statement}
We start by introducing some notations. 

Suppose $K$ represents a class of order $d>0$ in $H_1(Y;\Z)$. Fix a tubular neighborhood $\nbd(K)$ of $K$, let $\mu$ be the canonical choice of the right-handed meridian in $\nbd(K)$ and  $\lambda$ a choice of longitude,  both of which we may view as curves on $\partial(\nbd(K))$. 
  Seen as elements of $H_1(\partial (\nbd(K)))$, $\lambda$ and $\mu$ satisfy $\partial F = d\lambda - k\mu$ for some $k\in\Z$, where $F$ is a rational Seifert surface of $K$. In fact, the framing is completely determined by $k$.  Assume from now on $k\neq 0.$ 
  
  Let $Y_\lambda(K)$ denote the surgery on $K$ with the framing specified by $\lambda$. Note that inside  $Y_\lambda(K)$, $\mu$ is isomorphic to the core of surgery solid torus. Following Hedden-Levine's convention, we will let our cables of the meridian inherit the orientation from the \emph{left-handed} meridian. More precisely, define $K_{n,\lambda} \in Y_\lambda(K)$ to be $n$ copies of $-\mu$ joined with trivial band sums (see Figure \ref{fig: knot}).  The knot $K_{n,\lambda}$ is the $(n,1)$--cable of the dual knot with its framing specified by $\partial(\nbd(K))$. In rest of the paper, the standalone letters $k,d$ and $n$ are in general reserved for the quantities described above.

 We will assume from the reader certain familiarity of Heegaard Floer package. The basic construction and certain helpful propositions are reviewed in Section \ref{sec: prelim}. For a more thorough introduction or survey on the topic, see for example
\cite{IntroHeegaard,survey}.

 Recall that the \emph{relative spin$^c$ structures} are the set of homology classes of  vector fields that are non-vanishing  on $Y \minus \nbd(K)$ and tangent to the boundary along $\partial (\nbd(K))$, denoted by $\ul\Spin^c(Y,K)$.  Ozsv\'ath and Szab\'o defined the map
\[
G_{Y,K} \co \ul\Spin^c(Y,K) \to \Spin^c(Y),
\]
which is equivariant with respect to the restriction map
\[
H^2(Y,K;\Z) \to H^2(Y;\Z).
\]
Following the convention in \cite{HL}, the \emph{Alexander grading} of each $\xi \in \ul\Spin^c(Y,K)$ is defined as
\begin{equation} \label{eq: spinc-alex}
\AlNorm_{Y,K}(\xi) = \frac{\gen{c_1(\xi), [F]} + [\mu] \cdot [F] }{2 [\mu] \cdot [F]} \in \frac{1}{2d} \Z,
\end{equation}
where $F$ is a rational Seifert surface for $K$. For each $\spincs \in \Spin^c(Y)$, the values of $\AlNorm_{Y,K}(\xi)$, for all the  $\xi \in \ul\Spin^c(Y,K)$ such that $G_{Y,K}(\xi)=\spincs$, belong to a single coset in $\Q/\Z$; denote it by $\AlNorm_{Y,K}(\spincs)$. Moreover, any $\xi \in \ul\Spin^c(Y,K)$ is uniquely determined by the pair $(G_{Y,K}(\xi), \AlNorm_{Y,K}(\xi))$.


Choose a spin$^c$ structure $\spinct$ on $Y_\lambda(K)$. 
There is a bijection between $\Spin^c(W_{\lambda})$ and $\ul\Spin^c(Y,K)$, where $W_{\lambda}$ is the two handle cobordism from $Y$ to $Y_\lambda$ (see Definition \ref{def: spincbij}  for more details). Consider all the spin$^c$ structures in $\Spin^c(W_{\lambda})$ that extend $\spinct$. We may see these as spin$^c$ structures in $\ul\Spin^c(Y,K)$ through the bijection; denote them by $(\xi_l)_{l \in \Z}$.  Let $\spincs_l = G_{Y,K}(\xi_l)$ and $s_l = \AlNorm_{Y,K}(\xi_l)$.  The index is pinned down first by the relation $\spincs_{l+1} = \spincs_l + \PD[K]$, so that the sequence $(\spincs_l)_{l \in \Z}$ repeats with period $d$, while $s_{l+1} = s_l + \frac{k}{d}$, and then by the conventions (for more discussions see the end of Section \ref{sssec: spinc})
\begin{gather}
\label{eq: xil-bound-pos}
\frac{(2l-1)k}{2d}  < \AlNorm_{Y,K}(\xi_l) \le \frac{(2l+1)k}{2d} \qquad \text{if } k>0, \\
\label{eq: xil-bound-neg}
\frac{(2l+1)k}{2d} \le  \AlNorm_{Y,K}(\xi_l) < \frac{(2l-1)k}{2d}   \qquad \text{if } k<0.
\end{gather}

For each $l \in \Z$, let $A^\infty_{\xi_l}$ and $B^\infty_{\xi_l}$ each denote a copy of $\CFKi(Y,K,\spincs_l)$. Define a pair of filtrations $\II_\spinct$ and $\JJ_\spinct$ and an absolute grading $\gr_\spinct$ on these complexes as follows:
\begin{align}
\intertext{For $[\x,i,j] \in A^\infty_{\xi_l}$,}
\label{eq: It-def-A} \II_\spinct([\x,i,j]) &= \max\{i,j-s_l\} \\
\label{eq: Jt-def-A} \JJ_\spinct([\x,i,j]) &= \max\{i-n,j-s_l\} + \frac{2nds_l+nk-n^2 d}{2k} \\
\label{eq: grt-def-A} \gr_\spinct([\x,i,j]) &= \absgr([\x,i,j]) + \frac{(2ds_l - k)^2 }{4dk} + \frac{2-3\sign(k)}{4} \\
\intertext{For $[\x,i,j] \in B^\infty_{\xi_l}$,}
\label{eq: It-def-B} \II_\spinct([\x,i,j]) &= i \\
\label{eq: Jt-def-B} \JJ_\spinct([\x,i,j]) &= i-n + \frac{2nds_l+nk-n^2 d}{2k} \\
\label{eq: grt-def-B} \gr_\spinct([\x,i,j]) &= \absgr([\x,i,j]) + \frac{(2ds_l - k)^2 }{4dk} + \frac{-2-3\sign(k)}{4}
\end{align}
Here $\absgr$ denotes the $\Q$--valued Malsov grading on $\CFKi(Y,K,\spincs_l)$.  The values of $\II_\spinct$ are integers, while the values of $\JJ_\spinct$ live in the coset $\AlNorm_{Y_\lambda,K_{n,\lambda}}(\spinct)$. See Figure \ref{fig: filtration} for an example of the filtrations when $k=d=1.$

For each $\xi_l$, there is a filtered chain homotopy equivalence 
\[
\Psi^\infty_{\xi_l} \co \CFKi(Y,K,\spincs_l) \longrightarrow \CFKi(Y,K,\spincs_{l+1}),
\]
often referred to as the ``flip map". (We will discuss this in more details in Section \ref{sec: prelim}.) In particular, for any null-homologous knot in an $L$--space, the reflection map that exchanges $i$ and $j$ suffices to play the role of   $\Psi^\infty_{\xi_l} $.

 Let $A^-_{\xi_l}$ (resp.~$B^-_{\xi_l}$) denote the subcomplex of $A^\infty_{\xi_l}$ (resp.~$B^\infty_{\xi_l}$) with $\II<0$, and let $A^+_{\xi_l}$ (resp.~$B^+_{\xi_l}$) denote the quotient.
Define $v^\infty_{\xi_l} \co A^\infty_{\xi_l} \to B^\infty_{\xi_l}$ to be the identity map on $\CFKi(Y,K,\spincs_l)$, and $h^\infty_{\xi_l} \co A^\infty_{\xi_l} \to B^\infty_{\xi_{l+1}}$ given by the ``flip map'' $\Psi^\infty_{\xi_l} $. Both $v^\infty_{\xi_l}$ and $h^\infty_{\xi_l}$ are doubly-filtered and homogeneous of degree $-1$. So $v^\infty_{\xi_l}$ (resp.~$h^\infty_{\xi_l}$ ) restricted to $A^-_{\xi_l}$ maps to $B^-_{\xi_l}$ (resp.~$B^-_{\xi_{l+1}}$), and hence also induces a map from $A^+_{\xi_l}$ to $B^+_{\xi_l}$ (resp.~$B^+_{\xi_{l+1}}$). Define these maps to be $v^-_{\xi_l}$ (resp.~$h^-_{\xi_l}$ ) and $v^+_{\xi_l}$ (resp.~$h^+_{\xi_l}$ ). The above definitions agree with those in \cite{HL}.

Let $\mathbb{A}_{\lambda,\spinct,a,b}=\bigoplus_{l=a}^{b} A^\infty_{\xi_l}   $ and   $\mathbb{B}_{\lambda,\spinct,a,b}=\bigoplus_{l=a+1}^{b} B^\infty_{\xi_l}   $, both inherit the double filtrations. Let $v^\infty = \bigoplus_{l} v^\infty_{\xi_l} $ and $h^\infty = \bigoplus_{l} h^\infty_{\xi_l} $ be maps from $\mathbb{A}_{\lambda,\spinct,a,b}$ to $\mathbb{B}_{\lambda,\spinct,a,b}$.  Define $X^\infty_{\lambda, \spinct,n, a,b}$  to be the mapping cone of $v^\infty + h^\infty$. The following is our main theorem.

\begin{theorem} \label{thm: mapping-cone}
Let $K$ be a knot in a rational homology sphere $Y$, let $\lambda$ be a nonzero framing on $K$, and let $\spinct \in \Spin^c (Y_\lambda(K))$. Then for all $a \ll 0$ and $b \gg 0$, the chain complex $X^\infty_{\lambda,\spinct,n,a,b}$, equipped with the filtrations $\II_\spinct$ and $\JJ_\spinct$, is doubly-filtered chain homotopy equivalent to $\CFKi(Y_\lambda, K_{n,\lambda}, \spinct)$.
\end{theorem}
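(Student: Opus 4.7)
The plan is to adapt the filtered mapping cone framework of \cite{HL} using Truong's key observation about basepoint placement \cite{Truong}. I would start from a Heegaard diagram $(\Sigma, \boldsymbol{\alpha}, \boldsymbol{\beta}, z)$ for $(Y,K)$ and form the surgery triple $(\Sigma, \boldsymbol{\alpha}, \boldsymbol{\beta}, \boldsymbol{\gamma}, z)$ in which $\boldsymbol{\gamma}$ encodes $\lambda$-surgery on $K$. The Ozsv\'ath--Szab\'o rational surgery cone \cite{rational} exhibits $\CFi(Y_\lambda(K), \spinct)$ as the mapping cone of $v^\infty + h^\infty$, and the filtration $\II_\spinct$ recovers the second basepoint of \cite{HL}, which realizes the meridian of $K$ as the dual knot. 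To upgrade this to a formula for $K_{n,\lambda}$, I would insert a second basepoint $w$ in the region of $\Sigma$ that Truong's analysis identifies as specifying the $(n,1)$-cable of the meridian rather than the meridian itself; the resulting two-pointed diagram with basepoints $(z, w)$ computes $\CFKi(Y_\lambda, K_{n,\lambda}, \spinct)$.

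With the diagrammatic setup in place, the next step is to identify the individual summands. Each $A^\infty_{\xi_l}$ arises as the complex associated to the ``large surgery'' winding region in which $\boldsymbol{\beta}$ and $\boldsymbol{\gamma}$ differ by the surgery isotopy, and by Truong's theorem this complex is chain isomorphic to $\CFKi(Y,K,\spincs_l)$ with a second filtration determined by the new basepoint $w$. A parallel but more standard identification works for $B^\infty_{\xi_l}$, where only $\boldsymbol{\beta}$ appears, so the second filtration is inherited directly from the $i$-coordinate (reflecting that $K_{n,\lambda}$ does not ``wind'' over the corresponding region). The maps $v^\infty_{\xi_l}$ and $h^\infty_{\xi_l}$ are the pair-of-pants triangle maps for the canonical and flipped $(\boldsymbol{\alpha}, \boldsymbol{\beta}, \boldsymbol{\gamma})$ subdiagrams, each verified to respect both filtrations. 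Assembling these pieces and applying the truncation argument of \cite{rational, HL} (showing that for $a \ll 0$, $b \gg 0$ the truncation does not affect the doubly-filtered homotopy type in a given $\spinct$) yields the equivalence claimed in the statement.

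The main obstacle will be pinning down the precise shift $\tfrac{2nds_l + nk - n^2 d}{2k}$ appearing in \eqref{eq: Jt-def-A} and \eqref{eq: Jt-def-B}, as this is where the proof diverges most substantively from \cite{HL}. I would derive it by evaluating the absolute Alexander grading \eqref{eq: spinc-alex} on a rational Seifert surface for $K_{n,\lambda}$, using that $K_{n,\lambda}$ consists of $n$ parallel pushoffs of $-\mu$ into the surgery solid torus and that the relative $\spincu \in \ul\Spin^c(Y_\lambda, K_{n,\lambda})$ corresponding to a given generator in $A^\infty_{\xi_l}$ is determined by $\xi_l$ together with the multiplicity of its domain at $w$. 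Tracking how the cabling parameter $n$ interacts with the framing data $(d,k)$ through $\langle c_1(\spincu), [F_{n,\lambda}]\rangle$ should produce the claimed formula, with the Maslov grading shifts in \eqref{eq: grt-def-A}--\eqref{eq: grt-def-B} following from the analogous first Chern class computation on the two-handle cobordism $W_\lambda$. A point that will require care is compatibility with the flip map $\Psi^\infty_{\xi_l}$: unlike the $n=1$ case, the cable basepoint $w$ lies well inside the winding region, so verifying that $h^\infty_{\xi_l}$ is doubly-filtered rather than merely $\II_\spinct$-filtered will force me to reanalyze the holomorphic triangle count governing the flip map near $w$.
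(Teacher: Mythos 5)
Your proposal captures the correct high-level strategy, but it omits a structural ingredient that is essential to the argument and cannot be skipped: the proof does not go directly from a Heegaard triple $(\Sigma,\bm\alpha,\bm\beta,\bm\gamma)$ to the cone. Following Ozsv\'ath--Szab\'o and \cite{HL}, one must introduce a fourth family of attaching circles $\bm\delta$ encoding the large surgery $Y_{\lambda+m\mu}(K)$ for $m\gg 0$, and the entire proof hinges on the filtered surgery exact triangle relating $Y$, $Y_\lambda(K)$, and $Y_{\lambda+m\mu}(K)$. The $A^\infty_{\xi_l}$ summands are identified not from the $(\bm\beta,\bm\gamma)$ winding region as you suggest, but from the $(\bm\alpha,\bm\delta)$ subdiagrams via the filtered large surgery theorem (and the winding relevant to Truong's observation lives in those $\delta$ curves, not in $\bm\gamma$). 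The quasi-isomorphism $\CF^t(\bm\alpha,\bm\gamma,z)\simeq\Cone(f_2^t)$ is obtained from the filtered exact-triangle detection lemma, whose hypotheses require carefully verifying that the triangle-counting maps $f_0,f_1,f_2$, the quadrilateral-counting maps $h_0,h_1,h_2$ (after a spin$^c$-truncation that discards ``bad'' terms), and a pentagon-counting homotopy are all filtered with respect to the new $\JJ$ filtration. This verification is the bulk of the work; it rests on explicit Alexander-grading and Chern-class identities for holomorphic triangles, rectangles, and pentagons computed from the relative periodic domains $P_\gamma,P_\delta,P_{n,\gamma},P_{n,\delta},Q,R$.

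Two secondary comments. First, the shift $\tfrac{2nds_l+nk-n^2d}{2k}$ is indeed derived by the route you sketch, but the derivation goes through the modified relative periodic domain $P_{n,\gamma}$ (obtained from $n$ copies of $P_\gamma$ by strip attachment and gluing across the $\alpha_g$ disk) and the resulting formula for $\AlNorm_{w,z_n}(\q)$; simply expanding $\langle c_1(\spincu),[F_{n,\lambda}]\rangle$ in the abstract does not give the multiplicity corrections from the extra reference points $u_j,z_j$ that appear in the Alexander grading formulas for rectangles and pentagons. Second, your worry about the flip map $\Psi^\infty_{\xi_l}$ is misplaced: its filtered property depends only on the $j$ coordinate of $\CFKi(Y,K)$, which is unchanged, so HL's Lemma 2.16 applies verbatim. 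The genuine new difficulty is showing that the cobordism maps realizing $v^\infty$ and $h^\infty$ at the chain level respect the shifted $\JJ$ filtration, and that requires the spin$^c$-decomposed grading estimates, not a reanalysis of the flip map itself.
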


Theorem \ref{thm: mapping-cone} can be generalized to the case of rational surgeries as well. See Section \ref{sec: rational} for details.

\begin{remark}
Although not clear from the construction, the $a$ and $b$ in the above theorem can be decided by $d,n$ and $g$ quite straightforwardly, where $g$ is the genus of the knot $K$. Suppose $K$ is null-homologous, so we have $d=1.$ In this case taking $a=-g+1$ and $b=g+n-1$ would suffice. See the beginning of Section \ref{sec: examples} for an explanation.
\end{remark}

\begin{remark}
The filtration in \cite{Truong} only agrees with ours after a reflection with respect to the  line $i=j$. This is because Truong used the right-handed meridian while we use the left-handed meridian. In fact, there are two versions of the filtered mapping cone formula that one can prove: the current one with $(w,z,z_n)$ basepoints where the additional basepoint $z_n$ is placed to the left of $w$ and $z$, and another version with $(w,z,w_n)$ basepoints where $w_n$ basepoint is placed to the right of $w$ and $z$. The latter will result in a filtration matching the one in \cite{Truong}. We chose the current version since it generalizes the filtrations defined in \cite{HL}.  
\end{remark}

We learned that Ian Zemke is working on a different method that would achieve the similar filtered mapping cone formula for the $(n,1)$--cables of the knot meridian: if one takes the connected sum of $K$ with a $T_{2,2n}$ torus link, the $(n,1)$--cable of the meridian of $K$ can be obtained by performing a surgery on one of the link components. The gradings are then readily read off by considering the grading changes in the cobordism maps. The base case with the $T_{2,2}$ torus link is worked out in \cite{filteredinvolutive}.

\subsection*{Organization} In Section \ref{sec: prelim}, we review the basic construction of the knot Floer homology. In Section \ref{sec: alex}, we construct the diagrams for the $(n,1)$--cable of the knot meridian and compute the Alexander grading shifts and first Chern class evaluations of holomorphic polygons. Those would form the theoretical core for the upcoming sections.  Then in Section \ref{sec: largesurgery}, \ref{sec: exacttri} and \ref{sec: proofcone},  we prove the large surgery formula for the $(n,1)$--cable of the knot meridian, the  filtered surgery exact triangle and finally the filtered  mapping cone formula. In Section \ref{sec: rational} we generalize the filtered mapping cone formula to the case of rational surgeries. In Section \ref{sec: examples} we perform the computations that lead to the proof of Proposition \ref{prop: phi} and Theorem \ref{thm: phi}.

\subsection*{Acknowledgement}
I want to thank my advisor Jen Hom for her continued support and encouragement. I am grateful to Adam Levine, Ian Zemke, Matt Hedden, John Etnyre, JungHwan Park and Chuck Livingston for helpful comments. I would like to thank Matt Hedden and Adam Levine for providing such a clear and thorough guideline in \cite{HL}, which inspired the current paper. I also want to thank the group of young mathematicians  I met during GSTGC at Georgia Tech, to whom I attribute much of the motivation for accomplishing this project.

\section{Preliminary on knot Floer complexes} \label{sec: prelim}
 Ozsv\'ath and Szab\'o defined a package of Floer invariants, including the Heegaard Fleor complex for three manifolds (see \cite{OSthreemanifold}) and the knot Floer complex for the knots (see \cite{OSknot}). In this section we collect some basic definitions and propositions necessary for the rest of the paper. For a more thorough introduction or survey on the topic, see
\cite{IntroHeegaard,survey}.
 
 A \emph{pointed Heegaard diagram} for a three manifold $Y$ is a quadruple $(\Sigma,\bm\alpha,\bm\beta,w),$ where $\Sigma$ is an oriented surface of genus $g$, $\bm\alpha =\{ \alpha_1,\cdots, \alpha_g\}$ is a set of disjoint simple close curve on $\Sigma$ that indicates the attaching circles for one-handles of $Y$, and similarly $\bm\beta =\{\beta_1,\cdots, \beta_g\}$ indicates the  attaching circles for $2$-handles of $Y$.  Then 
 \[
 z\in \Sigma -\alpha_1 -\cdots -\alpha_g -\beta_1 -\cdots -\beta_g
 \]
 is a choice of reference point. Together $(\Sigma,\bm\alpha,\bm\beta,w)$ enables the construction of a suitable variant of Lagrangian Floer homology in the $g$--fold symmetric product of $\Sigma$. Define
 \[
 \T_\alpha=\alpha_1 \times\cdots \times\alpha_g \subset \text{Sym}^g(\Sigma) \quad \text{ and  }  \quad T_\beta=\beta_1 \times\cdots \times\beta_g \subset \text{Sym}^g(\Sigma). 
 \]
 The complex $\CFi(\Sigma,\bm\alpha,\bm\beta,w)$ is freely generated over $\F$ by generators $[\x,i]$, where $\x$ is an intersection point of  $\T_\alpha$ and $\T_\beta$ and $i$ is any integer. The differential is given by
 \[
 \partial([\x,i]) = \sum_{\y \in \T_\alpha \cap \T_\beta} \sum_{\substack{ \phi \in \pi_2(\x,\y) \\ \mu(\phi)=1}} \# \widehat\MM(\phi) \, [\y, i-n_z(\phi)],
 \]
where $\pi_2(\x,\y)$ denotes the space of homotopy class of holomorphic disks from $\x$ to $\y$, $\mu(\phi)$ denotes its Maslov index, $\widehat\MM(\phi)$ denotes the moduli space of pseudo-holomorphic representatives of $\phi$ quotient by $\R$, and $n_w(\phi)$ counts the algebraic intersection number of $\phi$ with $\{z\}\times \text{Sym}^{g-1}(\Sigma)$. There is an $U$--action reflected on the integer component, by the relation $U\cdot[\x,i]=[\x,i-1]$. The complex $\CFi(\Sigma,\bm\alpha,\bm\beta,z)$ has underlying structure an $\F[U]$ module. Define the subcomplex generated by $[\x,i]$ with $i<0$ to be $\CFm(\Sigma,\bm\alpha,\bm\beta,z)$,  the quotient of it $\CFp(\Sigma,\bm\alpha,\bm\beta,z)$ and the sub-quotient complex generated by $[\x,i]$ with $i=0$ as $\CFa(\Sigma,\bm\alpha,\bm\beta,z)$. For $t\in \N$, the "truncated complex" $\CF^t(\Sigma,\bm\alpha,\bm\beta,z)$ is defined to be the sub-quotient complex generated by $[\x,i]$ with $0\leq i \leq t$; or equivalently, the kernel of action $U^{t+1}$ on $\CFp(\Sigma,\bm\alpha,\bm\beta,z)$. Note that  $\CF^t(\Sigma,\bm\alpha,\bm\beta,z)$ is isomorphic to the quotient 
\[
\CFm(\Sigma, \bm\alpha, \bm\beta, z, \spincs) / (U^{t+1} \cdot \CFm(\Sigma, \bm\alpha, \bm\beta, z, \spincs)),
\]
up to a grading shift; or equivalently $[\x,i]$ with $-t\leq i \leq 0$. 
It is proved in \cite{OSclosed} that the chain homotopy type of the chain complex $\CF^\circ(\Sigma,\bm\alpha,\bm\beta,z)$ is a topological invariant of $Y$, so it makes sense to drop the choice of the Heegaard diagram from the notation and write $\CF^\circ(Y)$.

As a refinement of above construction, the knot Floer complex is a topological invariant for the isotopy class of the knot $K\subset Y$. For simplicity, specialize $Y$ to be a rational homology sphere from now on. First, a \emph{doubly pointed Heegaard diagram} $(\Sigma,\bm\alpha,\bm\beta,w,z)$ is a pointed Heegaard diagram with the addition of a basepoint $z$, which  specifies a knot $K\subset Y$ in the following way: letting $t_\alpha$ be an arc from $z$ to $w$ in $\Sigma \setminus \bm\alpha$, and   $t_\beta$ an arc from $w$ to $z$ in $\Sigma \setminus \bm\beta$, then $K$ is obtained from pushing $t_\alpha$ slightly into the $\alpha$--handlebody and $t_\beta$ slightly into the $\beta$--handlebody.

Ozsv\'ath and Szab\'o defined a map $\spincs_z \co \T_\alpha \cap\T_\beta \to \Spin^c(Y)$, which associate each $\x\in \T_\alpha \cap\T_\beta$ a spin$^c$ structure $\spincs_z(\x) \in \Spin^c(Y)$. The map $\spincs_w$ can also be defined in the same way, with the relation between them given by $\spincs_z(\x) = \spincs_w(\x) + PD[K]$. The Heegaard Floer complex decomposes over $\Spin^c(Y)$ as 
\[
\CF^\circ(Y)=\bigoplus_{\spincs \in \Spin^c(Y)} \CF^\circ(Y,\spincs),
\]
where $\CFp(Y,\spincs)$ is generated by $[\x,i]$ satisfying $\spincs_z(\x)=\spincs.$ Similarly,  Ozsv\'ath and Szab\'o also defined a map $\ul\spincs_{w,z} \co \T_\alpha \cap\T_\beta \to \ul\Spin^c(Y,K)$ with the property that $G_{Y,K} (\ul\spincs_{w,z}(\x)) = \spincs_w(\x)$, where $G_{Y,K} \co \ul\Spin^c(Y,K) \to \Spin^c(Y)$ is the restriction map defined in Section \ref{ssec: statement}. The Alexander grading of $\x$ is defined as
\begin{align}
\label{eq: alex-def}
\AlNorm_{w,z}(\x) &= \AlNorm_{Y,K}(\ul\spincs_{w,z}(\x)),
\end{align}
where the Alexander grading of a relative spin$^c$ structure is given by (\ref{eq: spinc-alex}). Suppose generators $\x$ and $\y$ satisfy $\spincs_z(\x) = \spincs_z(\y)$, for any disk $\phi \in \pi_2(\x,\y)$, then we have
\begin{equation} \label{eq: rel-alex}
\AlNorm_{w,z}(\x) - \AlNorm_{w,z}(\y) = n_z(\phi) - n_w(\phi).
\end{equation}
In particular, the difference of the Alexander grading is an integer. Thus the Alexander grading $\AlNorm_{w,z}(\x)$ of the generators in the same spin$^c$ class $\spincs_z(\x)=\spincs$ belongs to the same coset of $\Q/\Z$; denote it $\AlNorm_{w,z}(\spincs)$.    For each $\spincs \in \Spin^c(Y)$, the knot Floer complex $\CFKi(\Sigma, \bm\alpha, \bm\beta, w, z, \spincs)$ is freely generated over $\F$ by all $[\x, i, j]$, where $\spincs_z(\x) = \spincs$, $i \in \Z$, and $j-i = \AlNorm_{w,z}(\x)$. The $U$--action is given by 
$U \cdot [\x,i,j] = [\x,i-1,j-1]$, and the differential is given by
\begin{equation} \label{eq: CFKi-diff}
\partial [\x,i,j] = \sum_{\y \in \T_\alpha \cap \T_\beta} \sum_{\substack{\phi \in \pi_2(\x,\y) \\ \mu(\phi)=1}} \# \widehat{\MM}(\phi) [\y, i-n_w(\phi), j-n_z(\phi)].
\end{equation}
Note that the $j$ coordinate of any generator is in the coset specified by $\AlNorm_{Y,K}(\spincs)$. 

There is a forgetful map $\Omega_z \co \CFKi(\Sigma, \bm\alpha, \bm\beta, w, z, \spincs) \to \CFKi(\Sigma, \bm\alpha, \bm\beta, z,  \spincs)$ given by $\Omega_z([\x,i,j]) = [\x,i]$. Under this forgetful map, one can also view the $j$ coordinate as a second filtration on $\CFKi(\Sigma, \bm\alpha, \bm\beta, z,  \spincs)$, which we call the \emph{Alexander filtration}. Define $\AlNorm_{w,z}([\x,i]) = \AlNorm_{w,z}(\x) + i.$  By equipping different flavors of $\CF^\circ$ with this filtration, the corresponding  doubly-filtered chain complexes are denoted by $\CFK^-,\CFK^+, \CFKa$ and $\CFK^t$. Similar to the Heegaard Floer complex, the filtered chain homotopy type of $\CFK^\circ$ is a topological invariant of the isotopy class of the  knot $K\subset Y$, and at times we drop the Heegaard diagram infomation and simply write $\CFK^\circ(Y,K)$.

The difference between two distinct $\Q$--valued Maslov gradings is given by (see for example \cite{zemkegrading})
\[
\absgr_w(\x) - \absgr_z(\x) = 2\AlNorm_{w,z}(\x). 
\]
We stick to the Maslov grading specified by the basepoint of Heegaard Floer complex throughout the paper, and drop the index when the context is clear.

For each $s\in \AlNorm_{w,z}(\spincs),$   there is a map
\[
\Psi^\infty_{\spincs,s}\co \CFKi(\Sigma, \bm\alpha, \bm\beta, w, z, \spincs) \longrightarrow \CFKi(\Sigma, \bm\alpha, \bm\beta, w, z, \spincs+ \PD[K])
\]
that by the work of Juh\'asz–Thurston–Zemke in \cite{mappingclassheegaard} and the work of Zemke in \cite{graph}, is independent of any choice up to homotopy equivalence for any knot in a rational homology sphere. By \cite[Lemma 2.16]{HL}, the map $\Psi^\infty_{\spincs, s}$ is a filtered homotopy equivalence with respect to the $j$ filtration on the domain and the $i$ filtration on the range, in the sense that, for any $t \in \AlNorm_{Y,K}(\spincs)$, $\Psi^\infty_{\spincs, s}$  restricts to a homotopy equivalence from the $j \le t$ subcomplex of $\CFKi(\Sigma, \bm\alpha, \bm\beta, w, z, \spincs)$ to the $i \le t-s$ subcomplex of $\CFKi(\Sigma, \bm\alpha, \bm\beta, w, z, \spincs + \PD[K])$.

It is enough to know the map  $\Psi^\infty_{\spincs, s}$ for a singular value of $s$, since for other values of $s$, the maps $\Psi^\infty_{\spincs, s}$  are related by:
\[
\Psi^\infty_{\spincs, s+1} = U \circ \Psi^\infty_{\spincs, s} = \Psi^\infty_{\spincs, s} \circ U.
\]

The pair $(\spincs,s)$ determines and is determined by a relative spin$^c$ structure $\xi \in \ul\Spin^c(Y,K),$ thus we can  denote  the map by $\Psi^\infty_{\xi}.$ This is the ``flip map" used in the mapping cone.

In general, the map $\Psi^\infty_{\xi}$ is difficult to determine from the definition. However, for any null-homologous knot in an $L$--space, by \cite[Lemma 2.18]{HL}, $\Psi^\infty_{\xi}$ is filtered homotopy equivalent to the reflection map that exchanges $i$ and $j$.

At the end of this section, we recall two technical lemmas from \cite{HL} for the reader's convenience.  The first deal with the relation between $\CF^t$ and $\CFm$ complexes, and the second is the filtered version of the exact triangle detection lemma.

\begin{lemma}[Lemma 2.7 in \cite{HL}] \label{lemma: cft}
Let $\CFm(Y_1)$ and  $\CFm(Y_2)$ be Heegaard chain complexes of three manifolds equipped with a second filtration. Suppose for all $t\geq 0$, the complexes $\CF^t(Y_1)$ and $\CF^t(Y_2)$ are $\F[U]$--equivariantly doubly-filtered quasi-isomorphic, then $\CFm(Y_1)$ and  $\CFm(Y_2)$ are $\F[U]$--equivariantly doubly-filtered  quasi-isomorphic.
\end{lemma}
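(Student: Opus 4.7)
The plan is to deduce the quasi-isomorphism of $\CFm(Y_1)$ and $\CFm(Y_2)$ from the quasi-isomorphisms of their truncations $\CF^t$ by assembling a compatible inverse system and passing to the inverse limit.

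First, I would recall that for a rational homology sphere (and after fixing a relevant $\spincs$ structure), the complex $\CFm(Y)$ is a finitely generated chain complex of $\F[U]$-modules and is therefore $U$-adically complete. Under the identification of $\CF^t(Y)$ with $\CFm(Y)/U^{t+1}\CFm(Y)$ (up to a grading shift), the canonical quotient maps give surjective transition maps $\CF^{t+1}(Y) \to \CF^t(Y)$, and one has $\CFm(Y) \cong \invlim_t \CF^t(Y)$ as doubly-filtered $\F[U]$-complexes. This is the mechanism by which the inverse limit will reconstruct $\CFm$ from its finite-order truncations.

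Next, given the family of doubly-filtered $\F[U]$-equivariant quasi-isomorphisms $\phi_t \colon \CF^t(Y_1) \to \CF^t(Y_2)$ guaranteed by the hypothesis, I would inductively replace them with a homotopy-coherent family $\phi'_t$ satisfying $\pi_t \circ \phi'_{t+1} = \phi'_t \circ \pi_t$, where $\pi_t$ denotes the canonical projection. At the inductive step, the discrepancy $\phi_t \circ \pi_t - \pi_t \circ \phi_{t+1}$ is a doubly-filtered $\F[U]$-equivariant chain map whose induced map on homology agrees with the common map $H_*(\phi_t) \circ H_*(\pi_t)$ up to the quasi-isomorphism data, and hence can be killed by a chain homotopy respecting both filtrations (after replacing the complexes by their homotopy equivalent free $\F[U]$-models and using the freeness to lift filtered homotopies). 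Adjusting $\phi_{t+1}$ by this homotopy produces a strictly compatible $\phi'_{t+1}$.

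Taking the inverse limit $\Phi \coloneqq \invlim \phi'_t$ produces the desired doubly-filtered $\F[U]$-equivariant chain map $\CFm(Y_1) \to \CFm(Y_2)$. To check it is a quasi-isomorphism, I would apply the Milnor $\varprojlim{}^1$ sequence: since the transition maps in both inverse systems are surjective, the Mittag-Leffler condition holds and the $\varprojlim{}^1$ terms vanish, so the isomorphisms $H_*(\phi'_t)$ assemble to an isomorphism $H_*(\Phi)$. The double-filtered structure is automatic from the construction, since each $\phi'_t$ respects the filtrations.

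The main obstacle is the inductive compatibility step: constructing homotopies between $\phi_t \circ \pi_t$ and $\pi_t \circ \phi_{t+1}$ that respect both filtrations simultaneously while remaining $\F[U]$-equivariant. This can be handled by working filtration-level by filtration-level and using the fact that each individual $\phi_t$ is already doubly-filtered, but it is the step where care is needed to avoid collapsing filtration information in the limit.
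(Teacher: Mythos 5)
The key gap is the inductive compatibility step, and it is not a technicality but essentially the entire content of the lemma. You are given, for each $t$, some filtered quasi-isomorphism $\phi_t \colon \CF^t(Y_1) \to \CF^t(Y_2)$, but these are chosen independently, so there is no reason for $\phi_t \circ \pi_t$ and $\pi_t \circ \phi_{t+1}$ to agree even on homology, let alone be chain homotopic. For instance, take $\CF^t(Y_1) = \CF^t(Y_2)$ for all $t$ and let $\phi_{t+1}$ differ from $\phi_t$ by a nontrivial filtered automorphism; then the discrepancy $\phi_t\pi_t - \pi_t\phi_{t+1}$ induces that automorphism on the image of $H_*(\pi_t)$ and is not null-homotopic. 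The assertion that the discrepancy "can be killed by a chain homotopy" because it "agrees \dots\ up to the quasi-isomorphism data" is circular: the existence of a strictly compatible family $\{\phi'_t\}$ of doubly-filtered quasi-isomorphisms is exactly equivalent to what the lemma asserts (any quasi-isomorphism $\CFm(Y_1) \to \CFm(Y_2)$ produces such a family by reduction mod $U^{t+1}$, and conversely a compatible family assembles), so you have restated the conclusion rather than derived it. What is needed is a concrete mechanism that does not presume compatibility of the given $\phi_t$'s --- for example, exploiting the finite generation of $\CFm$ over $\F[U]$ to show that a doubly-filtered quasi-isomorphism at a single sufficiently large $t$ already determines, and lifts to, one on $\CFm$.

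Two secondary issues. The inverse limit $\invlim_t \CF^t$ is the $U$-adic completion of $\CFm$, not $\CFm$ itself (the coefficients become $\F[[U]]$), so even with a compatible family the limit map lands between completions and an extra descent argument is needed to recover a statement about $\CFm$. Moreover, the Mittag--Leffler condition in the Milnor exact sequence must be verified for the inverse system $\{H_*(\CF^t)\}$, not for the complexes themselves: surjectivity of the chain-level projections $\pi_t$ is not what controls the derived limit term. (This does hold here because the homology groups are finite-dimensional, but the justification you give points at the wrong objects.) The remark about passing to "free $\F[U]$-models" of $\CF^t$ is also off, since each $\CF^t$ is entirely $U^{t+1}$-torsion and admits no free $\F[U]$-model. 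For what it is worth, the paper itself supplies no proof of this lemma and simply defers to Section~2 of [HL], so your argument cannot be checked against an in-paper proof; but as written it does not close the central gap.
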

This is stated and proved in \cite[Section 2]{HL}.

\begin{lemma} [Lemma 2.9 in \cite{HL}]\label{lemma: mapping-cone}
Let $(C_i, \partial_i)_{i \in \Z}$ be a family of filtered chain complexes (over any ring). Suppose we have filtered maps $f_i \co C_i \to C_{i+1}$ and $h_i \co C_i \to C_{i+2}$ so that:
\begin{enumerate}
    \item $f_i$ is an anti-chain map, i.e., $f_i \circ \partial_i + \partial_{i+1} \circ f = 0$.
    \item $h_i$ is a null-homotopy of $f_{i+1} \circ f_i$, i.e., $f_{i+1} \circ f_i + h_i \circ \partial_i + \partial_{i+2} \circ h_i = 0$;
    \item $h_{i+1} \circ f_i + f_{i+2} \circ h_i$ is a filtered quasi-isomorphism from $C_i$ to $C_{i+3}$.
\end{enumerate}
Then the anti-chain map
\[
\begin{pmatrix} f_i \\ h_i \end{pmatrix}\co C_i \to \Cone(f_{i+1})
\]
is a filtered quasi-isomorphism (and hence a filtered homotopy equivalence when working over a field).
\end{lemma}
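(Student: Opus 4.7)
My plan is to show that $\Phi_i := (f_i, h_i)^T \co C_i \to \Cone(f_{i+1})$ is a filtered anti-chain map whose mapping cone is filtered acyclic; since we are working over the field $\F$, this is equivalent to the desired filtered quasi-isomorphism.

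First, I would directly verify the anti-chain property. The differential on $\Cone(f_{i+1}) = C_{i+1} \oplus C_{i+2}$ is $\partial_{i+1} + \partial_{i+2} + f_{i+1}$, so the two coordinates of $\partial_{\Cone(f_{i+1})}\Phi_i + \Phi_i \partial_i$ are $\partial_{i+1} f_i + f_i \partial_i$ and $f_{i+1} f_i + \partial_{i+2} h_i + h_i \partial_i$, which vanish by hypotheses (1) and (2) respectively. Filteredness of $\Phi_i$ with respect to both $\II$ and $\JJ$ is immediate from that of $f_i$ and $h_i$.

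Next, I would realize $\Cone(\Phi_i) = C_i \oplus C_{i+1} \oplus C_{i+2}$ as the total complex of a lower-triangular $3\times 3$ differential whose diagonal is $(\partial_i, \partial_{i+1}, \partial_{i+2})$ and whose off-diagonal entries are $f_i$, $f_{i+1}$, $h_i$. By regrouping the summands, this same total complex is also $\Cone(\Psi_i)$, where $\Psi_i := (h_i, f_{i+1}) \co \Cone(f_i) \to C_{i+2}$. Hence $\Phi_i$ and $\Psi_i$ have identical cones, so one is a filtered quasi-iso iff the other is. A direct computation then shows that the composition $\Psi_{i+1} \circ \Phi_i \co C_i \to C_{i+3}$ equals $h_{i+1} f_i + f_{i+2} h_i$, which by hypothesis (3) is a filtered quasi-iso. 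The standard distinguished triangle associated to a composition,
\[
\Cone(\Phi_i) \longrightarrow \Cone(\Psi_{i+1} \circ \Phi_i) \longrightarrow \Cone(\Psi_{i+1}) \xrightarrow{[1]},
\]
then yields a filtered homotopy equivalence $\Cone(\Phi_i) \simeq \Cone(\Psi_{i+1})[-1]$. Applying the regrouping identity at index $i+1$ gives $\Cone(\Psi_{i+1}) = \Cone(\Phi_{i+1})$, so we obtain $\Cone(\Phi_i) \simeq \Cone(\Phi_{i+1})[-1]$ in the filtered homotopy category.

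The main obstacle is upgrading this shift-equivalence across indices into outright filtered acyclicity of a single cone. I would close the argument by constructing an explicit filtered homotopy inverse: letting $\rho$ be a filtered homotopy inverse of $\Psi_{i+1} \circ \Phi_i$, the composite $\rho \circ \Psi_{i+1} \co \Cone(f_{i+1}) \to C_i$ is a filtered left homotopy inverse to $\Phi_i$; a symmetric construction using hypothesis (3) at index $i-1$ supplies a right homotopy inverse. Gluing these coherently with respect to both filtrations and the $U$-action is the delicate point. To handle the $U$-structure I would pass to the truncated complexes $\CF^t$ and apply Lemma \ref{lemma: cft} to promote the truncated-level homotopy equivalences to an $\F[U]$-equivariant doubly-filtered quasi-isomorphism, completing the proof.
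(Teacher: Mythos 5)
Your setup is correct and matches the standard outline: the anti-chain-map verification is fine, the observation that $\Cone(\Phi_i) = \Cone(\Psi_i)$ (where $\Psi_i = (h_i, f_{i+1})\co \Cone(f_i) \to C_{i+2}$) as total complexes is accurate, and the computation $\Psi_{i+1}\circ\Phi_i = h_{i+1}f_i + f_{i+2}h_i$ is exactly what feeds hypothesis (3) into the argument. The octahedron step yielding $\Cone(\Phi_{i+1}) \simeq \Cone(\Phi_i)[1]$ is also valid, and you correctly note that a shift-equivalence across indices does not by itself force acyclicity. The paper's own proof is a one-line deferral to the proof of \cite[Lemma 4.2]{branched}, decorated with the word ``filtered,'' so you are attempting to reconstruct that argument rather than cite it.

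The genuine gap is in the closing step. You claim that ``a symmetric construction using hypothesis (3) at index $i-1$ supplies a right homotopy inverse'' to $\Phi_i$. But hypothesis (3) at index $i-1$ says $\Psi_i\circ\Phi_{i-1}\co C_{i-1} \to C_{i+2}$ is a filtered quasi-isomorphism; letting $\sigma$ be a homotopy inverse, the composite $\Phi_{i-1}\circ\sigma\co C_{i+2} \to \Cone(f_i)$ is a right homotopy inverse to $\Psi_i$, \emph{not} to $\Phi_i$ --- the map $\Phi_i$ goes $C_i \to \Cone(f_{i+1})$, with an entirely different source and target. From your left inverse $\rho\circ\Psi_{i+1}$ you get injectivity of $(\Phi_i)_*$ (equivalently, surjectivity of $(\Psi_{i+1})_*$); from the shifted hypothesis you get surjectivity of $(\Psi_i)_*$ --- but neither, alone or together with $\Cone(\Phi_i) = \Cone(\Psi_i)$, immediately delivers surjectivity of $(\Phi_i)_*$, which is the crux of the argument. (If you try to show $\Phi_i\circ\rho\circ\Psi_{i+1}\simeq \mathrm{id}$, you find that $\Psi_{i+1}$ applied to both sides gives a homotopy, but $\Psi_{i+1}$ is only known to be surjective, not injective, up to homotopy, so you cannot cancel.) The original Ozsv\'ath--Szab\'o proof handles this with a more delicate diagram chase (or iterated application of the injectivity step at shifted indices); that extra work is exactly what your write-up elides. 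Finally, the appeal to Lemma~\ref{lemma: cft} is a non sequitur: that lemma concerns promoting $\F[U]$-equivariant quasi-isomorphisms from the truncated complexes $\CF^t$ of $3$-manifolds to $\CF^-$, whereas the lemma at hand is a purely algebraic statement about abstract filtered chain complexes over an arbitrary ring, with no $U$-action or Heegaard Floer truncation in sight. That paragraph should be removed.
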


\begin{proof}
This follows from the proof of \cite[Lemma 4.2]{branched}, adding the key word "filtered" when necessary.
\end{proof}

\section{Alexander grading and surgery cobordisms} \label{sec: alex}
In this section we study the cobordisms involved in the surgery exact triangle, mostly through the lens of periodic domains. We then compute the Alexander grading of holomorphic polygons. These computations play a crucial role in the proof of the filtered surgery exact triangle. We also examine the spin$^c$ structure in the cobordisms more carefully.

\subsection{Triply-periodic domains and relative periodic domains.} \label{ssec: domains}
Assume $Y$ is a rational homology sphere, $K\subset Y$ is a knot of rational genus $g$ with order $d>0$ in $H_1(Y;\Z).$ Let $\lambda$ be a nonzero framing for $K$, and $W_\lambda$ the $2$-handle cobordism from $Y$ to $Y_\lambda.$

Ozsv\'ath and Szab\'o explained how a \emph{pointed Heegaard triple} $( \Sigma,\bm\alpha,\bm\beta,\bm\gamma,z)$ gives rise to the cobordism $W_\lambda$ in  \cite[Section 2.2]{fourmanifold}. First let the curve $\gamma_g$ be a $\lambda$--framed longitude that intersects $\beta_g$ at one point and disjoint from all other $\beta$ curves, and  each $\gamma_i$ for $i= 1,\cdots,g-1 $ be a small pushoff of $\beta_i$, intersecting $\beta_i$ at two points. Clearly a pointed Heegaard triple specifies three manifolds $Y_{\alpha\beta}=Y$, $Y_{\alpha \gamma}=Y_\lambda$ and $Y_{\beta \gamma}=\conn^{g-1}(S^1\times S^2)$. These will be the boundary of the cobordism.

Let $U_\alpha$, $U_\beta$ and $U_\gamma$ denote the handlebody corresponding to each set of attaching circles.  Let $\Delta$ denote the  two-simplex, with vertices $v_\alpha$, $v_\beta$, $v_\gamma$ labeled clockwise, and let $e_i$
denote the edge $v_j$ to $v_k$, where $\{i, j, k\} = {\alpha, \beta, \gamma}$.  Then, form the identification
space
\[
X_{\alpha \beta \gamma}= \frac{(\Delta\times \Sigma)\sqcup (e_\alpha \times U_\alpha)\sqcup (e_\beta \times U_\beta) \sqcup (e_\gamma \times U_\gamma) }{ (e_\alpha \times \Sigma) \sim (e_\alpha \times \partial U_\alpha),(e_\beta \times \Sigma) \sim (e_\beta \times \partial U_\beta),(e_\gamma \times \Sigma) \sim (e_\gamma \times \partial U_\gamma)}
.
\]
Following the standard topological argument, one can smooth out the corners
to obtain a smooth, oriented, four-dimensional  manifold we also call $X_{\alpha \beta \gamma}$. Under the natural orientation conventions
implicit in the above description, we have
\[
\partial X_{\alpha \beta \gamma} =  -Y_{\alpha \beta}  \sqcup -Y_{\beta \gamma} \sqcup   Y_{\alpha \gamma}.
\]
One can use three-handles to kill the boundary component $Y_{\beta \gamma}$, which consists of copies of $S^1\times S^2$. As a result, $X_{\alpha \beta \gamma}$ represents the cobordism $W_\lambda.$ Ozsv\'ath and Szab\'o defined also defined a \emph{triply-periodic domain} to be a two-chain on $\Sigma,$ with multiplicity zero at the basepoint $z$ and  whose boundaries consists of multiples of $\alpha, \beta$  and $\gamma$ curves. The triply-periodic domains represent homology classes in $H_2(X_{\alpha \beta \gamma} ;\Z).$ Ozsv\'ath and Szab\'o proved a formula for the evaluation of the first Chern class (see \cite[Section 6.1]{fourmanifold}) using triply-periodic domains and holomorphic triangles. 

In order to obtain a periodic domain that represents the dual knot in the knot surgery, and suitable for certain homological computations, we make some modification to the above constructions.   This is demonstrated in \cite[Section 4]{HL}. First on $( \Sigma,\bm\alpha,\bm\beta,w,z)$, we now require that $\beta_g$ to be a meridian of $K$ that intersects $\alpha_g$ at one point and disjoint from all other $\alpha$ curves. Moreover, there is an arc $t_\alpha$ from $z$ to $w$  intersecting $\beta_g$ in a single point and disjoint from all other $\alpha$ and $\beta$ curves.

Orient the curves $\alpha_g, \beta_g, \gamma_g$ so that $\#(\alpha_g \cap \beta_g) = \#(\gamma_g \cap \beta_g) = \#(t_\alpha \cap \beta_g) = 1 $. Pick the orientation for $\alpha_i, \beta_i$ arbitrarily for $i\in \{1,\cdots,g-1 \}$, and orient $\gamma_i$ parallel to $\beta_i$.

We wind the $\gamma_g$ curve extra $\text{max}\{k,n\}$ times in the \emph{winding region}, which is a local region that contains all the basepoints, add a second basepoint $z_n$ to the left of the $n$-th winding, and then wind $\gamma_g$ back the same number of times to preserve the framing. The purposes of the windings are such that 
\begin{itemize}
    \item every spin$^c$ structure is represented by  a generator in the winding region;
\item the $w$ (or equivalently $z$) and $z_n$ basepoints represent the knot $K_{n,\gamma}.$ 
\end{itemize}

 With these modifications in place,  a \emph{relative periodic domain}  is defined to be a two-chain on $\Sigma,$ with multiplicity zero at the basepoint $z$, and whose boundary consists of multiples of $\alpha, \beta$ curves and a longitude for the knot (specified by $w$ and $z$). More details are discussed in \cite{periodic}.

There is a triply-periodic domain $P_\gamma$ (see Figure \ref{fig: twistgreen}), with $n_z(P_\gamma) = 0$, $n_w(P_\gamma) = k$, and
\begin{equation}\label{eq: Pgamma-boundary}
    \partial P_\gamma = -d\alpha_g  -k \beta_g + d\gamma_g + \sum_{i=1}^{g-1} (a_i \alpha_i + b_i \beta_i)
\end{equation}
for some integers $a_i, b_i$. We add extra reference points $z_j$ for $j=1,\cdots, n-1$ such that each $z_j$ is in the region on the left of  the $j$--th winding of the $\gamma_g$ curve to the left of $\beta_g.$

 This periodic domain can be viewed as either:
 \begin{enumerate}
     \item  a triply-periodic domain $(\Sigma,\bm\alpha,\bm\beta,\bm\gamma, z)$ representing the class of a capped-off Seifert surface in $H_2(W_\lambda)$ (note the multiplicity of $\gamma_g$);
     \item  a relative periodic domain $(\Sigma,\bm\alpha,\bm\beta,w,z)$ representing $\lambda$--framed longitude of $K$ in $Y$;
       \item  a relative periodic domain $(\Sigma,\bm\alpha,\bm\gamma,w,z_1)$ representing the dual knot of $K$ in $Y_\lambda$ (which we would not use in this paper).
 \end{enumerate}

As of now,  $P_\gamma$ is not  a relative periodic domain $(\Sigma,\bm\alpha,\bm\gamma,w,z_n)$ for the knot $K_{n,\lambda}$ (since the boundary consists of merely copies of the meridian),  but we will demonstrate the modification necessary to achieve this in Section \ref{ssec: spinc}.

 \begin{remark}
 Note that our $P_\gamma$ is slightly different from the setup in \cite{HL}. First, their second basepoint is to the right of $\beta_g$ curve. But in order to represent $(n,1)$--cable of the left-handed meridian, we have to choose our basepoint $z_n$ to the left of $\beta_g$. By setting $n=1$, the argument in this paper will recover their results, despite a different choice of second basepoint.
 
 Our winding is also chosen to the left of $\beta_g$, different from the choice in \cite{HL}. Loosely speaking, this is in order to capture the information of the extra windings coming from $K_{n,\lambda}$. 
 \end{remark}

\begin{figure}

\labellist

 \pinlabel $w$ at 295 70
 \pinlabel $z$ at 255 70
  \pinlabel $z_1$ at 231 140
 
 \pinlabel $z_n$ at 151 140

\pinlabel $\Theta_{\beta\gamma}$ at 285 123

 \pinlabel {{\color{red} $\alpha_g$}} [l] at 390 48
 \pinlabel {{\color{blue} $\beta_g$}} [r] at 267 173
 \pinlabel {{\color{OliveGreen} $\gamma_g$}} [l] at 388 120

 {\tiny
  \pinlabel $(n+1)d$  at 141 52
  }

    \pinlabel $k+d$ at 325 40
   \pinlabel $k+d$ at 327 145
  \pinlabel $d$ at 245 40
   \pinlabel $\cdots$ at 195 40
   \pinlabel $nd$ at 137 32
  \pinlabel $nd$  at 141 102
 \pinlabel $\cdots$  at 177 102
 \pinlabel $d$  at 209 102
  \pinlabel $0$  at 249 100
  \pinlabel $k$  at 325 100

\endlabellist

\includegraphics{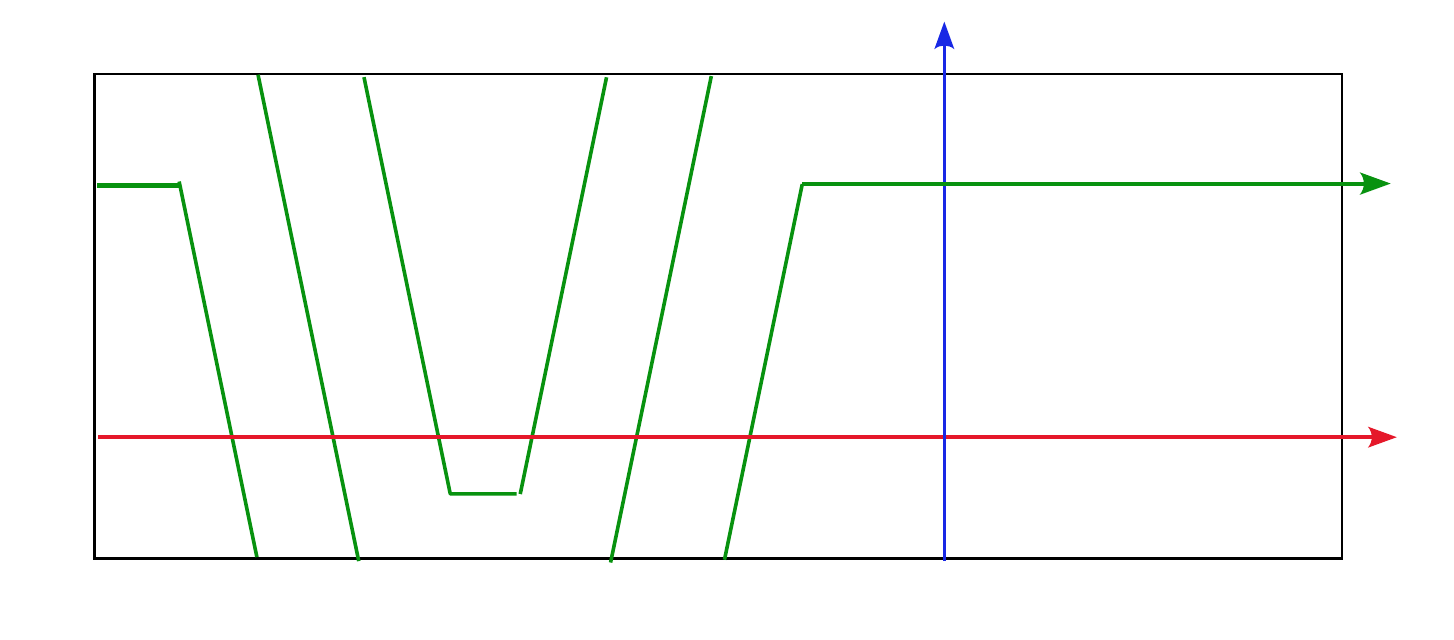}
\caption{The winding region of diagram $P_{\gamma}$, where $n=3$.}
\label{fig: twistgreen}
\end{figure}

Similarly, one can define a relative periodic domain $P_{\delta}$ for large surgery.
 For an integer $b$ such that $0\leq b \leq m-n$, let $\bm\delta^{m,b} = (\delta_1^{m,b}, \dots, \delta_g^{m,b})$ be a tuple of curves obtained from $\bm\gamma$ as follows: Let $\delta_g$ be a parallel pushoff of $\gamma_g$ by performing $m$ left-handed Dehn twists parallel to $\beta_g$, where $b$ (resp. $m-b$) of these twists are performed in the winding region on the same side of $\beta_g$ as $w$ (resp. $z$).  For $i=1, \dots, g-1$, $\delta_i^{m,b}$ is simply a small pushoff of $\gamma_i$ meeting it in two points.  The pointed Heegaard triple $(\Sigma, \bm\alpha, \bm\beta, \bm\delta^{m,b}, z)$ represents the cobordism $W_{\lambda + m\mu}$ from  $Y$ to $Y_{\lambda + m\mu}$. When $m$ and $b$ are understood from the context, we omit the superscripts from the $\delta$ curves.

The relative periodic domain $P_\delta$  shown in Figure \ref{fig: twistpurple} satisfies $n_z(P_\delta) = 0$, $n_w(P_\delta) = k+md$, $n_{z_j}(P_\delta) = jd$ for $j=1,\cdots, n$ and
\begin{align}
\label{eq: Pdelta-boundary}
\partial P_\delta &= -d\alpha_g  -(k+md) \beta_g + d\delta_g + \sum_{i=1}^{g-1} (a_i \alpha_i + b_i \delta_i).
\end{align}
for some integers $a_i, b_i$.

 This periodic domain can be viewed as either:
 \begin{enumerate}
     \item  a triply-periodic domain $(\Sigma,\bm\alpha,\bm\beta,\bm\delta, z)$ representing the class of a capped-off Seifert surface in $H_2(W_{\lambda+m\mu})$ (note the multiplicity of $\delta_g$);
     \item  a relative periodic domain $(\Sigma,\bm\alpha,\bm\beta,w,z)$ representing a ${\lambda+m\mu}$--framed longitude of $K$ in $Y$;
          \item  a relative periodic domain $(\Sigma,\bm\alpha,\bm\delta,w,z_1)$ representing the dual knot  of $K$ in $Y_{\lambda+m\mu}$ (which we would not use in this paper).
 \end{enumerate}
Similar as before,  $P_\delta$ also is not  a relative periodic domain $(\Sigma,\bm\alpha,\bm\delta,w,z_n)$ for the knot $K_{n,\lambda+m\mu}$ yet, while the modification necessary to achieve this is demonstrated in Section \ref{ssec: spinc}.

 For $j=1, \dots, g-1$, there are small periodic domains $S_{\beta\gamma}^j$ and $S_{\gamma\delta}^j$ with $\partial S_{\beta\gamma}^j = \beta_j - \gamma_j$ and $\partial S_{\gamma\delta}^j = \gamma_j - \delta_j$, supported in a small neighborhood of each pair of curves. We will refer to these as \emph{thin domains}.

Relating the periodic domains $P_\gamma$ to $P_\delta,$ there is a $(\beta,\gamma,\delta)$ triply periodic domain $Q$ with
\[
\partial Q = m\beta_g + \gamma_g - \delta_g ,
\]
so that
\[
P_\gamma - P_\delta = dQ + \text{thin domains}.
\]

 Furthermore, between the basepoints $z$ and $z_n$, we add some extra reference points  $u_j$  for $j=1,\cdots, n$, where  each   $u_j$ is in the region   on the right of the $j$--th winding of $\gamma_g$ to the left of $\beta_g$, as shown in Figure \ref{fig: Q}. These reference points are needed to compute the Alexander grading shifts in certain cobordisms.  Note that for $j=1,\cdots, n$, $z_j$ and $u_j$ are only separated by the curve  $\gamma_g$ and  $z_{j-1}$ and $u_j$ are only separated by the curve $\delta_g$ (considering $z$ to be $z_0$ here).

Finally, define the $(\alpha, \gamma, \delta)$ periodic domain
\[
R = \tfrac{m}{\nu} P_\gamma + \tfrac{k}{\nu} Q
\]
where $\nu = \gcd(k,m)$; it has
\[
\partial R = -\frac{md}{\nu} \alpha_g  +  \frac{k+md}{\nu}\gamma_g  - \frac{k}{\nu} \delta_g + \frac{m}{\nu} \sum_{i=1}^{g-1} (a_i \alpha_i + b_i \gamma_i).
\]

\begin{figure}
\labellist
 \pinlabel $w$ at 210 70
 \pinlabel $z$ at 170 70
 \pinlabel $z_n$ at 66 120

 \pinlabel {{\color{red} $\alpha_g$}} [l] at 345 35
 \pinlabel {{\color{blue} $\beta_g$}} [r] at 178 160
 \pinlabel {{\color{purple} $\delta_g$}} [l] at 345 70
 
 \tiny
 \pinlabel $\Theta_{\delta\beta}$ [br] at 204 80
 
  \pinlabel $k+md$ at 320 70
    \pinlabel $-bd$ at 320 60
 
\pinlabel $(m-b+1)d$ at 30 130
\pinlabel $k+md$ at 200 120
\pinlabel $+d$ at 200 110
\pinlabel $k+md$ at 234 120
\pinlabel $k+md$ at 266 120
\pinlabel $-d$ at 266 110
\pinlabel $k+md$ at 310 120
\pinlabel $-(b-1)d$ at 310 110

\pinlabel $k+md$ at 210 35
\pinlabel $+d$ at 210 25
\pinlabel $k+md$ at 250 35
\pinlabel $...$ at 280 35

\pinlabel $k+md$ at 317 35
\pinlabel $-(b-1)d$ at 317 25

\pinlabel $nd$ at 55 72
 \pinlabel $nd$ at 85 32
 \pinlabel $...$ at 90 72
 \pinlabel $...$ at 120 32
 \pinlabel $d$ at 150 120
  \pinlabel $d$ at 155 32
 \pinlabel $0$ at 155 62

\endlabellist
\includegraphics{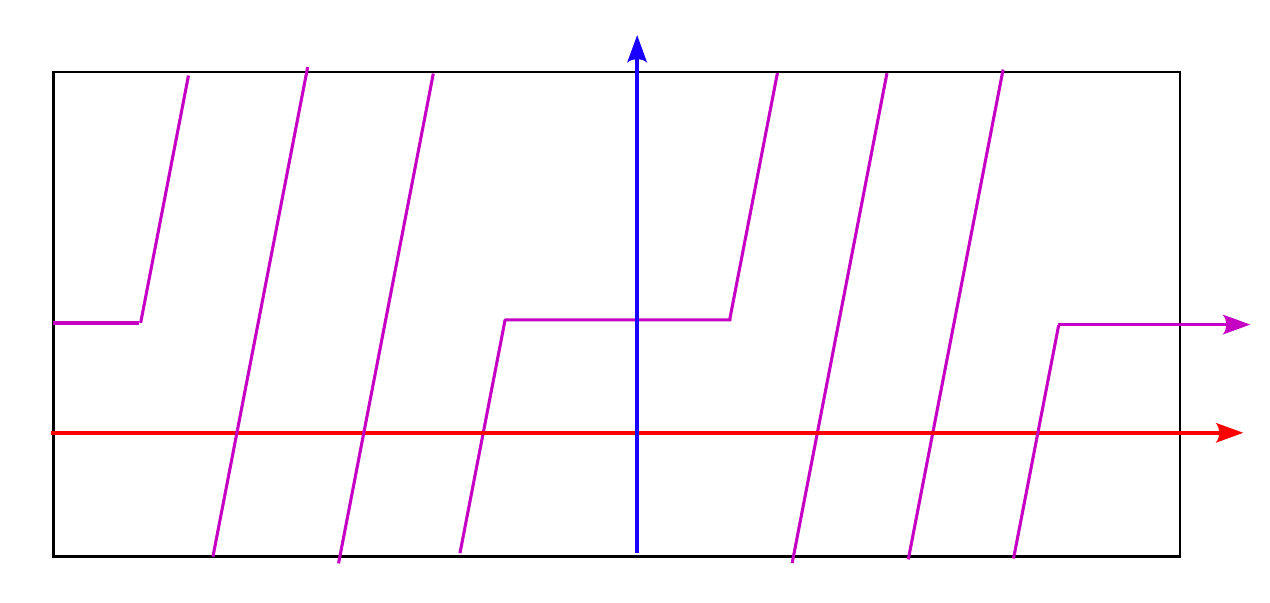}
\caption{The winding region of diagram $P_{\delta}$, in the case where $m=6$ and $b=3$.}
\label{fig: twistpurple}
\end{figure}

\begin{figure}

\labellist

 \pinlabel $w$ at 285 70
 \pinlabel $z$ at 260 70
 {\scriptsize
  \pinlabel $z_3$ at 161 145
 \pinlabel $u_3$ at 178 145
 \pinlabel $z_2$ at 194 145
 \pinlabel $u_2$ at 209 145
 
 \pinlabel $z_1$ at 241 145
 \pinlabel $u_1$ at 255 120
 }

 \pinlabel {{\color{red} $\alpha_g$}} [l] at 390 48
 \pinlabel {{\color{purple} $\delta_g$}} [l] at 389 74
 \pinlabel {{\color{blue} $\beta_g$}} [r] at 267 173
 \pinlabel {{\color{OliveGreen} $\gamma_g$}} [l] at 388 120

 {\tiny

 \pinlabel $\Theta_{\delta\beta}$ at 265 93
\pinlabel $\Theta_{\gamma\delta}$ at 303 136
\pinlabel $\Theta_{\beta\gamma}$ at 263 136

\pinlabel $0$ at 213 80
    \pinlabel $-1$ at 196 80
    \pinlabel $0$ at 182 80
    \pinlabel $-1$ at 165 80
    \pinlabel $0$ at 140 72

 \pinlabel $-1$ at 109 72
  \pinlabel $-2$ at 80 72
    \pinlabel $-3$ at 45 72

 \pinlabel $-m-1$ at 285 116
  \pinlabel $-m$ at 300 100
   \pinlabel $-m+1$ at 320 88
   \pinlabel $-m+2$ at 360 100
   \pinlabel $-m+3$ at 370 72
   
     \pinlabel $-m$ at 285 145
   \pinlabel $\cdots$ at 324 145

   \pinlabel $-m+3$ at 366 145

 \pinlabel $-1$ at 135 116
  \pinlabel $-2$ at 100 116
    \pinlabel $-3$ at 71 116
        \pinlabel $-4$ at 41 115

   \pinlabel $0$ at 246 73
  \pinlabel $0$ at 245 40
    \pinlabel $-1$ at 220 40
    \pinlabel $0$ at 203 40
   \pinlabel $\cdots$ at 182 40

  \pinlabel $-1$  at 249 105

 }

\endlabellist

\includegraphics{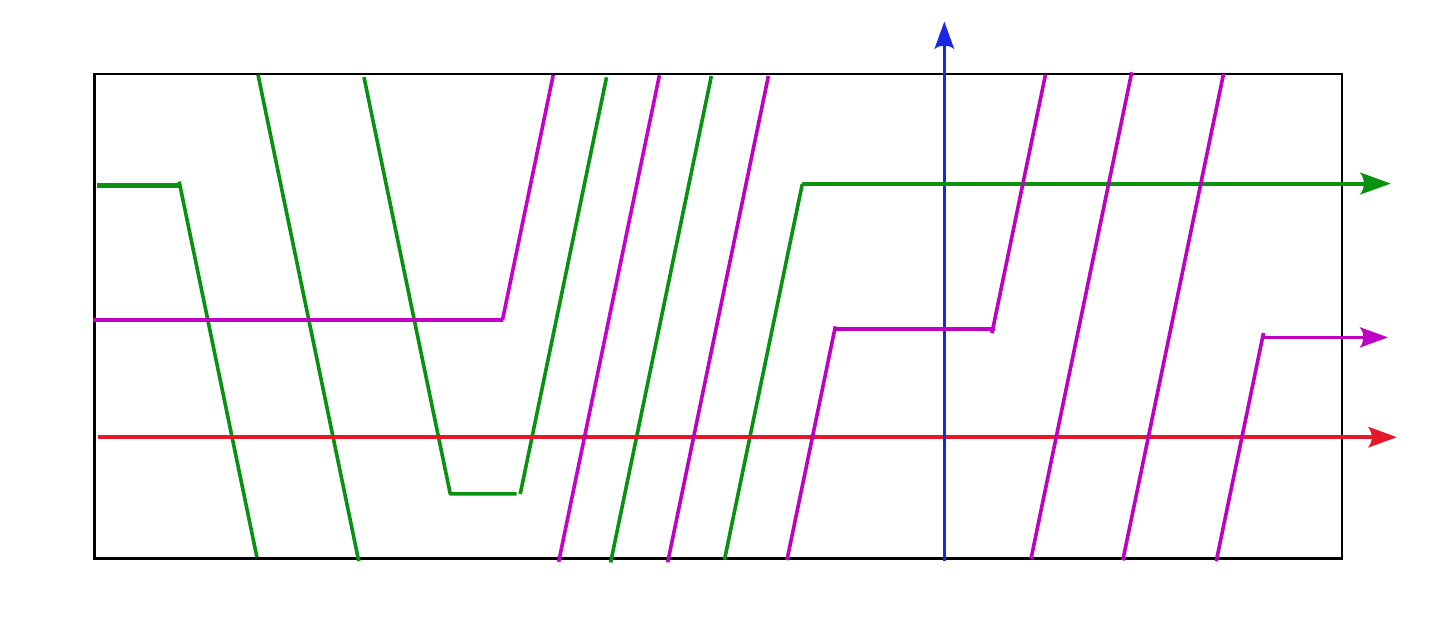}
\caption{The relative periodic domain $Q$, where $m=6$ and $n=3$. Here $\alpha_g$ has multiplicity zero in $\partial Q$.}
\label{fig: Q}
\end{figure}

The multiplicities of the periodic domains at the various basepoints are as follows:
\begin{center}
\begin{tabular} {|c|c|c|c|c|} \hline
& $n_z$ & $n_w$ & $n_{z_n}$  \\ \hline
$P_\gamma$ & $0$ & $k$ & $nd$  \\
$P_\delta$ & $0$ & $k+md$ & $nd$  \\
$Q$ & $0$ & $-m$ & $0$  \\
$R$ & $0$ & $0$ & $\frac{nmd}{\nu}$  \\ \hline
\end{tabular}
\end{center}

\subsection{Topology of the cobordisms} \label{ssec: cob}
Consider the topology of the cobordisms related to the Heegaard diagram $(\Sigma,
\bm\alpha, \bm\beta, \bm\gamma, \bm\delta)$.

According to the construction demonstrated in \cite[Section 8.1.5]{OSclosed}, we have three separate $4$-manifolds $X_{\alpha\beta\gamma\delta}$, $X_{\alpha\gamma\delta\beta}$, and $X_{\alpha\delta\beta\gamma}$, with:
\begin{align*}
\partial X_{\alpha\beta\gamma\delta} &= {-Y_{\alpha\beta}} \sqcup {-Y_{\beta\gamma}} \sqcup
{-Y_{\gamma\delta}} \sqcup {Y_{\alpha\delta}} \\
\partial X_{\alpha\gamma\delta\beta} &= {-Y_{\alpha\gamma}} \sqcup {-Y_{\gamma\delta}} \sqcup {-Y_{\delta\beta}} \sqcup Y_{\alpha\beta} \\
\partial X_{\alpha\delta\beta\gamma} &= {-Y_{\alpha\delta}} \sqcup {-Y_{\delta\beta}} \sqcup {-Y_{\beta\gamma}}  \sqcup {Y_{\alpha\gamma}}
\end{align*}
These $4$-manifolds each admit a pair of decompositions as follows:
\begin{align}
\label{eq: Xabgd-decomp}
X_{\alpha\beta\gamma\delta} &= X_{\alpha\beta\gamma} \cup_{Y_{\alpha\gamma}} X_{\alpha\gamma\delta} = X_{\alpha\beta\delta} \cup_{Y_{\beta\delta}} X_{\beta\gamma\delta} \\
\label{eq: Xagdb-decomp}
X_{\alpha\gamma\delta\beta} &= X_{\alpha\gamma\delta} \cup_{Y_{\alpha\delta}} X_{\alpha\delta\beta} =
X_{\alpha\gamma\beta} \cup_{Y_{\gamma\beta}} X_{\gamma\delta\beta} \\
\label{eq: Xadbg-decomp}
X_{\alpha\delta\beta\gamma} &= X_{\alpha\delta\beta} \cup_{Y_{\alpha\beta}} X_{\alpha\beta\gamma} =
X_{\alpha\delta\gamma} \cup_{Y_{\delta\gamma}} X_{\delta\beta\gamma},
\end{align}
where the $3$-manifolds in above notations are precisely:
\begin{align*}
Y_{\alpha\beta} &= Y & Y_{\alpha\gamma}&= Y_\lambda(K) & Y_{\alpha\delta} &= Y_{\lambda+m\mu}(K) \\
Y_{\beta\gamma} &= \conn^{g-1}(S^1 \times S^2)  & Y_{\gamma\delta} &= L(m,1) \conn^{g-1}(S^1 \times S^2) & Y_{\delta\beta} &= \conn^{g-1}(S^1 \times S^2) \\
Y_{\gamma\beta} &= -Y_{\beta\gamma} & Y_{\delta\gamma} &= -Y_{\gamma\delta} & Y_{\beta\delta} &= -Y_{\delta\beta}
\end{align*}
Note also that $X_{\alpha\gamma\beta} = -X_{\alpha\beta\gamma}$, and so on.

If we let $\bar X_{\alpha\beta\gamma}$, $\bar X_{\alpha\beta\gamma\delta}$, etc. denote the manifolds obtained by attaching $3$-handles to kill the $S^1 \times S^2$ summands in $Y_{\beta\gamma}$, $Y_{\gamma\delta}$, and $Y_{\delta\beta}$, then we have analogues of \eqref{eq: Xabgd-decomp}, \eqref{eq: Xagdb-decomp}, and \eqref{eq: Xadbg-decomp} for these manifolds as well.

The periodic domains $\{P_\gamma, P_\delta, Q, R\}$ represent homology classes which survive in $H_2(\bar X_{\alpha\beta\gamma\delta})$ and  the following relations hold:
\begin{align*}
[P_\delta] &= [P_\gamma] - d[Q], & [R] = \tfrac{m}{\nu} [P_\gamma] + \tfrac{k}{\nu} [Q].
\end{align*}
Hence, we may also write $[R] = \tfrac{m}{\nu}[P_\delta] + \tfrac{k+md}{\nu}[Q]$. The same relations are also satisfied in $H_2(\bar X_{\alpha\gamma\delta\beta})$ and $H_2(\bar X_{\alpha\delta\beta\gamma})$.

We can obtain the cobordism $W_\lambda$ from $\bar X_{\alpha\beta\gamma}$, simply by gluing a $4$-handle to kill the $S^3$ boundary component left over from $Y_\beta\gamma$. Let $\spincs^0_{\beta\gamma}$ denote the unique torsion spin$^c$ structure on $Y_{\beta\gamma}$. Let $\Theta_{\beta\gamma}$ and $\Theta_{\gamma\beta}$ denote the standard top-dimensional generators for $\CF^{\le0}(\Sigma, \bm\beta, \bm\gamma, z)$ and $\CF^{\le0}(\Sigma, \bm\gamma, \bm\beta, z)$, both of which use the unique intersection point in $\beta_g \cap \gamma_g$ as shown in Figure \ref{fig: twistgreen}.

Similarly, one can close off $\bar X_{\alpha\delta\beta}$ by attaching a $4$-handle to the remaining $S^3$ boundary component of $Y_{\delta\gamma}$. This will give us cobordism $W'_{\lambda+m\mu}$, which is $W_{\lambda+m\mu}$ with the orientation reversed, viewed as a cobordism from $Y_{\lambda+m\mu}(K)$ to $Y$. Define $\spincs^0_{\beta\delta}$, $\Theta_{\beta\delta}$, and $\Theta_{\delta\beta}$ analogously. Both $\Theta_{\beta\delta}$ and $\Theta_{\delta\beta}$ use the unique intersection point in $\beta_g \cap \delta_g$ as shown in Figure \ref{fig: twistpurple}.

Next let $W_{\gamma\beta\delta}$ denote the $4$-manifold obtained from $\bar X_{\gamma\beta\delta}$ by deleting a neighbourhood of an arc connecting $Y_{\beta\gamma}$ and $Y_{\beta\delta}$ (both are $\conn^{g-1}S^1\times S^2$). If we let   $B_m$ be the Euler number $m$ disk bundle over $S^3,$ which has the boundary $L(m,1)$, $W_{\gamma\beta\delta}$ is diffeomorphic to $((\#^{g-1} S^1 \times S^2) \times I) \bconn B_m$, and $Q$ corresponds to the homology class of the zero section in $B_m$. 

Following \cite[Definition 6.3]{rational}, define $\spincs^0_{\gamma\delta}$ to be the unique spin$^c$ structure on $Y_{\gamma\delta}$ that is torsion and has an extension $\spinct$ to $W_{\gamma\beta\delta}$ which satisfies $\gen{c_1(\spinct), [S^2]} = \pm m$. Pair the $m$ intersection points of $\gamma_g \cap \delta_g$ the top-dimensional intersection points of $\gamma_j \cap \delta_j$ ($j=1, \dots, g-1$) to obtain $m$ canonical cycles in $\CF^{\le 0}(\Sigma, \bm\gamma, \bm\delta, z)$, each of which represents a different torsion spin$^c$ structure on $Y_{\gamma\delta}$. Let $\Theta_{\gamma\delta}$ be the generator which uses the point of $\gamma_j \cap \delta_j$ that is adjacent to $w$, as shown in Figure \ref{fig: Q}.

The following is proved by Hedden and Levine:

\begin{lemma}[Lemma 5.2 in \cite{HL}] \label{lemma: Theta-gamma-delta}
The generator $\Theta_{\gamma\delta}$ represents $\spincs^0_{\gamma\delta}$.
\end{lemma}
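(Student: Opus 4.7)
The plan is to establish the lemma in two stages: first show that $\spincs_w(\Theta_{\gamma\delta})$ is torsion, and then identify which of the $m$ torsion spin$^c$ structures on $L(m,1) \conn^{g-1}(S^1 \times S^2)$ it represents by computing the first Chern class of a natural extension to $W_{\gamma\beta\delta}$.

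For the torsion claim, recall $Y_{\gamma\delta} = L(m,1) \conn^{g-1}(S^1 \times S^2)$. Since $H^2(L(m,1);\Z)$ is finite, every spin$^c$ structure on the $L(m,1)$ factor is torsion. On each $S^1 \times S^2$ summand, torsion is equivalent to the spin$^c$ structure being pulled back from the unique torsion class; since $\Theta_{\gamma\delta}$ uses the top-dimensional intersection point in each $\gamma_j \cap \delta_j$ for $j = 1, \dots, g-1$, the standard argument (that the top generators represent the unique torsion spin$^c$ structure on $\conn^{g-1}(S^1\times S^2)$) applies.

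The main work is identifying which torsion structure it is. I would pick a Whitney triangle class $\psi \in \pi_2(\Theta_{\gamma\beta}, \Theta_{\beta\delta}, \Theta_{\gamma\delta})$ whose domain is supported in a small neighborhood of the thin bigon regions around each $(\gamma_j,\beta_j,\delta_j)$-triple together with a small triangle near the triple intersection $\gamma_g \cap \beta_g \cap \delta_g$ in the winding region. This produces an extension $\spincs_w(\psi) \in \Spin^c(X_{\gamma\beta\delta})$ of $\spincs_w(\Theta_{\gamma\delta})$, which then descends to $W_{\gamma\beta\delta}$ after the $3$-handle attachment. Then I would apply the Ozsv\'ath--Szab\'o first Chern class formula for holomorphic triangles (from \cite{fourmanifold}, Section 6.1) to evaluate $\gen{c_1(\spincs_w(\psi)), [Q]}$ using the periodic domain $Q$ whose local multiplicities are displayed in Figure \ref{fig: Q}. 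The inputs are: the Euler measure $\hat\chi(Q)$, the multiplicities $n_w(Q) = -m$ and $n_z(Q) = 0$, and the contribution $\sum_c 2\bar n_c(Q)$ from the corners of $\psi$, where $\bar n_c$ averages the local multiplicities at the four quadrants around $c$.

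With the specific choice of $\Theta_{\gamma\delta}$ at the intersection point adjacent to $w$ (as indicated in Figure \ref{fig: Q}), the local multiplicities of $Q$ around this corner are asymmetric by exactly $1$, while the corners at $\Theta_{\gamma\beta}$ and $\Theta_{\beta\delta}$ sit symmetrically relative to $Q$ (since $\beta_g$ appears with multiplicity $0$ in $\partial Q$). Plugging into the formula gives $\gen{c_1(\spincs_w(\psi)), [Q]} = \pm m$, which by the defining property in \cite[Definition 6.3]{rational} identifies the restriction as $\spincs^0_{\gamma\delta}$. The main obstacle will be the bookkeeping of the corner contributions: one must verify that choosing the point adjacent to $w$ (as opposed to any of the other $m-1$ candidate intersections in $\gamma_g \cap \delta_g$) pins down the correct extremal value $\pm m$, rather than one of the intermediate values $-m+2, -m+4, \dots, m-2$ that would be produced by the other generators.
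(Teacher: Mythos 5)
Your overall strategy is the right one, and in fact it is essentially the same as the one in the source the paper cites (the lemma is imported from Hedden--Levine, and the paper immediately afterwards carries out the analogous small-triangle computation $\gen{c_1(\spincs_z(\tau_0^+)),[Q]}=-m$): first establish torsion on the $S^1\times S^2$ summands via the top generators, then pin down which of the $m$ torsion spin$^c$ structures on the $L(m,1)$ summand it is by evaluating $c_1$ on $[Q]$ via a small holomorphic triangle in the winding region. Your torsion observations in the first stage are fine.

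There is, however, a concrete error in the second stage that invalidates the shortcut you use to conclude $\pm m$. You assert that ``$\beta_g$ appears with multiplicity $0$ in $\partial Q$'' and use this to argue that the quadrant multiplicities at the two corners $\Theta_{\gamma\beta}$ and $\Theta_{\beta\delta}$ are symmetric, so that only the $\Theta_{\gamma\delta}$ corner contributes. But from the construction in Section \ref{ssec: domains} one has $\partial Q = m\beta_g + \gamma_g - \delta_g$, so $\beta_g$ appears with multiplicity $m$, not $0$. The curve with multiplicity zero in $\partial Q$ is $\alpha_g$, as the caption to Figure \ref{fig: Q} says explicitly --- and $\alpha_g$ is irrelevant here, since it is not a curve of the Heegaard triple $(\Sigma,\bm\gamma,\bm\beta,\bm\delta)$ at all. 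Since all three corners of your triangle sit at intersection points of curves ($\beta_g$, $\gamma_g$, $\delta_g$) that each appear with nonzero multiplicity in $\partial Q$, the quadrant multiplicities jump across every curve through every corner, and none of the three corner contributions can be dismissed by symmetry. Your final answer $\pm m$ is the correct value, but the justification does not go through: you do in fact need to carry out the full Ozsv\'ath--Szab\'o first Chern class computation (Euler measure, $\#(\partial Q)$, basepoint term, and the dual spider number collecting all three corners), rather than the reduced corner count you propose. This is precisely the ``bookkeeping'' you defer at the end, but it is not a one-corner calculation as your intermediate claim suggests.
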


Moreover, there is a class $\tau^+_0 \in \pi_2(\Theta_{\beta\gamma}, \Theta_{\gamma\delta}, \Theta_{\beta\delta})$ such that the intersection of its domain with the winding region is the small triangle above $w$ in Figure \ref{fig: Q}. A similar computation as in \cite{HL} yields
\[
\gen{c_1(\spincs_z(\tau_0^+)), [Q]} = -m.
\]

Let $X_*$ be any of the $4$-manifolds defined above (either with three or four subscripts).  For the rest of the paper, we will use $\Spin^c_0(X_*)$ to denote the set of spin$^c$ structures that restricts to $\spincs^0_{\beta\gamma}$ on $Y_{\beta\gamma}$, $\spincs^0_{\gamma\delta}$ on $Y_{\gamma\delta}$, and $\spincs^0_{\delta\beta}$ on $Y_{\delta\beta}$, for whichever applicable. Note that all such spin$^c$ structures extend uniquely to $\bar X_*$.

We take a look at the intersection forms on different $4$-manifolds $X_*$ to finish this subsection. In $X_{\alpha\beta\gamma\delta}$, the classes $[P_\gamma]$, $[P_\delta]$, $[Q]$, and $[R]$ can be represented by surfaces contained in $X_{\alpha\beta\gamma}$, $X_{\alpha\beta\delta}$, $X_{\beta\gamma\delta}$, and $X_{\alpha\gamma\delta}$, respectively. In particular,  the pair $[P_\gamma]$ and $[R]$ can be represented by disjoint surfaces in $X_{\alpha\beta\gamma\delta}$. The same is for the pair $[P_\delta] $and $ [Q]$. Thus 
\begin{equation} \label{eq: abgd-int-form}
[P_\gamma] \cdot [R] = [P_\delta] \cdot [Q] = 0,
\end{equation}
With orientation given by the cobordisms, the other pairs are
\begin{equation} \label{eq: abgd-PD-self-int}
\begin{aligned}
[P_\gamma]^2 &= dk & [P_\delta]^2 &= d(k+md) \\ [Q]^2 &=-m &  [R]^2 &= -\frac{m k (k+md)}{\nu^2} .
\end{aligned}
\end{equation}

In $X_{\alpha\gamma\delta\beta}$, the same classes have different self-intersection numbers due to a change of the orientation, and different pairs are disjoint according to the decomposition given by (\ref{eq: Xagdb-decomp}). In this case,
\begin{gather*}
[R] \cdot [P_\delta] = [P_\gamma]\cdot [Q] = 0\\
\begin{aligned}
[P_\gamma]^2 &= -dk & [P_\delta]^2 &= -d(k+md) \\
[Q]^2 &=-m &  [R]^2 &= -\frac{m k (k+md)}{\nu^2}.
\end{aligned}  
\end{gather*}

The sign of the self intersection $[P_\gamma]^2$ (resp.~$[P_\delta]^2$) is reversed because it is contained in $X_{\alpha\gamma\beta}$ (resp.~$X_{\alpha\delta\beta}$), which is diffeomorphic to $-X_{\alpha\beta\gamma}$ (resp.~$-X_{\alpha\beta\delta}$). This reverse of the orientation is equivalent to turning the cobordism $W_\lambda$ (resp.~$W_{\lambda+m\mu}$) around; denote the resulting cobordism $W'_\lambda(K)$ (resp.~$W'_{\lambda+m\mu}$). Parallel results hold for $X_{\alpha\delta\beta\gamma}$ as well.

\begin{figure}[!ht]

\subfloat[]{
\labellist
\tiny

 \pinlabel $nd$ at 130 120
  \pinlabel $2nd$ at 100 120

 \pinlabel $n(k+md$ at 185 130
  \pinlabel $+d)$ at 185 115

 \pinlabel $nd$ at 155 30
 \pinlabel $2nd$ at 118 30
  \pinlabel $...$ at 78 30

 \pinlabel $0$ at 145 50

 \pinlabel $n(k+md)$ at 195 67
\endlabellist
\includegraphics[width=0.5\textwidth]{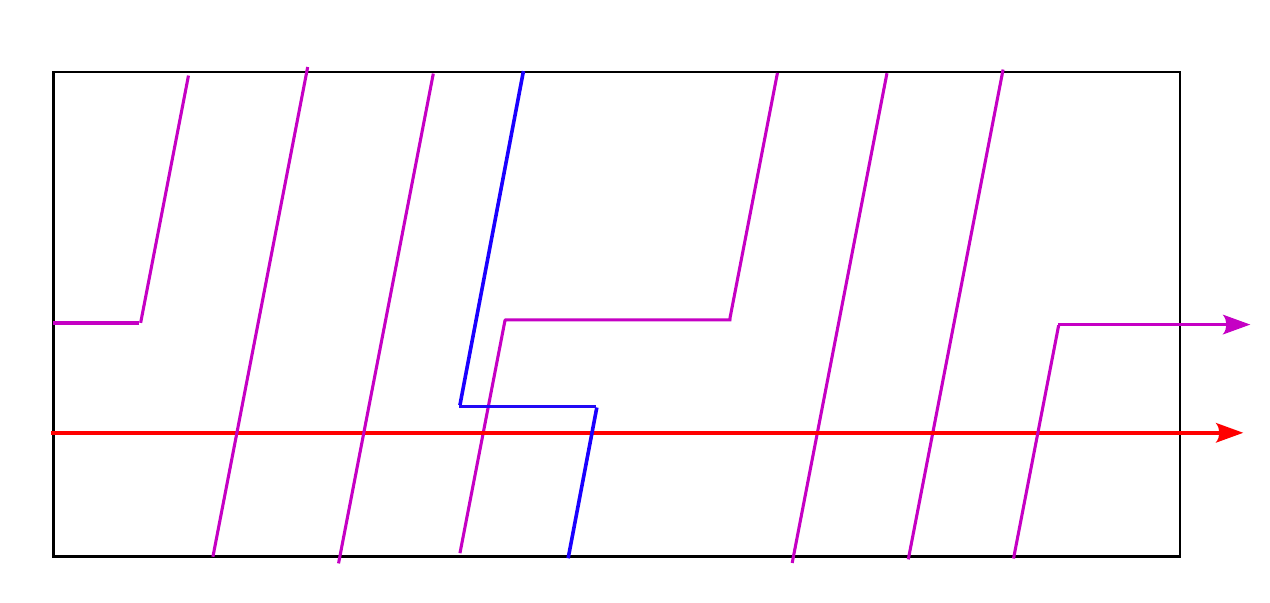}
\label{subfig: purplea}
}
\subfloat[]{
\labellist
\tiny
\pinlabel {{\color{purple} $k+md$}} at 100 30
\pinlabel {{\color{purple} $...$}} at 145 30
\pinlabel {{\color{purple} $k+md$}} at 76 120

\pinlabel {{\color{purple} $2(k$}} at 120 120
\pinlabel {{\color{purple} $+md)$}} at 118 100
\endlabellist
\includegraphics[width=0.5\textwidth]{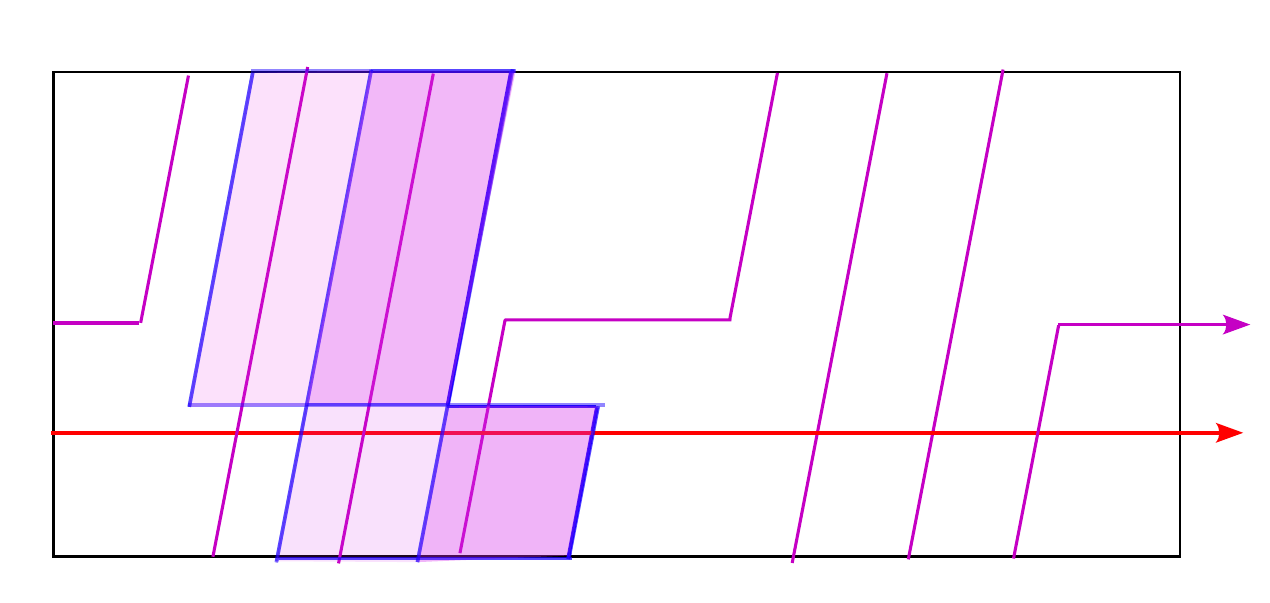}
\label{subfig: purpleb}
}

\subfloat[]{
\labellist

 \pinlabel $z$ at 150 70
  \pinlabel $w$ at 205 70
 \pinlabel $z_n$ at 52 135
 
 \tiny
   \pinlabel $-n(k+md)$ at 17 135
   \pinlabel $+(n+1)nd$ at 18 125
  \pinlabel $-n(k+md)$ at 48 105
  \pinlabel $+n^2d$ at 48 95
  
  \pinlabel $-(n-1)(k+md)$ at 44 53
  \pinlabel $n^2d$ at 56 63
 
 \pinlabel $...$ at 78 105
 \pinlabel $...$ at 100 105
 
 \pinlabel $nd$ at 170 120
 
 \pinlabel $-(k+md)$ at 125 120
  \pinlabel $+nd$ at 117 110
  
   \pinlabel $0$ at 225 120
     \pinlabel $-nd$ at 258 120
       \pinlabel $-2nd$ at 304 120
 
  \pinlabel $k+md$ at 175 52
  \pinlabel $k+md-nd$ at 245 52
  
  \pinlabel $...$ at 120 32
  \pinlabel $...$ at 137 32
  \pinlabel $...$ at 282 52
  
    \pinlabel $nd$ at 185 32
     \pinlabel $0$ at 240 32
     \pinlabel $-nd$ at 272 32
       \pinlabel $-2nd$ at 304 32

\endlabellist
\includegraphics{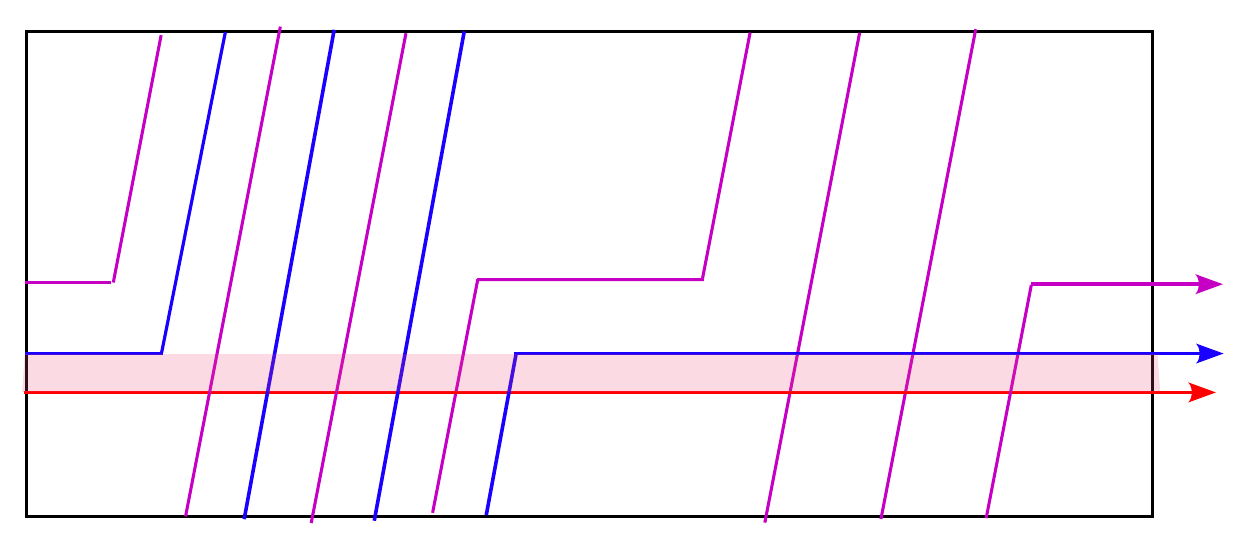}
\label{subfig: purplec}
}
\caption{The construction of the relative periodic domain $P_{n,\delta}$, in the case where $m=6, b=3$ and $n=3$.}
\label{fig: purple}
\end{figure}

\subsection{Polygons, spin$^c$ structures, and Alexander gradings} \label{ssec: spinc}
In this section, we compute of the Alexander grading shift and the first Chern class evaluation with respect to the holomorphic triangles and rectangles. We will make use of these computations in both the proof of the large surgery and the proof of the surgery exact triangle.

Following the convention from \cite{HL}, throughout the rest of the paper we will generally refer to elements of $\T_\alpha \cap \T_\beta$ as $\x$ or $\y$, elements of $\T_\alpha \cap \T_\gamma$ as $\q$ or $\r$, and elements of $\T_\alpha \cap \T_\delta$ as $\a$ or $\b$. We also introduce the following notational shorthands:
\begin{align*}
\AlNorm(\x) &= \AlNorm_{w,z}(\x) & \AlNorm(\q) &= \AlNorm_{w,z_n}(\q) & \AlNorm(\a) &=  \AlNorm_{w,z_n}(\a) \\
\Al(\x) &= d \AlNorm_{w,z}(\x) & \Al(\q) &= k \AlNorm_{w,z_n}(\q) & \Al(\a) &= (k+md) \AlNorm_{w,z_n}(\a).
\end{align*}

Although implicit in the notation, we remind the reader that   $\AlNorm(\q), \Al(\q), \AlNorm(\a)$ and $\Al(\a)$ are  dependent on the number $n$ through the basepoint $z_n.$

The following Alexander grading formula using relative periodic domains and holomorphic triangles, proved in \cite[Section 2.3]{HL}, is going to be our main tool along with the first Chern class formula from \cite{fourmanifold}.

\begin{proposition}[Proposition 1.3 in \cite{HL}] \label{prop: abs-alex} Let $(\Sigma, \alpha,\beta,w,z)$ be a doubly-pointed Heegaard diagram for a knot $(Y,K)$  representing a class in $H_1(Y)$ of order $d$, and let $P$ be a relative periodic domain specifying a homology class $[P]\in H_2(Y,K)$.  Then
the absolute Alexander grading of a generator $\x \in \T_\alpha \cap \T_\beta$ is given by
\begin{align}
\label{eq: abs-alex} \AlNorm_{w,z}(\x) &= \frac{1}{2d} \left( \hat\chi(P) + 2n_\x(P) - n_{\bar z}(P) - n_{\bar w}(P) \right)
\end{align}
where $\hat\chi(P)$ is the Euler measure of $P$, $n_\x(P)$ denotes the sum of the average of the four local multiplicities of $P$ in the regions abutting $x_j$ for all the $x_j \in \x$, and $n_{\bar w}(P)$ (resp.~$n_{\bar z}(P)$) denotes the average of the multiplicities of $P$ on either side of the longitude at $w$ (resp.~$z$).

\end{proposition}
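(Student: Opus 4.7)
The plan is to derive the formula from the definition (\ref{eq: spinc-alex}) of the Alexander grading, translating the first Chern class pairing $\langle c_1(\ul\spincs_{w,z}(\x)),[F]\rangle$ into combinatorial data on the Heegaard diagram via the relative periodic domain $P$. The key observation is that $P$, whose boundary contains a longitude of $K$, can be capped off inside $\nbd(K)$ by a disk to produce a representative of the rational Seifert class $[F]\in H_2(Y,K)$, so that computing $c_1$ against the capped surface gives $c_1$ paired with $[F]$.

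Using (\ref{eq: spinc-alex}) together with $[\mu]\cdot[F]=d$, the target identity is equivalent to
\[
\langle c_1(\ul\spincs_{w,z}(\x)),[F]\rangle \;=\; \hat\chi(P) + 2n_\x(P) - n_{\bar z}(P) - n_{\bar w}(P) - d.
\]
The Ozsv\'ath--Szab\'o first Chern class formula from \cite[Section 6]{fourmanifold}, applied to the interior of $P$, supplies the term $\hat\chi(P)+2n_\x(P)$. The longitude segment of $\partial P$, which is absent in the closed case, together with the capping disk inside $\nbd(K)$, accounts for the remaining correction: a local analysis at the points where the arcs $t_\alpha$ and $t_\beta$ cross the meridian yields the averaged multiplicities $-n_{\bar w}(P)$ and $-n_{\bar z}(P)$, while the rotation of the spin$^c$ vector field along the boundary torus, summed over the $d$ longitudinal components, contributes the $-d$.

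The main obstacle is the careful accounting of sign conventions and the verification that the right-hand side is independent of the many choices involved (capping disk, arcs $t_\alpha, t_\beta$, and the representative $P$ of a given class in $H_2(Y,K)$). The cleanest way around this, which I would ultimately adopt (and which is the strategy of \cite[Proposition 1.3]{HL}), is a two-step reduction. First, show that the right-hand side is invariant under replacing $\x$ by any $\y$ with $\spincs_z(\x)=\spincs_z(\y)$, by relating $n_\x(P)-n_\y(P)$ to the boundary of the domain of any $\phi\in\pi_2(\x,\y)$ and comparing with $\AlNorm(\x)-\AlNorm(\y)=n_z(\phi)-n_w(\phi)$; this shows both sides transform identically. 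Then pin down the formula on a single convenient generator sitting near a standard winding region, where the Alexander grading can be computed directly from (\ref{eq: spinc-alex}) and matched against an explicit local model for $P$. The invariance of the right-hand side under changes of $P$ in its relative homology class follows by noting that two such choices differ by a closed periodic domain, against which the formula evaluates to zero after the $\x$-independent terms cancel.
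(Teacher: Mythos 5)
The paper does not prove this proposition; it states it as \cite[Proposition 1.3]{HL} and refers the reader to \cite[Section 2.3]{HL} for the argument, so there is no ``paper's own proof'' to compare against. Your ultimate plan — show that the right-hand side changes by $n_z(\phi)-n_w(\phi)$ under moving between generators in the same $\spincs_z$-class, then anchor the formula by an explicit computation at a convenient generator, and handle the choice of $P$ by noting two representatives differ by a closed periodic domain (here a multiple of $\Sigma$, on which the formula evaluates to zero) — is exactly the reduction carried out in \cite[Section 2.3]{HL}, so the proposal is correct and matches the cited proof's structure.

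One caution on your opening sketch: the first two paragraphs suggest capping $P$ to apply the Chern class formula of \cite[Section 6]{fourmanifold}. That formula is for triply-periodic domains in Heegaard \emph{triple} diagrams, not for relative periodic domains in a doubly-pointed diagram; there is no third family of attaching curves here, so it does not apply directly. Moreover $[F]$ and $c_1(\xi)$ already live in relative (co)homology of the pair $(Y,K)$, so no capping is called for. In \cite{HL} the analogue of that evaluation formula for relative periodic domains is established from scratch, essentially by the same two-step argument you ``ultimately adopt.'' Since you explicitly drop the direct computation in favor of the reduction, the final plan is sound; just be aware the preliminary sketch would not compile into a proof as written.
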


\subsubsection{Modified relative periodic domains.}

A key ingredient of our argument is the relative periodic domain that represents the $(n,1)$--cable of the meridian. We now demonstrate the construction.

Starting with n copies of $P_\delta$, isotope the $\beta_g$ curve as in Figure \ref{subfig: purplea}. First add rectangle strips with multiplicity $(n-1)(k+md),\cdots,2(k+md),k+md$, from right to left, as shown in Figure \ref{subfig: purpleb}. In this stage we already obtain a periodic domain whose boundary, after gluing in the disks along $\alpha$ and $\delta$ attaching curves,  is identified with $k+md$ copies of the $(n,1)$--cable of the meridian, as required. The only problems are that the new blue curve is immersed, and the multiplicity at $z$ basepoint is not $0$. Next, add the shadowed rectangle strip with multiplicity $k+md$ depicted in Figure \ref{subfig: purplec}: this is equivalent to sweeping the horizontal portion of the blue curve across the disk attached along $\alpha_g$, thus replacing this portion of the blue curve with the remaining boundary of the disk. Finally adjust the multiplicity by adding $-n(k+md)\Sigma$ to the domain, resulting in the relative periodic domain  in Figure \ref{subfig: purplec}; denote it as $P_{n,\delta}$. 

The two-chain $P_{n,\delta}$ is a relative periodic domain $(\Sigma,\bm\alpha,\bm\delta,w,z_n)$ for the knot $K_{n,\lambda+m\mu}$ in $Y_{\lambda+m\mu}(K)$. In $P_{n,\delta}$, $w$ and $z$ basepoints are interchangeable, $n_z(P_{n,\delta}) = n_w(P_{n,\delta}) = 0$, $n_{z_n}(P_\gamma) = -n(k+md) + n^2 d$, and
\[
\partial P_\gamma = (k+md-nd)\alpha_g  - (k + md) \beta_{n,g} + nd\gamma_g + \sum_{i=1}^{g-1} (na_i \alpha_i + nb_i \beta_i)
\]
for some integers $a_i, b_i$.

We have $[P_{n,\delta}]=n[P_\delta] \in H_2(W_{\lambda})$, since each time attaching a rectangle strip is equivalent to a homotopy.

\begin{proposition}
The Euler measure $\hat\chi(P_{n,\delta})=n\hat\chi(P_{\delta}).$
\end{proposition}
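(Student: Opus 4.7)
The plan is to invoke additivity of the Euler measure under formal sums of $2$-chains, $\hat\chi(P_1 + P_2) = \hat\chi(P_1) + \hat\chi(P_2)$, tracing through each step of the construction of $P_{n,\delta}$ from $n P_\delta$. The starting contribution is $n\,\hat\chi(P_\delta)$ for the $n$ stacked copies of $P_\delta$.

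The crucial ingredient is that each rectangle strip added in Figures~\ref{subfig: purpleb} and~\ref{subfig: purplec} is a topological disk with four right-angle corners at transverse intersections of the $\alpha$, $\beta$, $\delta$ curves, and therefore has Euler measure $\hat\chi = 1 - 4/4 = 0$. Moreover, even if a strip is subdivided by further transverse arcs passing through it, each resulting sub-cell remains a rectangle with $\hat\chi = 0$. Hence every rectangle strip contributes zero to the total Euler measure, independently of its multiplicity, and after the $n-1$ strips of Figure~\ref{subfig: purpleb} together with the shadowed strip of Figure~\ref{subfig: purplec} are added, the running Euler measure is still $n\,\hat\chi(P_\delta)$.

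What remains is the final multiplicity adjustment by $-n(k+md)\Sigma$. Naively one might expect this to shift $\hat\chi$ by $-n(k+md)\,\chi(\Sigma) = -n(k+md)(2-2g)$, which would spoil the identity. The claim is that this apparent shift is exactly compensated by the combinatorial effect of the $\beta_g$-isotopy performed earlier, which reshapes regions and alters corner counts in a precisely balancing way. The cleanest way to verify this cancellation, and the plan for closing the argument, is a direct computation: evaluate $\hat\chi(P_{n,\delta}) = \sum_R n_R\bigl(\chi(R) - \tfrac{1}{4}\#\{\text{corners of }R\}\bigr)$ from the explicit region-multiplicities labelled in Figure~\ref{subfig: purplec} and compare it with the analogous sum for $n$ copies of $P_\delta$ read off from Figure~\ref{fig: twistpurple}.

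I expect this last bookkeeping between the $\Sigma$-shift and the $\beta_g$-isotopy to be the main obstacle, since it requires a careful tally of how corners in the winding region reorganize themselves. However, once the labelled multiplicities are in hand the calculation is purely mechanical, and any discrepancy would flag a missing intermediate $2$-chain in the sum representing $P_{n,\delta}$.
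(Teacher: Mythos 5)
You correctly flag the suspicious step: adding $-n(k+md)\Sigma$ should shift $\hat\chi$ by $-n(k+md)\chi(\Sigma)$, and you are right to doubt that this vanishes. But the proposed resolution---that the $\beta_g$-isotopy compensates---cannot work. The Euler measure of a $2$-chain on $\Sigma$ is a topological invariant of the chain: by Gauss--Bonnet it is the total curvature in any metric making the attaching curves geodesic with right-angled corners, so isotoping $\beta_g$ (or any other curve) never changes $\hat\chi$ of a fixed chain. The corner redistribution an isotopy produces is compensated internally, in the sum over regions, and never yields a net contribution; in particular it cannot absorb $\pm n(k+md)\chi(\Sigma)$.

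In fact, since $\hat\chi(\Sigma)=\chi(\Sigma)=2-2g$ for a genus-$g$ Heegaard surface, the stated identity $\hat\chi(P_{n,\delta})=n\hat\chi(P_\delta)$ is off by $-n(k+md)(2-2g)$ whenever $g>1$. (The paper's own one-line proof asserts that adding copies of $\Sigma$ preserves the Euler measure, which is likewise only literally true for $g=1$.) The reason the discrepancy is harmless downstream is not a cancellation inside $\hat\chi$ itself but the shape of the quantity in which $\hat\chi$ is actually used. The Alexander grading formula of Proposition~\ref{prop: abs-alex} computes $\frac{1}{2d}\bigl(\hat\chi(P)+2n_\x(P)-n_{\bar z}(P)-n_{\bar w}(P)\bigr)$, and this combination is invariant under $P\mapsto P+c\Sigma$: the shift $c(2-2g)$ in $\hat\chi$ is exactly cancelled by $+2cg$ in $2n_\x$ (each of the $g$ coordinates of $\x$ contributes $+c$) and $-2c$ coming from $-n_{\bar z}-n_{\bar w}$. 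One may therefore drop the $-n(k+md)\Sigma$ adjustment from every term in the formula simultaneously, which is implicitly what the proof of Proposition~\ref{prop: alex-triangle} does. The honest form of the claim is $\hat\chi(P_{n,\delta})=n\hat\chi(P_\delta)-n(k+md)\chi(\Sigma)$, paired with the observation that the extra term is invisible to the grading formula; the direct tally from the figures that you propose would reveal exactly this extra $\chi(\Sigma)$-term, not a compensating one, so the plan as written does not close.
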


\begin{proof}
According to \cite[Lemma 6.2]{fourmanifold}, for a triply-periodic domain $P=\sum_i n_i \mathcal{D}_i,$ the Euler measure can be calculated by
\[
\hat\chi(P) = \sum_i n_i \left(  \chi(\text{int}\mathcal{D}) - \frac{1}{4}(\# \text{corner points of }\mathcal{D}) \right). 
\]
From the formula we see that isotopy, adding rectangle domains and copies of $\Sigma$ do not change the Euler measure. Thus $\hat\chi(P_{n,\delta})=n\hat\chi(P_{\delta}).$
\end{proof}

\subsubsection{Alexander grading shifts on holomorphic triangles.} \label{sssec: triangle}
We study the intersection points in $(\Sigma, \bm\alpha, \bm\beta, \bm\delta^{m,b}, w, z, z_n)$ more carefully (see Figure \ref{fig: twistpurple}) before moving on to the computation.
The curve $\alpha_g $ and $\delta_g^{m,b}$ intersects $m$ times in the winding region, we will label the intersection points as such: following the orientation of $\delta_g^{m,b}$, let $p_{b-m}, \dots, p_{-1}$ denote the $b-m$ points to the left of $\beta_g$, with $p_{-1}$ being the closest, and $p_{0}, \dots, p_{b-1}$ denote the $b$ points to the right of $\beta_g$, with $p_{0}$ being the closest. Let $q$ be the unique intersecting point of $\alpha_g$ and $\beta_g$. For $\x \in \T_\alpha\cap \T_\beta$ and $l \in \{b-m, \dots, b-1\}$, we define $\x_l^{m,b} \in \T_\alpha \cap\T_{\delta^{m,b}}$ to be the point obtained by replacing $q$ with $p_l$ and taking ``nearest points'' in thin domains. There is a \emph{small triangle} $\psi_{\x,l}^{m,b} \in \pi_2(\x_l^{m,b}, \Theta_{\delta\beta}, \x)$ in the winding region satisfying
\begin{align}\label{eq: smalltriangle}
    (n_w(\psi_{\x,l}),n_z(\psi_{\x,l}),n_{z_n}(\psi_{\x,l}))=&
    \begin{cases}
    (l,0,0) & l\geq 0  \\
    (0,-l,0) & -n<l<0  \\
    (0,-l,-l-n) & l\leq -n.
    \end{cases}
\end{align}

\begin{figure}

\labellist

 \pinlabel $w$ at 275 95
 \pinlabel $z$ at 245 95
 {\scriptsize
  \pinlabel $z_n$ at 163 145
 }

 \pinlabel {{\color{red} $\alpha_g$}} [l] at 390 48

 \pinlabel {{\color{blue} $\beta_{n,g}$}} [l] at 389 74
 \pinlabel {{\color{OliveGreen} $\gamma_g$}} [l] at 388 120

  \pinlabel $0$ at 325 105
   \pinlabel $k$ at 325 69
   \pinlabel $nd$ at 320 40
    \pinlabel $nd$ at 304 145
   
 {\tiny

  \pinlabel $-k$ at 196 80
    \pinlabel $nd$ at 196 90

    \pinlabel $-k$ at 184 105
    \pinlabel $2nd$ at 188 115

   \pinlabel $2nd$ at 167 90
    \pinlabel $-2k$ at 165 80

    \pinlabel $n(-k$ at 140 75
    \pinlabel $+nd)$ at 140 65

 \pinlabel $-nk$ at 131 135
  \pinlabel $(n+1)nd$ at 131 145

\pinlabel $-nk$ at 100 106
  \pinlabel $n^2d$ at 100 116

    \pinlabel $-k$ at 223 35
    \pinlabel $nd$ at 225 45
    \pinlabel $-k$ at 203 35
    \pinlabel $2nd$ at 205 45
    
     \pinlabel $-2k$ at 185 35
    \pinlabel $2nd$ at 188 45
    
   \pinlabel $\cdots$ at 172 40

 }

\endlabellist
\includegraphics{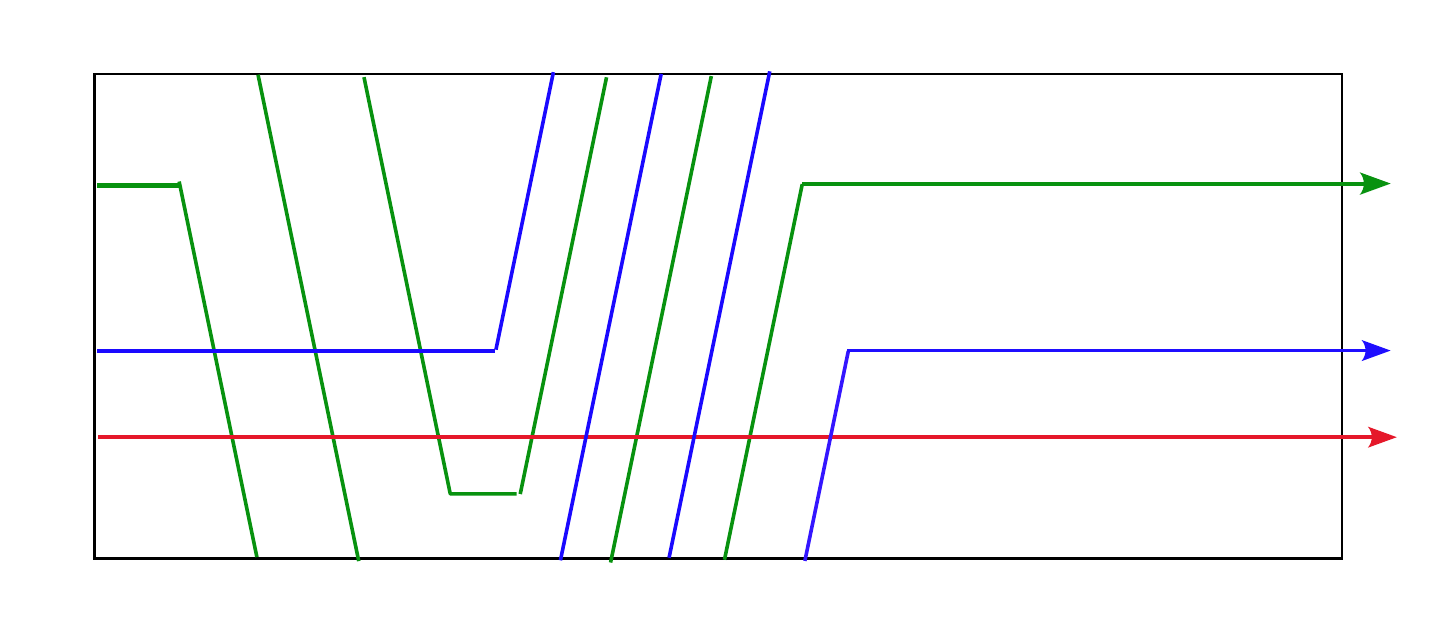}
\caption{The winding region of diagram $P_{n,\gamma}$, where $n=3$.}
\label{fig: green2}
\end{figure}

 The following computational result is one of our main ingredients. Compare \cite[Proposition 5.5]{HL}.
\begin{proposition} \label{prop: alex-triangle}
Let $\x \in \T_\alpha \cap \T_\beta$, $\q \in \T_\alpha \cap \T_\gamma$, and $\a \in \T_\alpha \cap \T_\delta$. Assume also $\a$ uses $p_l$, for some $l=b-m,\cdots,b-1$ in the winding region.
\begin{enumerate}
\item
For any $\psi \in \pi_2(\x, \Theta_{\beta\gamma}, \q)$,
\begin{align} \label{eq: abg-alex}
n\Al(\x) - \Al(\q) &= -nd n_w(\psi) - (k - nd) n_z(\psi) + k n_{z_n}(\psi) -\frac{nk-n^2 d}{2} \\
\label{eq: abg-c1-x} \gen{c_1(\spincs_z(\psi)), [P_\gamma]}
&= 2 \Al(\x) + 2d n_w(\psi) - 2d n_z(\psi) + k \\
\label{eq: abg-c1-q}\gen{c_1(\spincs_z(\psi)), [P_{n,\gamma}]}
&= 2 \Al(\q) + 2k n_{z_n}(\psi) - 2k n_w(\psi) + n^2 d
\end{align}

\item
For any $\psi \in \pi_2(\q, \Theta_{\gamma\delta}, \a)$,
\begin{align}
\label{eq: agd-alex}
\Al(\q) - \Al(\a) &=  -(k+ md) n_z(\psi) + (k+ md) n_{z_n}(\psi)  - md \sum_{j=1}^n \Big( n_{z_j}(\psi) - n_{u_j}(\psi) \Big) - \frac{nmd}{2} \\ 
  \nonumber \langle c_1(\spincs_z(\psi)), n[R]\rangle  &= \frac{m}{\nu} \Bigg( 2 \Al(\a)  +  2(k+md)\Big(n_{z_n}(\psi) - n_z(\psi)  -  \sum_{j=1}^n \left( n_{z_j}(\psi) - n_{u_j}(\psi) \right)\Big) \\ 
   \label{eq: agd-c1-a} &   - n(k + md) + n^2 d \Bigg) \\
\label{eq: agd-c1-q}
&= \frac{m}{\nu} \left( 2 \Al(\q) + 2k \sum_{j=1}^n \Big( n_{z_j}(\psi) - n_{u_j}(\psi) \Big)   -  nk  + n^2 d \right)
\end{align}

\item
For any $\psi \in \pi_2(\a, \Theta_{\delta\beta}, \x)$,
\begin{equation}  \label{eq: adb-alex}
\Al(\a) - n\Al(\x) =  (k+md) n_{z_n}(\psi) - nd n_w(\psi) - (k+md - nd) n_z(\psi)  + \frac{n(k+md) -  n^2 d}{2}
\end{equation}
\begin{align}
\label{eq: adb-c1-a}
\gen{c_1(\spincs_z(\psi)), [P_{n,\delta}]}
&= 2\Al(\a)  + 2(k+md)n_w(\psi)  - 2(k+md) n_{z_n}(\psi)  + n^2 d \\
\label{eq: adb-c1-x}
\gen{c_1(\spincs_z(\psi)), [P_\delta]}&= 2\Al(\x) + 2d n_z(\psi) - 2d n_w(\psi)  + (k+md)
\end{align}
\end{enumerate}
\end{proposition}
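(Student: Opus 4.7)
The plan is to reduce each of the nine identities to Proposition \ref{prop: abs-alex} applied at each vertex of the triangle, combined with the triangle version of Ozsv\'ath--Szab\'o's first Chern class formula from \cite[Section 6.1]{fourmanifold}. For a Heegaard triple and a triangle $\psi \in \pi_2(\mathbf v_1, \mathbf v_2, \mathbf v_3)$, this Chern class formula expresses $\gen{c_1(\spincs_z(\psi)), [P]}$ in terms of $\hat\chi(P)$, the local multiplicities $n_{\mathbf v_i}(P)$, and the boundary--longitude contributions of $P$; rewriting $n_{\mathbf v_i}(P)$ back through Proposition \ref{prop: abs-alex} converts this into an identity relating the Alexander gradings at the vertices to the basepoint multiplicities $n_w(\psi), n_z(\psi), n_{z_n}(\psi)$ of $\psi$.

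For part (1) with $\psi \in \pi_2(\x, \Theta_{\beta\gamma}, \q)$, I apply Proposition \ref{prop: abs-alex} to $\x$ using $P_\gamma$ and to $\q$ using the relative periodic domain $P_{n,\gamma}$ of Figure \ref{fig: green2}. The domain $P_{n,\gamma}$ is built from $nP_\gamma$ by the same procedure used to build $P_{n,\delta}$ from $nP_\delta$ (isotoping $\beta_g$ into the $(n,1)$--cable shape and adjusting multiplicities with rectangle strips and copies of $\Sigma$); in particular $\hat\chi(P_{n,\gamma}) = n\hat\chi(P_\gamma)$. Subtracting the two Alexander grading expressions and tracking how $\psi$ enters the winding region yields \eqref{eq: abg-alex}, the shift $-(nk-n^2d)/2$ arising from the discrepancy of basepoint and boundary--longitude multiplicities of $P_{n,\gamma}$ versus $nP_\gamma$. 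The Chern class identities \eqref{eq: abg-c1-x}, \eqref{eq: abg-c1-q} then come from the triangle Chern class formula with $P = P_\gamma$ and $P = P_{n,\gamma}$, using that $\Theta_{\beta\gamma}$ sits outside the winding region so its local contributions are constants read off from Figure \ref{fig: twistgreen}.

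Parts (2) and (3) follow the same template. For $\psi \in \pi_2(\q, \Theta_{\gamma\delta}, \a)$, I apply Proposition \ref{prop: abs-alex} to $\q$ with $P_{n,\gamma}$ and to $\a$ with $P_{n,\delta}$; for the Chern class evaluation I use the linear relation $[R] = \tfrac{m}{\nu}[P_\gamma] + \tfrac{k}{\nu}[Q]$ (and its multiple $n[R]$) to reduce $\gen{c_1(\spincs_z(\psi)), n[R]}$ to pairings against $[P_\gamma]$ and $[Q]$. The correction terms $\sum_{j=1}^{n}\bigl(n_{z_j}(\psi) - n_{u_j}(\psi)\bigr)$ in \eqref{eq: agd-alex} and \eqref{eq: agd-c1-q} come directly from the explicit multiplicities of $Q$ at the reference points $z_j, u_j$ recorded in Figure \ref{fig: Q}. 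In part (3), with $\psi \in \pi_2(\a, \Theta_{\delta\beta}, \x)$, the new feature is that $\a$ uses an intersection point $p_l$ in the winding region, so one must additionally track the local multiplicities of $P_{n,\delta}$ and $P_\delta$ at $p_l$ using Figures \ref{fig: twistpurple} and \ref{fig: purple}; the contributions from the small triangle $\psi_{\x,l}^{m,b}$ of \eqref{eq: smalltriangle} control the $p_l$-dependence.

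The main obstacle is the combinatorial bookkeeping in the crowded winding regions, especially pinning down the constant shifts $-(nk-n^2d)/2$, $-nmd/2$, and $(n(k+md)-n^2d)/2$. A useful simplification is that each identity depends on $\psi$ only through combinations of quantities that transform predictably when $\psi$ is modified by a Whitney disk at any vertex (by \eqref{eq: rel-alex}) or by a period class. Consequently, it suffices to verify each identity on one convenient representative triangle---typically a small triangle of the form $\psi_{\x,l}^{m,b}$ or its analogue in the $\gamma$-winding region---reducing the whole proposition to a finite local check of multiplicities in the winding regions displayed in Figures \ref{fig: twistgreen}--\ref{fig: green2}.
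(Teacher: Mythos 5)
Your framework for parts (1) and (3) — apply Proposition \ref{prop: abs-alex} and the triangle Chern class formula, verify on a small winding-region triangle $\psi_{\x,l}^{m,b}$ (and its $(\alpha,\beta,\gamma)$ analogue), then extend to arbitrary triangles using the transformation rules under Whitney disks and periodic domain additions — is essentially the paper's argument, and the extension step is carried out exactly as you describe (via $\psi' = \psi - rP_\delta$). The paper in fact works out part (3) in full and declares part (1) to be symmetric, which matches your template.

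For part (2) there is a genuine gap. The domain $P_\gamma$ is an $(\alpha,\beta,\gamma)$ triply-periodic domain and $Q$ is a $(\beta,\gamma,\delta)$ domain; neither is a periodic domain for the Heegaard triple $(\Sigma, \bm\alpha, \bm\gamma, \bm\delta)$, so the pairings $\gen{c_1(\spincs_z(\psi)), [P_\gamma]}$ and $\gen{c_1(\spincs_z(\psi)), [Q]}$ are not directly computable by the triangle Chern class formula for $\psi \in \pi_2(\q, \Theta_{\gamma\delta}, \a)$. Only the combination $R = \tfrac{m}{\nu}P_\gamma + \tfrac{k}{\nu}Q$, in which the $\beta$ boundaries cancel, is an honest $(\alpha,\gamma,\delta)$ periodic domain. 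So the planned decomposition of $\gen{c_1, n[R]}$ into $[P_\gamma]$ and $[Q]$ pairings is not available at this level; the linear relation among homology classes holds in $H_2(\bar X_{\alpha\beta\gamma\delta})$, which requires working with quadrilaterals, not triangles. Relatedly, the correction terms $\sum_{j=1}^n(n_{z_j}(\psi) - n_{u_j}(\psi))$ are not "multiplicities of $Q$ at the reference points" but multiplicities of the triangle $\psi$ itself, and they do not drop out of a direct vertex-by-vertex application of Proposition \ref{prop: abs-alex} — there is also no canonical small $(\alpha,\gamma,\delta)$ triangle singled out in the paper to anchor the local check. The paper resolves both problems at once by reducing part (2) to parts (1) and (3): it chooses an auxiliary $\phi \in \pi_2(\x,\Theta_{\beta\gamma},\q)$, composes $\phi * \psi$ with the small $(\beta,\gamma,\delta)$ triangle $\tau_0^+$ to obtain (after subtracting $rQ$) an $(\alpha,\beta,\delta)$ triangle $\sigma'$, and then the $\sum(n_{z_j}-n_{u_j})$ term appears through the identity $nr = \sum_j(n_{z_j}(\psi)-n_{u_j}(\psi)) + n_{z_n}(\phi) - n_z(\phi)$. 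You would need to adopt this composite-domain argument (or independently prove an analogue of the Chern class formula for $(\alpha,\gamma,\delta)$ triangles against $R$ directly, with the multiplicities of $R$ in the winding region in hand) to close part (2).
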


\begin{proof}
We will compute  part ($3$) first, namely the statements about $(\alpha,\delta,\beta)$ triangles. 

Up to permuting the indices of the $\beta$ curves, each $\x\in \T_\alpha \cap \T_\beta$ consists of points $\x_j \in \alpha_j \cap \beta_j$ for $j=1,\cdots,g,$ where $x_g$ is the unique point $q\in \alpha_g \cap \beta_g$. For $j=1,\cdots,g-1,$ suppose the local multiplicities of $P_\delta$ around $x_j$ are $c_j, c_j+a_j, c_j+a_j+b_j, c_j+b_j$ for some $c_j$. Hence, we compute
\begin{align*}
    &n_\x(P_\delta) = \frac{k+md+d}{2} + \sum_{j=1}^{g-1} \left(c_j + \frac{a_j + b_j}{2} \right) \\
    &n_{\bar w}(P_\delta) + n_{\bar z}(P_\delta) = k+md+d\\
    &n_{\bar w}(P_{n,\delta}) + n_{\bar z_{n}}(P_{n,\delta}) =-(n-1)(k+md)+n^2 d.
\end{align*}

We begin by showing (\ref{eq: adb-alex}) and (\ref{eq: adb-c1-x}) hold for $\psi_{\x,l}^{m,b} \in \pi_2(\x_l^{m,b}, \Theta_{\delta\beta}, \x)$, where $\psi_{\x,l}^{m,b}$ are the small triangles in winding region defined above.(We henceforth omit the superscripts for simplicity.) For $l=b-m,\cdots,b-1,$
\begin{align*}
    n_{\x_l}(P_{n,\delta})=\frac{k+md}{2} + n \sum_{j=1}^{g-1} \left(c_j + \frac{a_j + b_j}{2} \right) + &
    \begin{cases}
    -lnd & l\geq 0  \\
    -l(-(k+md)+nd) & -n<l<0  \\
    -n(k+md)-lnd & l\leq -n.
    \end{cases}
\end{align*}
To prove (\ref{eq: adb-alex}), we compute using (\ref{eq: abs-alex})
\begin{align*}
   (k+md)\AlNorm_{w,z_n}(\x_{l})=\frac{1}{2}\left(  n\hat\chi(P_\delta) + 2n_{\x_l}(P_{n,\delta}) - (n_{\bar z}(P_{n,\delta}) - n_{\bar w}(P_{n,\delta}))  \right)\\
   =nd\AlNorm_{z,w}(\x) +\frac{n(k+md)-n^2 d}{2} +  
   \begin{cases}
    -lnd & l\geq 0  \\
    -l(-(k+md)+nd) & -n<l<0  \\
    -n(k+md)-lnd & l\leq -n.
    \end{cases}
\end{align*}
Comparing with (\ref{eq: smalltriangle}), the last term on the right hand side is exactly  equal to 
\[
(k+md)n_{z_n}(\psi_{\x,l})-ndn_w(\psi_{\x,l})-(k+md-nd)n_z(\psi_{\x,l})
\]
as required.

For (\ref{eq: adb-c1-x}), we use the first Chern class formula from \cite[Proposition 6.3]{fourmanifold}.(There is a sign inconsistency in the definition of the dual spider number, see the footnote in \cite{HL} below the proof of Lemma 4.2.)
\begin{align*}
\gen{c_1(\spincs_z(\psi_{\x,l})), [P_\delta]} &= \hat\chi(P) + \#(\partial P_\delta) - 2n_z(P_\delta) + 2\sigma(\psi_{\x,l}, P_\delta) \\
&= \hat\chi(P_\delta) + \left(-(k+md) + \sum_{j=1}^{g-1} (a_j + b_j) \right) - 2 \cdot 0 + 2 \left( k+md-dl + \sum_{j=1}^{g-1} c_j \right) \\
&= \hat\chi(P_\delta)  + \sum_{j=1}^{g-1} (a_j + b_j + 2c_j) + 2d (-l) + k+md \\
&= \hat\chi(P_\delta) + 2 n_\x(P_\delta) - n_{\bar w}(P_\delta) - n_{\bar z}(P_\delta)  + 2d( n_z(\psi_{\x,l}) - n_w(\psi_{\x,l}) )+ k+md \\
&= 2d \AlNorm_{w,z}(\x) + 2d n_z(\psi_{\x,l}) - 2d n_w(\psi_{\x,l}) + k + md
\end{align*}
as required.

Next, we consider an aribitrary triangle $\phi \in \pi_2(\a,\Theta_{\delta\beta},\x).$ There are some $r\in \Z$ and some $l\in \{b-m,\cdots,b-1 \}$ such that
\[
n_w(\phi) - n_z(\phi)=r(k+md)+l.
\]
Let $\psi' = \psi - rP_\delta \in \pi_2(\x, \Theta_{\beta\gamma}, \q)$; then $n_w(\psi') - n_z(\psi') = l$. The composite domain $\phi$ with $\DD (\phi)= \DD(\psi') - \DD(\psi_{\x,l})$ is a disk in $\pi_2(\a, \x)$, so $\spincs_z(\psi') = \spincs_z(\psi_{\x,l})$.  We then compute:
\begin{align*}
\AlNorm_{w,z_n}(\a) - \AlNorm_{w,z_n}(\x_l) &= n_{z_n}(\phi) - n_w(\phi) \\
&= n_{z_n}(\psi') - n_{z_n}(\psi_{\x,l}) - n_w(\psi') + n_w(\psi_{\x,l}) \\
&= n_{z_n}(\psi) - n_w(\psi) -r(nd-(k+md)) + 
\begin{cases}
    l & l\geq 0  \\
    0 & -n<l<0  \\
    l+n & l\leq -n.
    \end{cases}
\end{align*}
\begin{align*}
& (k+md)\AlNorm_{w,z_n}(\a)- nd\AlNorm_{w,z}(\x) \\
&= ((k+md)\AlNorm_{w,z_n}(\x_l) - nd\AlNorm_{w,z}(\x) ) + (k+md)(\AlNorm_{w,z_n}(\a) - \AlNorm_{w,z_n}(\x_l)) \\
&= \frac{n(k+md)-n^2 d}{2}  +(k+md)(n_{z_n}(\psi) - n_w(\psi)) + (k+md-nd)(k+md)r + (k+md-nd)l \\
&= \frac{n(k+md)-n^2 d}{2} +(k+md)(n_{z_n}(\psi) - n_w(\psi)) +(k+md-nd)(n_w(\psi) - n_z(\psi)) 
\end{align*}
as required. Similarly, we have:
\begin{align*}
\gen{c_1(\spincs_z(\psi)), [P_\delta]}
&= \gen{c_1(\spincs_z(\psi' + rP_\delta)), [P_\delta]} \\
&= \gen{c_1(\spincs_z(\psi') + r \PD[P_\delta]), [P_\delta]} \\
&= \gen{c_1(\spincs_z)(\psi'), [P_\delta]} +   2r [P_\delta]^2 \\
&= \gen{c_1(\spincs_z)(\psi_{\x,l}), [P_\delta]} -   2r (k+md)d \\
&= 2d \AlNorm_{w,z}(\x) + k + md  - 2dl -   2r (k+md)d \\
&= 2d \AlNorm_{w,z}(\x) + k + md - 2d (n_w(\psi) - n_z(\psi)) 
\end{align*}
as required.

Then,  (\ref{eq: adb-c1-a}) follows immediately from (\ref{eq: adb-alex}) and (\ref{eq: adb-c1-x}). This concludes the proof of part  ($3$).

For  part ($1$), namely the statements about $(\alpha,\beta,\gamma)$ triangles can be proved in a similar manner and are left for the reader. The periodic diagram $P_{n,\gamma}$ is displayed in Figure \ref{fig: green2}.

Lastly, we prove part ($2$), namely the statements about $(\alpha,\gamma,\delta)$ triangles. Consider $\psi \in \pi_2(\q, \Theta_{\gamma\delta}, \a)$ for some $\q \in \T_\alpha \cap \T_\gamma$ and $\a \in \T_\alpha \cap \T_\delta$ . Note that $n_z(\psi) = n_w(\psi)$ since $w$ and $z$ are only separated by $\beta_g$.

Choose an arbitrary triangle $\phi \in \pi_2(\x, \Theta_{\beta\gamma}, \q)$ for some $\x \in \T_\alpha \cap \T_\beta$. By \eqref{eq: abg-alex},
\[
n\Al(\x) - \Al(\q) = -nd n_w(\phi) + k n_{z_n}(\phi) - (k-nd) n_z(\phi) - \frac{nk-n^2 d}{2}.
\]
For the simplification of notation, let $\tau = \tau_0^+ \in \pi_2(\Theta_{\beta\gamma}, \Theta_{\gamma\delta}, \Theta_{\beta\delta})$ be the class represented by the small triangle in the center of Figure \ref{fig: Q} (see Lemma \ref{lemma: Theta-gamma-delta}). The intersection number is zero for $\tau$ at all reference points. If we let $\sigma$ be the composite domain with $\DD(\sigma) = \DD(\phi) + \DD(\psi) - \DD(\tau)$, then $\sigma$ is almost the domain of a triangle in $\pi_2(\x, \Theta_{\beta\delta}, \a)$, except that the boundary of $\sigma$ includes $\gamma_g$ with multiplicity
\begin{align*}
r &= n_{z_j}(\sigma) - n_{u_j}(\sigma), \quad j=1,\cdots,n. 
\end{align*}
As a result
\begin{align*}
nr &= \sum^n_{j=1} \left(n_{z_j} (\sigma) - n_{u_j}(\sigma) \right)\\
&=\sum^n_{j=1} \left(n_{z_j} (\psi) - n_{u_j}(\psi) \right) + \sum^n_{j=1} \left(n_{z_j} (\phi) - n_{u_j}(\phi) \right) \\
&=\sum^n_{j=1} \left(n_{z_j} (\psi) - n_{u_j}(\psi) \right) +n_{z_n}(\phi) - n_z(\phi).
\end{align*}
There is an actual triangle class $\sigma' \in \pi_2(\x, \Theta_{\beta\delta}, \a)$ with $\DD(\sigma') = \DD(\sigma) - rQ$. Moreover, the composites $\rho_1 = \phi * \psi$ and $\rho_2 = \sigma' * \tau$ are each quadrilaterals in $\pi_2(\x, \Theta_{\beta\gamma}, \Theta_{\gamma\delta}, \a)$  satisfying $\DD(\rho_1) = \DD(\rho_2) + rQ$.

We compute
\begin{align*}
n\Al(\x) &- \Al(\a)
= -nd n_w(\sigma') + (k+md) n_{z_n}(\sigma') - (k+md -nd) n_z(\sigma') - \frac{n(k+md) -n^2 d}{2} \\
&= -nd n_w(\sigma) + (k+md) n_{z_n}(\sigma) - (k+md -nd) n_z(\sigma)  -mdnr - \frac{n(k+md) -n^2 d}{2} \\
&=  (k+md) n_{z_n}(\psi) - (k+md) n_z(\psi) -nd n_w(\phi) + (k+md) n_{z_n}(\phi)\\
&\qquad - (k+md -nd) n_z(\phi) -mdnr - \frac{n(k+md) -n^2 d}{2}\\
  & =(k+md)(n_{z_n}(\psi)-n_z(\psi)) -nd n_w(\phi) + (k+md) n_{z_n}(\phi) - (k+md -nd) n_z(\phi)  \\
  & \qquad -md \left( \sum^n_{j=1} \left(n_{z_j} (\psi) - n_{u_j}(\psi) \right) +n_{z_n}(\phi) - n_z(\phi)\right) - \frac{n(k+md) -n^2 d}{2} \\
  &= (k+md)(n_{z_n}(\psi)-n_z(\psi)) -md \sum^n_{j=1} \left(n_{z_j} (\psi) - n_{u_j}(\psi) \right)  \\
  &\qquad -nd n_w(\phi) + k n_{z_n}(\phi) - (k -nd) n_z(\phi) - \frac{n(k+md) -n^2 d}{2}
\end{align*}
The first equality above follows from the same computations in part ($3$), but using $(\alpha,\beta,\delta)$ triangles  instead. Subtracting the value of $n\Al(\x) - \Al(\q)$, we have:
\begin{align*}
\Al(\q) - \Al(\a)
&=  (k + md ) n_{z_n}(\psi)  - (k+md) n_z(\psi)  -md \sum^n_{j=1} \Big(n_{z_j} (\psi) - n_{u_j}(\psi)\Big) - \frac{nmd}{2}
\end{align*}
proving \eqref{eq: agd-alex}. Likewise, using \eqref{eq: abgd-PD-self-int}, we have:
\begin{align*}
&\quad \gen{ c_1(\spincs_z(\psi)), n[R]} \\
&= \gen{c_1(\spincs_z(\rho_1)), n[R]} \\
&= \gen{c_1(\spincs_z(\rho_2) + r\PD[Q]), n[R]} \\
&= \gen{c_1(\spincs_z(\rho_2)) + 2r\PD[Q], \frac{nm}{\nu} [P_\delta] + \frac{n(k+md)}{\nu}[Q]} \\
&= \frac{nm}{\nu} \gen{c_1(\spincs_z(\sigma')), \  [P_\delta]} + \frac{n(k+md)}{\nu} \gen{c_1(\spincs_z(\tau)), [Q]} + \frac{2nr(k+md)}{\nu}[Q]^2   \\
&= \frac{m}{\nu} \left( 2 \Al(\a) + 2(k+md)(n_{z_n}(\sigma') - n_z(\sigma')) + n^2 d\right) + \frac{n(k+md)}{\nu}(-m) +  \frac{2 nr(k+md)}{\nu}(-m)    \\
&= \frac{m}{\nu} \left( 2 \Al(\a) + 2(k+md)(n_{z_n}(\sigma') - n_z(\sigma') -nr) - n(k+md) + n^2 d \right)- n_z(\phi)   \\
&= \frac{m}{\nu} \left( 2 \Al(\a) + 2(k+md) \Big(n_{z_n}(\psi) - n_z(\psi) + n_{z_n}(\phi) -n_z(\phi) -nr \Big) + n(k+md) + n^2 d \right)  \\
&= \frac{m}{\nu} \left( 2 \Al(\a) + 2(k+md)\Big(n_{z_n}(\psi) - n_z(\psi) - \sum^n_{j=1} \left(n_{z_j} (\psi) - n_{u_j}(\psi) \right)\Big)   - n(k+md) + n^2 d  \right)
\end{align*}
thus proving \eqref{eq: agd-c1-a}. Finally, \eqref{eq: agd-c1-q} follows immediately from \eqref{eq: agd-alex} and \eqref{eq: agd-c1-a}.
\end{proof}

\begin{remark}
The first Chern class formulas in Proposition \ref{prop: alex-triangle} have their $\spincs_w$ version as well. For example, 
\begin{align*}
   & \gen{c_1(\spincs_w(\psi)), [P_\gamma]}\\
&=\gen{c_1(\spincs_z(\psi) - \PD[C]), [P_\gamma]}\\
&=\gen{c_1(\spincs_z(\psi) ) - 2\PD[C], [P_\gamma]}\\
&= 2 \Al(\x) + 2d n_w(\psi) - 2d n_z(\psi) - k, 
\end{align*}
which gives the $\spincs_w$ version of (\ref{eq: abg-c1-x}). The same computation applies to (\ref{eq: abg-c1-q}), (\ref{eq: adb-c1-a}) and (\ref{eq: adb-c1-x}) as well (using the core disk in each corresponding cobordism). For the $(\alpha,\delta,\gamma)$ triangles the two versions coincide due to the absence of the $\beta$ curves. 
\end{remark}

\begin{remark} \label{re: nprime}
Given a suitable diagram $(\Sigma, \bm\alpha, \bm\beta, \bm\gamma, \bm\delta, w,z,z_n)$ for a fixed $n,$ we can construct $P_{n',\gamma}$ and  $P_{n',\delta}$ for all the $n'$ with $1\leq n' \leq n.$ Simply by going through the same proof process, the results state for $n$ in the previous proposition holds (simultaneously) for all the $n'$ as well. Note that the Alexander gradings are dependent on $n$, even though it is not explicit in the notation. Adopting this point of view, in general the results stated for $n$ hold simultaneously for all  $n'$  with $1\leq n' \leq n$ for the rest of the paper, substituting $z_{n'} $ and $u_{n'}$ as appropriate. 

\end{remark}

\subsubsection{Spin$^c$ structures in cobordisms.} \label{sssec: spinc}
Proposition \ref{prop: alex-triangle} can help us understand the spin$^c$ structures in cobordisms. We focus on the cobordism $W_\lambda$, induced by $(\Sigma,\bm\alpha,\bm\beta,\bm\gamma).$ As before, let $K\subset Y$ be a knot of order $d>0$ in $H_1(Y;\Z)$ and $F$ a rational surface for $K$. Recall that $[\partial F]= d\lambda - k \mu$ in $H_1(\partial(Y-\text{nbh}(K))).$ Inside $W_\lambda$, let $C$ denote the core of the $2$-handle attached to $Y$, $C^*$  the cocore. Then $[C]$ (resp.~ $[C^*]$) generates $H_2(W,Y)$ (resp.~ $H_2(W,Y_\lambda)$). We abuse the notation and use $[C]$ and $[C^*]$ to denote the corresponding classes in $H_2(W)$ as well. Finally let $\hat F$ be the capped seifert surface in  $W_\lambda$, formed by capping a rational surface $F$ with $d$ parallel copies of $C$. Since $[\hat F]$ maps to $d[C]$ in $H_2(W,Y)$ and to $k[C^*]$ in $H_2(W, Y_\lambda(K))$, it follows that $[\hat F]^2=dk$.

In \cite[Section 2.2]{rational}, Ozsv\'ath and Szab\'o  defined the map
\[
G_{Y,K} \co \ul\Spin^c(Y,K) \to \Spin^c(Y),
\]
which is equivariant with respect to the restriction map
\[
H^2(Y,K;\Z) \to H^2(Y;\Z).
\]
The fibers of $G_{Y,K}$ are exactly the orbits of $\ul\Spin^c(Y,K)$ under the action of $\gen{\PD[\mu]} \subset H^2(Y,K;\Z)$. For any $\x \in \T_\alpha \cap \T_\beta,$  $G_{Y,K}(\ul\spincs_{w,z}(\x)) = \spincs_w(\x). $ They also construct a bijection
\[
E_{Y, \lambda, K} \co \Spin^c(W_\lambda) \to \ul\Spin^c(Y,K)
\]
characterized by the property that for $\psi \in \pi_2(\x, \Theta_{\beta\gamma}, \q)$,
\[
E_{Y,\lambda, K}(\spincs_w(\psi)) = \ul\spincs_{w,z}(\x) + (n_z(\psi) - n_w(\psi))\PD[\mu].
\]
As noted by Hedden and Levine, (\ref{eq: abg-c1-x}) in Proposition \ref{prop: alex-triangle} allows us to give an explicit and diagram-independent description of $E_{Y,\lambda, K}$ as follows
\begin{definition}\label{def: spincbij}
For any $\spincv \in \Spin^c(W_\lambda) $,   $E_{Y,\lambda, K}(\spincv)$ is the relative spin$^c$ structure satisfying
\[
G_{Y,K}(E_{Y,\lambda,K}(\spincv)) = \spincv|_Y \quad \text{and} \quad
\AlNorm_{Y,K}(E_{Y,\lambda, K}(\spincv)) = \frac{\gen{c_1(\spincv), [\hat F]} + k}{2d} .
\]
\end{definition}
We claim the two definitions coincide. Note that any relative spin$^c$ structure is determined by the pair $(G_{Y,K},\AlNorm_{Y,K})$. Given $\spincv \in \Spin^c(W_\lambda) $, suppose $\spincs_w(\psi)=\spincv$ for some $\psi \in \pi_2(\x, \Theta_{\beta\gamma}, \q)$. First we have  $G_{Y,K}(E_{Y,\lambda, K}(\spincv)) = G_{Y,K}(\ul\spincs_{w,z}(\x))$, since the action of $\PD[\mu]$ falls into the same orbit of $\Spin^c(Y)$.  Then according to   (\ref{eq: abg-c1-x}),
\begin{align*}
    \AlNorm_{Y,K}(E_{Y,\lambda, K}(\spincs_w(\psi)))&= \AlNorm(\x) + n_w(\psi) - n_z(\psi)\\
    &=\frac{\gen{c_1(\spincs_z(\psi)), [\hat F]} - k}{2d}\\
    &=\frac{\gen{c_1(\spincs_w(\psi))+2PD[C], [\hat F]} - k}{2d}\\
    & =\frac{\gen{c_1(\spincs_w(\psi)), [\hat F]} + k}{2d}\\
    & =\frac{\gen{c_1(\spincv), [\hat F]} + k}{2d}
\end{align*}
as required. 

Since $\ul\Spin^c(Y,K)$ and  $\ul\Spin^c(Y_\lambda,K_\lambda)$ (where $K_\lambda = K_{1,\lambda}$ is the dual knot) only depend on the knot complement, they are canonically identified. Therefore there is also a bijection between $\Spin^c(W_\lambda) $ and  $\ul\Spin^c(Y_\lambda,K_\lambda)$. Interestingly, Proposition \ref{prop: alex-triangle} further provides a bijection between $\Spin^c(W_\lambda) $ and  $\ul\Spin^c(Y_\lambda,K_{n,\lambda})$, and thus a bijection between $\ul\Spin^c(Y,K)$ and  $\ul\Spin^c(Y_\lambda,K_{n,\lambda})$, even though their knot complements are distinct.

\begin{definition} \label{def: Enbij}
For any $\spincv \in \Spin^c(W_\lambda) $,  suppose $\spincs_w(\psi)=\spincv$ for some $\psi \in \pi_2(\x, \Theta_{\beta\gamma}, \q)$.
Let 
\[
E_{Y_\lambda, K_{n,\lambda}} \co \Spin^c(W_\lambda) \to  \ul\Spin^c(Y_\lambda,K_{n,\lambda})
\]
be the map
 \[
E_{Y_\lambda, K_{n,\lambda}}(\spincs_z(\psi)) = \ul\spincs_{w,z_n}(\q) + (n_{z_n}(\psi) - n_w(\psi))\PD[K].
\]
Or equivalently, if $\spincs_z(\psi)=\spincv$,
 \[
\qquad E_{Y_\lambda, K_{n,\lambda}}(\spincs_w(\psi)) = \ul\spincs_{w,z_n}(\q) + (n_{z_n}(\psi) - n_w(\psi) - n)\PD[K].
\]
\end{definition}

And the following diagram-independent reinterpretation:

\begin{lemma}\label{le: spincbij}
For any $\spincv \in \Spin^c(W_\lambda) $, the map  $E_{Y_\lambda, K_{n,\lambda}} \co \Spin^c(W_\lambda) \to  \ul\Spin^c(Y_\lambda,K_{n,\lambda})$ is a bijection, characterized by
\[
G_{Y_\lambda,K_{n,\lambda}}(E_{Y_\lambda, K_{n,\lambda}}(\spincv)) = \spincv|_{Y_\lambda} \quad \text{and} \quad
\AlNorm_{Y_\lambda,K_{n,\lambda}}(E_{Y_\lambda, K_{n,\lambda}}(\spincv)) = \frac{\gen{c_1(\spincv), n[\hat F]} - n^2 d}{2k}.
\]
\\
Values of $\AlNorm_{Y_\lambda,K_{n,\lambda}}(E_{Y_\lambda, K_{n,\lambda}}(\spincv))$ for all the $\spincv$ with the same restriction in $\Spin^c(Y_\lambda(K))$  form a $\Q/n\Z$ coset.  
\end{lemma}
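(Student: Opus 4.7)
The plan is to follow the template the excerpt already uses to reinterpret $E_{Y,\lambda,K}$, now running the argument through identity \eqref{eq: abg-c1-q} in Proposition \ref{prop: alex-triangle} in place of \eqref{eq: abg-c1-x}. Given $\spincv \in \Spin^c(W_\lambda)$, realize it as $\spincs_z(\psi)$ for a triangle $\psi \in \pi_2(\x, \Theta_{\beta\gamma}, \q)$. The key homological input is the identification $[P_{n,\gamma}] = n[P_\gamma] = n[\hat F]$ in $H_2(W_\lambda)$, which follows from the homotopy construction of $P_{n,\gamma}$ just as $[P_{n,\delta}] = n[P_\delta]$ was noted in Section \ref{ssec: domains}.

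For the Alexander-grading formula, I would divide \eqref{eq: abg-c1-q} by $2k$, substitute $[P_{n,\gamma}] = n[\hat F]$ and $\Al(\q) = k\AlNorm_{w,z_n}(\q)$, and rearrange to obtain
\[
\AlNorm_{w,z_n}(\q) = \frac{\gen{c_1(\spincv), n[\hat F]} - n^2 d}{2k} - (n_{z_n}(\psi) - n_w(\psi)).
\]
The trailing shift is exactly absorbed by the $\PD[K]$-correction built into Definition \ref{def: Enbij}: each unit of $\PD[K]$ raises $\AlNorm_{Y_\lambda, K_{n,\lambda}}$ by $1$, by the same computation that shows $\PD[\mu]$ raises $\AlNorm_{Y,K}$ by $1$ in the analogous reinterpretation of $E_{Y,\lambda,K}$. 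The restriction condition $G_{Y_\lambda, K_{n,\lambda}}(E_{Y_\lambda, K_{n,\lambda}}(\spincv)) = \spincv|_{Y_\lambda}$ is then formal: $G_{Y_\lambda, K_{n,\lambda}}(\ul\spincs_{w,z_n}(\q)) = \spincs_w(\q) = \spincv|_{Y_\lambda}$, while $\PD[K]$ lies in the kernel of the restriction $H^2(Y_\lambda, K_{n,\lambda}) \to H^2(Y_\lambda)$.

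Bijectivity is then automatic, since a relative spin$^c$ structure is pinned down by the pair $(G_{Y_\lambda, K_{n,\lambda}}, \AlNorm_{Y_\lambda, K_{n,\lambda}})$, and the derived formula realises every admissible pair as $\spincv$ varies over $\Spin^c(W_\lambda)$. For the final coset claim, two spin$^c$ structures on $W_\lambda$ with common restriction to $Y_\lambda$ differ by a class $\eta$ in the image of $H^2(W_\lambda, Y_\lambda) \to H^2(W_\lambda)$; this image is cyclic, with generator satisfying $\gen{\eta, [\hat F]} = [C]\cdot[\hat F] = k$ (read off from $[\hat F]^2 = dk$ together with $[\hat F]\mapsto d[C]$ in $H_2(W_\lambda, Y)$). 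Since $c_1$ shifts by $2\eta$, the Alexander formula shifts by $\tfrac{2nk}{2k} = n$, giving exactly the $\Q/n\Z$ coset. I expect the main obstacle to be the bookkeeping of the various $\PD[\mu]$- and $\PD[K]$-shifts across the canonical identification $\ul\Spin^c(Y,K) \cong \ul\Spin^c(Y_\lambda, K_{n,\lambda})$, ensuring full compatibility with the reinterpretation of $E_{Y,\lambda,K}$ recorded earlier.
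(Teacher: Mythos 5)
Your proposal is correct and follows essentially the same route as the paper's proof: both use Definition~\ref{def: Enbij} to reduce the computation to the diagrammatic quantity $\AlNorm(\q)+n_{z_n}(\psi)-n_w(\psi)$, both invoke \eqref{eq: abg-c1-q} (hence implicitly $[P_{n,\gamma}]=n[\hat F]$) to rewrite this as the stated $c_1$-formula, and both verify the $\Q/n\Z$ coset claim by observing that the $\PD[C]$-action shifts $\gen{c_1(\cdot),n[\hat F]}$ by $2nk$.
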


\begin{proof}
Fix $\spinct \in \Spin^c(Y_\lambda(K)),$ the  spin$^c$ structures in $\Spin^c(W_\lambda)$ that restricts to $\spinct$ form an orbit with the action of $\PD[C]$. Denote by $\spincv$ any such a spin$^c$ structure from this orbit.  Since
\[
\gen{c_1(\spincv + \PD[C]), n[\hat F]} = \gen{c_1(\spincv), n[\hat F]} + 2nk,
\]
the values of $\AlNorm_{Y_\lambda,K_{n,\lambda}}(E_{Y_\lambda, K_{n,\lambda}}(\spincv))$ in Lemma \ref{le: spincbij}
form a $\Q/n\Z$ coset. On the other hand, suppose $\spincs_z(\psi)=\spincv$ for some $\psi \in \pi_2(\x, \Theta_{\beta\gamma}, \q)$, then we have
\begin{align*}
    G_{Y_\lambda,K_{n,\lambda}}(E_{Y_\lambda, K_{n,\lambda}}(\spincs_z(\psi))) &= G_{Y_\lambda,K_{n,\lambda}}(\ul\spincs_{w,z_n}(\q))  = \spincv|_{Y_\lambda}\\
    \AlNorm_{Y_\lambda,K_{n,\lambda}}(E_{Y_\lambda, K_{n,\lambda}}(\spincs_z(\psi))) &= \AlNorm(\q) + n_{z_n}(\psi) - n_w(\psi)\\
    &=\frac{\gen{c_1(\spincv), n[\hat F]} - n^2 d}{2k}.
\end{align*}
as required. The last two equalities are according to Definition \ref{def: Enbij}, and (\ref{eq: abg-c1-q}) respectively. (Note that the image   $E_{Y_\lambda, K_{n,\lambda}}(\spincv)$ forms an orbit in $\ul\Spin^c(Y_\lambda,K_{n,\lambda})$ with the action of $\PD[K]$.)
\end{proof}

Lemma \ref{le: spincbij} (together with Definition \ref{def: spincbij})  concretely describe the bijection between $\ul\Spin^c(Y,K)$ and  $\ul\Spin^c(Y_\lambda,K_{n,\lambda})$. If we take $n=1$, it also recovers \cite[Corollary 4.5]{HL}. 

The spin$^c$ structures in $\Spin^c(W_\lambda)$ that has the same restriction in $\Spin^c(Y)$ form an orbit with the action given by $\PD[C^*]$. Their image under the bijection $E_{Y,\lambda,K}$ forms an orbit with the action given by  $\PD[\mu]$,  whose $\AlNorm_{Y,K}$ values form a $\Q/\Z$ coset. On the other hand, given $\spinct \in \Spin^c(Y_\lambda(K)),$ the spin$^c$ structures in $\Spin^c(W_\lambda)$ that restricts to $\spinct$ form an orbit with the action given by $\PD[C]$. Their image under the bijection $E_{Y,\lambda,K}$ forms an orbit with the action given by  $\PD[K]$, whose $\AlNorm_{Y,K}$ values have step length $k/d$ and $G_{Y,K}$ is of period $d$.

\subsubsection{Alexander grading shifts on holomorphic rectangles.}
Following Hedden-Levine's approach, we introduce a function $\MultComb$ to help with the computation. Note that the definition is adjusted to reflect the changes we made on the relative periodic domains.  For any domain $S$, define
\begin{align}\label{eq: mult-comb}
 \nonumber \MultComb(S) =   -nd n_w(S) - (k+md-nd) n_z(S) &+ (k+md) n_{z_n}(S)\\
&- md \sum_{j=1}^n\Big( n_{z_j}(S) - n_{u_j}(S) \Big).
\end{align}
Clearly, the definition of $\MultComb$ depends on $n$, even though we omit it in the notation.
Similar to the function defined by Hedden and Levine, for any multi-periodic domain $S$ (including those with nonzero multiplicity at $z$), we claim $\MultComb(S)=0$. To see this, first observe any domain is a linear combination of $P_\gamma$, $Q$, $\Sigma$, thin domains, and  $(\alpha,\beta)$ periodic domains with $n_z=0$. Then one can check that $\MultComb$ vanishes for each of these, proving the claim. Note that for different types of domains, the formula for $\MultComb(S)$ can be simplified  significantly, depending on which basepoints are in the same regions. We record the result as follows (compare with the table in \cite[Section 5.2]{HL}):
\begin{center}
  \begin{tabular}{|c|c|}
     \hline
     Type of domain & $\MultComb(S)$ \\ \hline
     $(\alpha,\beta), (\beta,\gamma)$ and $(\beta,\delta)$ & $nd(n_z(S) - n_w(S))$ \\
     $(\alpha,\gamma)$ & $k(n_{z_n}(S) - n_w(S))$ \\
     $(\alpha,\delta)$ & $(k+md)(n_{z_n}(S) - n_w(S))$ \\
     $(\gamma,\delta)$ & $-mdn(n_{z_n}(S) - n_u(S))$ \\
     $(\alpha,\beta,\gamma)$  & $-nd n_w(S) - (k-nd) n_z(S) + k n_{z_n}(S)$  \\
     $(\alpha,\gamma,\delta)$ & $- (k+md) n_z(S) + (k+md) n_{z_n}(S) - md \sum_{j=1}^n\Big( n_{z_j}(S) - n_{u_j}(S) \Big)$ \\
     $(\alpha,\delta,\beta)$ & $-nd n_w(S) - (k+md-nd) n_z(S) + (k+md) n_{z_n}(S) $  \\
     $(\beta,\gamma,\delta)$ & $-nd n_w(S) + nd n_z(S)   -mdn(n_{z_n}(S) - n_u(S))$  \\
     \hline
   \end{tabular}
\end{center}
Note that on $(\gamma,\delta)$ and $(\beta,\gamma,\delta)$ domains, due to the absence of $\alpha$ curves,  all $n_{u_j}$ takes the same value for $j=1,\cdots, n$, which we simply denote by $n_u$ in this context, and $n_{z_j}=n_z$($=n_w$, if further without $\beta$ curves). Now we are ready to compute the Alexander grading shifts on rectangles. Compare \cite[Proposition 5.6]{HL}.

\begin{proposition} \label{prop: alex-rectangle}
Let $\x \in \T_\alpha \cap \T_\beta$, $\q \in \T_\alpha \cap \T_\gamma$, and $\a \in \T_\alpha \cap \T_\delta$. Assume also $\a$ uses $p_l$, for some $l=b-m,\cdots,b-1$ in the winding region.
\begin{enumerate}
\item \label{it: rec1}
For any $\rho \in \pi_2(\x, \Theta_{\beta\gamma}, \Theta_{\gamma\delta}, \a)$,
\begin{align}
\label{eq: abgd-alex}
n\Al(\x) - \Al(\a) &= \MultComb(\rho)  -\frac{n(k+md) - n^2 d}{2} \\
\label{eq: abgd-c1-Pg}
\gen{c_1(\spincs_z(\rho)), [P_\gamma]} &= 2 \Al(\x) + 2d n_w(\rho) - 2d n_z(\rho) +k \\
\label{eq: abgd-c1-R}
\gen{c_1(\spincs_z(\rho)), n[R]}
&= \frac{m}{\nu}\Bigg( 2 \Al(\a)  +  2(k+md)\Big(n_{z_n}(\rho) - n_z(\rho)  -\sum_{j=1}^n\left( n_{z_j}(\rho) - n_{u_j}(\rho) \right)  \Big)\\
\nonumber &\qquad - n(k + md) + n^2 d \Bigg) \\
\label{eq: abgd-c1-Q}
\gen{c_1(\spincs_z(\rho)), n[Q]} &= -2m\sum_{j=1}^n\left( n_{z_j}(\rho) - n_{u_j}(\rho) \right) -nm.
\end{align}

\item \label{it: rec2}
For any $\rho \in \pi_2(\q, \Theta_{\gamma\delta}, \Theta_{\delta\beta}, \x)$,
\begin{align}
\label{eq: agdb-alex}
\Al(\q) - n\Al(\x) &= \MultComb(\rho) + \frac{nk- n^2d}{2} \\
\label{eq: agdb-c1-R}
\gen{c_1(\spincs_z(\rho)), n[R]} &= \frac{m}{\nu} \left( 2 \Al(\q) - 2k \sum_{j=1}^n\left( n_{z_j}(\rho) - n_{u_j}(\rho) \right)  - nk  + n^2 d \right) \\
\label{eq: agdb-c1-Pd}
\gen{c_1(\spincs_z(\rho)), [P_\delta]} &= 2\Al(\x) + 2d n_z(\rho) - 2d n_w(\rho)  + (k+md) \\
\label{eq: agdb-c1-Q}
\gen{c_1(\spincs_z(\rho)), n[Q]} &= 2m \left( -n_z(\rho) + n_{z_n}(\rho) -  \sum_{j=1}^n\left( n_{z_j}(\rho) - n_{u_j}(\rho) \right)  \right) -nm.
\end{align}

\item \label{it: rec3}
For any $\rho \in \pi_2(\a, \Theta_{\delta\beta}, \Theta_{\beta\gamma}, \q)$,
\begin{align}
\label{eq: adbg-alex}
\Al(\a) - \Al(\q) &=  \MultComb(\rho) - \frac{nmd}{2} \\
\label{eq: adbg-c1-Pd}
\gen{c_1(\spincs_z(\rho)), [P_{n,\delta}]}
&= 2\Al(\a)  +2(k+md) \left( n_z(\rho) - n_{z_n}(\rho) + \sum_{j=1}^n\left( n_{z_j}(\rho) - n_{u_j}(\rho) \right)   \right) + n^2 d \\
\label{eq: adbg-c1-Pg}
\gen{c_1(\spincs_z(\rho)), [P_{n,\gamma}]} &= 2 \Al(\q) + 2k \sum_{j=1}^n\left( n_{z_j}(\rho) - n_{u_j}(\rho) \right) + n^2 d \\
\label{eq: adbg-c1-Q}
\gen{c_1(\spincs_z(\rho)), [Q]} &= 2 n_w(\rho) - 2 n_z(\rho)  + m
\end{align}
\end{enumerate}
\end{proposition}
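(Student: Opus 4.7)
The plan is to follow the same degeneration strategy used to prove Proposition~\ref{prop: alex-triangle}, treating each rectangle as the concatenation of two triangles along a diagonal degeneration and then invoking the triangle formulas already established. The function $\MultComb$ is designed exactly to absorb the bookkeeping of multiplicities at the various basepoints; once the two triangle-level identities are summed, the Alexander grading formula for the rectangle drops out after a short simplification.

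Concretely, for case (\ref{it: rec1}) I would degenerate $\rho \in \pi_2(\x, \Theta_{\beta\gamma}, \Theta_{\gamma\delta}, \a)$ as $\rho = \phi_1 * \phi_2$, with $\phi_1 \in \pi_2(\x, \Theta_{\beta\gamma}, \q)$ and $\phi_2 \in \pi_2(\q, \Theta_{\gamma\delta}, \a)$ for an intermediate $\q \in \T_\alpha \cap \T_\gamma$, adjusting by a multiple of $Q$ or thin domains if the composite domain is not already a triangle (exactly as in the proof of \eqref{eq: agd-alex}). Summing \eqref{eq: abg-alex} and \eqref{eq: agd-alex}, the constants combine to $-\tfrac{n(k+md)-n^2d}{2}$. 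The remaining basepoint terms collapse to $\MultComb(\rho)$ using two telescoping observations: first, $n_w(\phi_2) = n_z(\phi_2)$ because $\phi_2$ avoids $\beta_g$; second, for an $(\alpha, \beta, \gamma)$ triangle $\phi_1$ the regions containing $z_{j-1}$ and $u_j$ are separated only by $\delta_g$, which is absent, so $n_{z_{j-1}}(\phi_1) = n_{u_j}(\phi_1)$ and hence
\[
\sum_{j=1}^{n} \left( n_{z_j}(\phi_1) - n_{u_j}(\phi_1) \right) = n_{z_n}(\phi_1) - n_z(\phi_1)
\]
by telescoping. Cases (\ref{it: rec2}) and (\ref{it: rec3}) are handled by the completely analogous decompositions, with the roles of the three types of triangles permuted and the parallel telescoping identities coming from whichever of $\beta_g, \gamma_g, \delta_g$ is absent.

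For the four first Chern class evaluations in each case I would exploit the fact, recorded in Section~\ref{ssec: cob}, that each of $[P_\gamma], [P_\delta], [P_{n,\gamma}], [P_{n,\delta}], [R], [Q]$ is supported in a particular three-subscript subcobordism. Thus $\gen{c_1(\spincs_z(\rho)), [P_\gamma]}$ depends only on $\phi_1$ and is read off from \eqref{eq: abg-c1-x}; $\gen{c_1(\spincs_z(\rho)), n[R]}$ depends only on $\phi_2$ and is read off from \eqref{eq: agd-c1-a}; and similarly in the other two cases. To access $[Q]$, which lives in $X_{\beta\gamma\delta}$, I would use the \emph{other} diagonal degeneration of $\rho$ into an $(\alpha, \beta, \delta)$ triangle glued to the closed $(\beta, \gamma, \delta)$ triangle $\tau_0^+$, together with the evaluation $\gen{c_1(\spincs_z(\tau_0^+)), [Q]} = -m$ quoted in Section~\ref{ssec: cob} and the self-intersection $[Q]^2 = -m$ for the necessary $rQ$ correction, paralleling the bookkeeping at the end of the proof of \eqref{eq: agd-c1-a}.

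The main obstacle, as always in this type of argument, is identifying the precise integer $r$ for which $\DD(\rho)$ differs from $\DD(\phi_1) + \DD(\phi_2)$ by $rQ$ (or by a combination of $Q$ and thin domains) in each of the three cases and in each of the two diagonal degenerations; this is what controls both the telescoping identities above and the $[Q]$-correction for the Chern class. Once this combinatorics is settled, the rest of the proposition is a purely mechanical assembly of the triangle formulas from Proposition~\ref{prop: alex-triangle}, with the table of $\MultComb$-values recorded in Section~\ref{ssec: spinc} providing the simplifications needed to recognize the answer.
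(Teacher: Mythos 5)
Your overall strategy for the Alexander grading formulas is essentially the same as the paper's: degenerate the rectangle $\rho$ into two triangles $\psi_1 * \psi_2$, add the triangle formulas, and observe that the constants and $\MultComb$-terms combine. Your telescoping observation $n_{z_{j-1}}(\psi_1) = n_{u_j}(\psi_1)$, valid since $\psi_1$ involves no $\delta$ curve, is correct and a useful addendum. The paper packages the same bookkeeping more compactly as the statement that $\MultComb$ vanishes on the quadruply-periodic domain $S = \DD(\rho) - \DD(\psi_1 * \psi_2)$, so the sum of the two triangle $\MultComb$-values is $\MultComb(\rho)$.

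However, your treatment of the first Chern class evaluations has a gap, and it shows up in an inconsistency in your own plan. You state that $\gen{c_1(\spincs_z(\rho)), [P_\gamma]}$ ``depends only on $\phi_1$ and is read off from \eqref{eq: abg-c1-x}.'' But \eqref{eq: abg-c1-x} expresses the evaluation in terms of $n_w(\phi_1) - n_z(\phi_1)$, whereas the claimed formula \eqref{eq: abgd-c1-Pg} is in terms of $n_w(\rho) - n_z(\rho)$. These differ by $n_w(S)$ (using $n_z(S)=0$ and $n_w(\psi_2)=n_z(\psi_2)$), and $n_w(S)$ is not automatically zero. Indeed, $[P_\gamma] \cdot [Q] = -md \neq 0$ in $X_{\alpha\beta\gamma\delta}$, so if the periodic domain $S$ has any $Q$-component (as you allow by ``adjusting by a multiple of $Q$''), the $[P_\gamma]$-evaluation really does pick up a correction; the same applies to the $[R]$-evaluation since $[R] \cdot [Q] \neq 0$. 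You do acknowledge the need for an ``$rQ$ correction'' and the self-intersection $[Q]^2 = -m$ when treating $[Q]$ via the other diagonal degeneration — but then silently assume the correction is invisible for $[P_\gamma]$ and $[R]$, which it is not. The paper handles this uniformly: it decomposes $S = x P_\gamma + y R$ up to thin domains (exploiting $[P_\gamma]\cdot[R]=0$ from the disjoint support in $X_{\alpha\beta\gamma\delta}$), solves for $x = n_w(S)/k$ and $y$ from multiplicities, and carries the correction $2x[P_\gamma]^2 + 2y[R]\cdot[P_\gamma]$ through explicitly; this is exactly what converts the $\psi_1$-multiplicities into $\rho$-multiplicities in the end. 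Your proof would need this step (or a separate argument that the relevant multiplicities of $S$ vanish) to be complete. Finally, for $\gen{c_1, n[Q]}$, the paper derives it purely algebraically from the $[P_\gamma]$- and $n[R]$-evaluations using the linear relation $Q = \tfrac{\nu}{k} R - \tfrac{m}{k} P_\gamma$; this is considerably lighter than the second-diagonal degeneration you propose, though the latter would also work if executed carefully.
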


\begin{proof}
For part \eqref{it: rec1}, we consider $(\alpha, \beta, \gamma, \delta)$ rectangles.

For any $\x \in \T_\alpha \cap \T_\beta$, $\a \in \T_\alpha \cap \T_\delta$, and $\rho \in \pi_2(\x, \Theta_{\beta\gamma}, \Theta_{\gamma\delta}, \a)$, choose $\q \in \T_\alpha \cap \T_\gamma$, $\psi_1 \in \pi_2(\x, \Theta_{\beta\gamma}, \q)$, and $\psi_2 \in \pi_2(\q, \Theta_{\gamma\delta}, \a)$ such that $\spincs_z(\psi_1) = \spincs_z(\rho)|_{X_{\alpha\beta\gamma}}$ and $\spincs_z(\psi_2) = \spincs_z(\rho)|_{X_{\alpha\gamma\delta}}$. Moreover, by adding copies of $\Sigma$, which does not change the spin$^c$ structure condition, we may assume that $n_z(\rho) = n_z(\psi_1) + n_z(\psi_2)$. Hence, $S = \DD(\rho) - \DD(\psi_1 * \psi_2)$ is a quadruply periodic domain with $n_z(S)=0$. Since the function $\MultComb$ vanishes on all periodic domains, we have:
\begin{align*}
\Al(\x) - \Al(\a) &= (\Al(\x) - \Al(\q)) + (\Al(\q) - \Al(\a)) \\
&= \MultComb(\psi_1) - \frac{nk-n^2 d}{2} + \MultComb(\psi_2) - \frac{nmd}{2}
&= \MultComb(\rho) - \frac{n(k+md)-n^2 d}{2}
\end{align*}
which proves \eqref{eq: abgd-alex}.

Next, we consider the spin$^c$ evaluations. Up to thin domains, we have $S = x P_\gamma + y R$, where the decomposition is chosen considering fact that classes $[P_\gamma]$ and $[R]$ can be represented by disjoint surfaces in $X_{\alpha\beta\gamma\delta}$, so $[P_\gamma] \cdot [R] = 0$ in the intersection form on $X_{\alpha\beta\gamma\delta}$. We need to solve for $x$ and $y.$ Observe $n_w(R)=n_z(R)=0,$ from which we solve
\[
x =  \frac{n_w(S)}{k}.
\]
Similarly we have $-n_{z_n}(P_\gamma) + \sum_{j=1}^n\Big( n_{z_j}(P_\gamma) - n_{u_j}(P_\gamma) \Big)=0,$ so
\[
 y =  -\frac{\nu}{nk}\left(-n_{z_n}(S)+\sum_{j=1}^n\Big( n_{z_j}(S) - n_{u_j}(S) \Big)\right).
\]
Note that $x$ and $y$ need not be integers.  Using \eqref{eq: abgd-PD-self-int}, \eqref{eq: abgd-int-form}, and \eqref{eq: abg-c1-x}, we compute:
\begin{align*}
\gen{c_1(\spincs_z(\rho)), [P_\gamma]}
&= \gen{c_1(\spincs_z(\psi_1 * \psi_2) + \PD[S]), [P_\gamma]} \\
&= \gen{c_1(\spincs_z(\psi_1 * \psi_2)) + 2\PD[S], [P_\gamma]} \\
&= \gen{c_1(\spincs_z(\psi_1)), [P_\gamma]} + 2x [P_\gamma]^2 + 2y[R] \cdot [P_\gamma] \\
&= 2 \Al(\x) + 2d(n_w(\psi_1)-n_z(\psi_1)) + k  + 2dk\cdot \frac{ n_w(S)}{k} \\
&= 2 \Al(\x) + 2d(n_w(\psi_1)-n_z(\psi_1)) + k + 2d( n_w(\rho) - n_w(\psi_1) - n_w(\psi_2)) \\
&= 2 \Al(\x) + 2d(n_w(\rho) - n_z(\psi_1) -n_w(\psi_2)) +k  \\
&= 2 \Al(\x) + 2d(n_w(\rho) - n_z(\psi_1) -n_z(\psi_2)) +k  \\
&= 2 \Al(\x) + 2d(n_w(\rho) -n_z(\rho)) +k,
\end{align*}
which proves \eqref{eq: abgd-c1-Pg}. (Note the similarity with \eqref{eq: abg-c1-x}.) Formula \eqref{eq: abgd-c1-R} follows from a similar computation using  \eqref{eq: agd-c1-a}, as shown below. 
\begin{align*}
&\gen{c_1(\spincs_z(\rho)), n[R]}\\
&= \gen{c_1(\spincs_z(\psi_1 * \psi_2) + \PD[S]), [R]} \\
&= \gen{c_1(\spincs_z(\psi_2)), n[R]} + 2x n[P_\gamma] \cdot [R] + 2yn[R]^2 \\
&= \frac{m}{\nu}\Bigg( 2 \Al(\a)  +  2(k+md)\Big(n_{z_n}(\psi_2) - n_z(\psi_2)  -  \sum_{j=1}^n \left( n_{z_j}(\psi_2) - n_{u_j}(\psi_2) \right)\Big) - n(k + md) + n^2 d \Bigg) \\
&-\frac{2m(k+md)}{\nu}\left(-n_{z_n}(S)+\sum_{j=1}^n\Big( n_{z_j}(S) - n_{u_j}(S) \Big)\right) \\
&=  \frac{m}{\nu}\Bigg( 2 \Al(\a)  +  2(k+md)\Big(n_{z_n}(\psi_2) - n_z(\psi_2)  -  \sum_{j=1}^n \left( n_{z_j}(\psi_2) - n_{u_j}(\psi_2) \right)  +n_{z_n}(S) \\
&  -\sum_{j=1}^n\left( n_{z_j}(S) - n_{u_j}(S) \right)\Big) - n(k + md) + n^2 d \Bigg)\\
&=  \frac{m}{\nu}\Bigg( 2 \Al(\a)  +  2(k+md)\Big(n_{z_n}(\psi_2) - n_z(\psi_2) + n_{z_n}(\psi_1) - n_z(\psi_1)  + n_{z_n}(\rho) - n_{z_n}(\psi_1) \\
& - n_{z_n}(\psi_2)-\sum_{j=1}^n\left( n_{z_j}(\rho) - n_{u_j}(\rho) \right)  \Big)- n(k + md) + n^2 d \Bigg)\\
&=\frac{m}{\nu}\Bigg( 2 \Al(\a)  +  2(k+md)\Big(n_{z_n}(\rho) - n_z(\rho)  -\sum_{j=1}^n\left( n_{z_j}(\rho) - n_{u_j}(\rho) \right)  \Big)- n(k + md) + n^2 d \Bigg)
\end{align*}

Finally, to prove \eqref{eq: abgd-c1-Q}, we have:
\begin{align*}
&\gen{c_1(\spincs_z(\rho)), n[Q]} \\
&= \frac{\nu}{k} \gen{c_1(\spincs_z(\rho)), n[R]} - \frac{m}{k}\gen{c_1(\spincs_z(\rho)), n[P_\gamma]} \\
&= \frac{m}{k} \Bigg( 2 \Al(\a)  +  2(k+md)\Big(n_{z_n}(\rho) - n_z(\rho)  -\sum_{j=1}^n\left( n_{z_j}(\rho) - n_{u_j}(\rho) \right)  \Big) \\
& \qquad - n(k + md) + n^2 d \Bigg) - \frac{m}{k} \left( 2 n\Al(\x) + 2nd\left(n_w(\rho) -n_z(\rho)\right) +nk \right) \\
&=\frac{m}{k} \Bigg( -2\MultComb(\rho)  +  2(k+md)\Big(n_{z_n}(\rho) - n_z(\rho)  -\sum_{j=1}^n\left( n_{z_j}(\rho) - n_{u_j}(\rho) \right)  \Big)\\
& \qquad  - 2nd\left(n_w(\rho) -n_z(\rho)\right) -nk \Bigg) \\
&=-2m\sum_{j=1}^n\left( n_{z_j}(\rho) - n_{u_j}(\rho) \right) -nm
\end{align*}
as required. This concludes the proof for part \eqref{it: rec1}.

For part \eqref{it: rec2}, namely the case about $(\alpha,\gamma,\delta,\beta)$ rectangles,  consider surfaces inside $X_{\alpha\gamma\delta\beta}$. We use the decomposition $S = x R + y P_\delta,$ where
\[
x=\frac{\nu}{n(k+md)}\sum_{j=1}^n\left( n_{z_j}(S) - n_{u_j}(S) \right) \quad \text{and} \quad y=\frac{n_w(S)}{k+md}.\quad \qquad \qquad 
\]

 Similarly, part \eqref{it: rec3}  is the case about $(\alpha,\delta,\beta,\gamma)$ rectangles, where surfaces are inside $X_{\alpha\delta\beta\gamma}$. We  use the decomposition $S = x P_\delta + y P_\gamma,$ where
\[
x=\frac{1}{nd}\left(n_{z_n}(S)- \sum_{j=1}^n\left( n_{z_j}(S) - n_{u_j}(S) \right) \right) \quad \text{and} \quad y=\frac{1}{nd}\sum_{j=1}^n\left( n_{z_j}(S) - n_{u_j}(S) \right).
\]
The rest of the computations for both of these cases are parallel to part \eqref{it: rec1} and left for the reader.
 \end{proof}

\section{The filtered large surgery formula} \label{sec: largesurgery}
In this section, we prove the filtered large surgery formula for the $(n,1)$--cable of the knot meridian. The proof mostly follows the framework in \cite[Section 5.5]{HL}, with minor adjustments.

As before, let $K\subset Y$ be a knot of order $d>0$ in $H_1(Y;\Z)$ and $F$ a rational surface for $K$. Recall that $W_{\lambda+m\mu}$ denotes the cobordism from $Y$ to $Y_{\lambda+m\mu}(K)$, induced by $(\Sigma,\bm\alpha,\bm\beta,\bm\delta),$ and turning the cobordism around, $W'_{\lambda+m\mu}$ is a cobordism from $Y_{\lambda+m\mu}(K)$ to $Y$, induced by $(\Sigma,\bm\alpha,\bm\delta,\bm\beta).$ For the ease of notation, we write $W_m=W_{\lambda+m\mu}$ and  $W'_m=W'_{\lambda+m\mu}.$ We abuse the notation, and will denote by $C$  the core of the $2$-handle and $\hat F$ the capped seifert surface in the cobordism $W'_m$ as well.

Given $\spincu \in \Spin^c(Y_{\lambda+m\mu}(K))$, let $\spincv$ denote a spin$^c$ structure on $W'_m$ that extends $\spincu$, we have
\[
\gen{c_1(\spincv + \PD[C]), [\hat F]}  = \gen{c_1(\spincv), [\hat F]} + 2(k+md),
\]
so the values of $\gen{c_1(\spincv), [\hat F]}$ taken over all such $\spincv$ would form a coset in $\Z/2(k+md)$. We recall the following definition from \cite{HL}.

\begin{definition}[Definition 5.7 in \cite{HL}] \label{def: xu}
For each $\spincu \in \Spin^c(Y_{\lambda+m\mu}(K))$, let $\spincx_\spincu$ denote the unique spin$^c$ structure on $W'_m$ extending $\spincu$ such that
\begin{equation} \label{eq: c1(xu)}
-2(k+md) < \gen{c_1(\spincx_\spincu), [\hat F]} \le 0,
\end{equation}
 Let $\spincy_\spincu = \spincx_\spincu + \PD[C]$, so that
\begin{equation} \label{eq: c1(yu)}
0 < \gen{c_1(\spincy_\spincu), [\hat F]} \le 2(k+md).
\end{equation}
and let $\spincs_\spincu = \spincx_\spincu|_Y = \spincy_\spincu|_Y - \PD[K]$.
Define
\begin{align}\label{eq: su-def}
s_\spincu =& \frac{1}{2d}(\gen{c_1(\spincx_\spincu), [\hat F]} + k + md)\\
=&\frac{1}{2d}(\gen{c_1(\spincy_\spincu), [\hat F]} - k - md),
\end{align}
so that
\begin{equation} \label{eq: su-bound}
-\frac{k+md}{2d} < s_\spincu \le \frac{k+md}{2d}.
\end{equation}
Finally, define
\begin{equation} \label{eq: Delta-u-def}
\Delta_\spincu = \absgr(F^\infty_{W'_m, \spincx_u}) = -\frac{(2ds_\spincu-k-md)^2}{4d(k+md)} + \frac14.
\end{equation}
\end{definition}

We define a pair of filtrations $\II_\spincu, \JJ_\spincu$ on $\CFKi(Y,K,\spincs_\spincu)$ by the formula
\begin{align}
\label{eq: Iu-def} \II_\spincu([\x,i,j]) &= \max\{i, j-s_\spincu\} \\
\label{eq: Ju-def} \JJ_\spincu([\x,i,j]) &= \max\{i-n, j-s_\spincu\} + \frac{2nd(s_\spincu - n) + k+ md}{2(k+md)} .
\end{align}

The following theorem strengthens the main result of \cite{Truong}, as it computes the (absolute) Alexander grading of generators in $\CFKi(Y_{\lambda+m\mu}(K), K_{n,\lambda + m\mu})$, not only the filtration levels, and  the knot $K$ could be in any rational homology sphere, not only in $S^3$.  
Compare \cite[Theorem 5.8]{HL}.
\begin{theorem} \label{thm: large-surgery}
If $m$ is sufficiently large, then for every $\spincu \in \Spin^c(Y_{\lambda+m\mu}(K))$, there is a doubly-filtered quasi-isomorphism whose grading shift equal to $\Delta_\spincu$
\[
\Lambda^{\infty}_\spincu \co \CFKi(Y_{\lambda+m\mu}(K), K_{n,\lambda + m\mu}, \spincu) \to \CFKi(Y,K,\spincs_\spincu),
\]
where the latter is equipped with the filtrations $\II_\spincu$ and $\JJ_\spincu$, making the diagrams
\[
\xymatrix{
\CFKi(Y_{\lambda+m\mu}(K), K_{n,\lambda + m\mu}, \spincu) \ar[r]^-{\Lambda^{\infty}_\spincu} \ar[d]^{=} & \CFKi(Y,K,\spincs_\spincu) \ar[d]^{v^\infty} \\
\CFi(Y_{\lambda+m\mu}(K), \spincu) \ar[r]^-{F^\infty_{W'_m, \spincx_\spincu}} & \CFi(Y,\spincs_\spincu)
}
\]
and
\[
\xymatrix{
\CFKi(Y_{\lambda+m\mu}(K), K_{n,\lambda + m\mu}, \spincu) \ar[r]^-{\Lambda^{\infty}_\spincu} \ar[d]^{=} & \CFKi(Y,K,\spincs_\spincu)  \ar[d]^{h^\infty_{\spincs, s_\spincu}} \\
\CFi(Y_{\lambda+m\mu}(K), \spincu) \ar[r]^-{F^\infty_{W'_m, \spincy_\spincu}} & \CFi(Y,\spincs_\spincu+\PD[K])
}
\]
commute up to chain homotopy.
\end{theorem}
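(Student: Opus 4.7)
The plan is to follow the framework of \cite[Theorem 5.8]{HL}, adapted to account for the extra basepoint $z_n$ and the $(n,1)$--cable structure. Take $m$ large enough so that the winding effects dominate, and realize $\CFKi(Y_{\lambda+m\mu}(K), K_{n,\lambda+m\mu}, \spincu)$ on the diagram $(\Sigma, \bm\alpha, \bm\delta^{m,b}, w, z, z_n)$ for an appropriately chosen $b$. Using the bijection $E_{Y_{\lambda+m\mu}, K_{n,\lambda+m\mu}}$ from Definition \ref{def: Enbij} (applied to $W_m$, whose spin$^c$ set coincides with that of $W'_m$), the choice $\spincx_\spincu$ in Definition \ref{def: xu} singles out a unique index $l \in \{b-m, \dots, b-1\}$ such that every generator in the $\spincu$--summand has the form $\x_l^{m,b}$ for some $\x \in \T_\alpha \cap \T_\beta$ representing $\spincs_\spincu$. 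Define $\Lambda^\infty_\spincu$ on generators by $[\x_l^{m,b}, i, j] \mapsto [\x, i, j-s_\spincu]$, up to the corrections dictated by the small-triangle formula \eqref{eq: smalltriangle}, so that by construction the $\II$-level of the image matches $\max\{i, j-s_\spincu\}$ as required by \eqref{eq: Iu-def}. The absolute grading shift is then $\Delta_\spincu$ from \eqref{eq: Delta-u-def}, via the standard $c_1^2$--computation for $F^\infty_{W'_m,\spincx_\spincu}$ using the intersection form.

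\textbf{Chain map property and $\JJ$--compatibility.} To see that $\Lambda^\infty_\spincu$ is a chain map and a quasi-isomorphism, invoke the standard large-surgery argument of \cite[Theorem 4.1]{integer}: for $m \gg 0$, every Maslov-index-one pseudoholomorphic disk in $(\Sigma, \bm\alpha, \bm\delta^{m,b})$ is matched either with a disk in $(\Sigma, \bm\alpha, \bm\beta)$ not crossing $\beta_g$ in the winding region, or is absorbed into the small-triangle correspondence. Compatibility with $\II_\spincu$ is then immediate since $n_w$ is preserved under this matching. For the new filtration $\JJ_\spincu$, apply \eqref{eq: adb-alex} of Proposition \ref{prop: alex-triangle} to the small triangles $\psi_{\x,l}^{m,b}$: combined with \eqref{eq: smalltriangle}, this computes $\AlNorm_{w,z_n}(\x_l^{m,b})$ and shows that the three ranges of $l$ reproduce exactly the piecewise-linear shape $\max\{i-n, j-s_\spincu\}$ of \eqref{eq: Ju-def}, modulo the displayed additive constant $\tfrac{2nd(s_\spincu - n) + k + md}{2(k+md)}$.

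\textbf{Commutative squares and main obstacle.} For the two homotopy-commutative diagrams, the cobordism maps $F^\infty_{W'_m, \spincx_\spincu}$ and $F^\infty_{W'_m, \spincy_\spincu}$ count holomorphic triangles in $(\Sigma, \bm\alpha, \bm\delta, \bm\beta, w, z)$; split these triangles according to whether they pass through $w$. Those avoiding $w$ correspond under $\Lambda^\infty_\spincu$ to the identity map $v^\infty$ on $A^\infty_{\xi_l}$, while those crossing $w$ (which lie in the $\spincy_\spincu$--summand, shifted by $\PD[C]$) correspond to the flip map $h^\infty_{\spincs, s_\spincu}$, exactly as in \cite[Section 5.5]{HL}. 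The main obstacle is verifying that these triangle counts respect the new $\JJ_\spincu$--filtration: this is precisely the content of part (3) of Proposition \ref{prop: alex-triangle}, with the correction term $\sum_{j=1}^n (n_{z_j} - n_{u_j})$ and the new basepoint $z_n$ entering in an essential way. Once one checks in each of the two cases that $n_{z_n}(\psi)$ along contributing triangles produces exactly the shift predicted by \eqref{eq: Ju-def}, the homotopy commutativity follows formally from the associativity of the $(\alpha,\delta,\beta)$--triangle counting maps, as in the $n=1$ case.
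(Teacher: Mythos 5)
Your proposal follows the same strategy as the paper. The paper likewise realizes $\spincu$ as strongly supported in a winding region (via \cite[Lemma 5.12]{HL}), defines $\Lambda^\infty_\spincu$ by counting $(\alpha,\delta,\beta)$--triangles with $\spincs_w(\psi)=\spincx_\spincu$ (formula \eqref{eq: Lambda-u-def}), observes that the statements about the grading shift, quasi-isomorphism, and the two commuting squares are inherited verbatim from Ozsv\'ath--Szab\'o and Hedden--Levine because those only see the underlying cobordism maps (which are independent of $z_n$), and then reduces the whole theorem to checking the new $\JJ_\spincu$--filtration on $\Lambda^\infty_\spincu$: this is done exactly as you indicate, by computing $\AlNorm_{w,z_n}(\x_l)$ from the small-triangle multiplicities \eqref{eq: smalltriangle} and matching the resulting three-case formula to $\max\{i-n,j-s_\spincu\}$ plus the additive constant of \eqref{eq: Ju-def}, then noting that a general triangle only increases $n_w$ and $n_z$ relative to the small one.

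Two small comments. First, your displayed formula $[\x_l^{m,b},i,j]\mapsto[\x,i,j-s_\spincu]$ is only the effect on the zero-multiplicity triangle at a generator with $l\leq 0$; the honest map shifts the $i$--coordinate by $n_w(\psi)$ (and $j$ accordingly), as in \eqref{eq: Lambda-u-def}. You acknowledge this with ``up to corrections,'' but writing the triangle count explicitly is cleaner, since the chain-map property and the quasi-isomorphism come for free from that description. Second, in your final paragraph you present verifying $\JJ_\spincu$--compatibility of the $(\alpha,\delta,\beta)$--triangle counts as a ``main obstacle'' for the two homotopy-commutative squares; that check is not actually needed there, because those squares assert only chain-homotopy commutativity after forgetting both filtrations, and this statement is literally \cite[Theorem 5.8]{HL} since the vertical forgetful maps kill the $z_n$--data. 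The new content is entirely in the filtration check on $\Lambda^\infty_\spincu$ itself.
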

We make some remarks about the theorem statements before moving on to the proof.

\begin{remark} First, since the filtered chain homotopy type  of the knot Floer complexes is a topological invariant of the pair  $(Y,K)$, we drop the basepoints from the statements. Note that even though the definition of the map $\Lambda^{\infty}_\spincu$(spelled out in the proof later in the section) depends on the basepoints, the map itself does not. Next, observe that the statements here are almost the same as those in \cite[Theorem 5.8]{HL}, except that $ K_{n,\lambda + m\mu}$ is in place for $K_{\lambda + m\mu}$. Indeed, disregarding the second filtration, the map $\Lambda^{\infty}_\spincu$ here is identical to the one in \cite[Theorem 5.8]{HL}. The only difference is that  $\CFKi(Y_{\lambda+m\mu}(K), K_{n,\lambda + m\mu}, \spincu)$ is a  refiltering of $\CFKi(Y_{\lambda+m\mu}(K), K_{\lambda + m\mu}, \spincu)$, due to the different placement of a second basepoint. Reflecting this change, the $\JJ_\spincu$ filtration on $\CFKi(Y,K,\spincs_\spincu)$ is adjusted accordingly. Finally, the Maslov grading shift is induced by cobordism maps  $F^\infty_{W'_m, \spincx_\spincu}$ and $F^\infty_{W'_m, \spincy_\spincu}$, respectively, independent of the choice of a second basepoint. Therefore the statement about the Maslov grading follows directly from Hedden-Levine's proof.
\end{remark}

The bulk of Theorem \ref{thm: large-surgery} is proved by Ozsv\'ath and Szab\'o, we only need to show $\Lambda^{\infty}_\spincu$ preserves the second filtration that is defined above by (\ref{eq: Ju-def}).

We focus on the Heegaard diagram $(\Sigma, \bm\alpha, \bm\beta, \bm\delta^{m,b}, w, z, z_n)$, see Figure \ref{fig: twistpurple} for an example. Recall from the discussion in Section \ref{ssec: domains} that  $q$ is the unique intersecting point of $\alpha_g$ and $\beta_g$ and $p_l$ for $l=b-m,\cdots,b-1$ are the intersecting points of  $\alpha_g$ and $\delta_g$, following the orientation of $\delta_g$. For $\x \in \T_\alpha\cap \T_\beta$ and $l \in \{b-m, \dots, b-1\}$, we define $\x_l^{m,b} \in \Gens(\bm\alpha, \bm\delta^{m,b})$ to be the point obtained by replacing $q$ with $p_l$ and taking ``nearest points'' in thin domains. There is a \emph{small triangle} $\psi_{\x,l}^{m,b} \in \pi_2(\x_l^{m,b}, \Theta_{\delta\beta}, \x)$ in the winding region satisfying
\begin{align*}
    (n_w(\psi_{\x,l}),n_z(\psi_{\x,l}),n_{z_n}(\psi_{\x,l}))=&
    \begin{cases}
    (l,0,0) & l\geq 0  \\
    (0,-l,0) & -n<l<0  \\
    (0,-l,-l-n) & l\leq -n.
    \end{cases}\\
\end{align*}

A key ingredient of Ozsv\'ath and Szab\'o's large surgery theorem is the notion that every spin$^c$ structure can be represented by generators in the winding region. Hedden and Levine defined the following stronger version:

\begin{definition}[Definition 5.10 in \cite{HL}]
We say $\spincu \in \Spin^c(Y_{\lambda+m\mu}(K))$ is \emph{strongly supported in the winding region} if
\begin{itemize}
    \item every $\a \in \T_\alpha \cap \T_\delta$ with $\spincs_w(\a)=\spincu$ is of the form $\x_l^{m,b}$ for some $l$;
    \item  moreover $c_1(\spincs_w(\psi^{m,b}_{\x,l})) = \spincx_\spincu$, or equivalently, $c_1(\spincs_z(\psi^{m,b}_{\x,l})) = \spincy_\spincu$.
\end{itemize} 
\end{definition}

 Recall that $\spincs_w(\a)=\spincs_z(\a)$ since in the diagram $(\Sigma,\bm\alpha,\bm\delta)$ basepoints $w$ and $z$ are interchangeable. However, $\spincs_w(\psi)$ and $\spincs_z(\psi)$ differ by $PD[C]$  ; $\spincs_w(\x)$ and $\spincs_z(\x)$ differ by $\PD[K]$. Hedden and Levine proved that this improved condition can be satisfied.

\begin{lemma}[Lemma 5.12 in \cite{HL}]\label{lemma: strongly-supported}
There exists an $M$ such that for all $m\geq M$, for every  $\spincu \in \Spin^c(Y_{\lambda+m\mu}(K))$, there exists some $b$ such that $\spincu$ is strongly supported in the winding region of $(\Sigma, \bm\alpha, \bm\beta, \bm\delta^{m,b}, w, z, z_n)$. (Note that $b$ does depend on the choice of $\spincu$.)
\end{lemma}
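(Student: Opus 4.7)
The plan is to adapt Hedden--Levine's proof of their \cite[Lemma 5.12]{HL}. The key modification in our setting is that $\gamma_g$ (and hence $\delta_g^{m,b}$) are wound extra times to accommodate the second basepoint $z_n$, but since $z_n$ plays no role in the definition of $\spincs_w$ on $\T_\alpha \cap \T_\delta$, the spin$^c$ analysis should parallel the original argument.

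First I would establish that every $\a$ representing $\spincu$ lies in the winding region. Outside the winding region, $\delta_g^{m,b}$ coincides with $\gamma_g$, so intersection points of $\alpha_g \cap \delta_g$ outside the winding region are in bijection with those of $\alpha_g \cap \gamma_g$, a finite set independent of $m$ and $b$. Consequently, for any generator $\a$ supported outside the winding region the Alexander grading $\AlNorm_{Y_{\lambda+m\mu}, K_{n,\lambda+m\mu}}(\a)$ is bounded in absolute value by a constant depending only on $g$ and $n$. On the other hand, using Proposition \ref{prop: alex-triangle} together with Lemma \ref{le: spincbij}, the $m$ generators $\x_l^{m,b}$ realize a range of Alexander gradings proportional to $m$, so their relative spin$^c$ classes are all distinct. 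For $m$ large enough these account for every spin$^c$ class not already realized by a bounded-Alexander-grading generator, forcing every representative of $\spincu$ to be in the winding region.

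Second I would pin down which spin$^c$ structure on $W'_m$ is represented by the small triangle $\psi^{m,b}_{\x,l}$. Combining \eqref{eq: adb-c1-x} with the explicit multiplicities in \eqref{eq: smalltriangle} gives $\langle c_1(\spincs_z(\psi^{m,b}_{\x,l})), [\hat F]\rangle$ as an affine function of $l$ (and of $\AlNorm(\x)$, which is fixed modulo the freedom we care about). Since varying $b$ simply shifts the labeling of the $p_l$ within the winding region, for any fixed $\spincu$ and any fixed $\x$ with $\spincs_w(\x)=\spincs_\spincu$, there is a unique value of $b$ for which the small triangle's $c_1$-evaluation lands in the prescribed window $(-2(k+md), 0]$ cut out by the definition of $\spincx_\spincu$. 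This verifies the second bullet of the strongly-supported condition.

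The main obstacle is routine but delicate bookkeeping: tracking the multiplicities at $w$, $z$, $z_n$ (and the auxiliary $z_j$, $u_j$) under variation of $b$, and confirming that our left-sided winding produces the same matching between $l$ and the extension $\spincx_\spincu$ as in \cite{HL}. Because \eqref{eq: adb-c1-x} has the same structure as its counterpart in \cite{HL} (and is independent of $n_{z_n}$), the substance of their argument transfers directly once the local multiplicity computation in the modified winding region is carried out.
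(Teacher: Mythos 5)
The paper gives no proof of this lemma at all: it is a verbatim citation of \cite[Lemma 5.12]{HL}, and the justification is simply the observation (made implicitly in the surrounding paragraphs) that the strongly-supported condition involves only $\spincs_w$ on $\T_\alpha \cap \T_\delta$ and $\spincs_z$ of the small triangle $\psi^{m,b}_{\x,l}$, neither of which sees the basepoint $z_n$. The extra windings of $\gamma_g$ to the left of $\beta_g$ do not change the combinatorics underlying the spin$^c$ assignments, so Hedden--Levine's proof carries over unchanged. So you are reconstructing an argument that the paper deliberately outsources.

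Your reconstruction has a concrete error in its first step. You assert that the $m$ winding-region generators $\x_l^{m,b}$ ``realize a range of Alexander gradings proportional to $m$.'' This is false: the Alexander gradings $\AlNorm_{w,z_n}(\x_l)$ are normalized by $k+md$, and the computation in the proof of Theorem~\ref{thm: large-surgery} shows they are confined to a window of width $n$ (indeed $\AlNorm_{w,z_n}([\x_l,i]) - i - \tfrac{nd(2s_\spincu-n)}{2(k+md)} - \tfrac{n}{2}$ lies in $\{-n,\ldots,0\}$), independent of $m$. What actually separates the $\x_l$ from one another is the underlying spin$^c$ class $\spincs_w(\x_l) \in \Spin^c(Y_{\lambda+m\mu}(K))$, equivalently the Chern class evaluation $\langle c_1, [\hat F]\rangle$, which varies over a range of order $m$; the Alexander grading is a refinement that wraps around after one period. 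Conflating the two is the source of the confusion.

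Even after fixing that, the argument has a gap. Bounding the set of spin$^c$ classes realized by generators outside the winding region only shows that \emph{most} $\spincu$ are strongly supported; if $\spincu$ happens to be one of those boundedly-many ``bad'' classes, your argument does not rule out an outside representative. The role of $b$ in the HL/OS argument is precisely to remedy this: for a fixed $\spincu$, the parameter $b$ is chosen so that the winding-region generator representing $\spincu$ sits near the center of the winding (small $|l|$), and a pigeonhole / Chern class bound then forces every representative of $\spincu$ to be winding-region. Your proposal addresses the $b$-dependence only for the second bullet (selecting $\spincx_\spincu$), not for the first bullet, which is where it is actually indispensable.
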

As pointed out in \cite[Lemma 5.11]{HL}, a spin$^c$ structure $\spincu$ is strongly supported in the winding region of $(\Sigma, \bm\alpha,
\bm\delta)$ iff
\begin{equation*} \label{eq: strongly-supported}
\Gens(\bm\alpha, \bm\delta, w, \spincu) = \{\x_{\AlNorm(\x)-s_\spincu}  \mid \x \in
\Gens(\bm\alpha, \bm\beta, w, \spincs_\spincu)\},
\end{equation*}

where $\Gens(\bm\alpha, \bm\beta, w, \spincs) = \{\x \in \T_\alpha\cap \T_\beta \mid \spincs_w(\x) = \spincs\}.$ This can be seen from (\ref{eq: adb-c1-x}) combined with the definition of $s_\spincu$ by (\ref{eq: su-def}).

\begin{proof}[Proof of Theorem \ref{thm: large-surgery}]
When $m$ is large enough, for a given choice of $\spincu \in \Spin^c(Y_{\lambda+m\mu}(K))$, fix some $b$ to satisfy Lemma {\ref{lemma: strongly-supported}}.  

Define 
\begin{equation} \label{eq: Lambda-u}
\Lambda^\infty_\spincu \co \CFi(\Sigma, \bm\alpha, \bm\delta, w, \spincu) \to \CFKi(\Sigma, \bm\alpha, \bm\beta, w, z, \spincs_\spincu)
\end{equation}
by
\begin{equation} \label{eq: Lambda-u-def}
\Lambda^\infty_\spincu([\a,i]) =
\sum_{\substack{\x \in \T_\alpha \cap \T_\beta \\ \spincs_w(\x) = \spincs_\spincu }}
\sum_{\substack{\psi \in \pi_2(\a, \Theta_{\delta\beta}, \x) \\ \mu(\psi) = 0 \\ \spincs_w(\psi) = \spincx_\spincu}}
\#\MM(\psi) [\x, i-n_w(\psi), i-n_z(\psi) + s_\spincu].
\end{equation}
Precomposing with the identification of $\CFi(\Sigma, \bm\alpha, \bm\delta, w, \spincu)$ with $\CFKi(\Sigma, \bm\alpha, \bm\delta, w,z_n, \spincu)$,  $\Lambda^\infty_\spincu$ can also be seen as defined on $\CFKi(\Sigma, \bm\alpha, \bm\delta, w,z_n, \spincu)$. We will prove $\Lambda^\infty_\spincu$ is filtered with respect to $\JJ_\spincu$ filtration. The proof for $\II_\spincu $ filtration is easier and left for the reader. 

Since $\spincu$ is strongly supported in the winding region, every element of $\Gens(\bm\alpha, \bm\delta,w, \spincu)$ is of the form $\x_l$, where $\x \in \Gens(\bm\alpha,\bm\beta, w, \spincs_\spincu)$ and $l = \AlNorm(\x) - s_\spincu$. 
As in the proof of Proposition \ref{prop: alex-triangle}, we compute

\begin{align*}
    \Al(\x_l)=&\Al(\x) +\frac{n(k+md)-n^2d}{2}+
    \begin{cases}
    -lnd & l\geq 0\\
    l(k+md-nd) & -n<l<0 \\
    -n(k+md)-lnd & l\leq -n.
    \end{cases}\\
\end{align*}

So with $  \Al(\x) - lnd = nds_\spincu$, we have
\begin{align*}
    \AlNorm_{w,z_n}([\x_l,i])=i+&\frac{nd(2s_\spincu-n)}{2(k+md)}+\frac{n}{2}+
    \begin{cases}
    0 & l\geq 0\\
    l & -n<l<0 \\
    -n & l\leq -n.
    \end{cases}\\
\end{align*}

On the other hand, we first consider the small triangles $\psi_{\x,l} \in \pi_2(\x_l,\Theta_{\delta\beta},\x)$ in the winding region. According to Ozsv\'ath and Szab\'o, with respect to an energy filtration, $\psi_{\x,l}$ correspond to the main part of $\Lambda_\spincu ^\infty $.
\begin{align*}
    \JJ_{\spincu}([\x,i-n_w(\psi_{\x,l}),i-n_z(\psi_{\x,l})+s_\spincu])=&\text{ max}\{ -n_w(\psi_{\x,l}) -n, -n_z(\psi_{\x,l}) \} + i + \frac{nd(2s_\spincu - n)}{2(k+md)} + \frac{n}{2}\\
    =i + \frac{nd(2s_\spincu - n)}{2(k+md)} +& \frac{n}{2}+
    \begin{cases}
    0 & l\geq 0\\
    l & -n<l<0 \\
    -n & l\leq -n.
    \end{cases}
\end{align*}
Therefore the small triangles $\psi_{\x,l}$ preserves the $\JJ_\spincu$ filtration. Next, for an arbitrary triangle $\psi \in \pi_2(\x_l,\Theta_{\delta\beta},\x)$, $n_w(\psi) \geq n_w(\psi_{\x,l})$ and $n_z(\psi) \geq n_z(\psi_{\x,l})$. Thus $\psi$ decreases or preserves $\JJ_\spincu$ from the above computation.  

\end{proof}

\section{The surgery exact triangle} \label{sec: exacttri}

In this section, we will construct the surgery exact triangle relating the Floer homologies of $Y$, $Y_\lambda(K)$, and $Y_{\lambda+m\mu}(K)$ for $m$ large. The maps will be defined on the chain level. We start with a brief discussion about the Maslov grading.

For a diagram $(\Sigma, \bm\alpha, \bm\beta, \bm\gamma, \bm\delta^{b,m}, w, z, z_n)$, it is proved in \cite[Proposition 5.15]{HL} that for fixed large $m$, and $b$ within a small range of $\frac{m}{2}$, the Maslov grading of every generator $\a \in \T_\alpha \cap \T_{\delta^{m,b}}$ has constant lower and upper bound. We will adopt a choice of $m$ and $b$ satisfying the above condition, and henceforth drop them from the notation. Note that the Maslov grading is independent of the choice of $z_n$ basepoint, therefore the statements in \cite{HL} regarding the Maslov grading remain true in our set up as well, and we will restate them when needed. 

\subsection{Construction of the exact sequence}

We start by pointing out a slight difference in our set up compared to the construction in \cite{HL}. In this section we will define our complexes $\CF^\circ (\Sigma, \bm\alpha, \bm\beta, z)$, $\CF^\circ (\Sigma, \bm\alpha, \bm\gamma, z)$ and $\CF^\circ (\Sigma, \bm\alpha, \bm\delta, z)$ with the basepoint $z$ instead of $w$ as in \cite{HL}. And accordingly, we define the triangle, rectangle and pentagon counting maps with respect to the reference point $z$ instead of $w$. This in turn, coincides with the definition in \cite{integer,rational}. The reason why we need to make this modification, loosely speaking, is that we need to capture the information of the windings to the left of the $\beta_g$ curve in the winding region. Aside from this slight change, the proof largely follows from the one in \cite[Section 6]{HL}, with the only difference being computational details.   

The twisted complex $\CFit(\bm\alpha, \bm\beta, z; \GR)$ is generated over $\GR$ by all pairs $[\x,i]$ as usual, where $\GR$ denote the group ring $\F[\Z/m\Z]$. The differential is given by 
\begin{equation}
\partial(T^s \cdot [\x,i]) = \sum_{\y \in \T_\alpha \cap \T_\beta} \sum_{\substack{ \phi \in \pi_2(\x,\y) \\ \mu(\phi)=1}} \# \widehat\MM(\phi) \, T^{s+ n_w(\phi) - n_z(\phi)} [\y, i-n_z(\phi)].
\end{equation}

 Comparing to previous definitions, the $T$ exponent in \cite{integer,rational} was formulated  as the intersection number of the holomorphic disk with an extra basepoint on $\beta_g$, but this quantity is the same as $n_w(\phi) - n_z(\phi)$.  Comparing to the definition in \cite{HL}, we merely switched the reference point from $w$ to $z$.

We realize $\GR = \F[\Z/m\Z]$ as the quotient ring $\F[T]/(T^m-1)$, and often view it as a subring of $\F[\Q/m\Z]$. Recall that $\F[\Q/m\Z]$ is the ring of rational-exponent polynomials with variable $T$, where the coefficient is in $\F$. 

The complex $\CFit(\bm\alpha, \bm\beta, z; \GR)$ is isomorphic to $m$ copies of $\CFi(\Sigma, \bm\alpha, \bm\beta, z)$.
Let
\begin{equation} \label{eq: theta}
\theta\co \CFpt(\bm\alpha, \bm\beta, z; \GR) \to
\bigoplus_{\spincs \in \Spin^c(Y)} \CFp(\bm\alpha, \bm\beta, \spincs, z) \otimes T^{ - k/d -\AlNorm(\spincs)} \GR
\end{equation}
be the trivializing map defined by
\begin{equation} \label{eq: theta-def}
\theta(T^s [\x,i]) = [\x,i] \otimes T^{s - k/d - \AlNorm(\x) }.
\end{equation}
One can check that this gives an isomorphism between the two chain complexes.\footnote{This trivializing map differs from the one in \cite{HL} by a constant factor. This change amounts to shifting the sequence $s_l$ (defined in Section \ref{ssec: statement}) by one to the left, such that the terms in the sequence $(\spincs_l,s_l)$ in the mapping cone (see the proof in Section \ref{sec: proofcone}) have the same index. Otherwise the index is off by one, due to the choice of $z$ reference point instead of $w$.} 

 For each $\spincs \in \Spin^c(Y)$, recall that $\AlNorm(\spincs)$ forms a $\Q/Z$ coset. So there are $m$ different powers of $T$ occurring in $\CFp(\bm\alpha, \bm\beta, \spincs, z) \otimes T^{ -k/d-\AlNorm(\spincs)} \GR$, with exponents in $\Q/m\Z$. We regularly need to lift these exponents to $\Q$,  by choosing the $m$ values of $r$ satisfying
\begin{equation} \label{eq: r-bounds}
r \equiv -\frac{k}{d} -\AlNorm_{Y,K}(\spincs) \pmod \Z \quad \text{and} \quad \frac{-k-md}{2d} \le r < \frac{-k+md}{2d}.
 \end{equation}

Define chain maps
\begin{align}
\label{eq: f0}
f_0^+\co & \CFpt(\bm\alpha, \bm\beta, z; \GR) \to \CFp(\bm\alpha, \bm\gamma, z) \\
\label{eq: f1}
f_1^+ \co & \CFp(\bm\alpha, \bm\gamma, z) \to \CFp(\bm\alpha, \bm\delta, z) \\
\label{eq: f2}
f_2^+\co & \CFp(\bm\alpha, \bm\delta, z) \to \CFpt(\bm\alpha, \bm\beta, z; \GR)
\end{align}
by the following formulas:
\begin{align}
\label{eq: f0-def}
f_0^+(T^s \cdot [\x,i]) &= \sum_{\q \in \T_\alpha \cap \T_\gamma}  \sum_{\substack{\psi \in \pi_2(\x, \Theta_{\beta\gamma}, \q) \\ \mu(\psi)=0 \\ \mathclap{s + n_w(\psi) - n_z(\psi) \equiv 0 \pmod m}}}   \#\MM(\psi) \, [\q, i-n_z(\psi)] \\
\label{eq: f1-def}
f_1^+([\q,i]) &= \sum_{\a \in \T_\alpha \cap \T_\delta} \sum_{\substack{\psi \in \pi_2(\q, \Theta_{\gamma\delta}, \a) \\ \mu(\psi)=0}} \#\MM(\psi) \, [\a, i-n_z(\psi)] \\
\label{eq: f2-def}
f_2^+([\a,i]) &= \sum_{\x \in \T_\alpha \cap \T_\beta} \sum_{\substack{\psi \in \pi_2(\a, \Theta_{\delta\beta}, \x) \\ \mu(\psi)=0}} \#\MM(\psi) \, T^{n_w(\psi) - n_z(\psi)} \cdot [\x, i-n_z(\psi)].
\end{align}
Let $f_0^t$, $f_1^t$, $f_2^t$ denote the analogous maps on $\CF^t$. 

Following \cite{HeddenMarkFractional}, the quadrilateral-counting maps
\begin{align}
\label{eq: h0}
h_0^+\co & \CFpt(\bm\alpha, \bm\beta, z; \GR) \to \CFp(\bm\alpha, \bm\delta, z) \\
\label{eq: h1}
h_1^+ \co & \CFp(\bm\alpha, \bm\gamma, z) \to \CFpt(\bm\alpha, \bm\beta, z; \GR) \\
\label{eq: h2}
h_2^+\co & \CFp(\bm\alpha, \bm\delta, z) \to \CFp(\bm\alpha, \bm\gamma, z)
\end{align}
are defined by the following formulas:
\begin{align}
\label{eq: h0-def}
h_0^+(T^s \cdot [\x,i]) &= \sum_{\a \in \T_\alpha \cap \T_\delta}  \sum_{\substack{\rho \in \pi_2(\x, \Theta_{\beta\gamma}, \Theta_{\gamma\delta}, \a) \\ \mu(\rho)=-1 \\ \mathclap{s + n_w(\rho) - n_z(\rho) \equiv 0 \pmod m}}}   \#\MM(\rho) \, [\a, i-n_z(\rho)] \\
\label{eq: h1-def}
h_1^+([\q,i]) &= \sum_{\x \in \T_\alpha \cap \T_{\beta}} \sum_{\substack{\rho \in \pi_2(\q, \Theta_{\gamma\delta}, \Theta_{\delta\beta}, \x) \\ \mu(\rho)=-1}} \#\MM(\rho) \, T^{n_w(\rho) - n_z(\rho)} \cdot [\x, i-n_z(\rho)] \\
\label{eq: h2-def}
h_2^+([\a,i]) &= \sum_{\q \in \T_\alpha \cap \T_\gamma} \sum_{\substack{\rho \in \pi_2(\a, \Theta_{\delta\beta}, \Theta_{\beta\gamma}, \q) \\ \mu(\rho)=-1 \\ n_w(\rho) \equiv n_z(\rho) \pmod m}} \#\MM(\rho) \, [\q, i-n_z(\rho)]
\end{align}

A standard argument shows that for each $j \in \Z/3$, the following holds. (The second statement relies on pentagon-counting maps, which we will discuss in Section \ref{ssec: pent-filt}.)
\begin{itemize}
\item $h_j^+$ is a null-homotopy of $f^+_{j+1} \circ f^+_j$;
\item $h_{j+1}^+ \circ f_j^+ + f_{j+2}^+ \circ h_j^+$ is a quasi-isomorphism.
\end{itemize}

 Therefore, the exact triangle detection lemma \cite[Lemma 4.2]{branched} implies an exact sequence on homology. Using the same formulas, one can also define the maps on $\CF^t$, $\CFmc$, and $\CFic$.

Each of the three complexes come with an $\II$ filtration, and the maps $f_j$ and $h_j$ respect the $\II$ filtration by definition. We define a second filtration on each complex as follows.(Compare \cite[Definition 6.1]{HL}.)
\begin{definition} \label{def: J-filtrations}
\begin{itemize}
\item
The filtration $\JJ_{\alpha\gamma}$ on $\CFp(\bm\alpha, \bm\gamma, z)$ is simply the Alexander filtration:
\begin{equation} \label{eq: ag-filt}
\JJ_{\alpha\gamma}([\q,i]) = \AlNorm_{w,z_n}(\q) + i.
\end{equation}

\item
The filtration $\JJ_{\alpha\delta}$ on $\CFp(\bm\alpha, \bm\delta, z)$ is the Alexander filtration shifted by a constant on each spin$^c$ summand. For any spin$^c$ structure $\spincu$, and for any generator $\a$ with $\spincs_z(\a) = \spincu$, define 
\begin{equation} \label{eq: ad-filt}
\JJ_{\alpha\delta}([\a,i]) = \AlNorm_{w,z_n}(\a) + i + \frac{  nd^2 m (2 s_\spincu - n) }{2k(k+md)},
\end{equation}
where $s_\spincu$ is the number from Definition \ref{def: xu}.

\item
The filtration $\JJ_{\alpha\beta}$ on the twisted complex $\CFpt(\bm\alpha, \bm\beta, z; \GR)$ is defined via the trivialization map $\theta$. For any $\x \in \T_\alpha \cap \T_\beta$ with $\spincs_z(\x) = \spincs$, and any $r$ that satisfies \eqref{eq: r-bounds}, define
\begin{equation} \label{eq: ab-twisted-filt}
\JJ_{\alpha\beta}([\x,i] \otimes T^r) = i - \frac{2ndr +nk+n^2 d}{2k}.
\end{equation}
Namely, $\JJ_{\alpha\beta}$ is the trivial filtration shifted by a constant
which depends linearly on the exponent of $T$, and it does not depend on $\x$ except via its associated spin$^c$ structure. We transport this back to $\CFpt(\bm\alpha, \bm\beta, z; \GR)$ via the identification $\theta$.
\end{itemize}
\end{definition}

As is discussed in \cite{HL}, the maps $f^+_j$ are not filtered, due to the fact that the filtration shift of each map is a function of spin$^c$ evaluation. In order to get round this problem, note that it suffices to consider the maps $f^t_j$ to prove quasi-isomorphism. Recall that if we let $X_*$ denote one of the $4$-manifolds $X_{\alpha\beta\gamma}, X_{\alpha\gamma\delta}$ and $X_{\alpha\delta\beta}$ defined in Section \ref{ssec: cob} (either with three or four subscripts),  $\Spin^c_0(X_*)$ denotes the set of spin$^c$ structures that restricts to the canonical spin$^c$ structure represented by the generator $\Theta_{\beta\gamma}$ on $Y_{\beta\gamma}$, $\Theta_{\gamma\delta}$ on $Y_{\gamma\delta}$ and $\Theta_{\delta\beta}$ on $Y_{\delta\beta}$, for whichever applicable.  Each map $f^t_j$ decomposes over spin$^c$ structures as 
\[
f_j^t = \sum_{\spincv \in \Spin^c_0(X_*)} f^t_{j,\spincv},
\]
where  $X_*$ is the $4$-manifold corresponding to the cobordism and $f^t_{j,\spincv}$ counts only the triangles with $\spincs_z(\psi)=\spincv$. The Maslov grading shift of each $f^t_{j,\spincv}$ is given by a quadratic function of $c_1(\spincv)$, whereas for a fixed $t$, the grading of each generator in $\CF^t$ is bounded. As a result, only finitely many terms $f^t_{j,\spincv}$ may be nonzero, allowing us to have control over the $\JJ$ filtration shift within this range. We will prove the following proposition. Compare \cite[Proposition 6.2]{HL}.

\begin{proposition} \label{prop: tri-filt}
Fix $t \in \N$. For all $m$ sufficiently large, the maps $f_0^t$, $f_1^t$, and $f_2^t$ are all filtered with respect to the filtrations $\JJ_{\alpha\beta}$, $\JJ_{\alpha\gamma}$, and $\JJ_{\alpha\delta}$. Moreover, for any triangle $\psi$ contributing to any of these maps, the filtration shift of the corresponding term equals $n_{z_n}(\psi)$.
\end{proposition}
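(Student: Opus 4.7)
Following the strategy of \cite[Proposition 6.2]{HL}, I decompose each $f_j^t$ over relative spin$^c$ structures on the relevant cobordism: write
\[
f_j^t = \sum_{\spincv \in \Spin^c_0(X_*)} f_{j,\spincv}^t,
\]
where $X_*$ is the $4$-manifold from Section \ref{ssec: cob} corresponding to the triangle type. The Maslov grading shift of $f_{j,\spincv}^t$ is a quadratic function of $c_1(\spincv)$, while the Maslov grading on $\CF^t$ is bounded, so only finitely many $\spincv$ yield a nonzero contribution. It therefore suffices to verify that, for all $m$ sufficiently large and each such $\spincv$, every triangle $\psi$ counted by $f_{j,\spincv}^t$ contributes a term whose output $\JJ$-value equals its input $\JJ$-value minus $n_{z_n}(\psi)$; this is stronger than being filtered and implies both assertions at once.

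The computation for $f_0^t$ is representative. Combining (\ref{eq: abg-alex}) with the filtration definitions (\ref{eq: ab-twisted-filt}) and (\ref{eq: ag-filt}) and the trivialization (\ref{eq: theta-def}) (so that $r = s - k/d - \AlNorm(\x)$), direct substitution and simplification give
\[
\JJ_{\alpha\gamma}\bigl(f_0^t(T^s[\x,i])\bigr) - \JJ_{\alpha\beta}\bigl(T^s[\x,i]\bigr) = \frac{nd\bigl(s + n_w(\psi) - n_z(\psi)\bigr)}{k} - n_{z_n}(\psi).
\]
The constraint in (\ref{eq: f0-def}) forces $s + n_w(\psi) - n_z(\psi) = jm$ for some $j \in \Z$, so this difference equals $ndjm/k - n_{z_n}(\psi)$. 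For each of the finitely many $\spincv$ under consideration, the quantity $n_w(\psi) - n_z(\psi)$ is determined up to bounded variation by $\langle c_1(\spincv), [P_\gamma]\rangle$ via (\ref{eq: abg-c1-x}), while $s$ ranges in a bounded representative set for $\Z/m\Z$. Thus for $m$ large, $j=0$ is forced and the decrease is exactly $n_{z_n}(\psi)$.

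The arguments for $f_1^t$ and $f_2^t$ are parallel, using (\ref{eq: agd-alex}) and (\ref{eq: adb-alex}) respectively together with the filtration definitions on $(\alpha,\delta)$ and the twisted complex. The extra constant $\tfrac{nd^2 m(2s_\spincu - n)}{2k(k+md)}$ in (\ref{eq: ad-filt}) is engineered to cancel the spin$^c$-dependent part of the Alexander grading difference $\Al(\q)/k - \Al(\a)/(k+md)$ coming from (\ref{eq: agd-alex}), using that $s_\spincu$ is determined by the $c_1$ evaluation in (\ref{eq: agd-c1-q}). For $f_2^t$, the twisted output absorbs the Alexander grading of $\x$ through $\theta$, and the same large-$m$ argument controls the residual multiple of $m$ coming from the defining constraint.

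The main obstacle is the last step in each case: showing that the integer $j$ appearing in the mod-$m$ constraint must vanish once $m$ is large, given the a~priori bounds imposed by the finite set of contributing spin$^c$ structures. For $f_1^t$ there is also the bookkeeping that the shift constant in $\JJ_{\alpha\delta}$ correctly cancels the spin$^c$-dependent part of the Alexander grading change; this follows formally from (\ref{eq: agd-c1-a}) and the definition of $s_\spincu$. No new geometric input is needed beyond the Alexander grading formulas of Proposition \ref{prop: alex-triangle}, the quadratic growth of the Maslov shift in $c_1(\spincv)$, and the boundedness of Maslov gradings on $\CF^t$.
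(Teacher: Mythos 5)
Your proposal is correct and follows essentially the same route as the paper: decompose each $f_j^t$ over $\Spin^c_0(X_*)$, compute the $\JJ$-shift of a contributing triangle directly via Proposition~\ref{prop: alex-triangle}, and then argue that the only mod-$m$ ambiguity vanishes for $m$ large. Your formula for the $f_0^t$ shift, $\tfrac{nd(s+n_w(\psi)-n_z(\psi))}{k} - n_{z_n}(\psi)$, is correct and is equivalent to the paper's computation, which instead substitutes \eqref{eq: f0-c1-r} into \eqref{eq: abg-c1-q}; you work via \eqref{eq: abg-alex} but land in the same place.

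Two things you elide should be made explicit to close the argument. First, the claim that $j=0$ is \emph{forced} does not follow merely from finiteness of the contributing set of spin$^c$ structures: for $f_1^t$ and $f_2^t$ the relevant self-intersection numbers $[R]^2$ and $[P_\delta]^2$ grow with $m$, so bounded Maslov shift only constrains $\gen{c_1(\spincv),\cdot}$ to a window whose width is linear in $m$. The precise input is Lemma~\ref{le: f1range} and \cite[Lemmas~6.7, 6.11]{HL}, which give $\abs{\gen{c_1(\spincv),[P_\gamma]}}<\epsilon md$, $\abs{\gen{c_1(\spincv),[R]}}<m(k+md)/\nu$, and $\abs{\gen{c_1(\spincv),[P_\delta]}}<(1+\epsilon)(k+md)$ respectively; with $\epsilon<1$ these are exactly sharp enough that the lifted constraint pins $c_1(\spincv)$ to a unique value, yielding \eqref{eq: f0-c1-r}, \eqref{eq: f1-trunc-su}, and \eqref{eq: f2-filt-c1-neg}/\eqref{eq: f2-filt-c1-pos}. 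Second, for $f_2^t$ the bound only constrains $\spincv$ to lie in $\{\spincx_\spincu,\spincy_\spincu\}$, and the relation between $r$ and $s_\spincu$ is different in the two cases; the paper's proof of Proposition~\ref{prop: f2-filt} verifies both, and you should do the same rather than deferring to ``the same large-$m$ argument.'' Also a small slip: for $f_1^t$ the relevant Chern-class identity is \eqref{eq: agd-c1-q} (together with Lemma~\ref{le: f1range}), not \eqref{eq: agd-c1-a}.
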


The similar problem applies to the rectangle-counting maps. Following Hedden-Levine's argument, we will define truncated versions $\tilde h^t_j$, as a sum of certain terms $h^t_{j,\spincv}$ satisfying specific constraints. In other words, we simply throw away the ``bad" terms. The resulting $\tilde h^t_j$ will be homotopic equivalent to the original $h^t_j$ maps but behave nicer with respect to the filtration. We will prove the following proposition, parallel to \cite[Proposition 6.3]{HL}.

\begin{proposition} \label{prop: rect-filt}
Fix $t \in \N$. For all $m$ sufficiently large, the maps $\tilde h^t_0$, $\tilde h^t_1$, and $\tilde h^t_2$ have the following properties:
\begin{itemize}
\item $\tilde h^t_j$ is a filtered null-homotopy of $f^t_{j+1} \circ f^t_j$.
\item $\tilde h^t_{j+1} \circ f^t_j +  f^t_{j+2} \circ \tilde h^t_j$ is a filtered quasi-isomorphism.
\end{itemize}
\end{proposition}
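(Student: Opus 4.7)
The plan is to adapt the proof of \cite[Proposition 6.3]{HL} to our setting, with three components: defining the truncated maps $\tilde h^t_j$, establishing the null-homotopy identity, and proving the quasi-isomorphism identity. First, I would define $\tilde h^t_j$ by restricting the sum in each $h^t_j$ to those spin$^c$ structures $\spincv \in \Spin^c_0(X_*)$ whose Chern-class pairings with the relevant periodic domains $[P_\gamma]$, $[P_\delta]$, $[Q]$, $[R]$ lie in a window depending only on $t$ and $m$. Exactly as in the discussion preceding Proposition \ref{prop: tri-filt}, the excluded spin$^c$ structures carry Maslov-grading shifts too negative to survive on $\CF^t$, so dropping them does not change the chain homotopy type of the map at truncation level $t$.

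To verify that each surviving rectangle term is filtered, I would compute the $\JJ$-filtration shift using Proposition \ref{prop: alex-rectangle}. Substituting the Alexander-grading identities \eqref{eq: abgd-alex}, \eqref{eq: agdb-alex}, \eqref{eq: adbg-alex} together with the identification of $s_\spincu$ via $\gen{c_1(\spincv), [\hat F]}$ (Definition \ref{def: xu}) and the twisted-coefficient shift \eqref{eq: ab-twisted-filt} should, after cancellation, reduce the net shift of a rectangle $\rho$ to $n_{z_n}(\rho) \geq 0$, matching the triangle phenomenon of Proposition \ref{prop: tri-filt}. The reason this reduction works uniformly is that $\MultComb$ vanishes on every periodic domain, so any two rectangles in the same spin$^c$ class produce the same filtration shift and the topological term can be absorbed into the constants in Definition \ref{def: J-filtrations}.

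The null-homotopy identity $\partial \tilde h^t_j + \tilde h^t_j \partial = f^t_{j+1} \circ f^t_j$ then follows from counting the ends of $1$-dimensional moduli spaces of rectangle classes inside the truncation window: the within-window ends give the desired relation, while out-of-window ends act as zero on $\CF^t$ by the Maslov-grading bound. For the quasi-isomorphism claim, I would introduce pentagon-counting maps $\pi^t_j$ and use an analogous truncation to establish
\[
\tilde h^t_{j+1} \circ f^t_j + f^t_{j+2} \circ \tilde h^t_j = \Psi^t_j + \partial \pi^t_j + \pi^t_j \partial,
\]
where $\Psi^t_j$ is the standard filtered quasi-isomorphism coming from the nearest-point associator between the complexes, as in \cite[Proposition 6.3]{HL}.

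The main obstacle is bookkeeping: one must choose a single truncation window that is simultaneously compatible with the rectangles defining $\tilde h^t_j$, the pentagons defining $\pi^t_j$, and all the intermediate chain homotopies relating them, so that both identities close up on $\CF^t$ without leaking outside the window. This is precisely the delicate step in Hedden--Levine's argument, and the adaptation to our setting requires verifying the $n_{z_n}(\cdot) \geq 0$ property uniformly across all polygon types appearing in the cascade—which is immediate from positivity of intersection with $z_n$—and enlarging $m$ (depending on $t$) so that the spin$^c$-grading bounds for the pentagon contributions stabilize, parallel to the role played by $m$ in Proposition \ref{prop: tri-filt}.
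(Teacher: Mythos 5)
Your overall strategy---truncating each $h^t_j$ by spin$^c$ structure, verifying $\JJ$-filteredness on the surviving terms via the rectangle formulas of Proposition~\ref{prop: alex-rectangle}, and proving the quasi-isomorphism identity with a truncated pentagon map compared against the nearest-point isomorphism---does match the route taken in the paper. The gap is in the middle step.

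You assert that after substituting \eqref{eq: abgd-alex}, \eqref{eq: agdb-alex}, \eqref{eq: adbg-alex} and the twisted-coefficient constants, the net filtration shift of any contributing rectangle $\rho$ ``reduces to $n_{z_n}(\rho)\ge 0$,'' and that the reason is that $\MultComb$ vanishes on periodic domains so ``the topological term can be absorbed into the constants in Definition~\ref{def: J-filtrations}.'' This is not right. The vanishing of $\MultComb$ on periodic domains only tells you that two rectangles \emph{in the same spin$^c$ class} have the same filtration shift; it says nothing about how the shift varies across spin$^c$ classes. When you actually carry out the computation (e.g.\ for $h^t_0$ in Proposition~\ref{prop: h0-filt}), the shift comes out to $-n_{z_n}(\rho)+\frac{(p-q)nmd}{k}$, where $p,q\in\Z$ are determined by $\gen{c_1(\spincv),[P_\gamma]}$ and $\gen{c_1(\spincv),[R]}$ via \eqref{eq: h0-filt-p-def}--\eqref{eq: h0-filt-q-bounds}. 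The correction $\frac{(p-q)nmd}{k}$ is of order $m$ and cannot be absorbed into the constants of Definition~\ref{def: J-filtrations}, which depend only on the boundary spin$^c$ structure $\spincu$, not on the $4$-dimensional extension $\spincv$. The whole point of the truncation conditions in Definition~\ref{def: h0-trunc} (and the Maslov-grading Lemma~\ref{lemma: h0-trunc}) is to force $p=q$ in every surviving term, and the argument requires running through each allowed regime separately (both small-$c_1$ bounds hold, or $\gen{c_1(\spincv),[Q]}=\pm m$) to establish that. The same phenomenon recurs for $\tilde h^t_1$ and $\tilde h^t_2$, where the constraint $\gen{c_1(\spincv),[Q]}=\pm m$ together with the auxiliary Maslov bounds (Lemmas~\ref{lemma: h1-trunc}, \ref{lemma: h2-trunc}) pins down $2p-1=-e$ (resp.\ $2q-1=-e$), which is what makes the telescoping work.

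A related imprecision: the truncation conditions are not merely a ``window'' of Chern-class values. Part of Definition~\ref{def: h0-trunc}, and all of Definitions~\ref{def: h1-trunc} and the analogous one for $\tilde h^t_2$, are exact equalities $\gen{c_1(\spincv),[Q]}=\pm m$, not inequalities, and these equalities are essential for the parity analysis that kills the $\frac{(p-q)nmd}{k}$ term. The same exactness shows up again in Definition~\ref{def: g0-trunc} for the pentagon truncation, where conditions \eqref{eq: g0-trunc-Q}, \eqref{eq: g0-trunc-Qt}, \eqref{eq: g0-trunc-V} are equalities. Your final paragraph correctly identifies that the truncation window must be chosen compatibly across rectangles, pentagons, and the homotopies between them, but you don't say how this is resolved; in the paper it is resolved precisely by Lemmas~\ref{lemma: g0-trunc}--\ref{lemma: g0-trunc-htpy}, which establish that the degenerate pentagon configurations (case (P-$5$) in the proof of Lemma~\ref{lemma: g0-trunc-htpy}) always land in one of the allowed regimes of Definition~\ref{def: g0-trunc}, so that the truncated $\tilde g^t_0$ actually closes up the homotopy between $\tilde\Psi^t_0$ and $\Phi^t_0$ on the nose.
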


For the Maslov grading, observe that the underlying cobordisms  remain the same, independent of the choice of $z_n$ basepoint. It then follows from Hedden-Levine's argument that maps $f_j^t$ and $h^t_j$ are homogeneous and have the appropriate Maslov grading shifts (for the detailed statement see \cite[Proposition 6.5]{HL}). Combined with Proposition \ref{prop: rect-filt} and Proposition \ref{prop: tri-filt}, we have

\begin{theorem} \label{thm: CFt-cone-f2}
Fix $t \in \N$. For all $m$ sufficiently large, the map
\[
\begin{pmatrix} f_1^t \\ h_1^t \end{pmatrix}  \co \CF^t(\Sigma, \bm\alpha, \bm\gamma, z) \to \Cone(f_2^t)
\]
is a filtered homotopy equivalence that preserves the grading.
\end{theorem}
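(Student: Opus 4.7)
The plan is to invoke the filtered exact-triangle detection Lemma~\ref{lemma: mapping-cone} applied to the cyclic family $C_0 = \CF^t(\bm\alpha,\bm\beta, z; \GR)$, $C_1 = \CF^t(\bm\alpha,\bm\gamma, z)$, $C_2 = \CF^t(\bm\alpha, \bm\delta, z)$ (read cyclically modulo $3$), equipped with the filtrations $\JJ_{\alpha\beta}, \JJ_{\alpha\gamma}, \JJ_{\alpha\delta}$ of Definition~\ref{def: J-filtrations}, the filtered chain maps $f_j^t$, and the filtered null-homotopies $\tilde h_j^t$. Working over the field $\F = \F_2$, a chain map is automatically an anti-chain map, so Proposition~\ref{prop: tri-filt} supplies hypothesis~(1) of Lemma~\ref{lemma: mapping-cone}. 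Hypotheses~(2) and~(3)---that $\tilde h_j^t$ is a filtered null-homotopy of $f_{j+1}^t \circ f_j^t$ and that $\tilde h_{j+1}^t \circ f_j^t + f_{j+2}^t \circ \tilde h_j^t$ is a filtered quasi-isomorphism from $C_j$ to itself---are precisely the two bullets of Proposition~\ref{prop: rect-filt}.

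Taking $i=1$ in Lemma~\ref{lemma: mapping-cone} then yields a filtered quasi-isomorphism
\[
\begin{pmatrix} f_1^t \\ \tilde h_1^t \end{pmatrix} \co \CF^t(\bm\alpha, \bm\gamma, z) \longrightarrow \Cone(f_2^t),
\]
which is automatically a filtered chain homotopy equivalence because we are working over a field. To convert this into the statement for $h_1^t$, observe that the truncation used to pass from $h_1^t$ to $\tilde h_1^t$ discards only those quadrilateral contributions whose spin$^c$ data force the quadratic Maslov shift to carry their image outside the bounded window realized by $\CF^t$; once $m$ is large enough relative to $t$, these discarded terms already vanish on $\CF^t$, so $h_1^t$ and $\tilde h_1^t$ agree as maps out of $\CF^t$ and the two cone maps coincide.

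The Maslov-grading statement is independent of the placement of the extra basepoint $z_n$: each $f_j^t$ and $h_j^t$ is homogeneous of the degree prescribed by the underlying cobordism, as computed in \cite[Proposition~6.5]{HL}, and the arithmetic of those degree shifts combines in the cone to produce a map of degree $0$. The main obstacle of the whole argument therefore lies upstream, in establishing Propositions~\ref{prop: tri-filt} and~\ref{prop: rect-filt}: one must feed the Alexander-grading and first-Chern-class formulas of Propositions~\ref{prop: alex-triangle} and~\ref{prop: alex-rectangle} into a careful spin$^c$-by-spin$^c$ analysis of the filtration shift of each triangle and (truncated) quadrilateral term, showing that on each bounded piece $\CF^t$ these shifts are exactly $n_{z_n}(\psi)$ and hence respect $\JJ_{\alpha\beta}, \JJ_{\alpha\gamma}, \JJ_{\alpha\delta}$. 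Once those filtration computations are in hand, the present theorem follows formally from Lemma~\ref{lemma: mapping-cone}.
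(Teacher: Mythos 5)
Your proposal is correct and follows the same path the paper takes: the paper's proof of this theorem is essentially the one-line remark that Proposition~\ref{prop: tri-filt} and Proposition~\ref{prop: rect-filt}, together with the grading analysis imported from \cite[Proposition 6.5]{HL}, feed the hypotheses of Lemma~\ref{lemma: mapping-cone}, and you have reconstructed exactly that. One small mislabeling worth noting: Proposition~\ref{prop: tri-filt} supplies the \emph{filteredness} of the $f_j^t$ that Lemma~\ref{lemma: mapping-cone} requires of its maps, while hypothesis~(1) (the anti-chain-map condition) is automatic because the $f_j^t$ are chain maps over $\F_2$; your sentence conflates these. Your explanation of why the theorem can be stated with $h_1^t$ rather than $\tilde h_1^t$---that the discarded spin$^c$ terms, having $\abs{\gen{c_1(\spincv),[Q]}}\geq 3m$, are killed by the quadratic Maslov-degree bound on $\CF^t$ once $m$ is large---is correct and makes explicit a point the paper (following \cite{HL}) leaves implicit.
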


\subsection{Triangle maps} \label{ssec: tri-filt}

We will prove Proposition \ref{prop: tri-filt} in this section by looking at $f_0^t$, $f_1^t$, and $f_2^t$ individually. (Throughout, we will write $f_j^\circ$ when making statements that apply all the flavors of Heegaard Floer homology.) The proof follows the outline in \cite{HL} closely.

\subsubsection{The map $f_0^t$} \label{sssec: f0-filt}

For any $\x \in \T_\alpha \cap \T_\beta$ with $\spincs_z(\x) = \spincv|_Y$, and any $r \in \Q/m\Z$, we have:
\begin{align*}
f_{0,\spincv}^\circ(\theta^{-1}([\x,i] \otimes T^r))
&= f_{0,\spincv}^\circ (T^{r +\AlNorm(\x)} \cdot [\x,i])  \\
&= \sum_{\q \in \T_\alpha \cap \T_\gamma} \ \sum_{\substack{\psi \in \pi_2(\x, \Theta_{\beta\gamma}, \q) \\ \mu(\psi)=0 \\ \spincs_z(\psi) = \spincv \\ \mathclap{r + k/d + \AlNorm(\x) + n_w(\psi) - n_z(\psi) \equiv 0 \pmod m} }} \#\MM(\psi) \, [\q, i-n_z(\psi)].
\end{align*}
According to \eqref{eq: abg-c1-x},
\[
 \frac{k}{d} + \AlNorm(\x) + n_w(\psi) - n_z(\psi) = \frac{\gen{c_1(\spincv), [P_\gamma]}+k}{2d}.
\]
Thus, $f_{0,\spincv}^\circ \circ \theta^{-1}$ is nonzero only on the summand
\[
\CF(\Sigma, \bm\alpha, \bm\beta, \spincs, z) \otimes T^r \GR,
\]
where $\spincs = \spincv |_Y$ and
\[
r \equiv - \frac{1}{2d} (\gen{c_1(\spincv), [P_\gamma]} + k)  \pmod m.
\]
On this summand, if we neglect the power of $T$, the composition equals the untwisted cobordism map $F^\circ_{W_\lambda(K),\spincv}$. We also lift $r$ to $\Q$ with the constraint
\[
\frac{-k-md}{2d} \le r < \frac{-k+md}{2d}.
\]
In \cite[Lemma 6.7]{HL} it is proved that for fixed $t\in \N$ and all large enough  $m$, if $f_{0,\spincv}^t \neq 0$ over spin$^c$ structure $\spincv,$ then for any $\epsilon >0,$
\[
\abs{ \gen{c_1(\spincv), [P_\gamma]} } < \epsilon md.
\]
In particular, if we take $\epsilon < 1,$ this implies if $f_{0,\spincv}^\circ \circ \theta^{-1}$ is nonzero, then
\begin{equation} \label{eq: f0-c1-r}
\gen{c_1(\spincv), [P_\gamma]} = - 2dr - k.
\end{equation}
\begin{proposition}[Proposition 6.8 in \cite{HL}] \label{prop: f0-filt}
Fix $t \in \N$. For all $m$ sufficiently large, the map
\[
f_0^t \co \ul\CF^t(\bm\alpha, \bm\beta, z; \GR) \to \CF^t(\bm\alpha, \bm\gamma, z)
\]
is filtered with respect to the filtrations $\JJ_{\alpha\beta}$ and $\JJ_{\alpha\gamma}$.
\end{proposition}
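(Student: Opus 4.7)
The strategy mirrors the argument in \cite[Proposition 6.8]{HL}, but with the filtration $\JJ$ replacing the Alexander filtration, and uses our refined grading computations from Proposition \ref{prop: alex-triangle}. It suffices to show that for each triangle $\psi \in \pi_2(\x, \Theta_{\beta\gamma}, \q)$ contributing a nonzero term to $f_0^t$, the output $[\q, i-n_z(\psi)] \in \CF^t(\bm\alpha,\bm\gamma,z)$ satisfies
\[
\JJ_{\alpha\gamma}([\q, i-n_z(\psi)]) \;\le\; \JJ_{\alpha\beta}([\x,i]\otimes T^r),
\]
whenever the input $[\x,i]\otimes T^r \in \CFpt(\bm\alpha,\bm\beta,z;\GR)$ and the output lie in the same nonzero spin$^c$ summand $\spincv \in \Spin^c_0(X_{\alpha\beta\gamma})$.

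The first step is to pin down $r$ in terms of $\spincv$. Using the $T$-exponent congruence in \eqref{eq: f0-def} together with the identity $\frac{k}{d}+\AlNorm(\x)+n_w(\psi)-n_z(\psi) = \frac{\gen{c_1(\spincv),[P_\gamma]}+k}{2d}$ (which is just a rearrangement of \eqref{eq: abg-c1-x}), the congruence defining $r$ becomes
\[
2dr + \gen{c_1(\spincv),[P_\gamma]} + k \;\equiv\; 0 \pmod{2dm}.
\]
Invoking \cite[Lemma 6.7]{HL}, for any fixed $t$ and $m$ sufficiently large, only spin$^c$ structures with $|\gen{c_1(\spincv),[P_\gamma]}| < md$ can contribute to $f_{0,\spincv}^t$. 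Since $r$ also lies in the window $-\frac{k+md}{2d}\le r<\frac{-k+md}{2d}$, the congruence upgrades to the exact equality \eqref{eq: f0-c1-r}, namely $\gen{c_1(\spincv),[P_\gamma]}=-2dr-k$.

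Given this equality, the filtration shift is computed directly. From \eqref{eq: abg-c1-x} and \eqref{eq: f0-c1-r},
\[
n\Al(\x) \;=\; -ndr - nk - nd\bigl(n_w(\psi)-n_z(\psi)\bigr),
\]
and substituting into \eqref{eq: abg-alex} yields
\[
\Al(\q) \;=\; -ndr - nk + k\,n_z(\psi) - k\,n_{z_n}(\psi) + \tfrac{nk-n^2 d}{2}.
\]
Dividing by $k$ and adding $i-n_z(\psi)$, the terms involving $n_z(\psi)$ cancel and one obtains
\[
\JJ_{\alpha\gamma}([\q, i-n_z(\psi)]) \;=\; i - \frac{2ndr + nk + n^2 d}{2k} - n_{z_n}(\psi) \;=\; \JJ_{\alpha\beta}([\x,i]\otimes T^r) - n_{z_n}(\psi).
\]
Since $\psi$ is a triangle class with a holomorphic representative, $n_{z_n}(\psi)\ge 0$, so the shift is nonpositive, and moreover equals exactly $-n_{z_n}(\psi)$ as claimed in Proposition \ref{prop: tri-filt}.

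The only genuinely delicate point is the passage from a congruence mod $m$ to an exact equality of Chern class evaluations, which is where the size bound from \cite[Lemma 6.7]{HL} is essential; the rest is a bookkeeping exercise combining the two identities in Proposition \ref{prop: alex-triangle}(1). Once $m$ is chosen large enough that \eqref{eq: f0-c1-r} is forced, the filtration inequality becomes the clean identity displayed above, and no further estimates are required.
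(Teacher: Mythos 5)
Your proof is correct and follows essentially the same route as the paper's: pin down the exponent $r$ via the $T$-congruence together with the bound from \cite[Lemma 6.7]{HL} to obtain \eqref{eq: f0-c1-r}, and then compute the $\JJ$-shift to be exactly $n_{z_n}(\psi)$. The only cosmetic difference is that the paper's proof plugs directly into \eqref{eq: abg-c1-q}, whereas you recover the needed identity $\Al(\q) = -ndr - nk + k\,n_z(\psi) - k\,n_{z_n}(\psi) + \tfrac{nk-n^2 d}{2}$ from the pair \eqref{eq: abg-alex} and \eqref{eq: abg-c1-x}; since the paper notes that the third Chern class relation in each part of Proposition~\ref{prop: alex-triangle} follows from the other two, this is the same argument with the intermediate step unpacked. (Incidentally, your route has the advantage of sidestepping the fact that \eqref{eq: abg-c1-q} as printed reads $-2k\,n_w(\psi)$ where consistency with \eqref{eq: abg-alex} and \eqref{eq: abg-c1-x} — and the paper's own use in the proof of this proposition — requires $-2k\,n_z(\psi)$.)
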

\begin{proof}
Assume $m$ is large enough, and let $\spincv$ be a spin$^c$ structure for which $f^t_{0,\spincv} \ne 0$,  which satisfies \eqref{eq: f0-c1-r}.

For any $\x \in \T_\alpha \cap \T_\beta$ and $\q \in \T_\alpha\cap \T_\gamma$ with $\spincs_z(\x) =  \spincv|_{Y_{\alpha\beta}}$ and $\spincs_z(\q) = \spincv|_{Y_{\alpha\gamma}}$, and any triangle $\psi \in \pi_2(\x, \Theta_{\beta\gamma}, \q)$ contributing to $f^t_{0,\spincv}$, we compute
\begin{align*}
\JJ_{\alpha\beta} ([\x,i] &\otimes T^r) - \JJ_{\alpha\gamma}([\q,i-n_z(\psi)])   \\
&= - \frac{2ndr + nk +n^2 d}{2k} - \frac{\Al(\q)}{k}  + n_z(\psi)   \\
&= -\frac{2ndr + nk + n^2 d}{2k} - \frac{n\gen{c_1(\spincv), [P_\gamma]} - 2kn_{z_{n}}(\psi) + 2k n_z(\psi) - n^2 d}{2k} + n_z(\psi) \\
&= n_{z_{n}}(\psi) \\
&\ge 0
\end{align*}
as required. The last equality uses \eqref{eq: f0-c1-r}, and \eqref{eq: abg-c1-q}.
\end{proof}

\subsubsection{The map $f_1^t$} \label{sssec: f1-filt}The map $f_1^\circ$ decomposes as a sum
\begin{equation} \label{eq: f1-decomp}
f_1^\circ = \sum_{\spincv \in \Spin^c_0(X_{\alpha\gamma\delta})} f_{1,\spincv}^\circ.
\end{equation}
 We need the following lemma from \cite{HL}:
\begin{lemma}[Lemma 6.9 in \cite{HL}]\label{le: f1range}
Fix $t\in \N$, when $m$ is sufficiently large, if $f_{1,\spincv}^\circ\neq 0,$ then 
\begin{equation} \label{eq: f1-trunc}
\abs{ \gen{c_1(\spincv), [R]} } < \frac{m(k+md)}{\nu}.
\end{equation}
Moreover, for any $\spincu \in \Spin^c(Y_{\lambda+m\mu}(K))$, there is at most one nonzero term landing in $\CF^t(\Sigma, \bm\alpha, \bm\delta, \spincu, z)$, which satisfies
 \begin{equation} \label{eq: f1-trunc-su}
\gen{c_1(\spincv), [R]} = \frac{2 d m s_\spincu}{\nu} .
\end{equation}
\end{lemma}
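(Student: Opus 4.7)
The plan is to follow the proof strategy of Hedden--Levine's \cite[Lemma 6.9]{HL}, adapting the computations to our setting where $[R]$, $[P_\gamma]$, and $[Q]$ live in the cobordism $\bar X_{\alpha\gamma\delta}$ from $Y_\lambda$ to $Y_{\lambda+m\mu}$. The idea is that the truncation level $t$ confines the Maslov grading of generators in $\CF^t$ to a bounded window, and hence constrains the Maslov grading shift of any contributing cobordism map $F^\circ_{W_m(-\lambda),\spincv}$, which in turn forces a bound on $|c_1(\spincv)^2|$, and thus on $|\gen{c_1(\spincv),[R]}|$.

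First, I would record the relevant topology: the second homology of $\bar X_{\alpha\gamma\delta}$ is generated (up to the thin-domain classes $[S_{\gamma\delta}^j]$, which pair trivially with $c_1$ and $[R]$) by $[R]$, with self-intersection $[R]^2 = -\tfrac{mk(k+md)}{\nu^2}$ as recorded in Section \ref{ssec: cob}. The spin$^c$ cobordism map induced by $(\bar X_{\alpha\gamma\delta},\spincv)$ shifts absolute Maslov grading by $\tfrac{c_1(\spincv)^2 - 2\chi - 3\sigma}{4}$; the quadratic term in this shift is governed by $-\tfrac{\nu^2 \gen{c_1(\spincv),[R]}^2}{mk(k+md)}$, with a correction that is bounded independently of $m$ once the thin-domain contributions are accounted for. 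Since the Maslov gradings of generators of $\CF^t(Y_\lambda)$ and $\CF^t(Y_{\lambda+m\mu})$ remain in a fixed finite window (by the discussion recalled at the start of this section), a nonvanishing $f^t_{1,\spincv}$ forces this shift to be bounded below by a constant $-C(t)$. Rearranging gives $\gen{c_1(\spincv),[R]}^2 \le C'(t) \cdot \tfrac{mk(k+md)}{\nu^2}$, which for $m$ large enough yields the strict bound $|\gen{c_1(\spincv),[R]}| < \tfrac{m(k+md)}{\nu}$.

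For the second assertion, I would analyze the fiber of the restriction map $\Spin^c(\bar X_{\alpha\gamma\delta}) \to \Spin^c(Y_{\lambda+m\mu})$ over a fixed $\spincu$. Modulo the thin-domain part, this fiber is a torsor for the subgroup of $H^2(\bar X_{\alpha\gamma\delta})$ generated by the Poincar\'e dual of a surface representing $[R]$ up to multiples of $[P_\gamma]$ disjoint from $Y_{\lambda+m\mu}$; the upshot, using $[P_\gamma]\cdot[R]=0$ and $[R]^2 = -\tfrac{mk(k+md)}{\nu^2}$, is that varying $\spincv$ within this fiber changes $\gen{c_1(\spincv),[R]}$ by integer multiples of $\tfrac{2m(k+md)}{\nu}$. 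Combined with the strict inequality just proved, at most one value in each residue class can occur, giving uniqueness. To identify that value as $\tfrac{2dms_\spincu}{\nu}$, I would apply Definition \ref{def: xu} and Lemma \ref{le: spincbij}, using the decomposition $[R]=\tfrac{m}{\nu}[P_\delta]+\tfrac{k+md}{\nu}[Q]$ together with $\gen{c_1(\spincv),[Q]}=\pm m$ (since $\spincv$ extends $\spincs^0_{\gamma\delta}$) to rewrite $\gen{c_1(\spincv),[R]}$ in terms of $\gen{c_1(\spincv),[\hat F]}$, which by \eqref{eq: su-def} equals $2ds_\spincu - k - md$, and verify the arithmetic reduces to $\tfrac{2dms_\spincu}{\nu}$.

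The routine part is the Maslov grading bookkeeping. The main obstacle I anticipate is the careful identification in the last step: matching the normalization of $s_\spincu$ (which uses the rational Seifert surface $\hat F$ in the reversed cobordism $W'_m$) with the $[R]$-evaluation in $\bar X_{\alpha\gamma\delta}$, and correctly handling the thin-domain contributions and the orientation conventions recalled in Section \ref{ssec: cob}. Once these identifications are made, the argument is essentially parallel to that of \cite[Lemma 6.9]{HL}; the presence of the cable basepoint $z_n$ plays no role in this particular lemma because both $[R]$ and the Maslov grading shift are independent of the placement of $z_n$.
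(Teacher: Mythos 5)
Your approach tracks the argument as both the paper and Hedden--Levine present it: a Maslov grading bound forces $c_1(\spincv)^2$ to be small, which (since $H_2(\bar X_{\alpha\gamma\delta};\Q)$ is generated by $[R]$ with $[R]^2 = -\tfrac{mk(k+md)}{\nu^2}$) gives the first estimate, and the second statement follows from the strict inequality combined with the step length $\tfrac{2m(k+md)}{\nu}$ of $\gen{c_1(\spincv),[R]}$ over a fixed $\spincu$ and the range $\abs{s_\spincu}\leq\tfrac{k+md}{2d}$. The quadratic-form algebra in your first paragraph is correct, and your overall plan matches the paper's brief remark after the lemma.

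The one place I would push on is your identification step. You write $\gen{c_1(\spincv),[P_\delta]}$ and $\gen{c_1(\spincv),[Q]}$ for $\spincv\in\Spin^c_0(\bar X_{\alpha\gamma\delta})$, but neither $[P_\delta]$ (an $(\alpha,\beta,\delta)$ domain) nor $[Q]$ (a $(\beta,\gamma,\delta)$ domain) lives in $H_2(\bar X_{\alpha\gamma\delta})$; they are classes in the larger $\bar X_{\alpha\gamma\delta\beta}$. To make the decomposition $[R]=\tfrac{m}{\nu}[P_\delta]+\tfrac{k+md}{\nu}[Q]$ usable you need to first extend $\spincv$ to some $\bar\spincv\in\Spin^c_0(\bar X_{\alpha\gamma\delta\beta})$ across $Y_{\alpha\delta}$, and the evaluations $\gen{c_1(\bar\spincv),[P_\delta]}$ and $\gen{c_1(\bar\spincv),[Q]}$ are then only pinned down modulo $2(k+md)$ and $2m$ respectively (depending on the choice of extension). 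The punchline still works: running the arithmetic with these congruences gives $\gen{c_1(\spincv),[R]}\equiv\tfrac{2dms_\spincu}{\nu}\pmod{\tfrac{2m(k+md)}{\nu}}$ regardless of the choice, and the strict bound from part one then forces the equality. Alternatively, one can avoid extending $\spincv$ altogether by using \eqref{eq: agd-c1-a} directly (which is how the paper invokes this lemma in the proof of Theorem~\ref{thm: mapping-cone} and in Proposition~\ref{prop: h0-filt}): it expresses $\tfrac{\nu}{m}\gen{c_1(\spincs_z(\psi)),n[R]}$ in terms of $\Al(\a)$, from which the congruence with $2nds_\spincu$ modulo $2n(k+md)$ is immediate. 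You correctly flag this normalization issue as the main obstacle, but as written your step conflates a class in $\bar X_{\alpha\gamma\delta\beta}$ with one in $\bar X_{\alpha\gamma\delta}$, and the congruence-versus-equality distinction is essential there, not merely bookkeeping.
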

 The second statement follows from the the range of $s_\spincu$ and the fact that  $\gen{c_1(\spincv), [R]}$ has step length $ 2m(k+md)/\nu$.  For each $\spincu \in \Spin^c(Y_{\lambda+m\mu}(K)),$ recall that $s_\spincu$ is the number appeared in Definition \ref{def: xu}.
\begin{proposition}[Proposition 6.10 in \cite{HL}] \label{prop: f1-filt}
For $m$ sufficiently large, the map
\[
f_1^t \co \CF^t(\bm\alpha, \bm\gamma, z) \to \CF^t(\bm\alpha, \bm\delta, z)
\]
is filtered with respect to the filtrations $\JJ_{\alpha\gamma}$ and $\JJ_{\alpha\delta}$.
\end{proposition}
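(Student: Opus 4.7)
The plan is to mimic the strategy of the proof of Proposition \ref{prop: f0-filt}, but using the $(\alpha,\gamma,\delta)$-version of Proposition \ref{prop: alex-triangle} together with Lemma \ref{le: f1range}. The key point is that the seemingly mysterious constant $\frac{nd^2 m(2s_\spincu - n)}{2k(k+md)}$ in the definition of $\JJ_{\alpha\delta}$ (see \eqref{eq: ad-filt}) is precisely what will absorb the discrepancy between the normalizing factors $k$ (for $\AlNorm(\q) = \Al(\q)/k$) and $k+md$ (for $\AlNorm(\a) = \Al(\a)/(k+md)$).

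More concretely, decompose $f_1^t = \sum_{\spincv \in \Spin^c_0(X_{\alpha\gamma\delta})} f_{1,\spincv}^t$ as in \eqref{eq: f1-decomp}. Lemma \ref{le: f1range} guarantees that for $m$ large enough only finitely many summands are nonzero, and for a fixed $\spincu \in \Spin^c(Y_{\lambda+m\mu}(K))$ there is at most one contributing $\spincv$, which by \eqref{eq: f1-trunc-su} satisfies $\gen{c_1(\spincv),[R]} = 2dms_\spincu/\nu$, so that $\gen{c_1(\spincv), n[R]} = 2ndm s_\spincu/\nu$. Substituting this into \eqref{eq: agd-c1-a} gives a closed-form expression for $\Al(\a)$ in terms of $s_\spincu$, $n_z(\psi)$, $n_{z_n}(\psi)$, and the combination $\sum_j (n_{z_j}(\psi) - n_{u_j}(\psi))$, namely
\[
\Al(\a) = nd\,s_\spincu - (k+md)\Bigl(n_{z_n}(\psi) - n_z(\psi) - \sum_{j=1}^n (n_{z_j}(\psi) - n_{u_j}(\psi))\Bigr) + \tfrac{n(k+md)-n^2 d}{2}.
\]

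The remaining step is a direct computation: for any triangle $\psi \in \pi_2(\q,\Theta_{\gamma\delta},\a)$ contributing to $f_{1,\spincv}^t$, plug into
\[
\JJ_{\alpha\gamma}([\q,i]) - \JJ_{\alpha\delta}([\a, i-n_z(\psi)]) = \frac{\Al(\q)}{k} - \frac{\Al(\a)}{k+md} + n_z(\psi) - \frac{nd^2 m(2s_\spincu - n)}{2k(k+md)},
\]
clear denominators by multiplying through by $k(k+md)$, and use \eqref{eq: agd-alex} to rewrite $(k+md)\Al(\q) - k\Al(\a) = md\,\Al(\a) + (k+md)^2(n_{z_n}(\psi)-n_z(\psi)) - (k+md)md\sum_j(n_{z_j}(\psi)-n_{u_j}(\psi)) - (k+md)\tfrac{nmd}{2}$. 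Combining with the expression for $\Al(\a)$ above, all of the $\sum_j (n_{z_j}-n_{u_j})$ contributions cancel, the $s_\spincu$ terms cancel against the constant shift in $\JJ_{\alpha\delta}$, and the remaining terms collapse to $k(k+md)\,n_{z_n}(\psi)$. Hence
\[
\JJ_{\alpha\gamma}([\q,i]) - \JJ_{\alpha\delta}([\a,i-n_z(\psi)]) = n_{z_n}(\psi) \ge 0,
\]
which both proves that $f_1^t$ is filtered and shows that, as in Proposition \ref{prop: tri-filt}, the filtration shift along each triangle equals $n_{z_n}(\psi)$.

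The main obstacle is purely bookkeeping: the two complexes are normalized against different quantities ($k$ and $k+md$), so one must carefully track the constant shift that was built into $\JJ_{\alpha\delta}$ and verify it cancels exactly the $s_\spincu$-dependent terms coming from \eqref{eq: f1-trunc-su}. The fact that \emph{only} the constraint \eqref{eq: f1-trunc-su} is used (rather than the weaker \eqref{eq: f1-trunc}) is essential: otherwise $\gen{c_1(\spincv),[R]}$ could vary in steps of $2m(k+md)/\nu$ within a single $\spincu$-summand, and the filtration would be shifted by a nonzero multiple, breaking monotonicity. This is why the stronger ``at most one'' statement in Lemma \ref{le: f1range} is what actually does the work.
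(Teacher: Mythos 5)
Your proposal is correct and follows essentially the same route as the paper: both rely on the ``at most one'' constraint \eqref{eq: f1-trunc-su} from Lemma \ref{le: f1range} combined with the $(\alpha,\gamma,\delta)$ part of Proposition \ref{prop: alex-triangle} to reduce the filtration shift to $n_{z_n}(\psi) \ge 0$. The only cosmetic difference is that you solve for $\Al(\a)$ via \eqref{eq: agd-c1-a} and decompose $(k+md)\Al(\q)-k\Al(\a) = (k+md)(\Al(\q)-\Al(\a)) + md\Al(\a)$, whereas the paper writes $= k(\Al(\q)-\Al(\a)) + md\Al(\q)$ and uses \eqref{eq: agd-c1-q}; these are algebraically equivalent bookkeeping choices.
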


\begin{proof}
    Assume $m$ is large enough, and suppose $f^t_{1,\spincv}$ is the only nonzero term landing in $\CF^t(\Sigma, \bm\alpha, \bm\delta, \spincu, z)$, where $\spincu = \spincv|_{Y_{\alpha\delta}}$. For any $\q$ with $\spincs_z(\q)=\spincv|_{Y_{\alpha\gamma}}$ and  $\a$ with $\spincs_z(\a)=\spincu$, and any   triangle $\psi \in \pi_2(\q, \Theta_{\gamma\delta}, \a)$ contributing to $f^t_{1,\spincv}$, compute
\begin{align*}
\AlNorm(\q) &- \AlNorm(\a) \\
&= \frac{\Al(\q)}{k} - \frac{\Al(\a)}{k+md}   \\
&= \frac{k(\Al(\q) - \Al(\a)) + md \Al(\q)}{k(k+md)} \\
&= \frac{1}{k+md} \left( (k+md) n_{z_n}(\psi)  - (k+md) n_z(\psi) -md \sum_{j=1}^n \Big( n_{z_j}(\psi) - n_{u_j}(\psi) \Big)- \frac{nmd}{2} \right) \\
 & \qquad + \frac{md}{k(k+md)} \left( \frac{n\nu}{2m} \gen{c_1(\spincv), [R]} + k \sum_{j=1}^n \Big( n_{z_j}(\psi) - n_{u_j}(\psi) \Big) + \frac{nk}{2} -  \frac{n^2 d}{2} \right) \\
&= n_{z_n}(\psi) - n_z(\psi)  - \frac{2nmd}{2(k+md)} + \frac{md}{k(k+md)} \left( nd s_\spincu  + \frac{nk}{2} -  \frac{n^2 d}{2} \right)\\
&= n_{z_n}(\psi) - n_z(\psi) + \frac{nd^2m ( 2s_\spincu -  n )}{2k(k+md)}.
\end{align*}
Thus,
\[
\JJ_{\alpha\gamma}([\q,i]) - \JJ_{\alpha\delta}([\a, i-n_z(\psi)]) = n_{z_n}(\psi) \ge 0
\]
as required. 
\end{proof}

\subsubsection{The map $f_2^t$} \label{sssec: f2-filt}

Let us first examine how the spin$^c$ decomposition of $f_2^\circ$ interacts with the trivializing map $\theta$. For any $\a \in \T_\alpha \cap \T_\delta$,  we have:
\begin{align*}
\theta \circ f_2^\circ([\a,i])
&= \sum_{\x \in \T_\alpha \cap \T_\beta} \sum_{\substack{\psi \in \pi_2(\a, \Theta_{\delta\beta}, \x) \\ \mu(\psi)=0}} \#\MM(\psi) \, [\x, i-n_z(\psi)] \otimes T^{-k/d + n_w(\psi) - n_z(\psi)  - \AlNorm_{w,z}(\x)}.
\end{align*}
By \eqref{eq: adb-c1-x}, the $T$ exponent is equal to
\[
-\frac{k}{d} + n_w(\psi) - n_z(\psi)  - \AlNorm_{w,z}(\x) = - \frac{\gen{c_1(\spincv), [P_\delta]} +k-md}{2d}.
\]
Therefore, the term $\theta \circ f_{2,\spincv}^\circ$ lands in the summand
\begin{equation} \label{eq: f2-t-target}
\CF^\circ(\bm\alpha, \bm\beta, \spincv|_Y, z) \otimes T^r,
\end{equation}
where $r \in \Q/m\Z$ is given by
\[
r \equiv - \frac{\gen{c_1(\spincv), [P_\delta]} +k+md}{2d}    \pmod {m}
\]

Neglecting the power of $T$, the composition map equals the untwisted cobordism map $F^\circ_{W'_m, \spincv}$ in the first factor. Fixing $t\in\N$ and $\epsilon>0$, according to \cite[Lemma 6.11]{HL}, for all $m$ sufficiently large, if $f_{2,\spincv}^t \ne 0$, then
\begin{equation} \label{eq: f2-trunc}
\abs{ \gen{c_1(\spincv), [P_\delta]} } < (1+\epsilon)(k+md).
\end{equation}
In particular, assuming that $(1+\epsilon)(k+md) < 2md$, then the only spin$^c$ structures that may contribute to $f^t_2$ are those denoted by $\spincx_\spincu$ and $\spincy_\spincu$ in Definition \ref{def: xu}.

\begin{proposition}[Proposition 6.12 in \cite{HL}] \label{prop: f2-filt}
Fix $t \in \N$. For all $m$ sufficiently large, the map
\[
f_2^t \co \CF^t(\bm\alpha, \bm\delta, z) \to \ul\CF^t(\bm\alpha, \bm\beta, z; \GR)
\]
is filtered with respect to the filtrations $\JJ_{\alpha\delta}$ and $\JJ_{\alpha\beta}$.
\end{proposition}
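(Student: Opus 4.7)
The plan is to mirror the strategy used for Propositions \ref{prop: f0-filt} and \ref{prop: f1-filt}: decompose $f_2^t$ over spin$^c$ structures, truncate, and verify on each surviving piece that the $\JJ$ filtration drops by exactly $n_{z_n}(\psi) \ge 0$. First, decompose $f_2^t = \sum_{\spincv \in \Spin^c_0(X_{\alpha\delta\beta})} f_{2,\spincv}^t$. For $m$ large and any fixed small $\epsilon > 0$ with $(1+\epsilon)(k+md) < 2md$, Hedden--Levine's bound \eqref{eq: f2-trunc} restricts the sum to the finitely many $\spincv$ satisfying $|\gen{c_1(\spincv), [P_\delta]}| < (1+\epsilon)(k+md)$, which forces $\spincv \in \{\spincx_\spincu, \spincy_\spincu\}$ for some $\spincu \in \Spin^c(Y_{\lambda+m\mu}(K))$. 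For each such $\spincv$, the analysis preceding the proposition statement shows that $\theta \circ f_{2,\spincv}^t$ lands in the single summand $\CF^t(\bm\alpha, \bm\beta, \spincv|_Y, z) \otimes T^r$, where the lifted $r$ in the range \eqref{eq: r-bounds} is determined by $\spincv$.

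The heart of the proof is then the identity
\[
\JJ_{\alpha\delta}\bigl([\a, i]\bigr) - \JJ_{\alpha\beta}\bigl([\x, i - n_z(\psi)] \otimes T^r\bigr) = n_{z_n}(\psi)
\]
for every triangle $\psi \in \pi_2(\a, \Theta_{\delta\beta}, \x)$ contributing to $f^t_{2,\spincv}$. To check it, I would substitute Definition \ref{def: J-filtrations}, then use \eqref{eq: adb-c1-a} to write $\AlNorm_{w,z_n}(\a) = \Al(\a)/(k+md)$ in terms of $\gen{c_1(\spincv), [P_\delta]}$ (using $[P_{n,\delta}] = n[P_\delta]$) together with $n_w(\psi)$ and $n_{z_n}(\psi)$. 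Substituting $\gen{c_1(\spincx_\spincu), [P_\delta]} = 2ds_\spincu - (k+md)$ (respectively $\gen{c_1(\spincy_\spincu), [P_\delta]} = 2ds_\spincu + (k+md)$), the $s_\spincu$-dependence collapses against the correction $\frac{nd^2 m(2s_\spincu - n)}{2k(k+md)}$ built into $\JJ_{\alpha\delta}$, while the $r$-dependence of $\JJ_{\alpha\beta}$ cancels against the $\Al(\x)$-part of $\gen{c_1(\spincv), [P_\delta]}$ from \eqref{eq: adb-c1-x}. What remains after simplification is exactly $n_{z_n}(\psi)$, which is non-negative for holomorphic $\psi$. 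This mirrors step-for-step the proof of Proposition \ref{prop: f0-filt}: the role played there by \eqref{eq: abg-c1-q} and $[P_{n,\gamma}]$ is played here by \eqref{eq: adb-c1-a} and $[P_{n,\delta}]$.

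The main obstacle will be consolidating the two sub-cases $\spincv = \spincx_\spincu$ and $\spincv = \spincy_\spincu$, whose associated lifted $r$-values inside the window \eqref{eq: r-bounds} differ by $-(k+md)/d$ modulo $m$. Depending on where $s_\spincu$ sits in its permitted range \eqref{eq: su-bound}, one may need to shift the raw exponent $n_w(\psi) - n_z(\psi) - k/d - \AlNorm_{w,z}(\x)$ by $\pm m$ before applying the formula \eqref{eq: ab-twisted-filt}. I would handle this exactly as in \cite{HL}: in each sub-case, verify that the raw exponent reduces mod $m$ to the intended lifted $r$ and that any stray multiples of $m$ are absorbed consistently into the definition of $\JJ_{\alpha\beta}$. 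Once this book-keeping is in place, the proposition follows immediately from the filtration computation outlined above, and the statement on the precise shift value $n_{z_n}(\psi)$ records the identity proved in the second paragraph.
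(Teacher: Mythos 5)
Your proposal follows essentially the same route as the paper: decompose $f_2^t$ over $\Spin^c_0(X_{\alpha\delta\beta})$, invoke \eqref{eq: f2-trunc} to reduce to $\spincv \in \{\spincx_\spincu,\spincy_\spincu\}$, identify the landing $T^r$-summand, and verify the filtration shift equals $n_{z_n}(\psi)$ by substituting the relations $\gen{c_1(\spincv),[P_\delta]} = -2dr-k\mp md = 2ds_\spincu \mp (k+md)$ from the two cases. The mechanism of cancellation you describe is exactly what the paper carries out.

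One small caution for the worked-out version: you plan to extract $\AlNorm_{w,z_n}(\a)$ from \eqref{eq: adb-c1-a} ``together with $n_w(\psi)$ and $n_{z_n}(\psi)$.'' As printed, \eqref{eq: adb-c1-a} displays $n_w(\psi)$, but if you re-derive it from \eqref{eq: adb-alex} and \eqref{eq: adb-c1-x} (which the paper itself says is how it is obtained), the multiplicity that actually appears is $n_z(\psi)$, i.e.\ $\gen{c_1(\spincs_z(\psi)),[P_{n,\delta}]} = 2\Al(\a) + 2(k+md)n_z(\psi) - 2(k+md)n_{z_n}(\psi) + n^2 d$. This is the form the paper's own computation uses, and it is the form you need, since the $B$-side generator is $[\x,\,i-n_z(\psi)]$; plugging in the printed $n_w$-version would leave a stray $n_z(\psi)-n_w(\psi)$ that does not cancel against the $r$-dependence. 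Use the $n_z$ version and the calculation closes exactly as you outline.
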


\begin{proof}
Assume  $m$ is sufficiently large  and   $(1+\epsilon)(k+md) < 2md$. Suppose $\spincv$ is a spin$^c$ structure on $W'_m$ for which $f^t_{2, \spincv} \ne 0$, and let $\spincu = \spincv|_{Y_{\lambda+m\mu}(K)}$ and $\spincs = \spincv|_Y$. By \eqref{eq: f2-trunc}, we have
\[
-2md < -(1+\epsilon)(k+md) < \gen{c_1(\spincv), [P_\delta]} < (1+\epsilon)(k+md) < 2md.
\]

Let $r$ denote the rational number satisfying
\[
-k-md \le 2dr < -k+md \quad \text{and} \quad 2dr \equiv -(\gen{c_1(\spincv), [P_\delta]}+k+md) \pmod {2md}.
\]
Note that $r$ is one of the exponents appearing in \eqref{eq: r-bounds}. By \eqref{eq: f2-t-target}, $f^t_{2,\spincv}$ lands in $\CF^\circ(\bm\alpha, \bm\beta, \spincv|_Y, z) \otimes T^r$. At the same time, by \eqref{eq: su-bound} and \eqref{eq: adb-c1-x}, the number $s_\spincu$ satisfies:
\[
-k-md < 2ds_\spincu \le k+md \quad \text{and} \quad 2ds_\spincu \equiv \gen{c_1(\spincv), [P_\delta]} + k+md \pmod {2(k+md)}.
\]

There are two possible cases to consider. 
\begin{enumerate}[label=(\roman*)]

    \item \label{it:c1f2}
    If $-(1+\epsilon)(k+md) < \gen{c_1(\spincv), [P_\delta]} \le 0$, in this case $\spincv=\spincx_\spincu,$ and the above inequalities and congruences imply that
\begin{equation} \label{eq: f2-filt-c1-neg}
\gen{c_1(\spincv), [P_\delta]} = -2dr-k-md = 2ds_\spincu - k-md.
\end{equation}
\item \label{it:c2f2}
Otherwise if $0 < \gen{c_1(\spincv), [P_\delta]} < (1+\epsilon)(k+md)$, in this case $\spincv=\spincy_\spincu,$ and we obtain
\begin{equation} \label{eq: f2-filt-c1-pos}
\gen{c_1(\spincv), [P_\delta]} = -2dr-k+md = 2ds_\spincu + k+md.
\end{equation}
\end{enumerate}

Suppose $\psi \in \pi_2(\a, \Theta_{\delta\beta}, \x)$ is any triangle that contributes to $f^t_{2,\spincv}$,
so that $\theta( f^t_{2,\spincv}([\a,i]))$ includes the term $[\x,i-n_z(\psi)] \otimes T^r$. We compute:
\begin{align*}
\JJ_{\alpha\delta}([\a,i]) &- \JJ_{\alpha\beta} ([\x,i-n_z(\psi)]  \otimes T^r)   \\
&= \AlNorm_{w,z_n}(\a) + \frac{2ndr + nk + n^2 d}{2k} +  \frac{nd^2m (2 s_\spincu-n)}{2k(k+md)} + n_z(\psi)  \\
&= \frac{n\gen{c_1(\spincv), [P_\delta]} - 2(k+md) n_z(\psi) + 2(k+md) n_{z_n}(\psi) -n^2 d}{2(k+md)}  \\
& \qquad + \frac{2ndr + nk + n^2 d}{2k} +  \frac{nd^2m (2 s_\spincu-n)}{2k(k+md)} + n_z(\psi)  \\
&= \frac{n\gen{c_1(\spincv), [P_\delta]} -n^2 d }{2(k+md)}  + \frac{2ndr + nk + n^2 d}{2k} +  \frac{nd^2m (2 s_\spincu-n)}{2k(k+md)} + n_{z_n}(\psi).
\end{align*}
Depending on case \ref{it:c1f2} or \ref{it:c2f2},  we will use either \eqref{eq: f2-filt-c1-neg} or \eqref{eq: f2-filt-c1-pos} to express the terms in first two fractions with $s_\spincu$, which gives us:
\begin{align*}
\JJ_{\alpha\delta}([\a,i]) &- \JJ_{\alpha\beta} ([\x,i-n_z(\psi)]  \otimes T^r)   \\
&= \frac{2nds_\spincu -n^2 d }{2(k+md)}  - \frac{2nds_\spincu  - n^2 d}{2k} +  \frac{nd^2m (2 s_\spincu-n)}{2k(k+md)} + n_{z_n}(\psi) \\
&= n_{z_n}(\psi) \ge 0
\end{align*}
as required.
\end{proof}

\subsection{Rectangle maps} \label{ssec: rect-filt}
In this section we analyze rectangle-counting maps. Following Hedden-Levine's argument, we will introduce the truncated maps $\tilde h^t_0$, $\tilde h^t_1$, and $\tilde h^t_2$, and prove the first part of Proposition \ref{prop: rect-filt} with these maps. The proof follows completely from the recipe in \cite{HL}, replacing numerical values of Alexander filtration and spin$^c$ evaluation as appropriate. We write down the whole process for the completeness.

\subsubsection{The map $h_0^t$} \label{sssec: h0-filt} 
Similar to the triangle-counting maps, $h^\circ_0$ splits over spin$^c$ structures $\spincv \in \Spin^c_0(X_{\alpha\beta\gamma\delta})$. The composition map $h_{0,\spincv}^\circ \circ \theta^{-1}$ is nonzero only on the summand $\CF^\circ(\bm\alpha, \bm\beta, \spincs, z) \otimes T^r \GR$, where $\spincs = \spincv |_Y$ and
\[
r \equiv - \frac{1}{2d} (\gen{c_1(\spincv), [P_\gamma]} + k)  \pmod m,
\]
and on this summand  $h_{0,\spincv}^\circ \circ \theta^{-1}$ is equal to an untwisted count of rectangles. Recall the general strategy for the rectangle-counting maps is to throw away bad terms, and to prove the remaining terms still constitute a null-homotopy of the triangle-counting maps. The following definition and lemma are due to Hedden and Levine:

\begin{definition} [Definition 6.14 in \cite{HL}] \label{def: h0-trunc}
For given $\epsilon>0$, let $\tilde h_0^t$ be the sum of all terms $h^t_{0,\spincv}$ corresponding to spin$^c$ structures $\spincv$ which either satisfy both
\begin{align}
\label{eq: h0-filt-Pg-bound} \abs{\gen{c_1(\spincv), [P_\gamma]}} &< \epsilon md \\
\label{eq: h0-filt-R-bound} \abs{ \gen{c_1(\spincv), [R]}} &< \frac{m(k+md)}{\nu}
\end{align}
or satisfy
\begin{equation}
\label{eq: h0-filt-Q-bound} \abs{ \gen{c_1(\spincv), [Q]}} = \pm m.
\end{equation}
\end{definition}

\begin{lemma}[Lemma 6.15 in \cite{HL}] \label{lemma: h0-trunc-nulhtpy}
Fix $t \in \N$ and $\epsilon>0$. For all $m$ sufficiently large, $\tilde h^t_0$ is a null-homotopy of $f^t_1 \circ f^t_0$.
\end{lemma}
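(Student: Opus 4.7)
The plan is to replicate the strategy of \cite[Lemma 6.15]{HL}: first show that the untruncated rectangle count $h^t_0$ is a null-homotopy of $f^t_1 \circ f^t_0$ via pentagon counting, then verify that the discarded terms $h^t_0 - \tilde h^t_0$ assemble into a chain map, so that their removal preserves the null-homotopy property.

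For the first assertion, one examines the ends of the $1$-dimensional moduli spaces of pseudo-holomorphic rectangles $\rho \in \pi_2(\x, \Theta_{\beta\gamma}, \Theta_{\gamma\delta}, \a)$. Under the standing assumption $\spincv \in \Spin^c_0(X_{\alpha\beta\gamma\delta})$ we have $\spincv|_{Y_{\gamma\delta}} = \spincs^0_{\gamma\delta}$, so $\Theta_{\gamma\delta}$ is $\partial$--closed by Lemma \ref{lemma: Theta-gamma-delta}, and $\Theta_{\beta\gamma}$ is likewise closed. The standard degeneration analysis then produces, for each individual $\spincv$, the identity
\[
\partial h^t_{0,\spincv} + h^t_{0,\spincv} \partial \ =\ \sum_{\spincv = \spincv_0 * \spincv_1} f^t_{1,\spincv_1} \circ f^t_{0,\spincv_0},
\]
where the sum is over splittings compatible with the decomposition $X_{\alpha\beta\gamma\delta} = X_{\alpha\beta\gamma} \cup_{Y_{\alpha\gamma}} X_{\alpha\gamma\delta}$. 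Since only finitely many $\spincv$ contribute on $\CF^t$ by grading bounds, summing over $\spincv$ yields $\partial h^t_0 + h^t_0 \partial = f^t_1 \circ f^t_0$.

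For the second assertion, it suffices to show that $h^t_{0,\spincv}$ is itself a chain map for each $\spincv$ omitted from $\tilde h^t_0$. Such $\spincv$ must fail \eqref{eq: h0-filt-Q-bound} and at least one of \eqref{eq: h0-filt-Pg-bound} or \eqref{eq: h0-filt-R-bound}. If \eqref{eq: h0-filt-Pg-bound} fails, then every admissible splitting has $\langle c_1(\spincv_0),[P_\gamma]\rangle = \langle c_1(\spincv),[P_\gamma]\rangle$ (since $[P_\gamma]$ is supported in $X_{\alpha\beta\gamma}$), so \cite[Lemma 6.7]{HL} forces $f^t_{0,\spincv_0} = 0$. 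A symmetric argument using Lemma \ref{le: f1range} applies when \eqref{eq: h0-filt-R-bound} fails, yielding $f^t_{1,\spincv_1}=0$. In either case the right-hand side of the pentagon identity vanishes, so $h^t_{0,\spincv}$ is a chain map.

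The role of condition \eqref{eq: h0-filt-Q-bound} is to safeguard the delicate spin$^c$ structures on which both $[P_\gamma]$-- and $[R]$--evaluations could in principle be small simultaneously: the relation $\nu[R] = m[P_\gamma] + k[Q]$ together with $[Q]^2 = -m$ (so that $c_1$--evaluations on $[Q]$ within a single coset differ in steps of $2m$) singles out $|\langle c_1(\spincv),[Q]\rangle| = m$ as the minimal case allowed inside $\Spin^c_0$, which is exactly what \eqref{eq: h0-filt-Q-bound} retains. Combining this with the argument above shows that whenever $\spincv$ fails \eqref{eq: h0-filt-Q-bound} one of $f^t_{0,\spincv_0}$ or $f^t_{1,\spincv_1}$ necessarily vanishes on every splitting, so $h^t_0-\tilde h^t_0$ is a chain map and the proof concludes. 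The main technical obstacle will be carrying out this bookkeeping cleanly among the three evaluations while tracking the adjusted numerical constants imposed by the additional basepoint $z_n$, but the overall scheme is identical to that of \cite[Lemma 6.15]{HL}.
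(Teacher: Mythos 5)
The paper does not give a proof of this lemma at all; it is cited directly from \cite{HL} (Lemma 6.15 there), with the understanding that the argument carries over verbatim because the truncation conditions and the pentagon relation depend only on the $\spincs_z$--spin$^c$ decomposition, not on the additional basepoint $z_n$. Your reconstruction correctly identifies HL's strategy --- establish $\partial h^t_0 + h^t_0\partial = f^t_1 \circ f^t_0$ spin$^c$--by--spin$^c$, then show each discarded $h^t_{0,\spincv}$ is a chain map because the corresponding $f^t_{1,\spincv_1}\circ f^t_{0,\spincv_0}$ vanishes --- and the core paragraph doing this is correct: a discarded $\spincv$ fails at least one of \eqref{eq: h0-filt-Pg-bound} or \eqref{eq: h0-filt-R-bound}, and since $[P_\gamma]$ (resp.~$[R]$) is carried by $X_{\alpha\beta\gamma}$ (resp.~$X_{\alpha\gamma\delta}$), the restriction of $c_1(\spincv)$ determines the evaluation, whence Lemma 6.7 (resp.~Lemma~\ref{le: f1range}) kills $f^t_{0,\spincv_0}$ (resp.~$f^t_{1,\spincv_1}$).

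Two points merit correction. First, your closing sentence asserts that whenever $\spincv$ fails \eqref{eq: h0-filt-Q-bound}, one of the two triangle maps vanishes; this is false. The relation $\nu[R]=m[P_\gamma]+k[Q]$ only bounds $|\langle c_1(\spincv),[Q]\rangle|$ by roughly $m + (1+\epsilon)m^2d/k$ when both \eqref{eq: h0-filt-Pg-bound} and \eqref{eq: h0-filt-R-bound} hold, which for large $m$ is far above $m$, so there certainly exist $\spincv$ with $|\langle c_1(\spincv),[Q]\rangle|\gg m$ on which both $f^t_0$ and $f^t_1$ survive. What is true (and what your earlier paragraph already establishes) is that every \emph{discarded} $\spincv$ has one of the triangle maps vanishing, since a discarded $\spincv$ fails \eqref{eq: h0-filt-Q-bound} \emph{and} at least one of the other two bounds; the role of \eqref{eq: h0-filt-Q-bound} is not to protect the null-homotopy property (which holds with or without those extra terms, since they are chain maps anyway) but to make $\tilde h^t_0$ amenable to the filtration estimate in Proposition~\ref{prop: h0-filt}. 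Second, your stated degeneration identity lists only the $(\alpha\gamma)$-diagonal splittings; the ends of the $1$-dimensional moduli space also include degenerations along the $(\beta\delta)$-diagonal, producing terms of the form $F_{\alpha\beta\delta}(\cdot,G_{\beta\gamma\delta}(\Theta_{\beta\gamma},\Theta_{\gamma\delta}))$. These contribute zero by the standard analysis of the small $(\beta,\gamma,\delta)$ triangle region (this is where the specific choices of $\Theta_{\beta\gamma}$, $\Theta_{\gamma\delta}$ and the twisted coefficients enter, following \cite{HeddenMarkFractional}), and that vanishing should at least be acknowledged rather than folded silently into ``standard degeneration analysis.''
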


We require an extra restraint on the spin$^c$ evaluation. The following lemma is given by an analysis on the absolute grading, which applies to our case as well.

\begin{lemma} [Lemma 6.13 in \cite{HL}]\label{lemma: h0-trunc}
Fix $t \in \N$ and $\epsilon>0$. For all $m$ sufficiently large, suppose $\spincv$ is a spin$^c$ structure with $\abs{\gen{c_1(\spincv), [Q]}} = m$, and $h^t_{0,\spincv} \ne 0$, then
\[
\abs{\gen{c_1(\spincv), [P_\delta]}} < (1+\epsilon)(k+md).
\]
\end{lemma}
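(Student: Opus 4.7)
The plan is to mimic the Maslov grading-shift analysis already used for Propositions~\ref{prop: f0-filt}--\ref{prop: f2-filt}, adapting it to the rectangle-counting map $h^t_{0,\spincv}$. The Maslov grading shift induced by a rectangle class $\rho$ with $\mu(\rho)=-1$ and $\spincs_z(\rho)=\spincv$ contributing to $h^t_{0,\spincv}$ is, up to topological constants depending only on the underlying $4$-manifold $\bar X_{\alpha\beta\gamma\delta}$, equal to $\tfrac14 c_1(\spincv)^2$. Since the truncation to $\CF^t$ confines the Maslov gradings of both source and target generators to a window of finite length depending only on $t$, the nonvanishing of $h^t_{0,\spincv}$ forces $|c_1(\spincv)^2|\le C_t$ for some constant $C_t$ independent of $m$ and $\spincv$.

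I would then invoke the orthogonality $[P_\delta]\cdot [Q]=0$ from \eqref{eq: abgd-int-form} together with the self-intersections $[P_\delta]^2=d(k+md)$ and $[Q]^2=-m$ from \eqref{eq: abgd-PD-self-int}. Decomposing $c_1(\spincv)$ along the $\{[P_\delta],[Q]\}$-plane and its intersection-form orthogonal complement in $H_2(\bar X_{\alpha\beta\gamma\delta};\Q)$ yields
\[
c_1(\spincv)^2 \;=\; \frac{\gen{c_1(\spincv),[P_\delta]}^2}{d(k+md)} \;-\; \frac{\gen{c_1(\spincv),[Q]}^2}{m} \;+\; Q_\perp(c_1(\spincv)),
\]
where $Q_\perp$ records the pairings with the thin-domain classes capped off by the $3$-handles, whose intersection data are $m$-independent. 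The same truncation argument applied to $Q_\perp$ gives an $m$-independent bound $|Q_\perp(c_1(\spincv))|\le C'_t$. Plugging in $|\gen{c_1(\spincv),[Q]}|=m$ and rearranging,
\[
\gen{c_1(\spincv),[P_\delta]}^2 \;\le\; d(k+md)\bigl(m+C_t+C'_t\bigr).
\]

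For $m$ sufficiently large, the right-hand side is strictly less than $(1+\epsilon)^2(k+md)^2$: the leading quadratic comparison $d\cdot m\le (1+\epsilon)^2(k+md)$ produces a slack $(2\epsilon+\epsilon^2)md+O(1)$ that dwarfs the bounded correction $d(C_t+C'_t)$. Taking square roots yields $|\gen{c_1(\spincv),[P_\delta]}|<(1+\epsilon)(k+md)$, as required.

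The main obstacle is the orthogonal-complement bookkeeping---namely, verifying that the portion of $c_1(\spincv)$ lying outside the $\{[P_\delta],[Q]\}$-plane contributes only $O(1)$ to $Q_\perp$ whenever $h^t_{0,\spincv}\neq 0$, together with pinning down the topological constants $\chi(\bar X_{\alpha\beta\gamma\delta})$ and $\sigma(\bar X_{\alpha\beta\gamma\delta})$ that appear in the grading-shift formula. This bookkeeping is carried out in \cite[Lemma 6.13]{HL} for the $n=1$ setting; since the periodic-domain intersection data \eqref{eq: abgd-PD-self-int}--\eqref{eq: abgd-int-form} coincide with theirs, and since the extra reference points $z_j, u_j$ enter only through $\MultComb$ (hence into Alexander gradings) and not into the intersection form, the analysis transplants directly with the same numerical conclusion.
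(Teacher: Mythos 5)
Your overall strategy---use the absolute‐grading shift formula for the quadrilateral map, combine it with the $\{[P_\delta],[Q]\}$‐orthogonality from \eqref{eq: abgd-int-form}, and invoke the self‐intersections from \eqref{eq: abgd-PD-self-int}---is the same route the paper takes (by citing the grading analysis in \cite[Lemma~6.13]{HL}), and the final algebra, once the intermediate inequality is in hand, does close correctly. The decomposition
\[
c_1(\spincv)^2 = \frac{\gen{c_1(\spincv),[P_\delta]}^2}{d(k+md)} - \frac{\gen{c_1(\spincv),[Q]}^2}{m} + Q_\perp(c_1(\spincv))
\]
is also correct given the orthogonality and the self‐intersection data.

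However, the crucial step ``the nonvanishing of $h^t_{0,\spincv}$ forces $|c_1(\spincv)^2|\le C_t$ for some $C_t$ independent of $m$'' is not justified, and this is precisely the point your ``topological constants'' are hiding. The Maslov grading shift of the rectangle‐counting map through $\bar X_{\alpha\beta\gamma\delta}$ is not $\tfrac14 c_1(\spincv)^2$ plus $m$‐independent constants: it carries a contribution from the absolute grading of $\Theta_{\gamma\delta}$, i.e.\ from a $d$‐invariant of the $L(m,1)$ boundary component $Y_{\gamma\delta}$, and that quantity scales linearly in $m$. Thus bounding the grading shift (via the truncation to $\CF^t$) constrains a combination of $c_1(\spincv)^2$ and an $m$‐linear correction, not $c_1(\spincv)^2$ itself. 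You cannot read off $|c_1(\spincv)^2|\le C_t$ directly, and the hypothesis $|\gen{c_1(\spincv),[Q]}|=m$ is exactly what is needed to pin down the spin$^c$ structure on $L(m,1)$ and control that $m$‐dependent term. Your writeup uses the hypothesis only after the (unjustified) bound on $c_1^2$ has been assumed; in the actual argument the hypothesis must enter earlier, to cancel the $m$‐growth coming from $\gr(\Theta_{\gamma\delta})$. You gesture at this when you flag ``pinning down the topological constants $\chi,\sigma$'' as the remaining bookkeeping, but those constants are in fact $m$‐independent---the genuinely $m$‐dependent piece is the correction term for the $L(m,1)$ boundary, and that is the gap your proposal leaves open.
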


 With the restricted spin$^c$ evaluation, the filtration shifts on  the truncated map can be much more effectively controlled. The following is parallel to \cite[Proposition 6.16]{HL}.

\begin{proposition} \label{prop: h0-filt}
Fix $t \in \N$ and $0 <\epsilon <1$. For all $m$ sufficiently large, the map $\tilde h^t_{0}$ is filtered with respect to $\JJ_{\alpha\beta}$ and $\JJ_{\alpha\delta}$.
\end{proposition}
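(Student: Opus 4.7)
The plan is to parallel the proofs of Propositions \ref{prop: f0-filt}, \ref{prop: f1-filt}, and \ref{prop: f2-filt}, since $\tilde h_0^t$ counts $(\alpha,\beta,\gamma,\delta)$--rectangles that traverse all three cobordisms. Fix a spin$^c$ structure $\spincv$ contributing to $\tilde h_0^t$ and split into the two cases allowed by Definition \ref{def: h0-trunc}. In each case the strategy is: (i) use the bound on $\spincv$ to pin down the $T$--exponent $r$ and/or $s_\spincu$ exactly from the first Chern class data; (ii) compute $\JJ_{\alpha\beta}([\x,i]\otimes T^r) - \JJ_{\alpha\delta}([\a,i-n_z(\rho)])$ for a contributing rectangle $\rho \in \pi_2(\x,\Theta_{\beta\gamma},\Theta_{\gamma\delta},\a)$ using the formulas of Proposition \ref{prop: alex-rectangle}(1); (iii) verify that the difference equals $n_{z_n}(\rho) \ge 0$.

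In the first case, both bounds \eqref{eq: h0-filt-Pg-bound} and \eqref{eq: h0-filt-R-bound} hold. Since $\epsilon < 1$ and $\gen{c_1(\spincv),[P_\gamma]}$ has step length $2md$ on the fiber of $\Spin^c(W_\lambda) \to \Spin^c(Y)$, the bound \eqref{eq: h0-filt-Pg-bound} together with the mod--$m$ constraint from the target summand $\CF(\bm\alpha,\bm\beta,\spincs,z) \otimes T^r\GR$ pins down $r$ exactly, giving $2dr = -\gen{c_1(\spincv),[P_\gamma]} - k$ as in Proposition \ref{prop: f0-filt}. Similarly, the bound \eqref{eq: h0-filt-R-bound} combined with the step length $\tfrac{2m(k+md)}{\nu}$ of $\gen{c_1,[R]}$ yields $\gen{c_1(\spincv),[R]} = \tfrac{2dms_\spincu}{\nu}$, as in Proposition \ref{prop: f1-filt}. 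Feeding these identifications into the rectangle formulas \eqref{eq: abgd-alex}, \eqref{eq: abgd-c1-Pg}, \eqref{eq: abgd-c1-R}, the $\MultComb(\rho)$ term simplifies (cancelling the $\sum_j(n_{z_j}(\rho) - n_{u_j}(\rho))$ contributions against the $(k+md)$--multiple produced by \eqref{eq: abgd-c1-R}) and the two shift constants $\tfrac{2ndr+nk+n^2 d}{2k}$ and $\tfrac{nd^2 m(2s_\spincu - n)}{2k(k+md)}$ rearrange to cancel the remaining $s_\spincu$ and $n^2 d$ contributions, collapsing the difference to precisely $n_{z_n}(\rho)$. This mirrors how these same constants were engineered to produce $n_{z_n}(\psi)$ in the triangle proofs, and in fact recovers the combined computation of the proofs of Propositions \ref{prop: f0-filt} and \ref{prop: f1-filt}.

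In the second case, $\gen{c_1(\spincv),[Q]} = \pm m$, and Lemma \ref{lemma: h0-trunc} gives $\abs{\gen{c_1(\spincv),[P_\delta]}} < (1+\epsilon)(k+md)$. Assuming $(1+\epsilon)(k+md) < 2md$, this forces $\spincv$ to equal either $\spincx_\spincu$ or $\spincy_\spincu$, and as in Proposition \ref{prop: f2-filt} we may pin down $r$ and $s_\spincu$ simultaneously in terms of $\gen{c_1(\spincv),[P_\delta]}$ (with a sub--split according to the sign of $\gen{c_1(\spincv),[P_\delta]}$, analogous to the cases \ref{it:c1f2} and \ref{it:c2f2} in the proof of Proposition \ref{prop: f2-filt}). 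The homological identity $[P_\delta] = [P_\gamma] - d[Q]$ then converts these into the corresponding identifications of $r$ via $\gen{c_1(\spincv),[P_\gamma]}$; using \eqref{eq: abgd-c1-Q} to eliminate the $\sum_j(n_{z_j}(\rho) - n_{u_j}(\rho))$ terms in $\MultComb(\rho)$ under the constraint $\gen{c_1(\spincv),n[Q]} = \pm nm$, the filtration difference once again collapses uniformly to $n_{z_n}(\rho) \ge 0$, independent of which sub--case occurs.

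The main obstacle is bookkeeping. Case 2 in particular requires carefully tracking the interplay between the constraint $\gen{c_1,[Q]} = \pm m$, the sum $\sum_j(n_{z_j}(\rho) - n_{u_j}(\rho))$ appearing inside $\MultComb(\rho)$, and the two possible sub--cases according to the sign of $\gen{c_1(\spincv),[P_\delta]}$; verifying that both sub--cases yield the same final shift $n_{z_n}(\rho)$ is the heart of the argument. The shift constants chosen in Definition \ref{def: J-filtrations} are calibrated precisely so that these cancellations go through, and once Case 1 is carried out in detail the algebraic manipulations in Case 2 follow the same template, with $\gen{c_1,[P_\gamma]}$ and $\gen{c_1,[R]}$ replaced by their $\gen{c_1,[P_\delta]}$-- and $\gen{c_1,[Q]}$--expressions via the identities in Section \ref{ssec: cob}.
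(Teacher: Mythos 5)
Your proposal follows the paper's proof closely: same case split from Definition~\ref{def: h0-trunc}, same invocation of Lemma~\ref{lemma: h0-trunc}, same rectangle Chern-class formulas from Proposition~\ref{prop: alex-rectangle}(1), and the correct final filtration shift $n_{z_n}(\rho)$. The one genuine difference is the bookkeeping in case~2. The paper computes the filtration shift once and for all (for \emph{any} $\spincv$ contributing to $h^t_{0,\spincv}$) as $-n_{z_n}(\rho) + \tfrac{(p-q)nmd}{k}$, where $p,q$ are the integers locating $\gen{c_1(\spincv),[P_\gamma]}$ and $\tfrac{\nu}{m}\gen{c_1(\spincv),[R]}$ in their respective windows of width $2md$ and $2(k+md)$; the work then reduces to showing $p=q$. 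In case~2, the paper uses the $[Q]$- and $[P_\delta]$-bounds to conclude $p,q\in\{0,e\}$ and then derives the inequality $2(q-p-1)md + (2q-1)k < ke < 2(q-p+1)md + (2q+1)k$, which for large $m$ forces $p=q$. You instead propose to sub-split on the sign of $\gen{c_1(\spincv),[P_\delta]}$ as in Proposition~\ref{prop: f2-filt}; that also works --- in each of the four $(\text{sign},e)$ sub-cases the relations $[P_\gamma]=[P_\delta]+d[Q]$ and $\tfrac{\nu}{m}[R]=[P_\delta]+\tfrac{k+md}{m}[Q]$ pin $p$ and $q$ down to the same value --- but it costs more cases and still must be fed back into the general shift formula to conclude. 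A few things to tighten: the phrase ``$\spincv$ equals $\spincx_\spincu$ or $\spincy_\spincu$'' is an abuse, since $\spincv$ lives on $X_{\alpha\beta\gamma\delta}$ while $\spincx_\spincu,\spincy_\spincu$ live on $W'_m$; and $r$ is determined on the domain side by $\gen{c_1(\spincv),[P_\gamma]}$ (not $[P_\delta]$ as in Proposition~\ref{prop: f2-filt}, where the $(\alpha,\beta)$ summand is the target). Finally, your case~1 description of ``cancelling the $\sum_j(n_{z_j}-n_{u_j})$ contributions against the $(k+md)$-multiple from \eqref{eq: abgd-c1-R}'' is not quite the mechanism: the $\sum_j$ terms are eliminated via \eqref{eq: abgd-c1-Q} (which the paper uses in \emph{both} cases, not just case~2, as part of the general computation), leaving $-n_{z_n}(\rho) + \tfrac{(p-q)nmd}{k}$; the conditions of Definition~\ref{def: h0-trunc} are engineered exactly so that $p=q$.
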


\begin{proof}

We will start by looking at the filtration shift of a general term in $h^t_{0,\spincv},$ before specializing to the case of $\tilde h^t_{0}$.

Suppose $\x \in \T_\alpha \cap \T_\beta$ and $\a \in \T_\alpha \cap \T_\delta$ are generators such that $\spincs_z(\x) = \spincv|_Y = \spincs$ and $\spincs_z(\a) = \spincv|_{Y_{\lambda+m\mu}(K)}= \spincu$. Similar as before, $h^t_{0,\spincv} \circ \theta^{-1}$ in nonzero only on the summand $[\x,i] \otimes T^r$, where $r\in \Q$ is given by
\[
\frac{-k-md}{2d} \le r < \frac{-k+md}{2d} \quad \text{and} \quad r \equiv -\frac{1}{2d}( \gen{c_1(\spincv), [P_\gamma]} +k) \pmod m.
\]
So for some $p \in \Z$, we can write
\begin{equation} \label{eq: h0-filt-p-def}
\gen{c_1(\spincv), [P_\gamma]} = -2dr-k + 2pmd.
\end{equation} 
In other words, $p$ is the unique integer for which
\begin{equation} \label{eq: h0-filt-p-bounds}
(2p-1) md < \gen{c_1(\spincv), [P_\gamma]} \le (2p+1)md.
\end{equation}
In particular, if $\spincv$ satisfies \eqref{eq: h0-filt-Pg-bound}, then $p=0$.

On the other hand, consider the number $s_\spincu$ associated with the spin$^c$ structure $\spincu$. By its definition \eqref{eq: su-def}, combined with \eqref{eq: agd-c1-a} and \eqref{eq: adb-c1-a}, we have
\begin{align*}
    2nd s_\spincu &\equiv \gen{c_1(\spincv),[P_{n,\delta}]} + n(k+md)   \pmod{2n(k+md)}    \\
    &\equiv 2\Al(\a) + n^2 d - n(k+md)   \pmod{2n(k+md)} \\
    &\equiv \frac{\nu}{m} \gen{c_1(\spincv),n[R]}. \pmod{2n(k+md)}
\end{align*}
Hence for some $q \in \Z$, we can write
\begin{equation} \label{eq: h0-filt-q-def}
\frac{\nu}{m} \gen{c_1(\spincv), n[R]} = 2nd s_\spincu + 2nq(k+md),
\end{equation}
so that
\begin{equation} \label{eq: h0-filt-q-bounds}
(2q-1)(k+md) < \frac{\nu}{m} \gen{c_1(\spincv), [R]} \le (2q+1)(k+md).
\end{equation}
Again, suppose $\spincv$ satisfies \eqref{eq: h0-filt-R-bound}, then $q=0$.


For a   rectangle $\rho \in \pi_2(\x,\Theta_{\beta\gamma}, \Theta_{\gamma\delta}, \a)$  that contributes to $h^\circ_{0,\spincv}$, compute:
\begin{align*}
\JJ_{\alpha\delta}([\a, &i-n_z(\rho)]) - \JJ_{\alpha\beta}([\x,i] \otimes T^r) \\
&= \frac{\Al(\a)}{k+md} -n_z(\rho) + \frac{nd^2m(2s_\spincu-n)}{2k(k+md)} + \frac{2ndr + nk+n^2 d}{2k} \\
&= \frac{\frac{\nu}{m} \gen{c_1(\spincv), n[R]}  +n(k+md) -n^2 d}{2(k+md)}  + \frac{md (\frac{\nu}{m} \gen{c_1(\spincv), n[R]} - 2nq(k+md)) }{2k(k+md)}   \\
& \qquad  + \frac{-n\gen{c_1(\spincv), [P_\gamma]} - nk + 2npmd}{2k} - \frac{nd^2m}{2k(k+md)} + \frac{nk+n^2 d}{2k}\\
&\qquad- n_{z_n}(\rho) + \sum_{j=1}^n\left( n_{z_j}(\rho) - n_{u_j}(\rho) \right)\\
&= \frac{n}{2k} \gen{c_1(\spincv), \frac{\nu}{m}[R]- [P_\gamma]} + \frac{(p-q)nmd}{k} - \frac{n^2 d}{2k} + \frac{nk+n^2 d}{2k} \\
& \qquad - n_{z_n}(\rho) +  \sum_{j=1}^n\left( n_{z_j}(\rho) - n_{u_j}(\rho) \right) \\
&= \frac{1}{2m} \gen{c_1(\spincv),  n[Q] } + \frac{(p-q)nmd}{k}  + \frac{n}{2}   - n_{z_n}(\rho) +  \sum_{j=1}^n\left( n_{z_j}(\rho) - n_{u_j}(\rho) \right) \\
&= \frac{1}{2m} \left(-2m \sum_{j=1}^n\left( n_{z_j}(\rho) - n_{u_j}(\rho) \right) -nm \right)  + \frac{(p-q)nmd}{k}  + \frac{n}{2}  \\
&\qquad - n_{z_n}(\rho) +  \sum_{j=1}^n\left( n_{z_j}(\rho) - n_{u_j}(\rho) \right) \\
&= - n_{z_n}(\rho) + \frac{(p-q)nmd}{k}
\end{align*}
Therefore, in order to show that $\tilde h^t_{0}$ is filtered, we only need to show that $p-q = 0$ whenever $\spincv$ satisfies the conditions from Definition \ref{def: h0-trunc}.

If $\spincv$ satisfies \eqref{eq: h0-filt-Pg-bound} and \eqref{eq: h0-filt-R-bound}, we immediately deduce that $p=q=0$. This leaves us with only one case, when $\spincv$ satisfies \eqref{eq: h0-filt-Q-bound}, namely $\gen{c_1(\spincv), [Q]} = em$ where $e=\pm 1$. By Lemma \ref{lemma: h0-trunc}, we may also assume that $\abs{c_1(\spincv), [P_\delta]} < (1+\epsilon)(k+md)$. Recall that $[P_\gamma] = [P_\delta] + d[Q]$ and $\frac{\nu}{m}[R] = [P_\delta] + \frac{k+md}{m}[Q]$. Therefore,
\begin{align*}
(e-1-\epsilon) md -(1+\epsilon)k  < \gen{c_1(\spincv), [P_\gamma]} &<  (e+1+\epsilon)md + (1+\epsilon)k \\
(e-1-\epsilon)(k+md) < \frac{\nu}{m} \gen{c_1(\spincv), [R]} &< (e+1+\epsilon)(k+md).
\end{align*}
Comparing with \eqref{eq: h0-filt-p-def} and \eqref{eq: h0-filt-q-def} respectively, for sufficiently large $m$, this implies that $p,q \in \{0,e\}$. Also through \eqref{eq: h0-filt-p-bounds}and \eqref{eq: h0-filt-q-bounds}, we have 
\[
2(q-p-1)md + (2q-1)k < ke < 2(q-p+1)md + (2q+1)k.
\]
It then follows that $p=q$, as required.

\end{proof}

\subsubsection{The map $h_1^t$} \label{sssec: h1-filt}

We recall the definition of the truncated map $h^t_{1}$ from Hedden-Levine's argument.

\begin{definition}[Definition 6.17 in \cite{HL}] \label{def: h1-trunc}
Let $\tilde h^t_1$ denote the sum of all terms $h^t_{1,\spincv}$ for which $\spincv$ satisfies
\begin{equation} \label{eq: h1-trunc-Q-bound}
\gen{c_1(\spincv), [Q]} = \pm m.
\end{equation}
\end{definition}

Similar to the previous case, the truncated map is enough to fulfill the required condition in Proposition \ref{prop: rect-filt}, as indicated by the following lemma.

\begin{lemma}[Lemma 6.19 in \cite{HL}] \label{lemma: h1-trunc-nulhtpy}
Fix $t \in \N$. For all $m$ sufficiently large, $\tilde h_1^t$ is a null-homotopy of $f_2^t \circ f_1^t$.
\end{lemma}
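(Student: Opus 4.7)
The plan is to follow the strategy of Hedden-Levine's proof of Lemma 6.19 in \cite{HL}, which transfers to our setting with only notational adjustments: the defining condition $\gen{c_1(\spincv), [Q]} = \pm m$ for $\tilde h_1^t$ involves only the class $[Q]$ and spin$^c$ data on the four-manifold $X_{\alpha\gamma\delta\beta}$, both of which are independent of the placement of the second basepoint $z_n$.

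First I would verify that the full map $h_1^t$ is a null-homotopy of $f_2^t \circ f_1^t$ by the standard degeneration argument on the moduli of index-$(-1)$ pseudo-holomorphic quadrilaterals in $(\Sigma, \bm\alpha, \bm\gamma, \bm\delta, \bm\beta)$: the ends of this moduli space contribute $\partial h_1^t + h_1^t \partial$ on one side and $f_2^t \circ f_1^t$ on the other, after the usual cancellations involving broken $(\gamma,\delta,\beta)$-triangles which vanish exactly as in the untwisted case since the $\GR$-twisting and $z_n$-filtration do not affect the moduli counts. Since $\partial$ preserves the spin$^c$ decomposition, in order for $\tilde h_1^t$ to inherit the null-homotopy property it suffices to prove that
\[
h_1^t - \tilde h_1^t \;=\; \sum_{\substack{\spincv \in \Spin^c_0(X_{\alpha\gamma\delta\beta}) \\ \gen{c_1(\spincv), [Q]} \ne \pm m}} h_{1,\spincv}^t \;=\; 0 \quad \text{on } \CF^t
\]
for all sufficiently large $m$.

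The critical observation is that the condition $\spincv \in \Spin^c_0(X_{\alpha\gamma\delta\beta})$ forces the restriction to $Y_{\gamma\delta}$ to equal $\spincs^0_{\gamma\delta}$; combining this with the defining property of $\spincs^0_{\gamma\delta}$ in Lemma \ref{lemma: Theta-gamma-delta} and the torsor structure of the set of extensions of $\spincs^0_{\gamma\delta}$ to $W_{\gamma\beta\delta}$ under the $\PD[Q]$-action (which shifts $\gen{c_1(\spincv), [Q]}$ by $2[Q]^2 = -2m$ per step), the allowed values of $\gen{c_1(\spincv), [Q]}$ form the arithmetic progression $\{(2j+1)m : j \in \Z\}$. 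Hence every $\spincv$ contributing to the difference $h_1^t - \tilde h_1^t$ satisfies $|\gen{c_1(\spincv), [Q]}| \ge 3m$.

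For such $\spincv$, the orthogonal projection of $c_1(\spincv)^2$ onto the $[Q]$-direction in the intersection form on $X_{\alpha\gamma\delta\beta}$ contributes $-\gen{c_1(\spincv), [Q]}^2/m \le -9m$, and tends to $-\infty$ uniformly as $|\gen{c_1(\spincv), [Q]}|$ grows. Since the Maslov grading shift of $h_{1,\spincv}^t$ is $\tfrac14 c_1(\spincv)^2$ plus terms that are bounded independently of the chosen extension, this forces the image of $h_{1,\spincv}^t$ into Maslov gradings below the bottom of $\CFpt(\bm\alpha, \bm\beta, z; \GR)$ once $m$ is large enough, yielding $h_{1,\spincv}^t = 0$ on $\CF^t$ throughout the bad range. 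The main obstacle is making this grading vanishing uniform in $m$; this follows from the careful choice of $b$ within a narrow window around $m/2$ at the start of Section \ref{sec: exacttri}, which pins down the Maslov gradings of generators of the relevant $\CF^t$ complexes independently of $m$ -- a uniformity that continues to hold here because the Maslov grading is unaffected by the placement of $z_n$.
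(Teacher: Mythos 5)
The proposal reduces the lemma to showing that the thrown-away terms vanish, i.e.\ that $h^t_{1,\spincv}=0$ on $\CF^t$ for all $\spincv$ with $\abs{\gen{c_1(\spincv),[Q]}}\ge 3m$, so that $\tilde h_1^t = h_1^t$ and the null-homotopy is inherited for free. This is a stronger claim than what the lemma requires, and the grading argument offered to justify it is incomplete. The ``orthogonal projection'' only gives an \emph{upper} bound $c_1(\spincv)^2 \le -\gen{c_1(\spincv),[Q]}^2/m$, whereas to conclude that the bad terms fall below the grading window one must compare $c_1(\spincv)^2$ against the values achieved by the good $\spincv$'s; since $c_1(\spincv)^2$ also involves the $[P_\gamma]$- (equivalently $[P_\delta]$- or $[R]$-) component, which can be as large as $O(m d)$, a bad $\spincv$ with a smaller $[P_\gamma]$-evaluation can have $c_1(\spincv)^2$ no smaller than a good one's, and the proposal does not rule this out. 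The parenthetical appeal to ``the careful choice of $b$'' addresses only the uniformity of the \emph{window}, not this trade-off.

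The route that the paper intends (following Hedden--Levine) does not assert $\tilde h_1^t = h_1^t$ at all. One runs the degeneration argument directly on the moduli spaces of index $-1$ rectangles restricted to the truncated set of spin$^c$ structures; the ends give $\partial \tilde h_1^t + \tilde h_1^t\partial$ plus a sum of triangle-breaking contributions taken only over $\spincv$ with $\gen{c_1(\spincv),[Q]}=\pm m$. One then shows that this partial sum equals the \emph{full} composition $f_2^t\circ f_1^t$ on $\CF^t$ for $m$ large, using the bounds already in place for the triangle maps: if $f^t_{1,\spincv_1}\ne 0$ and $f^t_{2,\spincv_2}\ne 0$ and they compose, then by Lemma~\ref{le: f1range} one has $\abs{\gen{c_1(\spincv),[R]}}< m(k+md)/\nu$ and by the $f_2^t$ bound $\abs{\gen{c_1(\spincv),[P_\delta]}}<(1+\epsilon)(k+md)$, and since $[R]=\tfrac{m}{\nu}[P_\delta]+\tfrac{k+md}{\nu}[Q]$ with $[R]$ and $[P_\delta]$ supported in $X_{\alpha\gamma\delta}$ and $X_{\alpha\delta\beta}$ respectively, subtracting yields $\abs{\gen{c_1(\spincv),[Q]}}<(2+\epsilon)m$, forcing $\gen{c_1(\spincv),[Q]}=\pm m$. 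Thus every end contributing to $f_2^t\circ f_1^t$ already satisfies the truncation constraint, and no claim about the vanishing of the discarded $h^t_{1,\spincv}$ is needed. This is also consistent with the more elaborate truncation in Definition~\ref{def: h0-trunc} for $\tilde h_0^t$, which would be superfluous if the bad $\spincv$-terms simply vanished.
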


An extra spin$^c$ constraint is needed in the proof, given by the Maslov grading bound.

\begin{lemma} \label{lemma: h1-trunc}
Fix $t \in \N$. For all $m$ sufficiently large, if $\spincv$ is any spin$^c$ structure with $\abs{\gen{c_1(\spincv), [Q]}} = m$, and $h^t_{1,\spincv} \ne 0$, then $\abs{\gen{c_1(\spincv), [P_\gamma]}} < md$.
\end{lemma}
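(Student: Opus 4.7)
The strategy is directly parallel to the proof of \cite[Lemma 6.13]{HL} for the analogous $h_0$ bound: we control the Maslov grading shift of $h^t_{1,\spincv}$ using boundedness of Maslov gradings on $\CF^t$, and then convert this into a bound on $\gen{c_1(\spincv),[P_\gamma]}$ via the intersection form on $\bar X_{\alpha\gamma\delta\beta}$.

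First, every rectangle $\rho \in \pi_2(\q,\Theta_{\gamma\delta},\Theta_{\delta\beta},\x)$ with $\mu(\rho)=-1$ and $\spincs_z(\rho)=\spincv$ contributing to $h^t_{1,\spincv}$ realizes an absolute Maslov grading shift between $\q$ and $\x$ of the standard quadratic form
\[
\gr(\x)-\gr(\q)=\frac{c_1(\spincv)^2-2\chi(\bar X_{\alpha\gamma\delta\beta})-3\sigma(\bar X_{\alpha\gamma\delta\beta})}{4}-1.
\]
Both $\q$ and $\x$ lie in truncated complexes $\CF^t$, whose generators occupy a Maslov grading window of size depending only on $t$, $Y$, $Y_\lambda$ and $K$, and not on $m$. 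Hence $h^t_{1,\spincv}\neq 0$ forces $c_1(\spincv)^2 - 2\chi - 3\sigma$ to be bounded by a constant independent of $m$.

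Next, we use the decomposition $X_{\alpha\gamma\delta\beta}=X_{\alpha\gamma\beta}\cup_{Y_{\gamma\beta}} X_{\gamma\delta\beta}$ from \eqref{eq: Xagdb-decomp}: the classes $[P_\gamma]$ and $[Q]$ are represented by disjoint surfaces, hence are orthogonal in the intersection form, with $[P_\gamma]^2=-dk$ and $[Q]^2=-m$ (cf.\ Section \ref{ssec: cob}). With the restrictions of $\spincv$ to $Y_\lambda$ and $Y$ fixed by $\q$ and $\x$, expanding $c_1(\spincv)$ along the orthogonal basis $\{[Q],[P_\gamma]\}$ plus an orthogonal complement of rank bounded independently of $m$ gives
\[
c_1(\spincv)^2 = \frac{\gen{c_1(\spincv),[Q]}^2}{[Q]^2}+\frac{\gen{c_1(\spincv),[P_\gamma]}^2}{[P_\gamma]^2}+R(\spincv) = -m - \frac{\gen{c_1(\spincv),[P_\gamma]}^2}{dk}+R(\spincv),
\]
where the residual $R(\spincv)$ comes from thin-domain periodic classes together with the fixed boundary data, and is bounded uniformly in $m$.

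Finally, a direct topological computation of $\bar X_{\alpha\gamma\delta\beta}$, following \cite[Lemma 6.13]{HL} verbatim with $[P_\gamma]$ playing the role of $[P_\delta]$, yields $2\chi(\bar X_{\alpha\gamma\delta\beta})+3\sigma(\bar X_{\alpha\gamma\delta\beta})=-m+O(1)$. The $-m$ terms coming from $\gen{c_1,[Q]}^2/[Q]^2$ and from $3\sigma$ therefore cancel, and boundedness of $c_1(\spincv)^2-2\chi-3\sigma$ forces $\gen{c_1(\spincv),[P_\gamma]}^2/(dk)$ to be bounded uniformly in $m$. Thus $|\gen{c_1(\spincv),[P_\gamma]}|=O(\sqrt{dk})$, which is strictly smaller than $md$ for all sufficiently large $m$. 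The main technical step (and only nontrivial obstacle) is the precise computation of $2\chi+3\sigma$ for $\bar X_{\alpha\gamma\delta\beta}$ and the accounting of the $B_m$ contribution, both of which are handled exactly as in \cite{HL}; no new geometric input is required beyond swapping the role of $[P_\delta]$ for $[P_\gamma]$ and working in the oppositely-decomposed 4-manifold.
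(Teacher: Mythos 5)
Your overall strategy — bound the Maslov grading shift of $h^t_{1,\spincv}$ using the boundedness of gradings on $\CF^t$, then convert this into a bound on $\gen{c_1(\spincv),[P_\gamma]}$ via the orthogonal decomposition of the intersection form on $\bar X_{\alpha\gamma\delta\beta}$ into $[P_\gamma]$ and $[Q]$ — is the right one, and it is exactly what the paper relies on when it defers to \cite{HL} (the Maslov grading argument is independent of the extra basepoint $z_n$). Your identification of the correct decomposition $X_{\alpha\gamma\beta}\cup_{Y_{\gamma\beta}} X_{\gamma\delta\beta}$ from \eqref{eq: Xagdb-decomp}, and the intersection numbers $[P_\gamma]^2=-dk$, $[Q]^2=-m$, $[P_\gamma]\cdot[Q]=0$, are all correct.

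However, the claim that $2\chi(\bar X_{\alpha\gamma\delta\beta})+3\sigma(\bar X_{\alpha\gamma\delta\beta})=-m+O(1)$ cannot be right, and this is the mechanism you need for the cancellation. The Euler characteristic of $X_{\alpha\gamma\delta\beta}$ is determined entirely by the genus of the Heegaard surface $\Sigma$ and the number of attached $3$-handles, both of which are independent of $m$ (winding $\gamma_g$ and $\delta_g$ more does not change the handle decomposition of the polygon manifold), so $\chi=O(1)$. Likewise $\sigma$ is bounded in absolute value by $b_2(\bar X_{\alpha\gamma\delta\beta})$, which is the rank of the lattice of quadruply periodic domains modulo boundaries and is again independent of $m$ (it is spanned by $[P_\gamma]$ and $[Q]$ up to null classes), so $\sigma=O(1)$. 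Hence $2\chi+3\sigma=O(1)$, and the $-m$ coming from $\gen{c_1,[Q]}^2/[Q]^2$ does \emph{not} cancel against it. The missing term is the Maslov grading of $\Theta_{\gamma\delta}$: the formula you wrote for $\gr(\x)-\gr(\q)$ treats $\bar X_{\alpha\gamma\delta\beta}$ as if it were a cobordism from $Y_\lambda$ to $Y$, but it has a residual $L(m,1)$ boundary component, and the grading shift of the rectangle map picks up a contribution from $\gr(\Theta_{\gamma\delta})$, which is controlled by the $d$--invariant of $L(m,1)$ in the spin$^c$ structure $\spincs^0_{\gamma\delta}$ and grows like $m/4$. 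That is where the $-m/4$ from $c_1^2/4$ gets cancelled. Your final conclusion (that $\abs{\gen{c_1,[P_\gamma]}}$ is bounded independently of $m$, hence $<md$ for large $m$) is correct, but the accounting through $3\sigma$ is not, and since that accounting is labelled the ``only nontrivial obstacle'' it needs to be replaced by the lens-space boundary analysis before the argument is complete.
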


We are ready to prove the following proposition, parallel to \cite[Proposition 6.22]{HL}.

\begin{proposition} \label{prop: h1-filt}
Fix $t \in \N$. For all $m$ sufficiently large, the map $\tilde h_1^t$ is filtered with respect to the filtrations $\JJ_{\alpha\gamma}$ and $\JJ_{\alpha\beta}$.
\end{proposition}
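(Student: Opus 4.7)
The plan is to mirror the strategy of Proposition \ref{prop: h0-filt}, this time for rectangles $\rho \in \pi_2(\q, \Theta_{\gamma\delta}, \Theta_{\delta\beta}, \x)$ that contribute to a given summand $h_{1,\spincv}^{t}$ living over $\spincv \in \Spin^c_0(X_{\alpha\gamma\delta\beta})$. First, I will identify the $T$-exponent $r$ of the image after composing with the trivialization $\theta$. Combining $\theta([\x,i-n_z(\rho)]\otimes T^{n_w(\rho)-n_z(\rho)})$ with \eqref{eq: agdb-c1-Pd} gives
\[
2dr \;=\; -\gen{c_1(\spincv),[P_\delta]} - k + md,
\]
so as $\spincv$ varies by $\PD[C]$ (shifting $\gen{c_1(\spincv),[P_\delta]}$ by $2(k+md)$) one can write $\gen{c_1(\spincv),[P_\delta]} = -2dr-k+md+2p(k+md)$ for a unique integer $p$. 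Similarly, from Definition \ref{def: xu} and \eqref{eq: agdb-c1-R}, the number $s_\spincu$ attached to $\spincu=\spincv|_{Y_{\lambda+m\mu}(K)}$ satisfies $\frac{\nu}{m}\gen{c_1(\spincv), n[R]} \equiv 2nds_\spincu \pmod{2n(k+md)}$, so I will write
\[
\tfrac{\nu}{m}\gen{c_1(\spincv), n[R]} \;=\; 2nds_\spincu + 2nq(k+md)
\]
for a unique integer $q$.

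Second, I will carry out the filtration-shift computation
\[
\JJ_{\alpha\gamma}([\q,i]) - \JJ_{\alpha\beta}([\x,i-n_z(\rho)]\otimes T^{r}) \;=\; \tfrac{\Al(\q)}{k} + n_z(\rho) + \tfrac{2ndr+nk+n^{2}d}{2k}.
\]
Substituting $\Al(\q)$ from \eqref{eq: agdb-c1-R} and the expression for $2dr$ from the parametrization above, both $r$ and $s_\spincu$ appear only through the combination $\tfrac{\nu}{m}\gen{c_1(\spincv),[R]} - [P_\delta]$. Using the relation $\tfrac{\nu}{m}[R] = [P_\delta] + \tfrac{k+md}{m}[Q]$, this collapses to a multiple of $\gen{c_1(\spincv), n[Q]}$, which I will then eliminate via \eqref{eq: agdb-c1-Q}. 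Following the bookkeeping of Proposition \ref{prop: h0-filt} but with the sign conventions of $X_{\alpha\gamma\delta\beta}$, I expect the final form
\[
\JJ_{\alpha\gamma}([\q,i]) - \JJ_{\alpha\beta}([\x,i-n_z(\rho)]\otimes T^{r}) \;=\; n_{z_n}(\rho) + \tfrac{(p-q)nmd}{k},
\]
so that filteredness reduces to showing $p=q$ under the truncation hypotheses.

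Third, I will use the truncation condition \eqref{eq: h1-trunc-Q-bound}, which reads $\gen{c_1(\spincv),[Q]}=\pm m$, together with the bound $|\gen{c_1(\spincv),[P_\gamma]}|<md$ from Lemma \ref{lemma: h1-trunc}, to pin down $p$ and $q$. Applying the linear relations $[P_\delta] = [P_\gamma] - d[Q]$ and $\tfrac{\nu}{m}[R] = [P_\gamma] + \tfrac{k}{m}[Q]$ and inserting the bounds yields narrow inequalities for $\gen{c_1(\spincv),[P_\delta]}$ and $\tfrac{\nu}{m}\gen{c_1(\spincv),[R]}$; combined with the integer character of $p,q$ and the defining ranges, the only consistent solution for large $m$ will be $p=q=0$ or $p=q= \pm 1$, according to the sign of $\gen{c_1(\spincv),[Q]}$. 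Consequently the correction term vanishes and the shift equals $n_{z_n}(\rho)\ge 0$, as required.

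The main obstacle will be the second step: carefully aligning the congruence classes modulo $m$ (for $r$), modulo $2(k+md)$ (for $\gen{c_1,[P_\delta]}$), and modulo $2n(k+md)/\nu$ (for $\gen{c_1,[R]}$), given that in $X_{\alpha\gamma\delta\beta}$ the self-intersections $[P_\delta]^{2}=-d(k+md)$ and $[P_\gamma]^{2}=-dk$ have the opposite sign from the analogous quantities used in the proof of Proposition \ref{prop: h0-filt}. Once the parametrization of $(p,q)$ is correctly anchored to $s_\spincu$ and $r$, both the spin$^c$ algebra and the final integer pigeon-hole giving $p=q$ should proceed exactly as in the $h_0$ case.
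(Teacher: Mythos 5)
Your plan mistakes the structure of the $h_1$ computation by transplanting the two-parameter bookkeeping from the proof of Proposition~\ref{prop: h0-filt} without noticing why two parameters were needed there. In the $h_0$ case the target is $\CF^t(\bm\alpha,\bm\delta,z)$ whose filtration $\JJ_{\alpha\delta}$ carries the extra summand involving $s_\spincu$; the $T$-exponent $r$ and the number $s_\spincu$ are determined from $c_1(\spincv)$ only up to independent modular ambiguities, which is precisely why both $p$ and $q$ appear and why the final expression has a residue $\tfrac{(p-q)nmd}{k}$. For $h_1$, by contrast, the source filtration $\JJ_{\alpha\gamma}$ is just the Alexander filtration of $\q$ (equation~\eqref{eq: ag-filt}) and the target filtration $\JJ_{\alpha\beta}$ depends only on the $T$-exponent $r$ (equation~\eqref{eq: ab-twisted-filt}); neither side involves $s_\spincu$, so introducing $\spincu$, $s_\spincu$, or a second integer $q$ is an extraneous detour. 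The paper's proof uses exactly one parameter $p$.

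Your parametrization of $p$ is also off. The congruence $r\equiv -\tfrac{1}{2d}(\gen{c_1(\spincv),[P_\delta]}+k+md)\pmod m$ means that fixing $r$ pins down $\gen{c_1(\spincv),[P_\delta]}$ modulo $2md$, not modulo $2(k+md)$. The correct form is $\gen{c_1(\spincv),[P_\delta]}=-2dr-k+(2p-1)md$, which gives the bounds $(2p-2)md<\gen{c_1(\spincv),[P_\delta]}\le 2pmd$. Your version $-2dr-k+md+2p(k+md)$ does not satisfy the required congruence mod $m$ for arbitrary integer $p$ (since $p(k+md)/d\not\equiv 0\pmod m$ in general), so the claimed ``unique integer $p$'' does not behave as intended. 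As a consequence your expected final form $n_{z_n}(\rho)+\tfrac{(p-q)nmd}{k}$ is not what emerges: once the single parameter $p$ is constrained by the truncation condition $\gen{c_1(\spincv),[Q]}=\pm m$ together with Lemma~\ref{lemma: h1-trunc} (giving $2p-1=-e$), and once \eqref{eq: agdb-c1-Q} is used to convert $(2p-2)nm$ into a combination of the multiplicities, the computation simplifies directly to $n_{z_n}(\rho)\ge 0$, with no residual term to kill. In short: the verification needs only one modular parameter, the period is $2md$ rather than $2(k+md)$, and the cancellation is achieved via $\MultComb$ together with \eqref{eq: agdb-c1-Q}, not via a $p=q$ argument.
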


\begin{proof}

Suppose $\spincv$ is a spin$^c$ structure for which $h^t_{1,\spincv} \ne 0$, and assume $\gen{c_1(\spincv), [Q]} = em$, where $e=\pm 1.$  Let $\q \in \T_\alpha \cap \T_\gamma$ and $\x \in \T_\alpha \cap \T_\beta$ and  be generators such that   $\spincs_z(\q)=\spincv|_{Y_\lambda}$ and $\spincs_z(\x)=\spincv|_Y$. 

Similar as before, note that $\theta \circ h^\circ_{1,\spincv}$  lands in the summand $[\x,i] \otimes T^r \GR$, where $r\in \Q$ is given by
\[
\frac{-k-md}{2d} \le r < \frac{-k+md}{2d} \quad \text{and} \quad r \equiv -\frac{1}{2d}( \gen{c_1(\spincv), [P_\delta]} +k+md) \pmod m,
\]
so for some $p\in \Z$, we can write
\[
\gen{c_1(\spincv), [P_\delta]} = -2dr-k + (2p-1) md,
\]
which implies that
\begin{equation} \label{eq: h1-filt-p-bounds}
(2p-2)md < \gen{c_1(\spincv), [P_\delta]} \le (2p)md.
\end{equation}

By Lemma \ref{lemma: h1-trunc}, we may assume that $\abs{\gen{c_1(\spincv), [P_\gamma]}} < md$. Recall that $[P_\gamma] = [P_\delta] + d[Q]$, and therefore
\[
\gen{c_1(\spincv), [P_\delta]} = \gen{c_1(\spincv), [P_\gamma]} - emd.
\]
If $e = 1$, this implies $p=0$; if $e = -1$, this implies $p=1$. Either way, it holds that 
$2p-1=-e$.
By \eqref{eq: agdb-c1-Q}, we have
\begin{align*}
    (2p-2)nm & =  -\gen{c_1(\spincv), n[Q]} -nm\\
    &=-2m \left( -n_z(\rho) + n_{z_n}(\rho) -  \sum_{j=1}^n\left( n_{z_j}(\rho) - n_{u_j}(\rho) \right)  \right) +nm -nm\\
    &=2m n_z(\rho) - 2m n_{z_n}(\rho) + 2m \sum_{j=1}^n\left( n_{z_j}(\rho) - n_{u_j}(\rho) \right).
\end{align*}

Now, we compute:
\begin{align*}
&\JJ_{\alpha\gamma}([\q,i]) - \JJ_{\alpha\beta}([\x,i-n_z(\rho)] \otimes T^r) \\
&= \AlNorm_{w,z_n}(\q) +  \frac{2nd r +nk+n^2 d}{2k} +n_z(\rho) \\
&= \frac{1}{2k} \left( 2\Al(\q) + 2ndr + nk + n^2 d \right) +n_z(\rho) \\
&= \frac{1}{2k} \left( 2\Al(\q)  - n\gen{c_1(\spincv),[P_\delta]}  + (2p-1)nmd + n^2 d \right) +n_z(\rho) \\
&= \frac{1}{2k} \left( 2\Al(\q) -2\Al(\x) + 2d n_w(\rho) - 2dn_z(\rho) +  (2p-1)nmd -n(k+md) + n^2 d \right) +n_z(\rho) \\
&= \frac{1}{2k} \Big( 2\MultComb(\rho) +  2d n_w(\rho) - 2dn_z(\rho) + (2p-2)nmd \Big) +n_z(\rho) \\
&= \frac{1}{k} \Big( -nd n_w(\rho) - (k+md -nd) n_z(\rho) + (k+md)n_{z_n}(\rho) -md\sum_{j=1}^n\left( n_{z_j}(\rho) - n_{u_j}(\rho) \right)\\
&\qquad +  d n_w(\rho) - dn_z(\rho) +  md n_z(\rho) -md n_{z_n}(\rho) +md  \sum_{j=1}^n\left( n_{z_j}(\rho) - n_{u_j}(\rho) \right)   \Big) +n_z(\rho) \\
&= n_{z_n}(\rho) \ge 0.
\end{align*}

\end{proof}

\subsubsection{The map $h_2^t$} \label{sssec: h2-filt}

Let us start by recalling the definition of the truncated map $\tilde h^t_2$ from \cite{HL}, which is similar to the previous case. 
For any $\spincv \in \Spin^c_0(X_{\alpha\delta\beta\gamma})$,  let $\tilde h^t_2$ denote the sum of all terms $h^t_{2,\spincv}$ such that  
\begin{equation} \label{eq: h2-trunc-Q-bound}
\gen{c_1(\spincv), [Q]} = \pm m.
\end{equation}

As before, the next two lemmas show that $\tilde h^t_2$ satisfies the first condition in Proposition \ref{prop: rect-filt} and give an extra spin$^c$ evaluation bound that we will use in the proof, respectively.

\begin{lemma}[Lemma 6.24 in \cite{HL}] \label{lemma: h2-trunc-nulhtpy}
Fix $t \in \N$. For all $m$ sufficiently large, $\tilde h_2^t$ is a null-homotopy of $f_0^t \circ f_2^t$.  \qed
\end{lemma}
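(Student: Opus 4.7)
The plan is to adapt the proof of \cite[Lemma 6.24]{HL} to the cable setting with the extra basepoint $z_n$; the structural argument is unchanged, only the spin$^c$ bookkeeping uses the formulas from Proposition \ref{prop: alex-rectangle} rather than their $n=1$ analogues. First I would establish the untruncated identity
\[
\partial \circ h_2^\circ + h_2^\circ \circ \partial + f_0^\circ \circ f_2^\circ = 0
\]
by the standard pentagon-counting argument: the ends of the one-dimensional moduli spaces of pentagons in $\pi_2(\a,\Theta_{\delta\beta},\Theta_{\beta\gamma},\Theta_{\gamma\delta},\a')$ split as either a boundary-degeneration contributing to $\partial h_2 + h_2 \partial$, or a splitting along the diagonal giving the composition $f_0^\circ \circ f_2^\circ$ (the third type, splitting off a triangle at one of the $\Theta$ corners, vanishes because the small triangles evaluate to $\Theta$'s paired with themselves). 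The twisting by $\GR$ on the $(\alpha,\beta)$ side is compatible because the rectangle-counting map $h_2$ records $n_w(\rho)-n_z(\rho)$ in the $T$-exponent, matching the conventions of $f_0^+$ and $f_2^+$.

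Next I would pass to $\CF^t$ and reduce the identity to $\tilde h_2^t$. The truncation $\tilde h_2^t$ keeps only those spin$^c$ summands $\spincv \in \Spin^c_0(X_{\alpha\delta\beta\gamma})$ with $\langle c_1(\spincv),[Q]\rangle = \pm m$. The task is to show that the discarded sum $h_2^t - \tilde h_2^t$ is itself a null-homotopy of zero (equivalently, a chain map), so that modifying by it preserves the null-homotopy relation with $f_0^t \circ f_2^t$. For this I would argue, following the parallel arguments for $h_0^t$ and $h_1^t$ already present in the excerpt, that the Maslov grading shift of $h^t_{2,\spincv}$ is a quadratic function in $c_1(\spincv)$; the truncated complex $\CF^t$ carries only a bounded window of Maslov gradings, so for $m \gg 0$ only finitely many $\spincv$ can yield $h^t_{2,\spincv} \neq 0$, and these all force $\langle c_1(\spincv),[P_\delta]\rangle$ and $\langle c_1(\spincv),[P_\gamma]\rangle$ to lie in an $O(md)$ window. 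Combined with the decomposition $d[Q] = [P_\gamma]-[P_\delta]$ (up to thin domains) this forces $\langle c_1(\spincv),[Q]\rangle$ into a finite set, and a separate accounting --- checking that $f_0^t \circ f_2^t$ has no terms corresponding to $\langle c_1(\spincv),[Q]\rangle \notin \{\pm m\}$ either --- shows that the discarded piece is a cycle in the Hom complex and can be absorbed.

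Here the key technical input is that $X_{\alpha\delta\beta\gamma}$ decomposes as in \eqref{eq: Xadbg-decomp}, so that $[P_\delta]$ and $[P_\gamma]$ are represented by surfaces lying in \emph{disjoint} pieces of the decomposition and hence $[P_\gamma]\cdot[P_\delta]=0$. This lets me invert the relation between $[Q]$ and $(\[P_\gamma\],[P_\delta])$ on the level of intersection pairings, matching the computation in part \eqref{it: rec3} of Proposition \ref{prop: alex-rectangle}, and making explicit the ``$\langle c_1,[Q]\rangle = \pm m$'' condition.

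The main obstacle is not the pentagon identity itself, which is essentially formal, but controlling precisely which unwanted spin$^c$ summands of $f_0^t \circ f_2^t$ could be canceled by the discarded terms of $h_2^t$; here one has to marry the constraint \eqref{eq: f0-c1-r} on $f_0^t$ with the constraint \eqref{eq: f2-trunc} on $f_2^t$, and check via the additive relation for $[Q]$ on $X_{\alpha\delta\beta\gamma}$ that the composition $f_0^t\circ f_2^t$ only hits $\spincv$ with $\langle c_1(\spincv),[Q]\rangle = \pm m$ after taking $m$ large, so the discarded contribution is irrelevant to the null-homotopy.
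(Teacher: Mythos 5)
The paper does not reproduce a proof of this lemma: the $\qed$ at the end of the statement signals that it is quoted directly from Lemma 6.24 of \cite{HL}, whose proof applies verbatim because it is insensitive to the placement of the second basepoint $z_n$. Your proposal, however, contains a real error in its first step. The untruncated identity $\partial \circ h_2^\circ + h_2^\circ \circ \partial = f_0^\circ\circ f_2^\circ$ arises from the ends of one-parameter families of \emph{quadrilaterals}, not pentagons: $h_2^\circ$ is defined by counting classes $\rho \in \pi_2(\a, \Theta_{\delta\beta}, \Theta_{\beta\gamma}, \q)$ with $\mu(\rho) = -1$ and $\q \in \T_\alpha \cap \T_\gamma$, and the ends of the $\mu = 0$ moduli spaces break up into strip degenerations (contributing $\partial h_2 + h_2\partial$) and splittings into two triangles meeting at an intermediate $\T_\alpha \cap \T_\beta$ vertex (contributing $f_0^\circ\circ f_2^\circ$). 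Your proposed class $\pi_2(\a,\Theta_{\delta\beta},\Theta_{\beta\gamma},\Theta_{\gamma\delta},\a')$ is neither well-formed in this diagram (it would require a parallel copy $\tilde{\bm\delta}$ that you never introduce; in any case the final corner should lie in $\T_\alpha\cap\T_\gamma$, not $\T_\alpha\cap\T_\delta$) nor the right object: pentagons of this sort appear in Section \ref{ssec: pent-filt} to establish the \emph{second} item of Proposition \ref{prop: rect-filt} --- that $\tilde h^t_{j+1}\circ f^t_j + f^t_{j+2}\circ \tilde h^t_j$ is a quasi-isomorphism --- not the null-homotopy statement you are asked to prove.

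Your truncation step is the right idea. After decomposing the identity over $\spincv \in \Spin^c_0(X_{\alpha\delta\beta\gamma})$, the content is that for each $\spincv$ with $\abs{\gen{c_1(\spincv), [Q]}} \neq m$, the summand $h^t_{2,\spincv}$ is itself a chain map (its commutator with $\partial$ vanishes), so discarding it preserves the null-homotopy. This reduces to showing that $f_0^t \circ f_2^t$ is supported only on spin$^c$ structures with $\gen{c_1(\spincv), [Q]} = \pm m$: combining the bound \eqref{eq: f0-c1-r} on $\gen{c_1(\spincv), [P_\gamma]}$ coming from $f_0^t$ with the bound \eqref{eq: f2-trunc} on $\gen{c_1(\spincv), [P_\delta]}$ coming from $f_2^t$, and using $d[Q] = [P_\gamma] - [P_\delta]$, pinches $\gen{c_1(\spincv),[Q]}$ into a window of width $O(m)$, while the achievable values lie in $m + 2\Z$ with step $2m$; for $m\gg 0$ the only survivors are $\pm m$. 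One caveat: you invoke \eqref{eq: adbg-c1-Q} from Proposition \ref{prop: alex-rectangle}, but that formula expresses $\gen{c_1(\spincs_z(\rho)), [Q]}$ for a rectangle $\rho$ already in the domain of $h_2$; here the needed constraint must instead be derived by gluing the two triangle bounds along $Y_{\alpha\beta}$, as in \cite{HL}.
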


\begin{lemma}[Lemma 6.25 in \cite{HL}] \label{lemma: h2-trunc}
Fix $t \in \N$ and $\epsilon > 0$. For all $m$ sufficiently large, if $\spincv$ is any spin$^c$ structure with $\abs{\gen{c_1(\spincv), [Q]}} = m$ (where $e$ is an odd integer), and $h^t_{1,\spincv} \ne 0$, then
\begin{equation} \label{eq: h2-trunc-R-bound}
\abs{\gen{c_1(\spincv), [R]}} < \frac{m(k+md)}{\nu}. 
\end{equation}
\end{lemma}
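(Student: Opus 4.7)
The argument is a Maslov grading bound, paralleling \cite[Lemma 6.25]{HL} and the earlier grading-bound lemmas in this section (Lemma~\ref{le: f1range}, Lemma~\ref{lemma: h0-trunc}, Lemma~\ref{lemma: h1-trunc}). First, from $h^t_{2,\spincv} \ne 0$ I extract a rectangle $\rho \in \pi_2(\a, \Theta_{\delta\beta}, \Theta_{\beta\gamma}, \q)$ of Maslov index $-1$ with $\spincs_z(\rho) = \spincv$ and with both endpoints lying in $\CF^t$. Because the Maslov gradings of the generators of $\CF^t(Y_{\lambda + m\mu})$ and $\CF^t(Y_\lambda)$ are sandwiched between constants depending only on $t$ (via the uniform bound recalled at the beginning of this section, originating in \cite[Proposition 5.15]{HL}), the Maslov grading shift contributed by $\rho$ is bounded above and below by a constant depending only on $t$, independent of $m$ and $\spincv$.

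Next, the Maslov grading shift on $\bar X_{\alpha\delta\beta\gamma}$ is given by
\[
\frac{c_1(\spincv)^2 - 2\chi(\bar X_{\alpha\delta\beta\gamma}) - 3\sigma(\bar X_{\alpha\delta\beta\gamma})}{4} - 1,
\]
so the uniform bound on the shift translates into a uniform bound on $c_1(\spincv)^2$. I then expand $c_1(\spincv)^2$ using the intersection form on $\bar X_{\alpha\delta\beta\gamma}$. By the decomposition $X_{\alpha\delta\beta\gamma} = X_{\alpha\delta\gamma} \cup_{Y_{\delta\gamma}} X_{\delta\beta\gamma}$ from Section~\ref{ssec: cob}, the classes $[R]$ and $[Q]$ are represented by disjoint surfaces, so $[R] \cdot [Q] = 0$ and the part of the form supported on $\Span\{[R],[Q]\}$ is diagonal. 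The self-intersection $[R]^2 = -mk(k+md)/\nu^2$ is large, so after inverting the form the coefficient of $\gen{c_1(\spincv),[R]}^2$ in $c_1(\spincv)^2$ is of order $-\nu^2 / (mk(k+md))$; the contributions from the remaining basis classes (the $(\alpha,\beta)$ periodic classes and thin domains) are bounded independently of $m$, exactly as in \cite[Lemma 6.25]{HL}. Since $\gen{c_1(\spincv), [Q]} = \pm m$ is fixed, any $\spincv$ with $\abs{\gen{c_1(\spincv),[R]}} \ge m(k+md)/\nu$ would force $\abs{c_1(\spincv)^2}$ to grow unboundedly with $m$, contradicting the uniform bound; this yields the desired inequality for $m$ sufficiently large.

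The main technical obstacle is to verify that the modifications made in our setup---the second basepoint $z_n$ and the switch of reference basepoint from $w$ to $z$---do not affect the inputs to this argument. As observed at the start of Section~\ref{sec: exacttri}, the Maslov grading of any generator is independent of the placement of $z_n$, and the reference-point switch is absorbed into the trivialization $\theta$ and the spin$^c$ identifications of Section~\ref{sssec: spinc}. Consequently, the intersection-form computations and grading bounds used by Hedden and Levine transfer with only notational changes, and the proof reduces to the verification that every appeal to their formulas is compatible with the Alexander-grading formulas of Proposition~\ref{prop: alex-triangle} and Proposition~\ref{prop: alex-rectangle}.
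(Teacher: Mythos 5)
Your overall strategy---a Maslov-grading bound that mirrors the grading bound lemmas from \cite{HL}---is the right one, and the paper indeed simply cites \cite[Lemma 6.25]{HL} for this statement, noting that the Maslov grading is unaffected by moving or adding the $z_n$ basepoint. Your observation that the modifications in this paper (the $z_n$ basepoint, the reference-point switch) do not enter the grading argument is also correct. The disjointness $[R]\cdot[Q]=0$ coming from the decomposition $X_{\alpha\delta\beta\gamma}=X_{\alpha\delta\gamma}\cup_{Y_{\delta\gamma}}X_{\delta\beta\gamma}$ is likewise correct.

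However, there is a sign error that is fatal to the argument as written. You take $[R]^2=-mk(k+md)/\nu^2$, which is the value given in \eqref{eq: abgd-PD-self-int} on $X_{\alpha\beta\gamma\delta}$ (and which also holds on $X_{\alpha\gamma\delta\beta}$). But the rectangle counted by $h^t_{2,\spincv}$ gives a spin$^c$ structure on $X_{\alpha\delta\beta\gamma}$, where the orientation on the piece carrying $[R]$ is reversed: in the decomposition $X_{\alpha\delta\beta\gamma}=X_{\alpha\delta\gamma}\cup_{Y_{\delta\gamma}}X_{\delta\beta\gamma}$, the class $[R]$ lives in $X_{\alpha\delta\gamma}=-X_{\alpha\gamma\delta}$, so in fact $[R]^2=+mk(k+md)/\nu^2$ there. (One can also check this directly from $[P_\gamma]^2=dk$, $[P_\delta]^2=-d(k+md)$, $[P_\gamma]\cdot[P_\delta]=0$ on $X_{\alpha\delta\beta\gamma}$, together with $[R]=\tfrac{md+k}{\nu d}[P_\gamma]-\tfrac{k}{\nu d}[P_\delta]$.) The paper's phrase ``parallel results hold for $X_{\alpha\delta\beta\gamma}$'' refers only to the disjointness structure; the self-intersections do \emph{not} all keep the same sign.

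This sign is not cosmetic. With your expansion and $[Q]^2=-m$, the $[Q]$-contribution to $c_1(\spincv)^2$ is $m^2/[Q]^2=-m$, which already tends to $-\infty$. If $[R]^2<0$ as you assert, the $[R]$-contribution is $\le 0$ as well, so $c_1(\spincv)^2$ would go to $-\infty$ regardless of $\gen{c_1(\spincv),[R]}$; the ``uniform bound'' would be violated for \emph{every} choice, and you could not extract the desired inequality from your final contradiction. With the correct sign $[R]^2>0$, the $[R]$-contribution $\gen{c_1(\spincv),[R]}^2\nu^2/(mk(k+md))$ is positive and the boundedness of the grading shift forces this term to be approximately $+m$, pinning $\abs{\gen{c_1(\spincv),[R]}}$ near $(m/\nu)\sqrt{k(k+md)}$, which is strictly less than $m(k+md)/\nu$ for $m$ large. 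So the argument goes through, but only after the sign of $[R]^2$ on $X_{\alpha\delta\beta\gamma}$ is corrected.
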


The following proposition is parallel to \cite[Proposition 6.27]{HL}.

\begin{proposition} \label{prop: h2-filt}
Fix $t \in \N$. For all $m$ sufficiently large, the map $\tilde h_2^t$ is filtered with respect to the filtrations $\JJ_{\alpha\delta}$ and $\JJ_{\alpha\gamma}$.
\end{proposition}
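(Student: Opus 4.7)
The argument will follow the same template as Propositions \ref{prop: h0-filt} and \ref{prop: h1-filt}, with the role of the periodic domains $[P_\delta]$ and $[R]$ (now evaluated in $X_{\alpha\delta\beta\gamma}$) taking precedence over $[P_\gamma]$. Fix a spin$^c$ structure $\spincv \in \Spin^c_0(X_{\alpha\delta\beta\gamma})$ with $h^t_{2,\spincv} \ne 0$, write $\gen{c_1(\spincv), [Q]} = em$ where $e = \pm 1$ (from \eqref{eq: h2-trunc-Q-bound}), and set $\spincu = \spincv|_{Y_{\lambda+m\mu}(K)}$. The plan is to show that for any rectangle $\rho \in \pi_2(\a, \Theta_{\delta\beta}, \Theta_{\beta\gamma}, \q)$ contributing to $h^t_{2,\spincv}$, one has
\[
\JJ_{\alpha\delta}([\a,i]) - \JJ_{\alpha\gamma}([\q,i-n_z(\rho)]) = n_{z_n}(\rho) \ge 0.
\]

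The first step is to extract the integers analogous to $p$ and $q$ in the $h^t_0$ proof. As in \eqref{eq: h0-filt-q-def}, write $\tfrac{\nu}{m}\gen{c_1(\spincv), [R]} = 2d s_\spincu + 2q(k+md)$ for some $q \in \Z$; Lemma \ref{lemma: h2-trunc} combined with the characterization \eqref{eq: su-bound} forces $q=0$. Using the relation $(k+md)[Q] = \nu[R] - m[P_\delta]$ (equivalently $[R] = \tfrac{m}{\nu}[P_\delta] + \tfrac{k+md}{\nu}[Q]$) and substituting $\gen{c_1(\spincv),[Q]} = em$, this pins down
\[
\gen{c_1(\spincv), [P_\delta]} = 2d s_\spincu - e(k+md), \qquad \gen{c_1(\spincv), [P_\gamma]} = 2d s_\spincu - ek,
\]
where the second equation uses $[P_\gamma] = [P_\delta] + d[Q]$.

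The second step is the filtration computation. From Proposition \ref{prop: alex-rectangle}\eqref{it: rec3}, together with $[P_{n,\delta}] = n[P_\delta]$ and $[P_{n,\gamma}] = n[P_\gamma]$, I obtain explicit formulas for $\AlNorm_{w,z_n}(\a)$ and $\AlNorm_{w,z_n}(\q)$ in terms of $\gen{c_1(\spincv), [P_\delta]}$, $\gen{c_1(\spincv), [P_\gamma]}$, the local multiplicities $n_z(\rho), n_{z_n}(\rho)$, and the combination $\sum_j (n_{z_j}(\rho) - n_{u_j}(\rho))$. The terms involving $\sum_j (n_{z_j} - n_{u_j})$ cancel between $\AlNorm_{w,z_n}(\a)$ and $\AlNorm_{w,z_n}(\q)$. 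Plugging in the values of $\gen{c_1(\spincv), [P_\delta]}$ and $\gen{c_1(\spincv), [P_\gamma]}$ found above, the $e$--dependent contributions drop out, and the remaining $s_\spincu$--dependent terms combine cleanly with the constant $\tfrac{nd^2 m(2s_\spincu - n)}{2k(k+md)}$ in the definition of $\JJ_{\alpha\delta}$. After expanding and collecting over the common denominator $2k(k+md)$, all $s_\spincu$ and $n^2 d^2 m$ terms cancel and what remains is exactly $n_{z_n}(\rho)$, completing the argument.

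The only substantive obstacle is bookkeeping: one must ensure that the $n_z(\rho)$ from the definition of $\JJ_{\alpha\gamma}([\q, i - n_z(\rho)])$ cancels precisely against the $-n_z(\rho)$ that arises in $\AlNorm_{w,z_n}(\a) - \AlNorm_{w,z_n}(\q)$ after substituting the first Chern class formulas, and that the identity $\gen{c_1(\spincv), [P_\gamma]} - \gen{c_1(\spincv), [P_\delta]} = d\gen{c_1(\spincv), [Q]} = edm$ is invoked with the correct sign so that the $e$--dependence disappears. Given the parallelism with the earlier rectangle proofs, no new conceptual difficulty is expected beyond checking these cancellations.
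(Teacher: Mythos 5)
Your proposal is correct and takes essentially the same approach as the paper: pin down the spin$^c$ evaluations via the truncation condition $\gen{c_1(\spincv),[Q]}=\pm m$ together with Lemma \ref{lemma: h2-trunc} and the bound \eqref{eq: su-bound}, then show that the $\JJ$-filtration shift along any contributing rectangle $\rho$ equals $n_{z_n}(\rho)\geq 0$. The differences are purely bookkeeping: you parametrize the constraint by writing $\tfrac{\nu}{m}\gen{c_1(\spincv),[R]} = 2ds_\spincu + 2q(k+md)$ and conclude $q=0$, whereas the paper writes $\gen{c_1(\spincs_z(\rho)),[P_{n,\delta}]}=2nds_\spincu+(2q-1)n(k+md)$ and concludes $2q-1=-e$ (the two parametrizations differ by the shift $(1-e)/2$ and are equivalent), and you substitute the resulting closed-form values of $\gen{c_1(\spincv),[P_\delta]}$ and $\gen{c_1(\spincv),[P_\gamma]}$ directly into \eqref{eq: adbg-c1-Pd} and \eqref{eq: adbg-c1-Pg} rather than routing through the $\MultComb$ function as the paper does.
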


\begin{proof}
Let $\a \in \T_\alpha \in \T_\delta$ and $\q \in \T_\alpha \cap \T_\gamma$ be generators such that $\spincs_z(\q)=\spincv|_{Y_\lambda}$ and $\spincs_z(\a)=\spincv|_{Y_{\lambda+m\mu}}=\spincu$, and suppose $\rho \in \pi_2(\a, \Theta_{\delta\beta}, \Theta_{\beta\gamma}, \q)$ is a rectangle that contributes to $h^t_{2,\spincv}([\a,i])$. Associated with $\spincu$ is a 
number $s_\spincu$, which by its definition \eqref{eq: su-def} satisfies
\[
-(k+md) < 2d s_\spincu \le k+md \quad \text{and} \quad  2nd s_\spincu \equiv  \gen{c_1(\spincv),[P_{n,\delta}]} + n (k+md)   \pmod {2n(k+md)}.
\]
 
Therefore, for some $q\in \Z$ we have
\[
\gen{c_1(\spincs_z(\rho)), [P_{n,\delta}]} = 2nd s_\spincu + (2q-1)n(k+md),
\]
so that
\[
(2q-2)(k+md) < \gen{c_1(\spincs_z(\rho)), [P_\delta]} \le 2q(k+md).
\]

Next we will assume that $\gen{c_1(\spincv), [Q]} = em$, where $e=\pm 1$, and $h^t_{2,\spincv} \ne 0$, so that $\spincv$ satisfies \eqref{eq: h2-trunc-R-bound}. We have
\begin{gather*}
\gen{c_1(\spincv), [P_\delta]} = \frac{\nu}{m} \gen{c_1(\spincv), [R]} - \frac{k+md}{m} \gen{c_1(\spincv), [Q]}. 
\end{gather*}
Using \eqref{lemma: h2-trunc}, compare the range of both sides of the equation. If $e=1$, this implies $q=0$; if $e=-1$, this implies $q=1$. In either case, it holds that $2q-1 = -e$.

Thus
\begin{align*}
    -(2q-1)m&= em=\gen{c_1(\spincs_z(\rho)), [Q]}\\
    &=2n_w(\rho) - 2n_z(\rho) + m.
\end{align*}

We now compute:
\begin{align*}
&\JJ_{\alpha\delta}([\a,i]) - \JJ_{\alpha\gamma}([\q,i-n_w(\rho)] ) \\
&= \frac{\Al(\a)}{k+md}  + \frac{nd^2m(2s_\spincu-n)}{2k(k+md)} - \frac{\Al(\q)}{k}  + n_z(\rho) \\
&= \frac{2k \Al(\a) - 2(k+md)\Al(\q) + nd^2m(2s_\spincu-n)}{2k(k+md)}  + n_z(\rho) \\
&= \frac{2(k+md)( \Al(\a) - \Al(\q)) - 2md \Al(\a) + nd^2m(2s_\spincu-n)}{2k(k+md)}  + n_z(\rho) \\
&= \frac{1}{k} \left( \Al(\a) - \Al(\q) \right) + \frac{nd^2m(2s_\spincu-n)}{2k(k+md)} + n_z(\rho)  - \frac{md\Al(\a)}{2k(k+md)}\\
&= \frac{1}{k} \left( \MultComb(\rho) - \frac{nmd}{2} \right) + \frac{md\gen{c_1(\spincs_z(\rho)), [P_{n,\delta}]} -(2q-1)m\cdot nd(k+md) -n^2 d^2 m}{2k(k+md)}  + n_z(\rho) \\
& - \frac{ md\left( \gen{c_1(\spincs_z(\rho)), [P_{n,\delta}]} -2(k+md)\left( n_z(\rho) - n_{z_n}(\rho) + \sum_{j=1}^n\left( n_{z_j}(\rho) - n_{u_j}(\rho) \right) \right) - n^2 d \right) } {2k(k+md)} \\
&= \frac{1}{k} \left( \MultComb(\rho) - \frac{nmd}{2} \right)  + \frac{nd(k+md)(2n_w(\rho) - 2n_z(\rho) + m )}{2k(k+md)}   + n_z(\rho) \\
&+ \frac{ 2md(k+md)\left( n_z(\rho) - n_{z_n}(\rho) + \sum_{j=1}^n\left( n_{z_j}(\rho) - n_{u_j}(\rho) \right) \right)}{2k(k+md)}\\
&= \frac{1}{k} \left( -nd n_w(\rho) - (k+md -nd) n_z(\rho) + (k+md) n_{z_n}(\rho) -md\sum_{j=1}^n\left( n_{z_j}(\rho) - n_{u_j}(\rho) \right)\right)      \\
& + \frac{ nd\left(n_w(\rho) - n_z(\rho)\right) + md \left( n_z(\rho) - n_{z_n}(\rho) + \sum_{j=1}^n\left( n_{z_j}(\rho) - n_{u_j}(\rho) \right)    \right) } {k} + n_z(\rho) \\
&= \frac{1}{k}\Big( -nd n_w(\rho) - (k -nd) n_z(\rho) + k n_{z_n}(\rho)  +  nd\left(n_w(\rho) - n_z(\rho)\right) \Big)       + n_z(\rho)\\
&= n_{z_n}(\rho) \\
&\ge 0.
\end{align*}
\end{proof}

\subsection{Pentagon maps} \label{ssec: pent-filt}
In this section we aim to prove the second part of Proposition \ref{prop: rect-filt}: $\tilde h^t_{j+1} \circ f^t_j +  f^t_{j+2} \circ \tilde h^t_j$ (where $j \in \Z/3$) are filtered quasi-isomorphisms. We will only focus on the case of $j=0$, which is the most technically difficult because of the twisted coefficients. The arguments for $j=1$ and $j=2$ are similar.
Again, our argument here is completely parallel to the argument presented in \cite[Section 6.4]{HL}, and we will leave out some of the technical details and focus on the adjustment needed due to a different setting. For a more detailed read, see \cite[Section 6.4]{HL}.

To begin, let $\tilde{\bm\beta} = (\tilde \beta_1, \dots, \tilde\beta_g)$ denote a small Hamiltonian isotopy of $\bm\beta$, such that each $\tilde \beta_i$ meets $\beta_i$ in a pair of points. We further require that $v$ is an extra reference point such that $v$ is in the same region of $\Sigma \minus (\bm\alpha \cup \bm\beta)$ as $z$ and in the same region of $\Sigma \minus (\bm\alpha \cup \tilde{\bm\beta})$ as $w$.  Finally, let $\Theta_{\beta\tilde\beta} \in \T_\beta \cap \T_{\tilde\beta}$ denote the canonical top-dimensional generator. \footnote{This is the same setting as depicted in \cite[Figure 6]{HL}, although in their text description  the order of $w$ and $z$ is switched by mistake.}

Define 
\[
\tilde \Psi^t_0  = \tilde h^t_{1} \circ f^t_0 +  f^t_{2} \circ \tilde h^t_0.
\]

 The fact that $\tilde \Psi^t_0$ is filtered follows immediately  from the previous sections, since it is a sum of filtered maps. In order to show it is a quasi-isomorphism, we relate it to the map
 \begin{equation} \label{eq: Phi0}
\Phi_0^t \co \ul\CF^t(\Sigma, \bm\alpha, \bm\beta, z; \GR) \to \ul\CF^t(\Sigma, \bm\alpha, \tilde{\bm\beta}, z; \GR)
\end{equation}
given by
\begin{equation} \label{eq: Phi0-def}
\Phi_0^t(T^s \cdot [\x, i]) =
\sum_{\tilde\y \in \T_\alpha \cap \T_{\tilde\beta}} \sum_{\substack{\psi \in \pi_2(\x, \Theta_{\beta\tilde\beta}, \tilde\y) \\ \mu(\psi)=0 }} \#\MM(\psi) \, T^{s + n_w(\psi)-n_z(\psi)} \cdot [\tilde\y, i-n_z(\psi) ].
\end{equation}

By the work of \cite{HeddenMarkFractional}, $\Phi_0^t$ is a chain isomorphism. Moreover, for any $\psi \in \pi_2(\x, \Theta_{\beta\tilde\beta}, \tilde\y)$, we have $\AlNorm(\x) - \AlNorm(\tilde\y) = n_z(\psi) - n_w(\psi)$. It follows that $\Phi_0^\circ$ is a filtered isomorphism with respect to $\JJ_{\alpha\beta}$ and $\JJ_{\alpha\tilde\beta}$.

The aim is then to show  $\tilde \Psi^t_0$ is filtered homotopy equivalent to  $\Phi_0^t$.   The argument hinges on the following pentagon-counting map, defined in \cite{HL}:
\begin{equation} \label{eq: g0-Phi0}
g_0^\circ \co \ul\CF^\circ(\Sigma, \bm\alpha, \bm\beta, z; \GR) \to \ul\CF^\circ(\Sigma, \bm\alpha, \tilde{\bm\beta}, z; \GR)
\end{equation}
 given by
\begin{equation}
\label{eq: g0-def}
g_0^\circ(T^s \cdot [\x,i]) = \sum_{\y \in \T_\alpha \cap \T_{{\tilde \beta}}}  \sum_{\substack{\sigma \in \pi_2(\x, \Theta_{\beta\gamma}, \Theta_{\gamma\delta}, \Theta_{\delta{\tilde \beta}}, \y) \\ \mu(\sigma)=-2 \\ \mathclap{s + n_w(\sigma) - n_v(\sigma) \equiv 0 \pmod m}}}   \#\MM(\sigma) \, T^{n_v(\sigma) - n_z(\sigma)} [\y, i-n_z(\sigma)].
\end{equation}
Hedden and Levine defined a truncated version of $g_0^t$, again by throwing away bad terms, and they proceed to show the truncated map $\tilde g^t_0$ establishes the required filtered homotopy equivalence between $\tilde \Psi^t_0$ and $\Phi_0^t$. This argument works for our case as well, except that the grading shift on $\tilde g^t_0$ needs to be recalculated.

Before we discuss the filtration shifts, let us state analogues of the results of Section \ref{sec: alex} for pentagons. To begin, let $V$ be the $(\beta,\tilde\beta)$ periodic domain with $\partial V = \beta_g - \tilde\beta_g$. In other word, $V$ is the thin domain between  $\beta_g $ and $\tilde\beta_g$. We have  $n_v(V) = 1$, and $n_w(V) = n_z(V) = n_{z_j}(V) = n_{u_j}(V) = 0$ for $j=1,\cdots, n$. Let $\tilde P_\gamma$, $\tilde P_\delta$, and $\tilde Q$ be the analogues of $P_\gamma$, $P_\delta$, and $Q$ with $\beta$ circles replaced by ${\tilde \beta}$ circles: up to thin domains, we have
\[
[\tilde P_\gamma] = [P_\gamma] + k[V], \quad [\tilde P_\delta] = [P_\delta]+(k+md)[V], \quad \text{and} \quad
[\tilde Q] = [Q]-m[V].
\]

The Heegaard diagram determines a $4$-manifold $X_{\alpha\beta\gamma\delta\tilde\beta}$, which admits various decompositions into the pieces described in Section \ref{ssec: cob}; for instance, we have
\[
X_{\alpha\beta\gamma\delta\tilde\beta} = X_{\alpha\beta\gamma\delta} \cup_{Y_{\alpha\delta}} X_{\alpha\delta\tilde\beta} = X_{\alpha\beta\gamma} \cup_{Y_{\alpha\gamma}} X_{\alpha\gamma\delta\tilde\beta} .
\]
In the intersection pairing form on $H_2(X_{\alpha\beta\gamma\delta\tilde\beta})$, we have
\[
[V] \cdot [V] = [V] \cdot [Q] = [V] \cdot [\tilde Q] = 0,
\]
and all other intersection numbers can be deduced accordingly. Compare the following with \cite[Lemma 6.31]{HL}.

\begin{lemma} \label{le: pentagonspinc}
For any $\x \in \T_\alpha \cap \T_\beta$, $\tilde\y \in \T_\alpha \cap \T_{\tilde\beta}$, and $\sigma \in \pi_2(\x, \Theta_{\beta\gamma}, \Theta_{\gamma\delta}, \Theta_{\delta\tilde\beta}, \tilde \y)$, we have:
\begin{align}
\label{eq: abgdb-alex}
 n\Al(\x) - n\Al(\tilde \y) &= -nd n_w(\sigma) - (k+md - nd) n_z(\sigma) + (k+md) n_{z_n}(\sigma) \\
\nonumber & \qquad \qquad - md \sum_{j=1}^n\left( n_{z_j}(\sigma) - n_{u_j}(\sigma) \right)   \\
\label{eq: abgdb-c1-Pg}
\qquad \qquad \gen{c_1(\spincs_z(\sigma)), [P_\gamma]} &= 2 \Al(\x) + 2dn_w(\sigma) - 2d n_v(\sigma) + k \\
\label{eq: abgdb-c1-Pdt}
\qquad \qquad \gen{c_1(\spincs_z(\sigma)), [\tilde P_\delta]} &= 2 \Al(\tilde\y) + 2dn_z(\sigma) - 2d n_v(\sigma) + (k+md) \\
\label{eq: abgdb-c1-Q}
\qquad \qquad \gen{c_1(\spincs_z(\sigma)), n[Q]} &= -2m \sum_{j=1}^n\left( n_{z_j}(\sigma) - n_{u_j}(\sigma) \right) -nm \\
\label{eq: abgdb-c1-Qt}
\qquad \qquad \gen{c_1(\spincs_z(\sigma)), n[\tilde Q]} &= 2m\Big( -n_z(\sigma) + n_{z_n}(\sigma) - \sum_{j=1}^n\left( n_{z_j}(\sigma) - n_{u_j}(\sigma) \right) \Big) -nm \\
\label{eq: abgdb-c1-V}
\qquad \qquad \gen{c_1(\spincs_z(\sigma)), n[V]} &= 2n_z(\sigma) - 2n_{z_n}(\sigma)
\end{align}
\end{lemma}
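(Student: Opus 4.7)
The plan is to prove each identity in Lemma \ref{le: pentagonspinc} by decomposing the pentagon $\sigma$ as a composition of a rectangle with a triangle, applying the formulas from Propositions \ref{prop: alex-triangle} and \ref{prop: alex-rectangle}, and correcting by a multi-periodic domain contribution in the $4$-manifold $X_{\alpha\beta\gamma\delta\tilde\beta}$. Concretely, given $\sigma \in \pi_2(\x, \Theta_{\beta\gamma}, \Theta_{\gamma\delta}, \Theta_{\delta\tilde\beta}, \tilde\y)$, I would pick some $\a \in \T_\alpha \cap \T_\delta$, a rectangle $\rho \in \pi_2(\x, \Theta_{\beta\gamma}, \Theta_{\gamma\delta}, \a)$, and a triangle $\psi \in \pi_2(\a, \Theta_{\delta\tilde\beta}, \tilde\y)$ so that the spin$^c$ restrictions agree on $X_{\alpha\beta\gamma\delta}$ and $X_{\alpha\delta\tilde\beta}$ respectively; by adding copies of $\Sigma$ if necessary, we can also ensure $n_z(\sigma) = n_z(\rho) + n_z(\psi)$. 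Then $S = \DD(\sigma) - \DD(\rho * \psi)$ is a multi-periodic domain with $n_z(S) = 0$.

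For \eqref{eq: abgdb-alex}, the function $\MultComb$ vanishes on all such multi-periodic domains, just as in Proposition \ref{prop: alex-rectangle}. Summing the rectangle formula \eqref{eq: abgd-alex} with the analogue of \eqref{eq: adb-alex} for the $(\alpha,\delta,\tilde\beta)$ triangle $\psi$ --- identical in form since $\tilde{\bm\beta}$ is a small isotopic copy of $\bm\beta$ --- then yields exactly $\MultComb(\sigma)$, with the constants $\pm\frac{n(k+md)-n^2 d}{2}$ cancelling against each other.

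For the Chern class formulas, up to thin domains I would write $S = aP_\gamma + bQ + cV$ and solve for the coefficients in terms of the multiplicities $n_w(S)$, $n_v(S)$, and $\sum_j\bigl(n_{z_j}(S) - n_{u_j}(S)\bigr)$, exploiting that $n_v$ vanishes on $P_\gamma$ and $Q$ while $n_v(V)=1$. The evaluation $\gen{c_1(\spincs_z(\sigma)), \cdot}$ then equals the evaluation on $\rho * \psi$ plus $2\PD[S]$ paired against the class in question; substituting Propositions \ref{prop: alex-triangle} and \ref{prop: alex-rectangle} along with the intersection pairings $[V]\cdot[V] = [V]\cdot[Q] = [V]\cdot[\tilde Q] = 0$ recorded in the excerpt yields the claimed identities. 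In particular the appearance of $n_v(\sigma)$ in \eqref{eq: abgdb-c1-Pg} and \eqref{eq: abgdb-c1-Pdt} is traced back to the coefficient $c$ of $V$ in $S$, while \eqref{eq: abgdb-c1-V} reduces directly to pairing $S$ against $V$, which is governed entirely by the multiplicities tracked above.

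The main obstacle will be bookkeeping: keeping precise track of the multiplicities of $S$ at every reference point, correctly solving the linear system that expresses $S$ in the basis $\{[P_\gamma], [Q], [V]\}$ modulo thin domains, and then executing the intersection-form computations in $H_2(X_{\alpha\beta\gamma\delta\tilde\beta})$. This is the same flavor of calculation carried out in Proposition \ref{prop: alex-rectangle}, now with one extra periodic direction coming from $[V]$, and I expect it to proceed by essentially the same template without genuinely new ideas.
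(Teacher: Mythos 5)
Your proposal is correct and follows essentially the same approach as the paper: decompose $\sigma$ as $\rho * \psi$ with $\rho \in \pi_2(\x, \Theta_{\beta\gamma}, \Theta_{\gamma\delta},\a)$ and $\psi \in \pi_2(\a,\Theta_{\delta\tilde\beta},\tilde\y)$, normalize so $S = \DD(\sigma) - \DD(\rho*\psi)$ has $n_z(S)=0$, invoke the vanishing of $\MultComb$ on periodic domains for \eqref{eq: abgdb-alex}, and expand $S$ in a basis of periodic classes to push the Chern class evaluations onto Propositions \ref{prop: alex-triangle} and \ref{prop: alex-rectangle} together with the intersection form. The only (cosmetic) difference is your explicit basis $\{P_\gamma, Q, V\}$ versus the paper's $\{P_\gamma, R\}$ ``up to thin domains'': since $\{P_\gamma, Q\}$ and $\{P_\gamma, R\}$ span the same subspace and $V$ is itself a thin domain with vanishing pairings against all the relevant classes, the two decompositions are interchangeable, and explicitly tracking the $V$-coefficient as you do is arguably cleaner for the items such as \eqref{eq: abgdb-c1-V} that the paper leaves to the reader.
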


\begin{proof}
This proof  resembles the one for Proposition \ref{prop: alex-rectangle}, building upon the calculations made in Propositions \ref{prop: alex-triangle} and \ref{prop: alex-rectangle}.

For any $\x \in \T_\alpha \cap \T_\beta$, $\tilde\y \in \T_\alpha \cap \T_{\tilde\beta}$, and $\sigma \in \pi_2(\x, \Theta_{\beta\gamma}, \Theta_{\gamma\delta}, \Theta_{\delta\tilde\beta}, \tilde \y)$, choose $\a \in \T_\alpha \cap \T_\delta$, $\rho \in \pi_2(\x, \Theta_{\beta\gamma}, \Theta_{\gamma\delta},\a)$ and $\psi \in \pi_2(\a,\Theta_{\delta\tilde\beta}, \tilde \y)$ such that $\spincs_z(\rho) = \spincs_z(\sigma)|_{X_{\alpha\beta\gamma\delta}}$ and $\spincs_z(\psi) = \spincs_z(\sigma)|_{X_{\alpha\delta{\tilde\beta}}}$. Up to adding copies of $\Sigma,$ which does not change the spin$^c$ structure condition, we may assume that $n_z(\sigma) = n_z(\rho) + n_z(\psi)$. Hence, $S = \DD(\sigma) - \DD(\rho * \psi)$ is a quintuply periodic domain with $n_z(S)=0$. Since the function $\MultComb$ vanishes on all periodic domains, we have:
\begin{align*}
n\Al(\x) - n\Al(\tilde \y) &= (n\Al(\x) - \Al(\a)) + (\Al(\a) - n\Al(\tilde \y)) \\
&= \MultComb(\rho) - \frac{n(k+md)-n^2 d}{2} + \MultComb(\psi) + \frac{n(k+md)-n^2 d}{2}\\
&= \MultComb(\sigma) 
\end{align*}
which proves \eqref{eq: abgdb-alex}.

Next, up to thin domains, we have $S = x P_\gamma + y R$, similar as before, we can solve
\[
x =  \frac{n_w(S)}{k}.
\]
Using \eqref{eq: abgd-PD-self-int}, \eqref{eq: abgd-int-form}, and \eqref{eq: abgd-c1-Pg}, we compute:
\begin{align*}
\gen{c_1(\spincs_z(\sigma)), [P_\gamma]}
&= \gen{c_1(\spincs_z(\rho)), [P_\gamma]} + 2x [P_\gamma]^2  \\
&= 2 \Al(\x) + 2d(n_w(\rho)-n_z(\rho)) + k + 2d( n_w(\sigma) - n_w(\rho) - n_w(\psi)) \\
&= 2 \Al(\x) + 2d(n_w(\sigma) - n_z(\rho) -n_w(\psi)) +k  \\
&= 2 \Al(\x) + 2d(n_w(\sigma) - n_v(\rho) -n_v(\psi)) +k  \\
&= 2 \Al(\x) + 2d(n_w(\sigma) -n_v(\sigma)) +k,
\end{align*}
which proves \eqref{eq: abgdb-c1-Pg}.
The computations for the rest of the items are similar and left for the reader.



\end{proof}

\begin{remark} \label{re: veven}
We can adopt the perspective in Remark \ref{re: nprime}: for a fixed $n$, the relations in Lemma \ref{le: pentagonspinc} hold for all $n'$ with $1\leq n' \leq n$. In particular, taking $n'=1,$ then \eqref{eq: abgdb-c1-V} implies that $\gen{c_1(\spincs_z(\sigma)), [V]}$ is always an even integer, and \eqref{eq: abgdb-c1-Qt} implies that $\gen{c_1(\spincs_z(\sigma)), [\tilde Q]}=em$ for some odd integer $e.$

Moreover, by varying $n'$, we obtain 
$n_z(\sigma) - n_{z_1}(\sigma)=n_{z_j}(\sigma) - n_{z_{j+1}}(\sigma)$ for $j=1,\cdots,n-1.$ This is true because the pentagon class $\sigma$ has no $(\alpha,\gamma)$ or $(\alpha,\delta)$ endpoints.
\end{remark}

\begin{definition}[Definition 6.33 in \cite{HL}] \label{def: g0-trunc}
Fix $0 < \epsilon < \frac13$. Let $\tilde g^t_0$ denote the sum of all terms $g^t_{0,\spincv}$ for which $\spincv$  satisfies either both
\begin{align}
\label{eq: g0-trunc-Pg} \abs{\gen{c_1(\spincv), [P_\gamma]}} &< \epsilon md  \\
\label{eq: g0-trunc-Qt} \gen{c_1(\spincv), [\tilde Q]} &= \pm m, \\
\intertext{or it satisfies both}
\label{eq: g0-trunc-Pdt} \abs{\gen{c_1(\spincv), [\tilde P_\delta]}} &< (1+\epsilon)(k+md)  \\
\label{eq: g0-trunc-Q} \gen{c_1(\spincv), [Q]} &= \pm m,  \\
\intertext{or it satisfies}
\label{eq: g0-trunc-V} \gen{c_1(\spincv), [V]} &= 0.
\end{align}
\end{definition}

The following lemma puts further constraints on the spin$^c$ evaluations. It is stated and proved in \cite{HL}.

\begin{lemma} [Lemma 6.32 in \cite{HL}] \label{lemma: g0-trunc}
Fix $t \in \N$ and $0<\epsilon < \epsilon' < 1$. For all $m$ sufficiently large, if $\spincv$ is any spin$^c$ structure for which $g^t_{0,\spincv} \ne 0$, then the following implications hold:
\begin{enumerate}
\item \label{it: g0-trunc-Pg-Pgt}
If $\abs{\gen{c_1(\spincv), [P_\gamma]}} < \epsilon md$ and $\gen{c_1(\spincv), [\tilde Q] } = \pm m$, then
\begin{equation} \label{eq: g0-trunc-Pgt}
\abs{\gen{c_1(\spincv), [\tilde P_\gamma]}} < \epsilon' md.
\end{equation}
\item \label{it: g0-trunc-Pdt-Pd}
If $\gen{c_1(\spincv), [Q]} = \pm m$ and $\abs{\gen{c_1(\spincv), [\tilde P_\delta]}} < (1+\epsilon)(k+md)$, then \begin{equation} \label{eq: g0-trunc-Pd}
\abs{\gen{c_1(\spincv), [P_\delta]}} < (1+\epsilon')(k+md).
\end{equation}
\item \label{it: g0-trunc-V-Q}
If $\gen{c_1(\spincv), [V]}=0$, then
\[
\gen{c_1(\spincv), [Q]} = \gen{c_1(\spincv), [\tilde Q]}  = \pm m.
\]
\end{enumerate}
\end{lemma}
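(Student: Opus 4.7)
The proof strategy follows \cite[Lemma 6.32]{HL}, to which this statement is a direct analog. Three ingredients enter:
(i) the linear relations $[\tilde P_\gamma] = [P_\gamma] + k[V]$, $[\tilde P_\delta] = [P_\delta] + (k+md)[V]$, and $[\tilde Q] = [Q] - m[V]$ in $H_2$ of the pentagon $4$-manifold;
(ii) the parity constraints from Remark \ref{re: veven}, namely $\langle c_1(\spincv), [V]\rangle \in 2\Z$ and $\langle c_1(\spincv), [\tilde Q]\rangle \in m(2\Z+1)$ (and hence $\langle c_1(\spincv), [Q]\rangle \in m(2\Z+1)$ as well);
(iii) the first-Chern-class grading-shift formula, which expresses the Maslov shift of $g^t_{0,\spincv}$ as a quadratic form in $c_1(\spincv)$ determined by the intersection form on the closed-off version of $X_{\alpha\beta\gamma\delta\tilde\beta}$.

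Since the generators of $\CF^t$ have absolute Maslov grading bounded in $t$, ingredient (iii) yields, whenever $g^t_{0,\spincv} \neq 0$, simultaneous bounds on $\langle c_1(\spincv),[V]\rangle$, $\langle c_1(\spincv),[Q]\rangle$, and $\langle c_1(\spincv),[P_\gamma]\rangle$ in terms of $t$ and $m$. The relevant intersection form is unchanged from \cite{HL}: the additional basepoint $z_n$ and the relative domains $P_{n,\gamma}, P_{n,\delta}$ enter only through Alexander gradings and not through the closed-cycle pairings used to compute $c_1^2$. In particular, for $m$ large relative to $t$, the quadratic bound restricts $|\langle c_1(\spincv),[V]\rangle|$ to a small set of even integers (independently of $m$), whose admissibility is further cut down by the specific hypotheses in parts (1)--(3).

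With these bounds in hand, each part follows by a short calculation. For (3): $\langle c_1(\spincv),[V]\rangle = 0$ combined with $[\tilde Q] = [Q] - m[V]$ gives $\langle c_1(\spincv),[Q]\rangle = \langle c_1(\spincv),[\tilde Q]\rangle$, which is an odd multiple of $m$ by (ii); the quadratic Maslov bound forces this multiple to be $\pm 1$ once $m$ is large. For (1) and (2): after imposing $\langle c_1(\spincv),[\tilde Q]\rangle = \pm m$ (respectively $\langle c_1(\spincv),[Q]\rangle = \pm m$), the Maslov bound together with parity restricts $\langle c_1(\spincv),[V]\rangle$ to a small window; the extra hypothesis on $\langle c_1(\spincv), [P_\gamma]\rangle$ (respectively $\langle c_1(\spincv), [\tilde P_\delta]\rangle$) then forces $\langle c_1(\spincv),[V]\rangle = 0$, after which the desired conclusion follows immediately from the linear relation in (i). The main obstacle will be making the quadratic bound in ingredient (iii) fully explicit --- that is, choosing a basis of $H_2(\bar X_{\alpha\beta\gamma\delta\tilde\beta})$ containing $[V], [Q], [P_\gamma]$ and inverting the restricted intersection pairing to extract the precise numerical ranges --- but this is a mild variant of the computation already performed in \cite{HL} and introduces no new conceptual difficulty.
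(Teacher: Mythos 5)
The paper does not actually supply a proof of this lemma; it states it with the tag ``[Lemma 6.32 in \cite{HL}]'' and relies on the fact (noted at the start of Section~\ref{sec: exacttri}) that the Maslov grading bounds are independent of the placement of the extra basepoint $z_n$, so the argument of \cite{HL} transfers verbatim. Your proposal correctly identifies this as the crucial observation, and your three ingredients --- the linear relations among $[V]$, $[Q]$, $[\tilde Q]$, $[P_\gamma]$, $[\tilde P_\gamma]$, $[P_\delta]$, $[\tilde P_\delta]$; the parity constraints from Remark~\ref{re: veven}; and the quadratic Maslov-shift bound --- are exactly the tools one needs. So the overall route is the right one.

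There is, however, a concrete flaw in your sketch of parts (1) and (2). You assert that the Maslov bound together with the hypothesis on $\gen{c_1(\spincv), [P_\gamma]}$ (resp.\ $\gen{c_1(\spincv), [\tilde P_\delta]}$) ``forces $\gen{c_1(\spincv), [V]} = 0$.'' This is not true, and you would not be able to justify it: after the constraints you impose, $\gen{c_1(\spincv), [V]}$ is a bounded even integer, but there is no mechanism forcing it to vanish. Indeed, in the subsequent proof of Lemma~\ref{lemma: g0-filt} one extracts from Lemma~\ref{lemma: g0-trunc} only the bound $\abs{k\gen{c_1(\spincv),[V]}} < 2md$, not vanishing. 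Fortunately vanishing is not needed. The correct closing step in, say, part (1) is: the Maslov bound gives $\abs{\gen{c_1(\spincv),[Q]}} < C(t)\,m$ for some constant depending only on $t$; combined with the hypothesis $\gen{c_1(\spincv),[\tilde Q]} = \pm m$ and the relation $[\tilde Q] = [Q] - m[V]$ one gets $\abs{\gen{c_1(\spincv),[V]}} \le C(t) + 1$, a bound independent of $m$. Then $[\tilde P_\gamma] = [P_\gamma] + k[V]$ yields
\[
\abs{\gen{c_1(\spincv),[\tilde P_\gamma]}} \le \abs{\gen{c_1(\spincv),[P_\gamma]}} + \abs{k}\,\abs{\gen{c_1(\spincv),[V]}} < \epsilon m d + \abs{k}(C(t)+1) < \epsilon' m d
\]
for $m$ large. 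Part (2) is the same with $[P_\delta]$ and $[\tilde P_\delta]$ in place of $[\tilde P_\gamma]$ and $[P_\gamma]$, and part (3) as you describe. So: drop the claim that $\gen{c_1(\spincv),[V]}=0$, replace it by the $m$-independent bound, and your argument goes through.
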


We are ready to compute the filtration shifts. The following lemma is parallel to \cite[Lemma 6.34]{HL}.

\begin{lemma} \label{lemma: g0-filt}
Fix $t \in \N$ and $\epsilon>0$. For all $m$ sufficiently large, the map $\tilde g^t_0$ is filtered with respect to the filtrations $\JJ_{\alpha\beta}$ and $\JJ_{\alpha\tilde\beta}$.
\end{lemma}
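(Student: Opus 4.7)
The plan is to follow the template of Propositions \ref{prop: h0-filt}--\ref{prop: h2-filt}, with the additional complication that both the source and target of $g^t_0$ are twisted complexes, so the trivialization $\theta$ contributes a shift on each end. Given a pentagon $\sigma \in \pi_2(\x, \Theta_{\beta\gamma}, \Theta_{\gamma\delta}, \Theta_{\delta\tilde\beta}, \tilde\y)$ contributing to $g^t_{0,\spincv}$, I would write the input as $[\x,i] \otimes T^r$ and the output as $[\tilde\y, i-n_z(\sigma)] \otimes T^{\tilde r}$, where $r$ is determined modulo $m$ by $\gen{c_1(\spincv), [P_\gamma]}$ and $\tilde r$ by $\gen{c_1(\spincv), [\tilde P_\delta]}$, exactly as in \eqref{eq: h0-filt-p-def} and the analogous formula from the proof of Proposition \ref{prop: h1-filt}. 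Denote by $p, q \in \Z$ the resulting integer ``wrap numbers'':
\begin{gather*}
\gen{c_1(\spincv), [P_\gamma]} = -2dr - k + 2pmd, \\
\gen{c_1(\spincv), [\tilde P_\delta]} = -2d\tilde r - k + (2q-1)md.
\end{gather*}

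First I would carry out the algebraic reduction: substitute these formulas into $\JJ_{\alpha\beta}([\x,i] \otimes T^r) - \JJ_{\alpha\tilde\beta}([\tilde\y, i-n_z(\sigma)] \otimes T^{\tilde r})$ using \eqref{eq: ab-twisted-filt} (and its analogue on the $\tilde\beta$ side), use \eqref{eq: abgdb-alex} to absorb the Alexander grading difference $\Al(\x) - \Al(\tilde\y)$, and invoke the Chern class identities \eqref{eq: abgdb-c1-Pg}, \eqref{eq: abgdb-c1-Pdt}, \eqref{eq: abgdb-c1-Q}, \eqref{eq: abgdb-c1-Qt}, \eqref{eq: abgdb-c1-V} to eliminate the remaining $c_1$ evaluations. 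Guided by the parallel rectangle calculation in Proposition \ref{prop: h0-filt}, I expect the difference to collapse to an expression of the form $n_v(\sigma) + c(p-q)$, where $c$ is a nonzero constant depending only on $m, d, k, n$.

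Next I would show $p = q$ in each of the three truncation regimes of Definition \ref{def: g0-trunc}. In the regime \eqref{eq: g0-trunc-Pg}--\eqref{eq: g0-trunc-Qt}, the first inequality gives $p = 0$ immediately, while Lemma \ref{lemma: g0-trunc}\eqref{it: g0-trunc-Pg-Pgt} together with $[\tilde P_\delta] = [\tilde P_\gamma] - d[\tilde Q]$ bounds $\abs{\gen{c_1(\spincv),[\tilde P_\delta]}}$ tightly enough (for $\epsilon$ small and $m$ large) to force $q = 0$. The symmetric regime \eqref{eq: g0-trunc-Pdt}--\eqref{eq: g0-trunc-Q} uses Lemma \ref{lemma: g0-trunc}\eqref{it: g0-trunc-Pdt-Pd} with $[P_\gamma] = [P_\delta] + d[Q]$ in the analogous way. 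The third regime \eqref{eq: g0-trunc-V}, via Lemma \ref{lemma: g0-trunc}\eqref{it: g0-trunc-V-Q}, gives $\gen{c_1(\spincv),[Q]} = \gen{c_1(\spincv),[\tilde Q]} = em$ for $e = \pm 1$; the two resulting interval bounds on $[P_\gamma]$ and $[\tilde P_\delta]$ then yield $p, q \in \{0, e\}$ with a matching inequality as in case (iii) of the proof of Proposition \ref{prop: h0-filt}, forcing $p = q$. In every regime, the correction term vanishes and the shift reduces to $n_v(\sigma) \ge 0$, as desired.

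The main obstacle will be the first step: the algebraic reduction is long, with many partial cancellations among the multiplicities $n_w, n_z, n_{z_n}, n_{z_j}, n_{u_j}, n_v$, and one must juggle the twisted-side shifts on both ends together with the pentagon Chern class identities simultaneously. The truncation case analysis is then routine, parallel in structure to Proposition \ref{prop: h0-filt}. No fundamentally new geometric or homological ingredient is required beyond the lemmas already established.
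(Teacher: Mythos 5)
Your overall framework is right — introduce the two wrap integers $p,q$ from the twisted-side exponents $r,\tilde r$ on the source and target, reduce the filtration shift to an expression involving the pentagon Chern class identities, and then use Definition~\ref{def: g0-trunc} together with Lemma~\ref{lemma: g0-trunc} to control the leftover correction. But two of your expectations are wrong, and each would derail the proof as written.

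First, the filtration shift does not reduce to $n_v(\sigma)+c(p-q)$. Working through the computation, $\JJ_{\alpha\beta}([\x,i]\otimes T^r)-\JJ_{\alpha\tilde\beta}([\tilde\y,i-n_z(\sigma)]\otimes T^{\tilde r})=n_z(\sigma)+\frac{nd(\tilde r-r)}{k}$, and once the $c_1$-identities of Lemma~\ref{le: pentagonspinc} are invoked, the correct final answer is $n_{z_n}(\sigma)\geq 0$, consistent with Propositions~\ref{prop: h0-filt}--\ref{prop: h2-filt}, all of which end with $n_{z_n}$. The reference point $v$ lies between $\beta_g$ and $\tilde\beta_g$, whereas $z_n$ sits in the winding region; these are different points, and for a generic pentagon $n_v(\sigma)\neq n_{z_n}(\sigma)$. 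Expecting $n_v$ to emerge is a sign you are not tracking the $z_n$-multiplicities correctly through the $[P_\gamma]$- and $[\tilde P_\delta]$-evaluations.

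Second, $p=q$ is simply false in general. Writing $e$ for the odd integer with $\gen{c_1(\spincv),[\tilde Q]}=em$, the correct relation is $e=2p-2q+1$, so $p-q\in\{0,-1\}$ depending on whether $e=1$ or $e=-1$; both signs are permitted by the truncation. Your argument in regime~\eqref{eq: g0-trunc-Pg}--\eqref{eq: g0-trunc-Qt} is where this goes wrong: the estimate $\abs{\gen{c_1(\spincv),[\tilde P_\delta]}}\leq\abs{\gen{c_1(\spincv),[\tilde P_\gamma]}}+d\abs{\gen{c_1(\spincv),[\tilde Q]}}<(1+\epsilon')md$ is of order $md$, and comparing it to \eqref{eq: g0-filt-Pdt-bounds} only pins $q\in\{0,1\}$, not $q=0$. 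The missing ingredient is the unifying bound $\abs{k\gen{c_1(\spincv),[V]}}<2md$, established uniformly across all three regimes using Lemma~\ref{lemma: g0-trunc}, combined with the homology identity $\gen{c_1(\spincv),[\tilde P_\delta]-[P_\gamma]+d[\tilde Q]}=\gen{c_1(\spincv),k[V]}$. Comparing interval lengths then yields $e=2p-2q+1$; substituting this back replaces the $(2q-2p-1)md$ term by $-emd=-\gen{c_1(\spincv),d[\tilde Q]}$, which is exactly what makes the Chern class evaluations telescope down to $-\tfrac{k}{2k}\gen{c_1(\spincv),n[V]}+n_z(\sigma)=n_{z_n}(\sigma)$ via \eqref{eq: abgdb-c1-V}. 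Your plan, which hopes to zero out the $(p-q)$-term directly and leave a bare $n_v(\sigma)$, skips this mechanism and will not close.
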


\begin{proof}
For any $\x \in \T_\alpha \cap \T_\beta$, we have 
\begin{align*}
(\theta_{\alpha\tilde\beta} &\circ  g^t_0  \circ \theta_{\alpha\beta}^{-1}) ([\x,i] \otimes T^r) \\
&=
\sum_{\tilde\y \in \T_\alpha \cap \T_{\tilde\beta}}  \sum_{\substack{\sigma \in \pi_2(\x, \Theta_{\beta\gamma}, \Theta_{\gamma\delta}, \Theta_{\delta\tilde\beta}, \tilde\y) \\ \mu(\sigma)=-2 \\ \mathclap{r + k/d +\AlNorm_{w,z}(\x) + n_z(\sigma) - n_v(\sigma) \equiv 0 \pmod m}}}   \#\MM(\sigma) \, [\tilde\y, i-n_z(\sigma)] \otimes T^{-k/d -\AlNorm_{w,z} (\tilde\y) + n_v(\sigma) - n_z(\sigma)} 
\end{align*}
So as before, the composition $(\theta_{\alpha\tilde\beta} \circ  g^t_{0,\spincv}  \circ \theta_{\alpha\beta}^{-1})$ is only nonzero on the summand $\CF^t(\bm\alpha, \bm\beta, \spincs, z) \otimes T^r \GR $ and lands in $\CF^t(\bm\alpha, \bm\beta, \tilde \spincs, z) \otimes T^s \GR $, where $\spincs = \spincv |_{Y_{\alpha\beta}}$ , $\tilde\spincs = \spincv |_{Y_{\alpha\tilde\beta}}$ and $r,s \in \Q$ are given by
\begin{equation} \label{eq: g0-r-s-def}
\begin{aligned}
\frac{-k-md}{2d} &\le r < \frac{-k+md}{2d} & r &\equiv -\frac{1}{2d}( \gen{c_1(\spincv), [P_\gamma]} +k) \pmod m \\
\frac{-k-md}{2d} &\le s < \frac{-k+md}{2d} & s &\equiv -\frac{1}{2d}( \gen{c_1(\spincv), [\tilde P_\delta]} +k+md) \pmod m.
\end{aligned}
\end{equation}
Therefore, for some $p,q\in \Z$ we can write
\begin{align*}
\gen{c_1(\spincv), [P_\gamma]} &= -2dr -k + 2pmd \\
\gen{c_1(\spincv), [\tilde P_\delta]} &= -2ds -k + (2q-1) md,
\end{align*}
so that 

\begin{align}
\label{eq: g0-filt-Pg-bounds} (2p-1)md &< \gen{c_1(\spincv), [P_\gamma]} \le (2p+1)md \\
\label{eq: g0-filt-Pdt-bounds} (2q-2)md &< \gen{c_1(\spincv), [\tilde P_\delta]} \le 2q md.
\end{align}

Next, assume that $\gen{c_1(\spincv), [\tilde Q]} = em$, where $e$ is an odd integer.     For any $\spincv$ that appears in the definition of $\tilde g^t_0$,      we now claim 
\[
e=2p-2q+1.
\]
Following the same argument in \cite{HL}, we will first establish that $\gen{c_1(\spincv), [V]}$ is small compared to $m$. To be precise, we will show $\abs{k \gen{c_1(\spincv), [V]}} < 2md$ in all three cases in Definition \ref{def: g0-trunc}.
\begin{enumerate}[label=(\roman*)]
    \item If $\spincv$ satisfies \eqref{eq: g0-trunc-Pg} and \eqref{eq: g0-trunc-Qt}, hence \eqref{eq: g0-trunc-Pgt} by Lemma \ref{lemma: g0-trunc}\eqref{it: g0-trunc-Pg-Pgt},where we take $\epsilon' = 2\epsilon$. Then
\begin{align*}
\abs{k \gen{c_1(\spincv), [V]}} &= \abs{ \gen{c_1(\spincv), [P_\gamma] - [\tilde P_\gamma]}} \\
&\le \abs{\gen{c_1(\spincv), [P_\gamma]}} + \abs{\gen{c_1(\spincv), [\tilde P_\gamma]}}  \\
&< 3\epsilon md \\
&< 2md
\end{align*}
as required.
     \item Suppose that $ \spincv$ satisfies \eqref{eq: g0-trunc-Pdt} and \eqref{eq: g0-trunc-Q} and hence \eqref{eq: g0-trunc-Pd} by Lemma \ref{lemma: g0-trunc}\eqref{it: g0-trunc-Pdt-Pd}, again taking $\epsilon'=2\epsilon$.
Therefore,
\begin{align*}
(k+md) \abs{\gen{c_1(\spincv), [V]}} = \abs{\gen{c_1(\spincv), [\tilde P_\delta] - [P_\delta]}} &< (2+3\epsilon)(k+md).
\end{align*}
For $m$ sufficiently large, we again obtain $\abs{k \gen{c_1(\spincv), [V]}} <2md$, as required.
      \item If $ \spincv$ satisfies \eqref{eq: g0-trunc-V}, then the requirement is met immediately. 
\end{enumerate}
We proceed by observing since
\[
[P_\gamma] - [P_\delta] = d[Q], \quad [\tilde P_\delta] = [P_\delta] + (k+md) [V] \quad \text{and} \quad[\tilde Q] = [Q] -m[V],
\]
it follows
\begin{equation}
   \gen{c_1(\spincv),[\tilde P_\delta ] - [P_\gamma] + d [\tilde Q]} = \gen{c_1(\spincv),k[V]}.  
\end{equation}
Comparing the range of both sides of the equation, using \eqref{eq: g0-filt-Pg-bounds} and \eqref{eq: g0-filt-Pdt-bounds}, we obtain 
\begin{equation} \label{eq: g0-filt-V-bounds}
2q-2p-3 + e <     \frac{k\gen{c_1(\spincv), [V]}}{md} < 2q-2p+1 + e.
\end{equation}
When $\abs{\gen{c_1(\spincv), [V]}}$ is sufficiently small, this implies $e=2p-2q+1,$ proving the claim.

For any $\sigma \in \pi_2(\x, \Theta_{\beta\gamma}, \Theta_{\gamma\delta}, \Theta_{\delta\tilde\beta}, \tilde \y)$ contributing to $\tilde g^t_0$, we compute:
\begin{align*}
\JJ_{\alpha\beta}([\x,i] \otimes T^r) &- \JJ_{\alpha\tilde\beta}([\tilde\y,i-n_z(\sigma)] \otimes T^s) \\
&= i - \frac{2nd r +nk+n^2 d}{2k} - (i-n_z(\sigma)) + \frac{2nd s +nk+n^2 d}{2k} \\
&= \frac{nd(s-r)}{k} + n_z(\sigma) \\
&= \frac{\gen{c_1(\spincv), n[P_\gamma]-n[\tilde P_\delta]} + (2q-2p-1)nmd }{2k} + n_z(\sigma) \\
&= \frac{\gen{c_1(\spincv), n[P_\delta] + nd[Q] -n[\tilde P_\delta]} -enmd }{2k} + n_z(\sigma) \\
&= \frac{\gen{c_1(\spincv), -(k+md)n[V] + nd[Q]} - \gen{c_1(\spincv),   nd[\tilde Q]} }{2k} + n_z(\sigma) \\
&= \frac{\gen{c_1(\spincv), -(k+md)n[V] + nmd[V]} }{2k} + n_z(\sigma) \\
&= \frac{-k\gen{c_1(\spincv), n[V]} }{2k} + n_z(\sigma) \\
&= n_{z_n}(\sigma) - n_z(\sigma) + n_z(\sigma)\\
&= n_{z_n}(\sigma)\\
&\geq 0.
\end{align*}
\end{proof}

\begin{lemma}[Lemma 6.35 in \cite{HL}] \label{lemma: g0-trunc-htpy}
Fix $t \in \N$. For all $m$ sufficiently large, the map $\tilde g^t_0$ is a (filtered) chain homotopy between $\tilde \Psi^t_0$ and $\Phi^t_0$.
\end{lemma}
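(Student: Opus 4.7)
The plan is to follow the template of \cite[Lemma 6.35]{HL}, keeping essentially the same pentagon-degeneration bookkeeping and adjusting only for our shifted placement of the second basepoint $z_n$. First I would examine the ends of the $1$-dimensional moduli space of Maslov-index $-1$ pentagons $\sigma \in \pi_2(\x, \Theta_{\beta\gamma}, \Theta_{\gamma\delta}, \Theta_{\delta\tilde\beta}, \tilde\y)$. Gromov compactness gives a chain-level identity whose boundary terms come from pinching one of the five interior chords of the pentagon; counting signs (which is trivial over $\F$), one obtains a relation of the schematic form
\[
\partial \circ g^t_0 + g^t_0 \circ \partial \;=\; \Phi^t_0 \;+\; h^t_1 \circ f^t_0 \;+\; f^t_2 \circ h^t_0 \;+\; \text{corrections},
\]
where the corrections arise from the three other chords (through $\Theta_{\beta\delta}$, $\Theta_{\gamma\tilde\beta}$, and through a degenerate pentagon involving $\Theta_{\beta\tilde\beta}$ on one side).

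Next I would identify each term. The $v_0$--$v_2$ pinch produces a triangle counted by $f^t_0$ followed by an $(\alpha,\gamma,\delta,\tilde\beta)$ rectangle which, after composition with the triangle of $\Theta$'s $\Theta_{\delta\tilde\beta}\otimes\Theta_{\gamma\delta}\mapsto\Theta_{\gamma\tilde\beta}$ and the identification $\CFa(\bm\alpha,\tilde{\bm\beta}) \simeq \CFa(\bm\alpha,\bm\beta)$ via $\Phi^t_0$, is homotopic to $h^t_1\circ f^t_0$; similarly the $v_0$--$v_3$ pinch produces $f^t_2\circ h^t_0$. The pinches through $\Theta_{\beta\delta}$ and $\Theta_{\gamma\tilde\beta}$ contribute maps which vanish up to homotopy by the associativity of the $\Theta$-triangle counts established in \cite{HL}, and the remaining degeneration produces $\Phi^t_0$ (paired with the composite $\Theta$-counting, which represents the top generator of $\HFa$ of $\#^{g-1}(S^1\times S^2)$ connect-summed with a Lens space, cf.\ the analysis in Section~\ref{ssec: cob}).

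To pass from $g^t_0$ to the truncated $\tilde g^t_0$, I would verify that the spin$^c$ conditions of Definition~\ref{def: g0-trunc} are exactly compatible with the truncation conditions of Definitions~\ref{def: h0-trunc} and \ref{def: h1-trunc}: each pentagon deleted from $g^t_0$ degenerates only into triangle-rectangle pairs whose rectangle factor has already been removed in the corresponding $\tilde h^t_j$. This matching, which I would check case-by-case using the spin$^c$ evaluations $\langle c_1,[P_\gamma]\rangle$, $\langle c_1,[\tilde P_\delta]\rangle$, $\langle c_1,[Q]\rangle$, $\langle c_1,[\tilde Q]\rangle$, $\langle c_1,[V]\rangle$ together with the relations $[\tilde P_\delta]=[P_\delta]+(k+md)[V]$, $[\tilde Q]=[Q]-m[V]$ and Lemma~\ref{lemma: g0-trunc}, guarantees that the chain homotopy equation survives truncation with $\tilde\Psi^t_0$ replacing $\Psi^t_0$ on the right hand side. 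Once the chain homotopy statement is in place, the ``filtered'' part of the lemma is immediate from Lemma~\ref{lemma: g0-filt}.

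The main obstacle I anticipate is the combinatorial verification that the truncation conditions do match across the three definitions; this is where the different placement of $z_n$ (to the left of $\beta_g$ rather than to the right, as in \cite{HL}) requires the most care, and the integrality constraints gathered in Remark~\ref{re: veven} are precisely what is needed to make the bookkeeping go through. I would organize the argument by treating separately each of the three clauses in Definition~\ref{def: g0-trunc}, showing in each case that pentagons violating that clause must degenerate into products in which at least one factor has been set to zero in the definition of $\tilde h^t_0$ or $\tilde h^t_1$, so that the contribution drops out of both sides simultaneously.
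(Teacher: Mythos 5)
Your overall architecture matches the paper's: follow Hedden--Levine's pentagon-degeneration argument verbatim and isolate what must be re-checked because of the shifted basepoint $z_n$. The one place where your write-up stays at the ``I would verify'' level is exactly the place the paper actually does the work: the single degeneration that produces the nearest-point map $\Phi^t_0$. In that degeneration the pentagon decomposes as $\sigma = \rho * \psi$ with $\rho \in \pi_2(\Theta_{\beta\gamma}, \Theta_{\gamma\delta}, \Theta_{\delta\tilde\beta}, \tilde\Theta)$ a $(\beta,\gamma,\delta,\tilde\beta)$ rectangle and $\psi \in \pi_2(\x, \tilde\Theta, \tilde\y)$ an $(\alpha,\beta,\tilde\beta)$ triangle, where $\tilde\Theta \in \T_\beta\cap\T_{\tilde\beta}$. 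The key observation is that $z$ and $z_n$ lie in the same region of both subdiagrams $(\Sigma,\bm\beta,\bm\gamma,\bm\delta,\tilde{\bm\beta})$ and $(\Sigma,\bm\alpha,\bm\beta,\tilde{\bm\beta})$ (because no $\alpha$, $\gamma$, or $\delta$ curve separates them in the relevant pieces), so $n_z(\sigma) = n_{z_n}(\sigma)$. By \eqref{eq: abgdb-c1-V} this forces $\gen{c_1(\spincs_z(\sigma)),[V]}=0$, i.e.\ these pentagons satisfy clause \eqref{eq: g0-trunc-V} of Definition~\ref{def: g0-trunc} and are automatically retained in $\tilde g^t_0$. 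That is the retention check, not a deletion check, and it is the only point where the new placement of $z_n$ interacts with the truncation.

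Your plan, as stated, is organized around showing that deleted pentagons degenerate into products with a deleted factor; that is the right argument for the terms matching $\tilde h^t_1 \circ f^t_0$ and $f^t_2 \circ \tilde h^t_0$, and Hedden--Levine's truncation lemmas \ref{def: h0-trunc}, \ref{def: h1-trunc}, \ref{def: g0-trunc} and Lemma~\ref{lemma: g0-trunc} are designed to make it go through. But without the explicit $n_z=n_{z_n}$ observation for the $\Phi^t_0$-ends, your proposal does not establish that $\tilde g^t_0$ still produces all of $\Phi^t_0$ rather than some truncated version of it, and the chain homotopy identity would be between $\tilde\Psi^t_0$ and an a priori unidentified map. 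The paper's proof compresses everything else to ``proceed without modification'' precisely because this is the unique new check needed; Remark~\ref{re: veven} (which you correctly flag as relevant) gives evenness of $\gen{c_1,[V]}$ in general, but the stronger vanishing statement needed here comes from the basepoint-region observation, not from parity alone.
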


\begin{proof}
The proof in \cite{HL} can proceed without modulation, except that in the last item (P-$5$) where $\sigma$ has the decomposition $\sigma = \rho * \psi$, where $\rho \in \pi_2(\Theta_{\beta\gamma}, \Theta_{\gamma\delta}, \Theta_{\delta\tilde\beta}, \tilde \Theta)$ and $\psi \in \pi_2(\x, \Theta, \tilde\y)$ with $\tilde \Theta \in \T_\beta \cap \T_{\tilde\beta}$, now $z$ and $z_n$ are in the same region of both $(\Sigma, \bm\beta, \bm\gamma, \bm\delta, \bm{\tilde \beta})$ and $(\Sigma, \bm\alpha, \bm\beta, \bm{\tilde \beta})$. As a result, $n_z(\sigma) = n_{z_n}(\sigma)$.  By \eqref{eq: abgdb-c1-V}, $\gen{c_1(\spincs_z(\sigma)), [V]}=0$, so $\spincv$ satisfies the condition \eqref{eq: g0-trunc-V} and must appear in the definition of $\tilde g^t_0$. 

\end{proof}

This concludes the proof of Proposition \ref{prop: rect-filt}, and hence of Theorem \ref{thm: CFt-cone-f2}.

\section{Proof of the filtered mapping cone formula}\label{sec: proofcone}

We are ready to prove the filtered mapping cone formula. According to Lemma \ref{lemma: cft}, it suffices to prove the mapping cone formula for  $\CF^t$. The proof follows closely the recipe from \cite[Section 7]{HL}, and we will write down the steps for completeness. 

\begin{proof}[Proof of Theorem \ref{thm: mapping-cone}]
We will only prove the case when $k>0$. Given $t\in \N$, by Theorem \ref{thm: CFt-cone-f2} , for a diagram $(\Sigma, \bm\alpha, \bm\beta, \bm\delta, w, z, z_n)$ and large enough $m$, we have doubly-filtered homotopy equivalence
\[
\begin{pmatrix} f_1^t \\ h_1^t \end{pmatrix}  \co \CF^t(\bm\alpha, \bm\gamma, z) \to \operatorname{Cone}(f_2^t).
\]


Fixing a spin$^c$ structure $\spinct \in \Spin^c(Y_\lambda(K))$, consider all its extension in $\Spin^c(W_\lambda).$  As discussed at the end of Section \ref{sssec: spinc}, these spin$^c$ structures form an orbit with the action of $\PD[C]$, where $C$ denotes the $2$-handle attached to $Y$. Their image under the bijection $E_{Y,\lambda,K}$ form an orbit in $\ul\Spin^c(Y,K)$ with the action of $\PD[K];$ call them $\xi_l$. The Alexander gradings  $A_{Y,K}(\xi_l)$ with $l \in \Z$ are precisely the arithmetic sequence $s_l$ with step length $k/d$, as defined  in Section \ref{ssec: statement}, where the index is fixed by
\[
  \frac{(2l-1)k}{2d}  < s_l \le \frac{(2l+1)k}{2d}.
\]
Under the bijection $E_{Y_\lambda, K_{n,\lambda}}\circ  (E_{Y,\lambda,K})^{-1} \co \ul\Spin^c(Y,K) \to \ul\Spin^c(Y_\lambda,K_{n,\lambda})$,  the set $\{\xi_l\}$ is identified with a sequence of spin$^c$ structures in  $\ul\Spin^c(Y_\lambda,K_{n,\lambda})$ (also with the action of $\PD[K]$). Their  $A_{Y_\lambda,K_{n,\lambda}}$ values form a $\Q/n\Z$ coset, which we denote by $A_{Y_\lambda,K_{n,\lambda}}(\spinct).$ By \eqref{eq: abg-alex}, we have
\[
nds_l \equiv kA_{Y_\lambda,K_{n,\lambda}}(\spinct) + \frac{-nk+n^2 d}{2}  \pmod{\Z}
\]


 Next, define $\spincv_l \in \Spin^c_0(\bar X_{ \alpha\gamma\delta})$ to be  the spin$^c$ structure extending $\spinct$ with
\[
\frac{(2l-1) mk}{\nu} < \gen{c_1(\spincv_l), [R]} \le \frac{(2l+1)mk}{\nu},
\]
and let  $\spincu_l = \spincv_l | _{Y_{\alpha\delta}}$. We want to show $s_{\spincu_l}=s_l$. (For the definition of $s_{\spincu_l}$ see (\ref{eq: su-def}). By \eqref{eq: agd-c1-q}, we have
\begin{align*}
    \frac{n\nu}{2m}\gen{c_1(\spincv_l),[R]}&=\Al(\q) - k \sum_{j=1}^n \left( n_{z_j}(\psi) - n_{u_j}(\psi) \right) +\frac{-nk + n^2 d}{2}\\
    &\equiv kA_{Y_\lambda,K_{n,\lambda}}(\spinct) +\frac{-nk + n^2 d}{2}   \pmod{k}\\
    &\equiv nd s_l.  \pmod{k}
\end{align*}
Therefore 
\[
\gen{c_1(\spincv_l),[R]}= \frac{2mds_l}{\nu}.
\]
On the other hand, according to Lemma \ref{le: f1range},
\[
\gen{c_1(\spincv_l),[R]}= \frac{2mds_{\spincu_l}}{\nu},
\]
which shows $s_{\spincu_l}=s_l$, as required. Lemma \ref{le: f1range} further shows that $f^t_{1,\spincv_l}$ is only non zero when $-L\leq l \leq L$ for some $L$.  In other words, the  image of $f^t_1$ restricted to $\CF^t(\bm\alpha,\bm\gamma,\spinct)$  is contained in the direct sum of $\CF^t(\bm\alpha,\bm\delta,\spincu_l)$ for $-L\leq l \leq L$.

Recall that spin$^c$ structures $\spincx_{\spincu_l}, \spincy_{\spincu_l}$ on $W_{\alpha\delta\beta}$, both restricting to $\spincu_l$ on $Y_{\alpha\delta}$,  are characterized by
\begin{align*}
\gen{c_1(\spincx_{\spincu_l}), [P_\delta]} &= 2ds_l - k-md \\
\gen{c_1(\spincy_{\spincu_l}), [P_\delta]} &= 2ds_l + k+md.
\end{align*}
We will write $\spincx_l = \spincx_{\spincu_l}$ and $\spincy_l = \spincy_{\spincu_l}$ to simplify the notation. The proof of Proposition \ref{prop: f2-filt} shows that $\spincx_{l}$ and  $\spincy_{l}$ are the only two spin$^c$ structures that may contribute to the map $f^t_{2}$ on $\CF^t(\bm\alpha,\bm\delta, \spincu_l)$. 
Observe $\spincx_{l}$ and $\spincy_{l-1}$ restrict to the same spin$^c$ structure  $\spincs_{\spincu_l}$ on $Y$. We will write  $\spincs_l =\spincs_{\spincu_l}.$ 

Moreover,  the images of the maps $\theta \circ f^t_{2, \spincx_{l}}$ and $\theta \circ f^t_{2, \spincy_{l-1}}$ both lie in the summand $\CF^t(\bm\alpha, \bm\beta, \spincs_l) \otimes T^{-s_l}$. 

Therefore, the complex $\CF^t(\Sigma, \bm\alpha, \bm\gamma, w, \spinct)$ equipped with filtrations $\II_{\alpha\gamma}$ and $\JJ_{\alpha\gamma}$, is doubly-filtered quasi-isomorphic to the doubly-filtered complex
\begin{equation} \label{eq: mapping-cone-CFt}
\xymatrix@R=0.6in{
\cdots \ar[dr] &\CF^t(\bm\alpha, \bm\delta, \spincu_{l-1}) \ar[d]|{F^t_{W'_m, \spincx_{l-1}}} \ar[dr]|{F^t_{W'_m, \spincy_{l-1}}} & \CF^t(\bm\alpha, \bm\delta, \spincu_{l}) \ar[d]|{F^t_{W'_m, \spincx_{l}}} \ar[dr]|{F^t_{W'_m, \spincy_{l}}} & \cdots  \ar[d]  \\
&\CF^t(\bm\alpha, \bm\beta, \spincs_{l-1}) \otimes T^{-s_{l-1}} & \CF^t(\bm\alpha, \bm\beta, \spincs_{l}) \otimes T^{-s_{l}} & \cdots 
}
\end{equation}
where the filtrations are inherited from those on $\CF^t(\bm\alpha, \bm\delta, z)$ and $\ul\CF^t(\bm\alpha, \bm\beta, z; \GR)$, respectively.

By Theorem \ref{thm: large-surgery}, there are doubly-filtered quasi-isomorphisms
\[
\Lambda^t_{\spincu_l}\co \CF^t(\Sigma, \bm\alpha, \bm\delta, \spincu_l) \to A^t_{\spincs_l, s_l}
\]
where the Alexander filtration on $\Lambda^t_{\spincu_l}$ is identified with the filtration $\JJ_{\spincu_l}$ from \eqref{eq: Ju-def}. Whereas each $\CF^t(\bm\alpha, \bm\beta, \spincs_{l}) \otimes T^{-s_{l}}$ can be identified with $B^t_{\spincs_l} = C_{\spincs_l}\{0 \le i \le t\}$, so that the complex in \eqref{eq: mapping-cone-CFt} is quasi-isomorphic to
\begin{equation} \label{eq: mapping-cone-AtBt}
\xymatrix@C=0.6in@R=0.6in{
\cdots \ar[dr] & A^t_{\xi_{l-1}} \ar[d]|{v^t_{\xi_{l-1}}} \ar[dr]|{h^t_{\xi_{l-1}}} & A^t_{\xi_{l}} \ar[d]|{v^t_{\xi_{l}}} \ar[d]|{v^t_{\xi_{l}}} \ar[dr]|{h^t_{\xi_{l}}}  &\cdots  \ar[d] \\
& B^t_{\spincs_{l-1}}   & B^t_{\spincs_{l}}& \cdots
}
\end{equation}
This is  the complex $X^t_{\lambda,\spinct,n,-L,L}$ from Section \ref{ssec: statement} by definition.

Disregarding the second filtration, the underlying mapping cone is isomorphic to the one defined in \cite{HL}, and with the mapping cone in \cite{rational}, for this purpose. The maps induced by cobordisms are independent of the choice of basepoints. Thus the absolute grading shift on our mapping cone matches with Hedden-Levine's. Placing the $z_n$ basepoint differently amounts to imposing a different $\JJ$ filtration to the mapping cone. To complete the proof, we just need to check the $\JJ$ filtration agrees with the descriptions (\ref{eq: Jt-def-A}) and (\ref{eq: Jt-def-B}) in Section \ref{ssec: statement}. 
\begin{itemize}
\item
On each summand $\CF^t(\bm\alpha, \bm\delta, \spincu_l)$, $\JJ_{\alpha\delta}$ is defined in \eqref{eq: ad-filt} as the Alexander filtration plus $\frac{nd^2m(2s_{\spincu_l}-n)}{2k(k+md)}$, thus $\JJ$ on $A^t_{\xi_l}$ would be $\JJ_{\spincu_l}$ plus the same shift:
\begin{align*}
\JJ([\x, i, j]) &= \JJ_{\spincu_l}([\x,i,j]) + \frac{nd^2m(2s_{\spincu_l}-n)}{2k(k+md)} \\
&= \max\{i-n, j-s_l\} + \frac{nd(2s_l - n)}{2(k+md)}  + \frac12 + \frac{nd^2m(2s_l-n)}{2k(k+md)} \\
&= \max\{i-n, j-s_l\} + \frac{nds_l +nk-n^2 d}{2k}
\end{align*}
as required.
\item
On each summand $\CF^t(\bm\alpha, \bm\beta, \spincs_l) \otimes T^{-s_l}$, according to \eqref{eq: ab-twisted-filt} we have
\begin{align*}
\JJ([\x, i, j]) &= \JJ_{\alpha\beta}([\x, i] \otimes T^{-s_l}) \\
&= i - \frac{2nd(-s_l) + nk + n^2 d}{2k} \\
&= i-n + \frac{2nds_l + nk - n^2 d}{2k}
\end{align*}
as required.
\end{itemize}

We have proved that $\CFK^t(Y_\lambda, K_{n,\lambda}, \spinct)$ is doubly-filtered quasi-isomorphic to  $X^t_{\lambda,\spinct,n,-L,L}$. According to Lemma \ref{lemma: cft}, this implies the doubly-filtered quasi-isomorphism between   $\CFKm(Y_\lambda, K_{n,\lambda}, \spinct)$ and $X^-_{\lambda,\spinct,n,-L,L}$. To pass to the infinity version, tensor both complexes by $\F[U,U^{-1}]$, and we conclude that $\CFKi(Y_\lambda, K_{n,\lambda}, \spinct)$ is doubly-filtered quasi-isomorphic to $X^\infty_{\lambda,\spinct,n,-L,L}$, as required.

\end{proof}

\section{Rational surgery} \label{sec: rational}

Following the same approach spelled out in \cite[Section 8]{HL}, our formula also supports a generalization to the rational surgery. For the simplicity, in this section we only demonstrate the process for $1/p$ surgery on a null-homologous knot $K$ inside a rational homology sphere $Y$, where $p$ is a positive integer. But this can be generalized to the case of arbitrary rational surgeries on any knot $K$ in a rational homology sphere without too much trouble.

Let $K_{n,1/p}$ denote the knot in $Y_{1/p}(K)$ obtained from $(n,1)$--cable of a left-handed meridian of $K$. Note that the left-handed meridian is no longer isotopic to the surgery solid torus as in the case of integral surgery.  Following Hedden and Levine's notation, let $Y'$ denote $Y\conn -L(p,1)$ and $K'=K\conn O_p$, where $O_p \subset -L(p,1)$ is obtained from the Hopf link by performing a $-p$ surgery on one of the unknot components. Then   $Y_{1/p}(K)$ is obtained from $Y'$ via a $2$-handle cobordism along $K';$ let $W$ denote this cobordism.

For $q\in \{0, \cdots, p-1 \}$, the complex $\HFKa(-L(p,1), O_p, \spincu_q)$ is generated by a single generator in Alexander grading $-\frac{p-2q-1}{2p}$. For any $\spincs \in \Spin^c(Y)$, let  $\spincs_{q} = \spincs \conn \spincu_q \in \Spin^c(Y \conn -L(p,1))$. The K\"unneth principle for connected sums (see \cite[Theorem 7.1]{OSknot}) implies that $\CFKi(Y',K', \spincs_q)$ is isomorphic to $\CFKi(Y,K)$, with the Alexander grading shifted by $-\frac{p-2q-1}{2p}$. Moreover, all the  $\spincs_{q}$ are cobordant to the same spin$^c$ structure on $Y_{1/p}(K)$ through $W$; denoted this spin$^c$ structure by $\spinct \in \Spin^c(Y_{1/p}(K))$.
 
In order to compute $\CFKi(Y_{1/p}(K),K_{n,1/p},\spinct),$ we apply the filtered mapping cone formula to $(Y',K')$. Using the notation from Section \ref{ssec: statement}, we take $d=p$ and the framing on $K'$ corresponds to $k=1$. Consider all the spin$^c$ structures on $W$ that extends $\spinct$. Through the bijection between $\Spin^c(W)$ and $\ul\Spin^c(Y',K'),$ they form a sequence $\xi_l \in \ul\Spin^c(Y',K')$ for $l\in \Z$, where
\[
\frac{2l-1}{2p} < \AlNorm_{Y',K'}(\xi_l) \le \frac{2l+1}{2p}.
\]
We set $s_l = \AlNorm_{Y',K'}(\xi_l)$. Note that each $\AlNorm_{Y',K'}(\xi_l)$ satisfies 
\[
\AlNorm_{Y',K'}(\xi_l) \equiv \frac{-p+2q+1}{2p} \pmod \Z,
\]
for some $q=0,1,\cdots,p-1.$ So we can write
\[
s_l = \frac{-p+2q+1}{2p} + r
\]
for some $r \in \Z.$ The above arithmetic constraint is enough to pin down $s_l$ and $r$, as demonstrated by the following computation from \cite{HL}: Observe $2l-1 < (2r-1)p + 2q+1 \le 2l+1$, so we have $(2r-1)p \le 2(l-q) < (2r-1)p + 2$. There are two cases:
\begin{enumerate}[label=(\roman*)]
    \item If $p$ is even, we deduce that $2(l-q) = (2r-1)p$, which implies
    \[
    s_l = \frac{2l+1}{2p};
    \]
    \item if $p$ is odd, then $2(l-q) = (2r-1)p+1$, and therefore 
    \[
    s_l= \frac {l}{p}. 
    \]
\end{enumerate}
In both cases  we have
\[
r=\floor {\frac{2l+p}{2p}}.
\]

In the mapping cone, the complexes $A^\infty_{\xi_l}$ and $B^\infty_{\xi_l}$ are each copies of $\CFKi(Y',K', \spincs_q)$ by definition. As noted above, each of these complexes is in turn isomorphic to $\CFKi(Y,K,\spincs)$, but with the $j$ coordinate shifted by $-\frac{p-2q-1}{2p}$. Now we can directly compute the filtrations on $A^\infty_{\xi_l}$ and $B^\infty_{\xi_l}$ using the formulas \eqref{eq: It-def-A}, \eqref{eq: Jt-def-A}, \eqref{eq: It-def-B}, and \eqref{eq: Jt-def-B}. We collect the results as follows.
\begin{align}
\intertext{On $A^\infty_{\xi_l}$,}
\label{eq: It-ratl-A} \II_\spinct([\x,i,j]) &= \max\{i,j-r\} \\
\label{eq: Jt-ratl-A} \JJ_\spinct([\x,i,j]) &= \max\{i-n,j-r\} + nl - 
\begin{cases}
\frac{n}{2}(np-1)  \quad p \text{ odd},  \\
\frac{n}{2}(np-2)  \quad p \text{ even},  \\
\end{cases}
\intertext{On $B^\infty_{\xi_l}$,}
\label{eq: It-ratl-B} \II_\spinct([\x,i,j]) &= i \\
\label{eq: Jt-ratl-B} \JJ_\spinct([\x,i,j]) &= i-n + nl - 
\begin{cases}
\frac{n}{2}(np-1)  \quad p \text{ odd},  \\
\frac{n}{2}(np-2)  \quad p \text{ even}.  \\
\end{cases}
\end{align}
In particular, the above formula takes the same form as the mapping cone defined by Ozsv\'ath and Szab\'o, namely each $A_s$ and $B_s$ complex appears $p$ times in the mapping cone (though in our case the $\JJ_t$ filtration of each copy is shifted accordingly).

\section{Examples and applications}\label{sec: examples}

In this section, we compute a series of examples that lead us to the proof of Proposition \ref{prop: phi} and other applications. For the reader's convenience, we restate the mapping cone formula for any knot  $K\subset S^3$ under the $+1$-surgery, where the $(n,1)$--cable of the meridian is denoted by $K_{n,1}$. Take $k=d=1$, and $s_l=s \in \Z$ in this case, since the Alexander grading for any relative spin$^c$ structure of $K$ is an integer. 

According to the main theorem, the knot floer complex  $\CFKi(S^3_{1}(K),K_{n,1})$ is given by $X^\infty_{1,n,a,b}(K)$ for some $a\ll 0$ and $b\gg 0$. Specifically, let us consider the  mapping cone $X^\infty_{1,n,-g+1,g+n-1}(K)$ (we will see this gives suitable values for $a$ and $b$ later): 
\begin{align}
    \bigoplus^{g+n-1}_{s=-g+1}A^\infty_s \xrightarrow{v^\infty_s+h^\infty_s} \bigoplus^{g+n-1}_{s=-g+2}B^\infty_s,
\end{align}
where each $A^\infty_s$ and $B^\infty_s$ is isomorphic to $\CFKi(S^3,K)$, the map $v^\infty_s\co A^\infty_s \to B^\infty_s $ is the identity and $h^\infty_s\co A^\infty_s \to B^\infty_{s+1} $ is the reflection map precomposed with $U^s$. The $\II$ and  $\JJ$  filtrations are given by
\begin{align}
\intertext{For $[\x,i,j] \in A^\infty_{s}$,}
\label{eq: filtration_s3_1}
 \II([\x,i,j]) &= \max\{i,j-s\} \\
 \label{eq: filtration_s3_2}
 \JJ([\x,i,j]) &= \max\{i-n,j-s\} + ns - \frac{n(n-1)}{2} \\
\intertext{For $[\x,i,j] \in B^\infty_{s}$,}
 \label{eq: filtration_s3_3}
 \II([\x,i,j]) &= i \\
  \label{eq: filtration_s3_4}
 \JJ([\x,i,j]) &= i-n + ns - \frac{n(n-1) }{2}.
\end{align}
 It is straightforward to check that for $s<-g+1$, the map $h^\infty_s$ induces an isomorphism on the homology; for $s>g+n-1,$ 
the map $v^\infty_s(K)$ induces an isomorphism on the homology. Thus the filtered quasi-isomorphism type of $X^\infty_{1,n,a,b}(K)$ does not depend on $a,b$ if $a\leq -g+1$ and $b\geq g+n-1$. We will write 
\[
X_n^\infty(K) = X^\infty_{1,n,-g+1,g+n-1}(K). 
\]
For the purpose of computations,   looking at the associated graded complex is usually helpful. We will describe the  associated graded complex with $\II=0$, and the associated graded complex with other  $\II$ values can be obtained from translations. The complex $X_n^\infty (K)\{i=0\}$ is given by 
\begin{align}
    \bigoplus^{g+n-1}_{s=-g+1}A^\infty_s \{\text{max}(i,j-s)=0 \} \xrightarrow{v^\infty_s+h^\infty_s} \bigoplus^{g+n-1}_{s=-g+2}B^\infty_s\{i=0\},
\end{align}
where each $A^\infty_s \{\text{max}(i,j-s)=0 \}$ admits exactly $n+1$ filtration levels with respect to $\JJ$, namely $\JJ= ns+\frac{n(n-1)}{2}, 1+ns+\frac{n(n-1)}{2},\cdots, n+ns+\frac{n(n-1)}{2}, $ while   $B^\infty_s\{i=0\}$ corresponds to the filtration level $-n+ns+\frac{n(n-1)}{2}.$ See Figure \ref{fig: filtration}.

\begin{figure}
    \labellist

\pinlabel $ns+n(n-1)/2$  at 100 290
\pinlabel $-1+ns+n(n-1)/2$  at 193 268
\pinlabel $\cdot$  at 143 249
\pinlabel $\cdot$  at 143 245
\pinlabel $\cdot$  at 143 241
\pinlabel $-n+ns+n(n-1)/2$  at 193 215

\pinlabel $A_{s-1}$  at 110 314

\pinlabel $h_{s-1}$  at 200 164
\pinlabel $v_s$  at 292 164

\pinlabel $n+ns+n(n-1)/2$  at 280 288
\pinlabel $n-1+ns+n(n-1)/2$  at 382 268
\pinlabel $\cdot$  at 330 248
\pinlabel $\cdot$  at 330 244
\pinlabel $\cdot$  at 330 240
\pinlabel $ns+n(n-1)/2$  at 389 215

\pinlabel $A_s$  at 290 314

\pinlabel $ns+n(n-1)/2$  at 200 100
\pinlabel $B_s$  at 290 110

\endlabellist
    \includegraphics{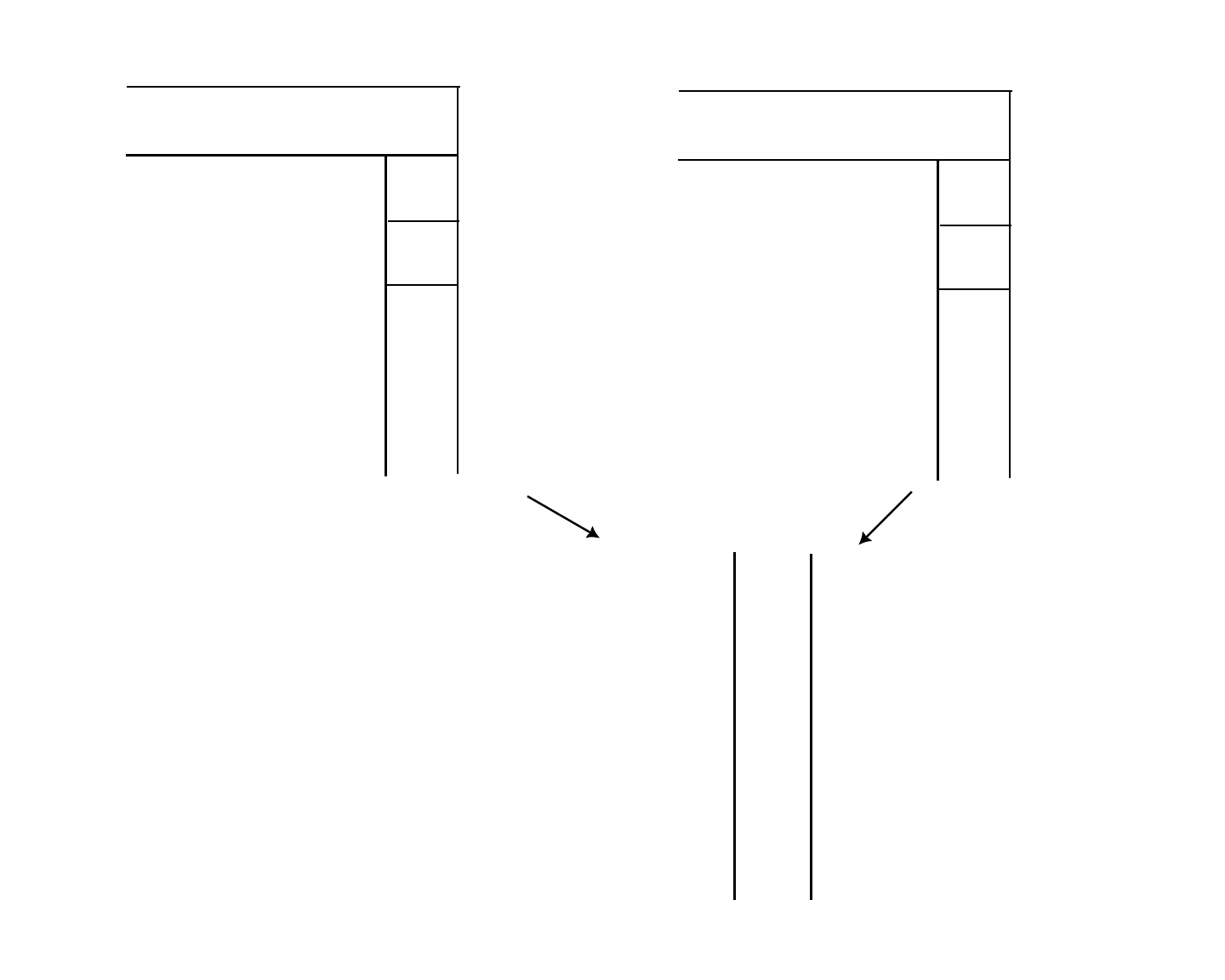}
    \caption{A portion of the complex $X_n^\infty(K) \{i=0\}$, where the vertical bar of each complex is at $i=0$ and the horizontal bar of $A_{s}$ (resp.~ $A_{s-1}$) is at $j=s$ (resp.~ $j=s-1$).  }
    \label{fig: filtration}
\end{figure}

The general strategy for computation involves finding a \emph{reduced} basis for $X^\infty_n(K),$ where every term in the differential strictly lowers at least one of the filtrations. This can be achieved through a cancellation process (see for example \cite{Bordered}) as follows: suppose $\partial x_i = y_i$ $+ $ lower filtration terms, where the double filtration of $y_i$ is the same as $x_i$, then the subcomplex of  $X^\infty_n(K)$ generated by all such $\{x_i,  \partial x_i\}$ is acyclic, and $X^\infty_n(K)$ quotient by this complex is reduced.

In the rest of the section, we wish to shed a light on the richness of examples coming from the new filtered mapping cone formula. But at first we have to go on a detour for some preliminaries of Heegaard Floer concordance invariants. The experts can skip the next subsection.  
\subsection{Concordance invariants and other rings}\label{ssec: ring}
The concordance invariants in our interests are defined using Heegaard Floer homology over other rings. Pointing the reader to the sources, we would not go in depth about the detailed construction. Instead, we will review some results that are useful to us and use an example to explain some of the intuition. 

To start, Ian Zemke provided a reinterpretation of Heegaard Floer homology over the ring $\F[U,V]$ (see for example \cite{zemkegrading}).   In the knot Floer complex over $\F[U,V]$,  there is a bigrading $\gr=(\gr_U,\gr_V)$ assigned to each generator, where $\gr(U)=(-2,0)$ and $\gr(V)=(-2,0)$.  Two knot Floer complexes are \emph{locally equivalent} over $\F[U,V]$ if the two knots are (smoothly) concordant (\cite[Section 2.3]{ianconnect}). The local equivalence takes on slightly different meanings when defined over different rings, but is defined to always respect the knot concordance. We are generally interested in, within different contexts,  the morphism between the monoid of knots over concordance and the set of knot Floer complexes over the local equivalence relation.  

No matter over which ring it is defined, the knot Floer complex always possesses certain symmetric property. For example, over the ring  $\F[U,V]$ this entails that after exchanging the role of $U$ and $V$, the resulting complex is homotopy equivalent to the original knot Floer complex.

In \cite{Moreconcor},  Dai-Hom-Stoffregen-Truong studied  knot Floer complexes over the ring $\F[U,V]/(UV).$ The results were initially for knots in $S^3,$ but easily generalizable to knots in integer homology sphere $L$--spaces. According to \cite[Definition 4.3]{Moreconcor}, a \emph{standard complex} over $\F[U,V]/(UV)$ is freely generated by $\{x_0, x_1, \cdots, x_{2l} \}$ for some $l\in \N,$ where each pair of generators $x_{2i}$ and $x_{2i+1}$ is connected by some ``U-arrow''  and each pair of generators $x_{2i+1}$ and $x_{2i+2}$ is connected by some ``V-arrow''  for $i=0,\cdots, l-1$. All information of a standard complex can be encoded using a sequence of signed integers describing  the length and the direction of the arrows in order. Thus we shall use this sequence to indicate a standard complex. It turns out that the complexes over the ring   $\F[U,V]/(UV)$ are quite nice. Indeed, we have
\begin{theorem*}[Theorem 6.1 in \cite{Moreconcor}]
The knot Floer complex of any knot in an integer homology sphere $L$--space is locally equivalent to a unique standard complex over $\F[U,V]/(UV).$
\end{theorem*}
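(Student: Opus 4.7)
The plan is to establish both the existence and the uniqueness statements separately. For existence, I would exploit the fact that when $Y$ is an integer homology sphere $L$--space, both the $U$--localization and the $V$--localization of $\CFK_{\F[U,V]/(UV)}(Y,K)$ recover $\HFa(Y)\otimes\F[U,U^{-1}]$ and $\HFa(Y)\otimes\F[V,V^{-1}]$ respectively, each of which is a single tower by the $L$--space hypothesis. First I would pass to a reduced (minimal) model via repeated Gaussian elimination, so that no component of the differential is itself an invertible generator of $\F[U,V]/(UV)$. Because $UV=0$, every surviving component of the differential in the reduced model is either a pure power of $U$ or a pure power of $V$; I can then index the generators by traversing the ``arrow graph'' formed by these differentials, beginning at the unique generator surviving in the $U$--tower.

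The second task of the existence argument is to show that this arrow graph is in fact a linear chain with $U$-- and $V$--arrows alternating. The $L$--space hypothesis again plays the critical role: branches or loops in the graph would produce additional surviving towers in one of the localizations, contradicting $\dim\HFa(Y)=1$. The alternation of $U$-- and $V$--arrows along the chain would then follow from the conjugation symmetry that exchanges $U$ and $V$ together with the bigrading: two consecutive $U$--arrows (or two $V$--arrows) between the same pair of generators would either allow further cancellation, contradicting minimality, or would violate the symmetry-induced pairing of arrows. The resulting reduced model is exactly a standard complex in the sense of \cite{Moreconcor}.

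For uniqueness, I would argue that the defining sequence $(a_1,\dots,a_{2l})$ of a standard complex is a complete local equivalence invariant. The strategy is to extract the lengths $|a_i|$ and their signs from numerical invariants of the local equivalence class, in the spirit of the $\varphi_{i,j}$ homomorphisms of \cite{Homoconcor}: for each $i$ one reads off $a_i$ as the length of the $i$--th ``turning point'' of a canonical cycle under the bigrading. Any local equivalence between two standard complexes must preserve all these numbers simultaneously, and a direct inductive check shows that agreement of the full sequence forces an honest isomorphism of the standard complexes. The main obstacle I anticipate is the alternation step in the existence proof: ruling out reduced models containing consecutive arrows of the same type requires a careful combination of conjugation symmetry, Alexander grading, and minimality, rather than a single one-line homological argument.
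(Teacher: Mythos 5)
The key step of your existence argument is false. You claim that, after Gaussian elimination, "branches or loops in the graph would produce additional surviving towers in one of the localizations," and conclude that the reduced model itself must be a linear chain, i.e.\ a standard complex. This is not so. Torsion summands produced by branching vanish after inverting $U$ (resp.~$V$), so a reduced complex can have a highly non-linear arrow graph while still localizing to a single tower on each side. The reduced $\F[U,V]/(UV)$--model of the figure-eight knot already exhibits this: it is a five-generator complex consisting of a ``box'' ($b_1 \to b_2, b_3$; $b_2, b_3 \to b_4$ via mixed $U$-- and $V$--arrows) plus a standalone generator $b_0$, it is reduced, it has a generator with two outgoing arrows, and yet both localizations are a single $\F[U,U^{-1}]$ or $\F[V,V^{-1}]$. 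It is \emph{not} a standard complex, but it \emph{is} locally equivalent to the trivial one. You have conflated ``the reduced model is standard'' (false in general) with ``the complex is locally equivalent to a standard one'' (the actual theorem), and the entire difficulty of the theorem lives in that gap: one must construct local-equivalence maps in both directions to a standard complex, not merely simplify the differential. Cancellation alone cannot reach the standard complex because the maps witnessing local equivalence are not required to be invertible.

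The proof given in \cite{Moreconcor} proceeds quite differently: it introduces a total order on standard complexes, shows that every knot-like complex $C$ is sandwiched (in the local order) between standard complexes on either side, and then uses the symmetry coming from dualization of the local equivalence group to collapse the sandwich to a single standard representative. Uniqueness is also handled through this total order rather than by extracting numerical invariants one at a time as you propose, although your sketch for uniqueness is at least conceptually aligned with the $\varphi_{i,j}$ picture and could plausibly be made to work. To repair the existence argument you would need to abandon the claim that the reduced model is itself standard and instead exhibit explicit local-equivalence morphisms to and from a standard complex, which is where the genuine content of \cite[Theorem 6.1]{Moreconcor} lies.
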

Therefore, the sequence of signed integers is a concordance invariant through this identification. In \cite[Definition 7.1]{Moreconcor}, the invariant $\varphi_i$ is defined to be a signed count of the times number $i$ appear in the sequence in odd position, and  subsequently  in \cite[Theorem 7.2]{Moreconcor}, is shown to be a homomorphism from the knot concordance group $\mathcal{C}$ to $\Z$. Equivalently we can define $\varphi_i$ by counting only in even position (inverting the sign for each integer since we want the two definitions to be consistent); either way only half of the sequence is needed due to the symmetry. The two definitions correspond respectively to  counting all the ``U-arrows'' or  ``V-arrows'' of a certain length.

 Setting $U=0$ in the ring $\F[U,V]/(UV)$ makes the knot Floer complex a module over $\F[V]$; call the homology of the resulting complex the \emph{vertical homology.} Since $\F[V]$ is a PID,   the vertical homology of any knot Floer complex  has the decomposition
 \begin{align*}
     \F[V] \oplus \big( \bigoplus_j \F[V]/(V^j)  \big).
 \end{align*}
A basis on the knot Floer complex that realizes this decomposition is called a \emph{vertical basis;} namely, a vertical basis consists of generators that pairwise generate the  $V$--torsions and a standalone generator (i.e. in the kernel but not the image of $\partial$) that generates the single copy of $\F[V]$. Furthermore by setting $V=1,$ the resulting complex computes $\HFa(Y) \cong \F,$ where $Y$ is the ambient manifold. Therefore the  standalone generator in a vertical basis has $\gr_V=d(Y),$ or we say the $d$--invariant is supported in said generator.

In \cite{Homoconcor}, the above construction is generalized by the same group of authors. They devised the following ring 
\begin{align*}
    \X=\frac{\F[U_B,\{W_{B,i}\}_{i\in \Z},V_T,\{W_{T,i}\}_{i\in \Z}]}{(U_B V_T, U_B W_{B,i}-W_{B,i+1},V_T W_{T,i}-W_{T,i+1})}
\end{align*}
with a bigrading $(\gr_1,\gr_2) \in \Z \times \Z,$ where
\begin{gather*}
    \begin{aligned}
     \hspace{8em} \gr(U_B)=(-2,0) &\hspace{2em} \text{and} \hspace{2em}&  \gr(W_{B,i})=(-2i,-2) &\hspace{3em}
      \shortintertext{and}
      \gr(V_T)=(0,-2) &\hspace{2em} \text{and} \hspace{2em}&  \gr(W_{T,i})=(-2,-2i) &\hspace{3em}.
    \end{aligned}
\end{gather*}
Inside $\X$ there are two subrings $\mathcal{R}_U$ and  $\mathcal{R}_V$  given by
\begin{align*}
    \mathcal{R}_U &= \F[U_B,\{W_{B,i}\}_{i\in \Z}]/(U_B W_{B,i}-W_{B,i+1})
    \intertext{and}
     \mathcal{R}_V &= \F[V_T,\{W_{T,i}\}_{i\in \Z}]/(V_T W_{T,i}-W_{T,i+1}).
\end{align*}
Note that elements in $\mathcal{R}_U$ can be uniquely written down as $U^i_B W^j_{B,0},$ where 
\begin{align*}
    (i,j)\in (\Z \times \Z^{\geq 0}) - (\Z^{<0} \times \{0\}).
\end{align*}
Namely, $i$ (the power of $U_B$) can be negative when $j$ (the power of $W_{B,0}$) is positive, but is required to be non-negative when $j=0.$  This is analogous to the ring $\F[U,V]$ localized at the maximal ideal $(U)$, denoted by $\F[U,U^{-1},V]$. Here $U_B$ plays the role of $U$ and $W_{B,0}$ plays the role of $V$, such that we are allow to invert $U$ but not $V$. However, a difference is that in $\mathcal{R}_U$  when there is no $V$ power (when $j=0$) we ``remember'' the power of $U$ ($i$ needs to be non-negative).

Similarly, the elements in $\mathcal{R}_V$ can be written down  uniquely as well. Whenever talking about the elements in $\mathcal{R}_U$ and $\mathcal{R}_V$, we canonically use this expression.

The complexes over $\F[U,V]$ can be translated to $\X$ using the map
\begin{align*}
    U&\xmapsto[\hspace{2em}]{} U_B + W_{T,0}\\
    V&\xmapsto[\hspace{2em}]{} V_T + W_{B,0},
\end{align*}
so this allows us to talk about knot Floer complexes over the ring $\X.$

Similar to the case of $\F[U,V]/(UV)$, there is the notion of the standard complex over $\X$ (\cite[Definition 5.1]{Homoconcor}). A standard complex is freely generated by $\{x_0, x_1, \cdots, x_{2l} \}$ for some $l\in \N,$ where each pair of generators $x_{2i}$ and $x_{2i+1}$ is connected by a differential using elements in $\mathcal{R}_U$  and each pair of generators $x_{2i+1}$ and $x_{2i+2}$ is connected by a differential using elements in $\mathcal{R}_V$,  for $i=0,\cdots, l-1$. Again, quite remarkably, we have the following. 
\begin{theorem*}[Theorem 7.1 in \cite{Homoconcor}]
Every knot Floer complex over the ring $\X$ is locally equivalent to a unique standard complex.
\end{theorem*}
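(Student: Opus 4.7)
The plan is to prove existence and uniqueness separately, modeling the argument on the analogous result over $\F[U,V]/(UV)$ from \cite{Moreconcor} but with extra care for the more intricate ring $\X$.

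For existence, I would proceed by induction on the number of $\X$-generators in a reduced model for $C$ (where ``reduced'' means no differential is a unit times an identity of generators). The first step is to identify the distinguished generator $x_0$ supporting the $d$-invariant: after inverting an appropriate element of $\X$, $C$ becomes a free module of rank one, and $x_0$ generates that rank-one piece (unique up to a unit). Next, I would analyze the differentials emanating from $x_0$; since $U_B V_T = 0$, in a reduced model $\partial x_0$ decomposes as a sum of terms whose coefficients lie in $\mathcal{R}_U \cup \mathcal{R}_V$. Using the $(U_B, W_{T,0}) \leftrightarrow (V_T, W_{B,0})$ symmetry of $C$ (inherited from the $i \leftrightarrow j$ reflection symmetry of the knot Floer complex), I can perform a change of basis so that the dominant term of $\partial x_0$ is a single monomial, say $U_B^a W_{B,0}^b \cdot x_1$; this becomes the first arrow of the would-be standard complex. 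One then pairs $x_1$ with the generator $x_2$ it is sent to by an $\mathcal{R}_V$-monomial, replaces $C$ by $C / \langle x_0, x_1, x_2 \rangle$ (after absorbing any acyclic summands), and iterates.  Since the generator count drops at each step, the process terminates and produces a standard complex $C_\sigma$ together with a local equivalence $C \simeq C_\sigma$.

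For uniqueness, suppose $C_\sigma$ and $C_{\sigma'}$ are standard complexes that are locally equivalent. The plan is to recover $\sigma$ from $C_\sigma$ via numerical invariants that are manifestly preserved under local equivalence; these invariants should be precisely the $\varphi_{i,j}$ concordance homomorphisms referenced earlier in the excerpt. Concretely, for each pair $(i,j)$ one defines $\varphi_{i,j}(C_\sigma)$ as a signed count of the occurrences of the monomial $U_B^i W_{B,0}^j$ (in odd position, say) in the sequence $\sigma$, and then shows that this count equals the $\F$-rank of a specific finite-dimensional subquotient of $C_\sigma$ cut out by kernels and images of multiplication by ring elements. Because such ranks are local-equivalence invariants and the collection $\{\varphi_{i,j}\}$ (over all admissible $(i,j)$) plainly recovers the full sequence for any standard complex, $\sigma = \sigma'$, which simultaneously establishes uniqueness for the theorem and the homomorphism property of $\varphi_{i,j}$ on $\CZhat$.

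The main obstacle will be the existence step, and specifically showing that the leading term of $\partial x_0$ can always be reduced to a single monomial of $\mathcal{R}_U$ or $\mathcal{R}_V$. Neither $\mathcal{R}_U$ nor $\mathcal{R}_V$ is a PID (unlike $\F[V]$ in the simpler $\F[U,V]/(UV)$ setting), so the naive invariant-factor / Smith-normal-form argument does not apply; instead one must exploit the specific structure of knot Floer complexes---particularly the symmetry and the fact that $C$ arises from a $(\Z \oplus \Z)$-graded, bifiltered complex---to rule out pathological leading behavior such as two incomparable $\mathcal{R}_U$-monomials appearing with nontrivial mutual contribution. I expect that analyzing the localizations $C[U_B^{-1}]$ and $C[V_T^{-1}]$ in tandem, together with a careful induction forcing the alternation between $\mathcal{R}_U$ and $\mathcal{R}_V$ arrows, will be the technical heart of the argument.
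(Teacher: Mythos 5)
First, a framing point: this theorem is not proved in the present paper. It appears as a \texttt{theorem*} with an explicit citation to Theorem~7.1 of \cite{Homoconcor} (Dai--Hom--Stoffregen--Truong) and is imported as background so that the invariants $\varphi_{i,j}$ make sense on $\CZhat$; there is no in-paper argument to compare against, so you are reproving a result the paper merely quotes. On the merits, the existence half of your sketch has the right silhouette (localize to find the distinguished generator $x_0$, alternately peel off an $\mathcal{R}_U$-arrow and an $\mathcal{R}_V$-arrow, quotient and iterate), and you correctly flag where it is hard: neither $\mathcal{R}_U$ nor $\mathcal{R}_V$ is a PID, so one cannot put the leading part of $\partial x_0$ into a single monomial by Smith normal form. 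But as written that single step carries the entire weight of the theorem and is left as a hope; an appeal to ``the specific structure of knot Floer complexes'' is not a mechanism. The Dai--Hom--Stoffregen--Truong development supplies one: a total order on $\mathcal{R}_U$- and $\mathcal{R}_V$-monomials, a simplification/merge lemma constraining which monomials can survive a filtered change of basis, and an induction on a complexity measure. Some analogue of that machinery is unavoidable here.

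The uniqueness half has a genuine logical gap, and the direction of implication is reversed relative to the source. You propose to recover the sequence $\sigma$ from the signed counts $\varphi_{i,j}(C_\sigma)$ of monomial occurrences in odd position, claiming that the collection of $\varphi_{i,j}$ ``plainly recovers the full sequence.'' It does not: $\sigma$ is ordered data, and signed multiset counts of entries in odd position are invariant under reordering those entries, so two genuinely different standard complexes can share all their $\varphi_{i,j}$. In \cite{Homoconcor} and \cite{Moreconcor} the well-definedness of $\varphi_{i,j}$ on $\CZhat$ is a \emph{consequence} of the uniqueness of the standard-complex representative, not a route to it; uniqueness is proved directly by showing that a pair of local maps $C_\sigma \to C_{\sigma'}$ and $C_{\sigma'} \to C_\sigma$ between standard complexes forces $\sigma = \sigma'$, via an analysis of how the two generator chains must interleave under any graded filtered map. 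Your rank-of-subquotient reformulation of $\varphi_{i,j}$ is a fine way to see that it is a local-equivalence invariant, but it does not close the gap from ``same $\varphi_{i,j}$ for all admissible $(i,j)$'' to ``same $\sigma$.''
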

Therefore one can  define the concordance invariants $\varphi_{i,j}$ by either counting  $\mathcal{R}_U$  edges or $\mathcal{R}_V$  edges in the standard complex that is locally equivalent to the given complex (\cite[Definition 8.1]{Homoconcor}). For $(i,j)\in (\Z \times \Z^{\geq 0}) - (\Z^{<0} \times \{0\})$, $\varphi_{i,j}$ are indeed homomorphisms according to \cite[Theorem 8.2]{Homoconcor}. When $j=0$, the map $\varphi_{i,0}$ agrees with the previously defined homomorphism $\varphi_i$, and when $j \neq 0$, all $\varphi_{i,j}$ vanish for any knot in an integer homology sphere $L$--space.

Note that for a knot Floer complex over $\X$, suppose we set $V_{T}=1$. This forces $\mathcal{R}_U=0$ due to the relation $U_B V_T=0$. The resulting complex is a graded module over $\F[W_{T,0}]$, and once again we obtain that the $d$--invariant is supported in the non-torsion element in this complex. 

We hope that the following example provides some intuition in dealing with complexes over the ring $\X.$

\begin{figure}
\begin{minipage}{.5\linewidth}
     \centering
      \subfloat[The complex $C$ in Example \ref{example:C}. ]{
       \begin{tikzpicture}
      \begin{scope}[thin, black!20!white]
    \foreach \i in {-2,...,3}
     {\draw  (\i-0.5, 3) -- (\i-0.5, -3);
	  \draw  (3, \i-0.5) -- (-3, \i-0.5); }
      \end{scope}
      	\filldraw (2, 0) circle (2pt) node[] (x1){};
         \filldraw (2, 1) circle (2pt) node[] (y2){};
      	\filldraw (2, 2) circle (2pt) node[] (z){};
      	\filldraw (1, 2) circle (2pt) node[] (x2){};
      	\filldraw (0, 2) circle (2pt) node[] (y1){};
      	\filldraw (-1, -2) circle (2pt) node[] (w1){};
      	\filldraw (-2, -1) circle (2pt) node[] (w2){};
 
     \draw  (2,2) -- (2,1);
      \draw  (2,2) -- (1,2);
       \draw  (2,1) -- (-2,-1);
          \draw  (2,1) -- (-1,-2);
        \draw  (1,2) -- (-1,-2);
        \draw  (1,2) -- (-2,-1);
        \draw  (0,2) -- (-2,-1);
         \draw  (2,0) -- (-1,-2);
      
      	\node  [right] at (x1) {$x_1$};
      	 	\node  [right] at (z) {$z$};
      	 	 	\node  [right] at (y2) {$y_2$};
      	 	 		\node  [above] at (y1) {$y_1$};
      	 	 		\node  [above] at (x2) {$x_2$};
      	 	 			\node  [below] at (w1) {$w_1$};
      	 	 				\node  [below] at (w2) {$w_2$};
    \end{tikzpicture}
    }
\end{minipage}%
\begin{minipage}{.5\linewidth}
     \centering
      \subfloat[{The  complex $C$ after a change of basis in $\F[U,V,V^{-1}]$.}]{
       \begin{tikzpicture}[scale=0.9]
       \begin{scope}[thin, black!0!white]
	  \draw  (5, 0) -- (-5,0);
      \end{scope}
      \begin{scope}[thin, black!20!white]
    \foreach \i in {-2,...,3}
     {\draw  (\i-0.5, 3) -- (\i-0.5, -3);
	  \draw  (3, \i-0.5) -- (-3, \i-0.5); }
      \end{scope}
      	\filldraw (2, 0) circle (2pt) node[] (x1){};
         \filldraw (2, 1) circle (2pt) node[] (y2){};
      	\filldraw (2, 2) circle (2pt) node[] (z){};
      	\filldraw (1, 2) circle (2pt) node[] (x2){};
      	\filldraw (0, 2) circle (2pt) node[] (y1){};
      	\filldraw (-1, -2) circle (2pt) node[] (w1){};
      	\filldraw (-2, -1) circle (2pt) node[] (w2){};
 
     \draw  (2,2) -- (2,1);
  
        \draw  (1,2) -- (-1,-2);
       
        \draw  (0,2) -- (-2,-1);
 \tiny{

      	 	\node  [right] at (z) {$z$};
      	 	 	\node  [right] at (y2) {$y_2+UV^{-1}x_2$};
      	 	 		\node  [left] at (y1) {$y_1$};
      	 	 		\node  [above] at (x2) {$x_2+Uy_1$};
      	 	 			\node  [below] at (w1) {$w_1$};
      	 	 				\node  [below] at (w2) {$w_2$};
     \node  at (2.1, -0.25) {$x_1+UV^{-1}x_2+U^2V^{-2}y_1$};
     
     }    
    \end{tikzpicture}
     }
\end{minipage}
    \caption{}
    \label{fig: complex_example}
\end{figure}
\begin{example} \label{example:C}
Consider the following complex $C$ over $\F[U,V]$ generated by $x_1,x_2,y_1,y_2,$ $w_1,w_2$ and $z$ with differentials as follows (see Figure \ref{fig: complex_example})
\begin{gather*}
    \begin{aligned}
      \partial x_1 &= U^3V^2w_1  \hspace{4em}   &\partial x_2 = U^2V^4w_1 + U^3V^3w_2 \\
     \partial y_1 &= U^2V^3w_2              &\partial y_2 = U^3V^3w_1 + U^4V^2w_2\\
      \partial z &= Ux_2 + Vy_2. &
    \end{aligned}
\end{gather*}
In fact, $C$ is locally equivalent to the complex $CFK(S^3_1(T_{2,11}),(T_{2,11})_{2,1}).$ (This follows from Lemma \ref{le: localequi} and the proof of Proposition \ref{prop: phi}.) We aim to compute $\varphi_{i,j}(C)$ for some suitable $i$ and $j,$ which necessitates studying $C$ over the ring $\X.$ To provide some intuition, we would like to first examine $C$ over $\F[U,V]$.

Inverting $V$, in the localized ring  $\F[U,V,V^{-1}]$, we can perform a change of basis by replacing $y_2$ with $y_2+UV^{-1}x_2.$ This simplifies the differentials:
\begin{gather}\label{eq: example_middlestep_changebasis}
    \begin{aligned}
          \partial x_1 &= U^3V^2w_1  \hspace{4em} &\partial x_2 = U^2V^4w_1 + U^3V^3w_2 \\
     \partial y_1 &= U^2V^3w_2      &\partial z = V(y_2 + UV^{-1}x_2 ).  \hspace{0.8em}
    \end{aligned}
\end{gather}
Observing that $x_1, x_2$ and $y_1$  and their differentials make up  ``a connected series of line segments", we further perform the following change of basis:
\begin{align*}
    x_1 &\xmapsto[\hspace{1em}]{} x_1 + UV^{-2}x_2 + U^2V^{-2}y_1\\
    x_2 &\xmapsto[\hspace{1em}]{} x_2 + Uy_1.
\end{align*}
Under this change of basis, (the image of) $y_1$ and $w_2$, $x_2$ and $w_1$, $z$ and $y_2$ pairwise generate the torsion in the homology while the standalone generator $x_1$  generates the single copy of the base ring, realizing the decomposition 
\[
H_*(C\otimes \F[U,V,V^{-1}]) \cong \F[U,V,V^{-1}] \oplus \big(\bigoplus_j \F[U,V,V^{-1}]/(U^j)\big).
\]
If we instead invert $U$, then one could also write down a similar change of basis over the ring $\F[U,U^{-1},V],$ such that there is one standalone generator while all other generators appear in pairs forming the torsion in the homology. Indeed, 
\begin{gather*}
\begin{aligned}
     y_1 &\xmapsto[\hspace{1em}]{} y_1 + U^{-2}Vy_2 + U^{-2}V^2x_1,  & \hspace{2em}
    y_2 \xmapsto[\hspace{1em}]{} y_2 + Vy_1, \\
    x_2 &\xmapsto[\hspace{1em}]{} x_2 + U^{-1}Vy_2  &
\end{aligned}
\end{gather*}
    accomplishes the job.
   
Next, denote by $C_\X$ the complex $C$ over the ring $\X.$ Translating using the language of ring $\X$, the differentials become 
\begin{align*}
    \partial x_1 &= (U_B^3 W_{B,0}^2 + V_T^2W_{T,0}^3)w_1\\
    \partial x_2 &= (U_B^2 W_{B,0}^4 + V_T^4 W_{T,0}^2 )w_1 + (U_B^3W_{B,0}^3 + V_T^3 W_{T,0}^3  )w_2 \\
    \partial y_1 &= (U_B^2 W_{B,0}^3 + V_T^3 W_{T,0}^2  )w_2\\
     \partial y_2 &= (U_B^3 W_{B,0}^3 + V_T^3 W_{T,0}^3 )w_1 + (U_B^4W_{B,0}^2 + V_T^2 W_{T,0}^4) w_2\\
      \partial z &= (U_B + W_{T,0}) x_2 + (W_{B,0} + V_T ) y_2.
\end{align*}
 We wish to formulate a change of basis, under which $C_\X$ is a standard complex. To motivate such a change of basis, consider the following. Inside the differential of each generator, there are terms in the subring $\mathcal{R}_V$ and terms in the subring $\mathcal{R}_U.$ As we have seen, the ring $\mathcal{R}_V$ is the analogue of inverting $V$ inside $\F[U,V]$ and the ring $\mathcal{R}_U$ is the analog of inverting $U$ inside $\F[U,V]$. If we ignore all the terms in $\mathcal{R}_U$ and focus on the terms in $\mathcal{R}_V$, there clearly exists a set of basis analogous to the case of $\F[U,V,V^{-1}]$. Similar for $\mathcal{R}_U$. Thus the intuition would be to perform the two sets of the change of basis simultaneously.
 
 First, if we set 
 \begin{gather*}
\begin{aligned}
      \hspace{2.5em} \widetilde{x}_2 &= x_2 + U^{-1}_B W_{B,0} y_2    \hspace{5.5em} &\widetilde{y}_2 &= y_2 + V^{-1}_T W_{T,0} x_2,
\end{aligned}
\end{gather*}
the differentials can be simplified to
 \begin{gather*}
\begin{aligned}
  \hspace{4em}    \partial x_1 &= (U_B^3 W_{B,0}^2 + V_T^2W_{T,0}^3)w_1 \hspace{2em}    &\partial \widetilde{x}_2 &=  V_T^4 W_{T,0}^2 w_1 +  V_T^3 W_{T,0}^3  w_2 \hspace{0.3em}\\
       \partial y_1 &= (U_B^2 W_{B,0}^3 + V_T^3 W_{T,0}^2  )w_2      &\partial \widetilde{y}_2 &= U_B^3 W_{B,0}^3 w_1 + U_B^4W_{B,0}^2  w_2\\
        \partial z &= U_B \widetilde{x}_2 + V_T  \widetilde{y}_2.
\end{aligned}
\end{gather*}
Compare the $\mathcal{R}_V$ terms in the differential of each generator with the differentials in \eqref{eq: example_middlestep_changebasis} and notice the similarity.   With this observation in mind, we further perform the following change of basis
 \begin{gather*}
\begin{aligned}
     \hspace{6em}\widehat{x}_1 &= x_1 + V_T^{-2}W_{T,0} \widetilde{x}_2 + V_T^{-2}W^{2}_{T,0} y_1 \hspace{1em}
    &\widehat{x}_2 &= \widetilde{x}_2 + W_{T,0} y_1 \hspace{0.3em}\\
     \widehat{y}_1 &= y_1 + U_B^{-2}W_{B,0} \widetilde{y}_2 + U_B^{-2}W^{2}_{B,0} x_1
    &\widehat{y}_2 &= \widetilde{y}_2 + W_{B,0} x_1,
\shortintertext{making the differentials become}
      \partial \widehat{x}_1 &= U_B^3 W_{B,0}^2 w_1  &\partial \widehat{x}_2 &=  V_T^4 W_{T,0}^2 w_1  \\
       \partial  \widehat{y}_1 &=  V_T^3 W_{T,0}^2 w_2    &\partial \widetilde{y}_2 &=  U_B^4W_{B,0}^2  w_2\\
        \partial z &= U_B \widehat{x}_2 + V_T  \widehat{y}_2.
\end{aligned}
\end{gather*}
We conclude that $C_\X$ under this final basis is a standard complex given by
\[
(-(3,2),(4,2),(1,0),-(1,0),-(4,2),(3,2)).
\]
In particular, we have
\begin{align*}
    \varphi_{i,j}(C)=\begin{cases} -1 \quad &\text{if} \hspace{0.5em} (i,j)=(3,2) \text{ or }(4,2)\\
    1 &\text{if} \hspace{0.5em} (i,j)=(1,0)\\
    0 &\text{otherwise.}
    \end{cases}
\end{align*}
\end{example}

In the following subsection, we will perform most of the computations (especially when we are invoking the filtered mapping cone formula) over the original ring $\F[U,U^{-1}].$ Only after collecting enough information and ready to compute the concordance invariants, we then move on to either the ring $\F[U,V]/(UV)$ or $\X$ depending on the case at hand. We will always specify the ring we are working on whenever the base ring is not  $\F[U,U^{-1}]$ or $\F[U]$. It is our hope that the current approach causes minimal confusion.

\subsection{Examples and applications} \label{ssec: examples}

We begin by describing a family of concrete examples by applying the filtered mapping formula to the $+1$-surgery on the torus knot $T_{2,3}$  with varying $n$.


Using the notion of the standard complex from either \cite{Homoconcor} or \cite{Moreconcor}, we have 
\begin{proposition} \label{prop: t23_n1}
 The complex $\CFKi(S^3_1(T_{2,3}),(T_{2,3})_{n,1})$ is locally equivalent to the standard complex $(-1,n,1,-1,-2,\cdots,n-i+1,1,-1,-i-1,\cdots,2,1,-1,-n,1)$ for some fixed $n\geq 1$, where $i=1,\cdots, n-1.$
\end{proposition}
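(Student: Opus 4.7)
The plan is to apply Theorem~\ref{thm: mapping-cone} directly to $(S^3, T_{2,3})$ with $k=d=1$, and then reduce the resulting bifiltered complex to a standard complex over $\X$ by the same kind of change-of-basis procedure illustrated in Example~\ref{example:C}. Since $T_{2,3}$ has genus $g=1$, the remark after Theorem~\ref{thm: mapping-cone} lets us take $a=0$ and $b=n$, so
\[
X^\infty_n(T_{2,3}) \;=\; \bigoplus_{s=0}^{n} A^\infty_s \xrightarrow{\,v^\infty_s + h^\infty_s\,} \bigoplus_{s=1}^{n} B^\infty_s,
\]
with $\II,\JJ$ given by \eqref{eq: filtration_s3_1}--\eqref{eq: filtration_s3_4}. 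Because $T_{2,3}$ is an $L$--space knot, the flip map $\Psi^\infty_{\xi_l}$ is just the reflection $(i,j)\mapsto(j,i)$, so $h^\infty_s$ is explicitly the reflection precomposed with $U^s$. Each $A^\infty_s$ and $B^\infty_s$ is a copy of the three-generator staircase $\CFKi(S^3,T_{2,3})$ with generators $a_s,b_s,c_s$ at relative $(i,j)$-positions $(0,1),(0,0),(1,0)$ and differential $\partial a_s = b_s + c_s$ (after $\F[U,U^{-1}]$-extension).

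Next I would write out the $(2n+1)\cdot 3$ generators of $X^\infty_n(T_{2,3})\{\II=0\}$, labeling the three generators in each $A^\infty_s$ as $a_s,b_s,c_s$ (positioned on the associated graded complex at the $\JJ$-levels $ns+\tfrac{n(n-1)}{2}+\{0,1,\dots,n\}$ dictated by \eqref{eq: filtration_s3_2}) and the three generators in each $B^\infty_s$ (at the single $\JJ$-level $-n+ns+\tfrac{n(n-1)}{2}$ from \eqref{eq: filtration_s3_4}). I will then systematically cancel all pairs whose connecting arrow preserves both $\II$ and $\JJ$: in particular the $v^\infty_s$-arrow on the staircase generator that sits at $\II=0,\JJ=B$-level in both $A^\infty_s$ and $B^\infty_s$, and the $U^s h^\infty_s$-arrow that provides a bifiltered identification between the reflected generator of $A^\infty_s$ and the corresponding generator of $B^\infty_{s+1}$. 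The result, after cancellation, is a bifiltered complex whose generators form a ``zigzag'' of length $\sim 3n$ alternating between horizontal ($\II$) and vertical ($\JJ$) arrows, with the specific edge lengths determined by the values $ns+\tfrac{n(n-1)}{2}$ at the two ends of each $A^\infty_s$ staircase.

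To extract the concordance invariants I would then push the reduced complex into the ring $\X$ via the substitution $U\mapsto U_B+W_{T,0}$, $V\mapsto V_T+W_{B,0}$, and perform the two types of basis changes illustrated in Example~\ref{example:C}: first the simultaneous $\widetilde{x}=x+U_B^{-1}W_{B,0}y$ and $\widetilde{y}=y+V_T^{-1}W_{T,0}x$ substitutions that separate each ``corner'' of the zigzag into a pair of $\mathcal{R}_U$- and $\mathcal{R}_V$-edges, then the longer-range substitutions that collapse the remaining linear pieces into a single standard staircase. Reading off the signed lengths of the surviving $\mathcal R_U$- and $\mathcal R_V$-edges, in order, yields the sequence claimed in the statement; the outermost entries $\pm 1$ and $\pm n$ come from the extreme summands $A^\infty_0, B^\infty_1$ and $A^\infty_n, B^\infty_n$ where the staircase only partially cancels, while the middle block comes from the $n-1$ internal summands indexed by $i$.

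The main obstacle I foresee is the careful bookkeeping of the $\JJ$-filtration through both the cancellation step and the final $\X$-basis change, since the shifts $ns+\tfrac{n(n-1)}{2}$ interact non-trivially with the $U^s$-twisting in $h^\infty_s$; indeed getting the signs and orientations of the $\mathcal R_U$- versus $\mathcal R_V$-edges right (which is what determines the ordered sign pattern in the standard-complex sequence) requires tracking whether a surviving arrow came originally from a vertical or horizontal edge of some $A^\infty_s$. A convenient check will be to specialize to $n=1$, where the zigzag must reproduce Hedden--Levine's computation for the dual knot in $S^3_1(T_{2,3})$, and to $n=2,3$, where the sequence predicted above can be verified against Figure~\ref{fig: complex_example} and Example~\ref{example:C}.
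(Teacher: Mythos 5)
Your proposal takes essentially the same approach as the paper: apply Theorem~\ref{thm: mapping-cone} to $T_{2,3}$ over $s=0,\dots,n$, write out all generators, cancel the filtration-preserving $v$- and $h$-arrows to get a reduced zigzag, and then identify the resulting standard complex by a change of basis, with the extreme summands $A^\infty_0$ and $A^\infty_n$ supplying the outermost entries. The one real mismatch is the choice of ring: since the claimed standard complex is the single-integer-sequence version from \cite{Moreconcor}, the paper works over $\F[U,V]/(UV)$ rather than the full ring $\X$; pushing into $\X$ is overkill here (it's appropriate later, for $\varphi_{i,j}$) and, as stated, does not directly read off the integer sequence without an extra projection step. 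Also note a small coordinate typo in your description of $\CFKi(S^3,T_{2,3})$: the bottom-right generator should sit at $(1,0)$ only if the staircase has the top-left generator at $(0,1)$ with a \emph{decreasing} $i$-arrow to $(-1,0)$ — as written, an arrow from $(0,1)$ to $(1,0)$ would increase $i$, which is not a valid differential; the paper normalizes all three generators to $i=0$ with $\partial a = Ub + c$ to avoid exactly this.
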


The standard complex described in the proposition is simply the concatenation of $(n-i+1,1,-1,-i-1)$ for $i=1,\cdots,n-1$ in that order, with $-1$ and $1$ in the front and back.

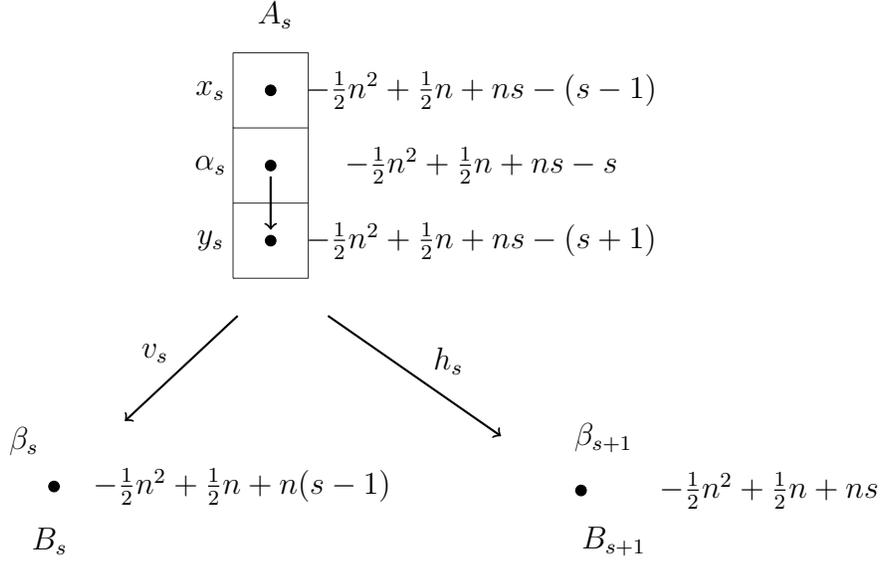
\begin{figure}

\begin{tikzpicture}

	\node   at (-2,6) {$A_s$};
	
\node[]() at (0,4) {

\begin{tikzpicture}
	\draw[step=1, black!80!white, thin] (0, 0) grid (1, 3);

	\filldraw (0.5, 2.5) circle (2pt) node[] (b){};
		\filldraw (0.5, 1.5) circle (2pt) node[] (a){};
		\filldraw (0.5, 0.5) circle (2pt) node[] (c){};

	\draw [thick, ->] (a) -- (c);

	\node   at (-0.3,1.5) {$\alpha_s$};
		\node   at (-0.3,2.5) {$x_s$};
			\node   at (-0.3,0.5) {$y_s$};
			
				\node   at (3.3,1.5) {$-\frac{1}{2}n^2  + \frac{1}{2}n + ns -s$};
		\node   at (3.3,2.5) {$-\frac{1}{2}n^2  + \frac{1}{2}n + ns -(s-1)$};
			\node   at (3.3,0.5) {$-\frac{1}{2}n^2  + \frac{1}{2}n + ns -(s+1)$};

\end{tikzpicture}};

	\node   at (-5,-1) {$B_s$};
\node[]() at (-3,0) {
\begin{tikzpicture}
\filldraw (0, 0) circle (2pt) node[] {};
	\node   at (-0.4,0.6) {$\beta_s$};
		\node   at (2.5,0) {$-\frac{1}{2}n^2  + \frac{1}{2}n + n(s -1)$};
\end{tikzpicture}};

	\node   at (2.5,-1) {$B_{s+1}$};
\node[]() at (4,0) {
\begin{tikzpicture}
\filldraw (0, 0) circle (2pt) node[] {};
	\node   at (0.3,0.7) {$\beta_{s+1}$};
		\node   at (2.5,0) {$-\frac{1}{2}n^2  + \frac{1}{2}n + ns$};
\end{tikzpicture}};

	\draw [thick, ->] (-2.5,2) -- (-4,0.6);
	\draw [thick, ->] (-1.3,2) -- (1,0.4);

	\node   at (-3.6,1.5) {$v_s$};
	\node   at (0.3,1.4) {$h_s$};

\end{tikzpicture}
\caption{A part of the complex $X^\infty_n(T_{2,3})\{i=0\}$ under the chosen basis.}
\label{fig: portion1}

\end{figure}

\begin{proof}
The case of $n=1$ follows from a simple calculation. Now consider when $n>1.$

Let us denote the generators of $\CFKi(S^3,T_{2,3})$ by $a,b$ and $c$ with coordinates $(0,0),(0,1)$ and $(0,-1)$ respectively, where $b$ has Maslov grading $0$. They satisfy
\[
\partial a=Ub + c.
\]
Via the isomorphism with $\CFKi(S^3,T_{2,3})$, we denote the generators in $A_s$ (resp.~ $B_s$) by $a_s,b_s$ and $c_s$  (resp.~ $a'_s,b'_s$ and $c'_s$) for $0\leq s \leq n$. We will abuse the notation and use $\partial$ for the differential in $X^\infty_n(T_{2,3})$ from now on. For each $s=1,\cdots, n-1$, We have
\begin{align*}
    \partial a_s &= Ub_s + c_s + a'_i + U^s a'_{s+1} \qquad \qquad \null \\
    \partial b_s &= b'_s + U^{s-1} c'_{s+1} \\
    \partial c_s &= c'_s + U^{s+1} b'_{s+1}\\
    \partial a'_s &= Ub'_s +  c'_s.
\end{align*}
 We will establish a set of basis for the reduced complex of $X^\infty_n(T_{2,3})$.   First we can let $c'_s + Ub'_s$ replace $c'_s$ and quotienting out the acyclic summand generated by $a'_s$ and $c'_s + Ub'_s$. Then considering the first three relations, it is natural to perform the following change of basis (for all $0\leq s\leq n$):
\begin{gather*}
    \begin{aligned}
    \alpha_s &= a_s    \qquad \qquad   &   x_s &= b_s + U^{s-1} a'_{s+1}\\
     \beta_s &=c'_s    \qquad  \qquad  &       y_s &= c'_s + a_s,
    \end{aligned}
\end{gather*}
such that under the new basis, the relations are simplified.  See Figure \ref{fig: portion1}. For each $s=1,\cdots, n-1$,
\begin{align*}
    \partial \alpha_s &= Ux_s + y_s \qquad \qquad \qquad \qquad \qquad \null \\
     \partial x_s &= \beta_s + U^s \beta_{s+1} \\
    \partial  y_s &= U\beta_s + U^{s+1} \beta_{s+1}. 
\end{align*}
When considered as a generator of the complex $B_s\{ i =0\},$  $\beta_s$ is the unique generator of homology under the chosen basis. The $(\II,\JJ)$ filtrations of the generators can be computed using \eqref{eq: filtration_s3_1},\eqref{eq: filtration_s3_2},\eqref{eq: filtration_s3_3} and \eqref{eq: filtration_s3_4}. All $\alpha_s, \beta_s, x_s$ and $y_s$ have $\II=0$ and their $\JJ$ filtrations are as follows:
\begin{align*}
    \JJ(\alpha_s)&=-\frac{1}{2}n^2  + \frac{1}{2}n + ns -s\\
    \JJ(\beta_s)&=-\frac{1}{2}n^2  + \frac{1}{2}n + n(s -1)\\
    \JJ(x_s)&=-\frac{1}{2}n^2  + \frac{1}{2}n + ns -(s-1)\\
      \JJ(y_s)&=-\frac{1}{2}n^2  + \frac{1}{2}n + ns -(s+1).
\end{align*}
The differential of $\alpha_s$ consists of $Ux_s$ and $y_s$, which differ by $(1,-1)$ in  the $(\II,\JJ)$ grading. Next    consider the differentials of $x_s$ and $y_s$. The $(\II,\JJ)$ grading shift is equal to $(0,n-s+1)$  from $x_s$ to $\beta_s$ and equal to $(s,1)$  from $x_s$ to $U^s\beta_{s+1}$. Note that $ \partial  y_s = \partial Ux_s$, from this immediately follows that  the $(\II,\JJ)$ grading shifts in the differential of $y_s$ are $(1,n-s)$ and $(s+1,0)$ respectively.

 We then consider the generators in $A_0$. After performing a change of basis 
 \begin{align*}
      x_0 &\xmapsto[]{}  x_0 + U^{-1}y_0\\
 \alpha_0 &\xmapsto[]{} \alpha_0,
 \end{align*}
and quotienting out the resulting acyclic summand, the only remaining generator in $A_0$ is $y_0$, with the differential given by 
 \[
 \partial y_0 = U\beta_1.
 \]
 The $\JJ$ filtrations of $y_0 $ and $ U\beta_1$ are equal.

With the above computations ready,  the rest of the proof  assesses the complex  over the ring $\F[U,V]/(UV)$,  using the language of \cite{Moreconcor}. The sub-quotient complex generated by $\alpha_s, \beta_s x_s$ and $y_s$  corresponds to the sequence $(n-s+1,1,-1,-s-1)$, starting from $\beta_s$. See Figure \ref{fig: portion2}.  Next observe that each $\beta_s$ is in the image of $x_s$ and  $y_{s+1}$ (with some appropriate $U,V$ decoration), thus as $s$ ranges from $1$ to $n-1$,  the sub-quoitent complex generated by all $\alpha_s, \beta_s, x_s$ and $y_s$  corresponds to the  concatenation of $(n-s+1,1,-1,-s-1)$.

 Finally, it is easy to see that by setting $U=0$, each pair of generators $\alpha_s$ and $y_s$,  $\beta_s$ and  $x_s$ for $n\geq s\geq 1$ makes up a torsion summand in the vertical homology while $y_0$ generates the single copy of $\F[V]$. Therefore by further setting $V=1$, $\HFa(S^3_1(T_{2,3}))$ (and thus the $d$-invariant) is supported in $y_0$ as required.  The differential of $y_0$ gives the $-1$ in the beginning of the sequence. Following from the symmetry, we obtain the $1$ at the end of the sequence.
\end{proof}

\begin{remark}
In fact, the proof shows that over the ring $\F[U,V],$ $\CFK(S^3,T_{2,3})/(UV)$ is isomorphic to the standard complex $(-1,\cdots,n-i+1,1,-1,-i-1,\cdots,1)$, with $i=1,\cdots, n-1.$ Furthermore, this information completely determines $\CFKi(S^3,T_{2,3})$ (via the condition $\partial^2 = 0$).
\end{remark}

\begin{figure}[!htb]
  \begin{tikzpicture}

		\draw  [thick] (2, 2) -- (1, 2);
		\draw  [thick] (2, 2) -- (2, 1);
			\draw  [thick] (1, 2) -- (1, -2);
			\draw  [thick] (2, 1) -- (-2, 1);

\node[] at (-2.5,1.3) {\small $\beta_{s+1}$};
\node[] at (2.4,0.7) {\small $y_s$};
\node[] at (2.4,2.3) {\small $\alpha_s$};
\node[] at (0.7,2.3) {\small $x_s$};
\node[] at (1.3,-2) {\small $\beta_{s}$};

\node[] at (1.5,2.2) {\tiny $1$};
\node[] at (2.2,1.6) {\tiny $1$};

\node[] at (-0.5,1.3) {\small $s+1$};
\node[] at (0,-0.5) {\small $n-s+1$};
\end{tikzpicture}
    \caption{The portion of $X^\infty_n (T_{2,3})$ that corresponds to the sequence \newline $(n-s+1,1,-1,-s-1)$ in a standard complex using vertical basis. The generators are denoted abstractly, without their $U,V$ decoration.}
    \label{fig: portion2}
    
\end{figure}
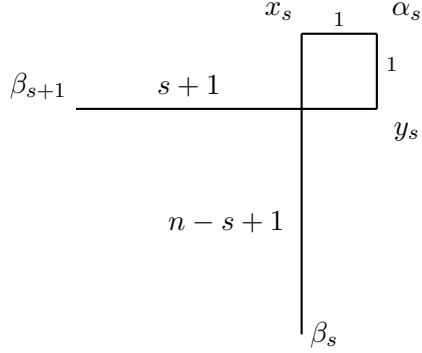

 We are ready to prove the computational result Proposition \ref{prop: phi} which leads to Theorem \ref{thm: phi}.
 Recall that torus knots $T_{2,4k+3}$ are $L$--space knots with genus $g=2k + 1$, where $k\in \Z_{\geq 0}$. (We have so far reserved the letter $k$ for the surgery coefficient. Since in this section the surgery is mostly fixed, hopefully this new assignment of letter $k$ does not cause confusion.)  The knot Floer complex $\CFKi(S^3,T_{2,4k+3})$ is generated by $a_i$ for $i=1,\cdots,g$ with coordinate $(0,-g+2i-1)$ and $b_i$ for $i=1,\cdots,g+1$ with coordinate $(0,-g+2i-2)$. The Maslov grading is supported in $b_{g+1}$ and the differentials are given by
 \[
 \partial a_i = Ub_{i+1} + b_i, \quad \text{for} \hspace{0.5em} i=1,\cdots,g.
 \]
 As before, let $(T_{2,4k+3})_{n,1}$ denote the $(n,1)$--cable of the meridian inside $+1$-surgery on $T_{2,4k+3}$ and let $J_{n,k}$ denote $(T_{2,4k+3})_{n,1}$ connected sum with the unknot in $-S^3_1(T_{2,4k+3})$. The ambient manifold  $S^3_1(T_{2,4k+3}) \conn -S^3_1(T_{2,4k+3})$ is homology cobordant to $S^3$. 

\propphi*
 
 This immediately implies Theorem \ref{thm: phi}. In fact, we can prove a stronger version.
 \begin{theorem}
 For any integers $i$ and $j$ such that $i>j\geq 0,$ and a given sequence of arbitrary  integers $(h_{j+1},\cdots, h_i)$,
 there exists some knot $K\in \CZhat$ such that $\varphi_{t,j}(K)=h_t$ for $t=j+1,\cdots, i,$ and $\varphi_{t,j}(K)=0$ for $t>i.$
 \end{theorem}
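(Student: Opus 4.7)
The plan is to deduce this stronger statement directly from Proposition \ref{prop: phi} by a triangular linear-algebra argument, exploiting the fact that each $\varphi_{t,j}$ is a homomorphism on $\CZhat$. First, set $k = j$ in Proposition \ref{prop: phi} and, for each $t \in \{j+1, \dots, i\}$, define $K_t := J_{t-j, k}$, which lies in a fixed integer homology sphere homology cobordant to $S^3$. Proposition \ref{prop: phi} then gives
\[
\varphi_{s,j}(K_t) = \begin{cases} -1 & s = t, \\ 0 & s > t, \end{cases}
\]
while $\varphi_{s,j}(K_t)$ is unspecified for $s < t$. In particular, the $(i-j) \times (i-j)$ integer matrix $M$ with entries $M_{s,t} := \varphi_{s,j}(K_t)$ (rows and columns indexed by $\{j+1, \dots, i\}$) is upper-triangular with $-1$'s on the diagonal, so $\det M = \pm 1$ and $M^{-1}$ has integer entries.

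Given the target sequence $(h_{j+1}, \dots, h_i)$, solve the integer linear system $Mc = h$ for a column vector $c = (c_{j+1}, \dots, c_i)^T \in \Z^{i-j}$, and define
\[
K := \mathop{\#}_{t=j+1}^{i} c_t\, K_t,
\]
where $c_t K_t$ denotes the connected sum of $|c_t|$ copies of $K_t$ with the appropriate orientation. Since each $\varphi_{s,j}$ is a homomorphism $\CZhat \to \Z$, one immediately obtains $\varphi_{s,j}(K) = \sum_{t} c_t \varphi_{s,j}(K_t) = (Mc)_s = h_s$ for $s \in \{j+1, \dots, i\}$. For $s > i$, the inequality $s > i \ge t$ forces $\varphi_{s,j}(K_t) = 0$ by Proposition \ref{prop: phi} for every $t$ in the sum, hence $\varphi_{s,j}(K) = 0$.

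There is no genuine obstacle here: the only ingredients are Proposition \ref{prop: phi}, the fact that $\varphi_{s,j}$ is a concordance homomorphism, and the observation that an upper-triangular integer matrix with unit diagonal is invertible over $\Z$. The mild point worth a sentence in the final write-up is that the connected sum is taken inside $\CZhat$ (i.e., that each $K_t$ lives in an integer homology sphere bounding a homology ball, and connected sums of such remain so), which is already noted in the statement of Proposition \ref{prop: phi}.
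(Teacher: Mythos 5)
Your proof is correct and is essentially the same argument as the paper's: the paper fixes $\varphi_{i,j}=h_i$ first and then iteratively adds copies of the $J_{n,j}$ with smaller $n$ to fix $\varphi_{i-1,j},\varphi_{i-2,j},\dots$ in turn without disturbing the already-fixed values, which is exactly back-substitution in your upper-triangular unimodular system $Mc=h$. Your matrix formulation is a clean way to package that iteration, but it is not a genuinely different route.
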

 \begin{proof}
 The concordance invariant $\varphi_{i,j}$  is additive under the group action of $\CZhat$. By Proposition \ref{prop: phi}, we can first choose a knot  with $\varphi_{i,j}=h_i$. Assuming $i-j>1,$ then by adding copies of $J_{i-1,j}$ to the chosen knot, we can make it such that $\varphi_{i-1,j}=h_{i-1}$ while the value of $\varphi_{i,j}$ remains unchanged. Repeat this process until  a knot with desired property is obtained.
 \end{proof}
 
  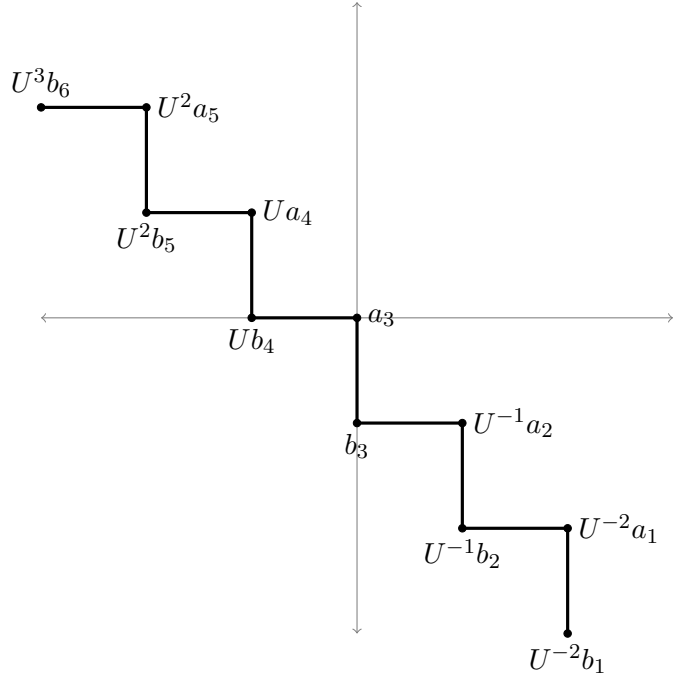
\begin{figure}[!htb]

\begin{tikzpicture}[scale=0.7]

\begin{scope}[thin, black!50!white]
		\draw [<->] (-6, 0) -- (6, 0);
	\draw [<->] (0, -6) -- (0, 6);
	\end{scope}
		\filldraw (-6,4) circle (2pt) node (b_{6}) {};
			\node[above] at (b_{6}) {\small $U^3b_{6}$};
    \foreach \i in {-2,...,2}
    {
	\filldraw (-2*\i,2*\i) circle (2pt) node (a_{\i}) {};
	\filldraw (-2*\i,-2+2*\i) circle (2pt) node (b_{\i}) {};

		\draw [very thick] (-2*\i,2*\i)--(-2*\i-2,2*\i);
		\draw [very thick] (-2*\i,2*\i)--(-2*\i,-2+2*\i);
	}
	\node[below] at (b_{2}) {\small $U^2b_{5}$};
	\node[below] at (b_{1}) {\small $Ub_{4}$};
	\node[below] at (b_{0}) {\small $b_{3}$};
	\node[below] at (b_{-1}) {\small $U^{-1}b_{2}$};
	\node[below] at (b_{-2}) {\small $U^{-2}b_{1}$};
	
		\node[right] at (a_{2}) {\small $U^2a_{5}$};
	\node[right] at (a_{1}) {\small $Ua_{4}$};
	\node[right] at (a_{0}) {\small $a_{3}$};
	\node[right] at (a_{-1}) {\small $U^{-1}a_{2}$};
	\node[right] at (a_{-2}) {\small $U^{-2}a_{1}$};
	\end{tikzpicture}

	\caption{The knot Floer complex $\CFKi(S^3,T_{2,11})$. The torus knot $T_{2,11}$ is an $L$--space knot with genus equal to $5$.}
	\label{fig:lspace}
	\end{figure}

 Before moving on to the proof of Proposition \ref{prop: phi}, let us study the complex $X^\infty_n (T_{2,4k+3})$ closely. The case when $k=0$ has been studied, so we assume $k>0$ from now on. Via the isomorphism with $\CFKi(S^3,T_{2,4k+3})$, denote the generators in  $A_s$  by $(a_i)_s$ for $i=1,\cdots, g$ and $(b_i)_s$ for $i=1,\cdots, g+1$ and the generators in $B_s$ by $(a_i)'_s$ and $(b_i)'_s$, where $s=-g+1,\cdots, g+n-1.$ The differential on the mapping cone is given by
 \begin{align*}
     \partial (a_i)_s &= (b_i)_s + U(b_{i+1})_s + (a_i)'_s + U^{s+g+1-2i} (a_{g-i+1})'_{s+1} \\
     \partial (b_i)_s &= (b_i)'_s + U^{s+g+2-2i} (b_{g-i+2})'_{s+1} \\
     \partial (a_i)'_s &= (b_i)'_s + U(b_{i+1})'_s.
 \end{align*}
 We first aim to choose a reduced basis for the complex of $X^\infty_n (T_{2,4k+3})$. Note that $(a_i)'_s $ and $ (b_i)'_s \in \partial  (a_i)'_s$ are in the same $(\II,\JJ)$ filtration, so we may quotient out the acyclic complex generated by $\{(a_i)'_s, \partial  (a_i)'_s \}$. Under this identification, we have 
 \[
    (b_{g+1})'_s = U(b_{g})'_s = \cdots = U^g(b_{1})'_s. \quad \null
 \]
 Setting
 \begin{align*}
     (\alpha_i)_s &=(a_i)_s   \qquad \qquad \qquad \qquad \qquad \null\\
     (x_i)_s &=(b_i)_s       \\
     \beta_s &=(b_{g+1})'_s,  
 \end{align*}
 the differentials are now simplified considerably:
 \begin{align}
   \label{eq: diff_a}  \partial (\alpha_i)_s &= (x_i)_s + U(x_{i+1})_s \\
   \label{eq: diff_x}  \partial (x_i)_s &= U^{g+1-i}\beta_s + U^{s+g+1-i} \beta_{s+1}. 
 \end{align}
 Next, notice for certain values of $i$ and $s$,  at least one term in $\partial (\alpha_i)_s$ preserve the $(\II,\JJ)$ filtration, and we shall quotient out the acyclic summands generated by these $\{(a_i)_s, \partial  (a_i)_s \}$  as well. See Figure \ref{fig: reduced}. When $-g\leq s\leq g$, we claim for $i \in \{1,\cdots, g\}$  the above condition is satisfied if and only
\begin{gather*}
\begin{aligned}
 i\geq \frac{s+g+1}{2} & \quad \text{or} \quad i\leq \frac{s+g-1}{2}-\floor{\frac{n-1}{2}}  & \text{ if } s \text{ is even;} \\
 i\geq \frac{s+g+2}{2} & \quad  \text{or} \quad  i\leq \frac{s+g}{2}-\ceil{\frac{n-1}{2}}  & \text{ if } s \text{ is odd},
\end{aligned}  
\end{gather*}
 and when $s\geq g$, if and only if
\[
  i\leq \frac{g+1}{2} - \ceil{\frac{n-s}{2}}.
\]
 We will show the claim when  $-g\leq s\leq g$ and $s$ is even. The other two cases are similar and left for the reader to verify. The first half of the claim is clear, and we focus on the second half.

 \begin{figure}[!htb]
 \centering
     \subfloat[The complex $A_s \{\text{max}(i,j-s)=0 \}$ when $-g \leq s \leq g$ and $s$ is even. ]{
     \begin{tikzpicture}[scale=0.6]
       \begin{scope}[thin, black!0!white]
      	\draw (-10, 0) -- (12, 0);
      \end{scope}
     \begin{scope}[thin, black!40!white]
		\draw (0, 0) -- (0, -10);
	\draw  (2, 2) -- (2, -10);
	\draw  (-4, 0) -- (0, 0);
		\draw  (-4, 2) -- (2, 2);
			\draw (0, 0) -- (2, 0);
					\draw (0, -2) -- (2, -2);
						\draw (0, -4) -- (2, -4);
	\end{scope}

	\filldraw (-1,1) circle (2pt) node () {};
	
	  \foreach \i in {0,...,2}
    {
	\filldraw (1,1-4*\i) circle (2pt) node () {};
	\filldraw (1,-1-4*\i) circle (2pt) node () {};

		\draw [very thick] (1,1-4*\i)--(1,-1-4*\i);
	}
	\draw [very thick] (-1,1)--(1,1);
	
		\node[] at (-2.1,0.8) {$x_{\frac{s+g+3}{2}}$};
		\node[right] at (1.2,1) {$\alpha_{\frac{s+g+1}{2}}$};
		\node[right] at (1.2,-1) {$x_{\frac{s+g+1}{2}}$};
			\node[right] at (1.2,-3) {$\alpha_{\frac{s+g-1}{2}}$};

		\node[right] at (2,-5) {$\cdot$};
			\node[right] at (2,-5.5) {$\cdot$};
				\node[right] at (2,-4.5) {$\cdot$};
				
					\node[right] at (1.2,-7) {$\alpha_i$};
					\node[right] at (1.2,-9) {$x_i$};

     \end{tikzpicture}
     }\par\medskip
     \begin{minipage}{.5\linewidth}
     \centering
      \subfloat[The complex $A_s \{\text{max}(i,j-s)=0 \}$ when $-g \leq s \leq g$ and $s$ is odd. ]{
      \begin{tikzpicture}[scale=0.5]
      \begin{scope}[thin, black!0!white]
      	\draw (-10, 0) -- (8, 0);
      \end{scope}
     \begin{scope}[thin, black!40!white]
		\draw (0, 0) -- (0, -10);
	\draw  (2, 2) -- (2, -10);
	\draw  (-4, 0) -- (0, 0);
		\draw  (-4, 2) -- (2, 2);
			\draw (0, 0) -- (2, 0);
					\draw (0, -2) -- (2, -2);
						\draw (0, -4) -- (2, -4);
	\end{scope}
	
	 \filldraw (-1,1) circle (2pt) node () {};
	  \filldraw (-3,1) circle (2pt) node () {};
		\draw [very thick] (-1,1)--(-3,1);
	
	  \foreach \i in {0,...,1}
    {
	\filldraw (1,-1-4*\i) circle (2pt) node () {};
	\filldraw (1,-3-4*\i) circle (2pt) node () {};

		\draw [very thick] (1,-1-4*\i)--(1,-3-4*\i);
	}
   \filldraw (1,1) circle (2pt) node () {};
	
		\node[right] at (1.2,1) {$x_{\frac{s+g+2}{2}}$};
		\node[right] at (1.2,-1) {$\alpha_{\frac{s+g}{2}}$};
			\node[right] at (1.2,-3) {$x_{\frac{s+g}{2}}$};
			
		\node[right] at (2,-5) {$\cdot$};
		\node[right] at (2,-5.5) {$\cdot$};
	\node[right] at (2,-6) {$\cdot$};

     \end{tikzpicture}
     }
     \end{minipage}%
        \begin{minipage}{.5\linewidth}
     \centering
       \subfloat[The complex $A_s \{\text{max}(i,j-s)=0 \}$ when $ s \geq g$.]{
      \begin{tikzpicture}[scale=0.5]
        \begin{scope}[thin, black!0!white]
      	\draw (-10, 0) -- (8, 0);
      \end{scope}
     \begin{scope}[thin, black!40!white]
		\draw (0, 6) -- (0, -4);
        \draw (2, 6) -- (2, -4);
        \foreach \i in {0,...,2}
    {
     \draw (0, 2*\i) -- (2, 2*\i);
	}
	\end{scope}

	\filldraw (1,5) circle (2pt) node () {};
    \filldraw (1,3) circle (2pt) node () {};
    \filldraw (1,1) circle (2pt) node () {};
      \filldraw (1,-1) circle (2pt) node () {};
        \filldraw (1,-3) circle (2pt) node () {};

		\draw [very thick] (1,3)--(1,1);
			\draw [very thick] (1,-3)--(1,-1);

		\node[right] at (1.2,5) {$x_{g+1}$};
			\node[right] at (1.2,3) {$\alpha_g$};
		\node[right] at (1.2,1) {$x_g$};
		
		\node[right] at (1.5,-2) {$\cdot$};
		\node[right] at (1.5,-1.5) {$\cdot$};
		\node[right] at (1.5,-1) {$\cdot$};

     \end{tikzpicture}
     }
     \end{minipage}
     \caption{Part of the associated graded complex $X^\infty_n(T_{2,4k+3})\{i=0\}$,  the complex $A_s \{\text{max}(i,j-s)=0 \}$ with the $\JJ$ filtration level depicted. }
     \label{fig: reduced}
 \end{figure}
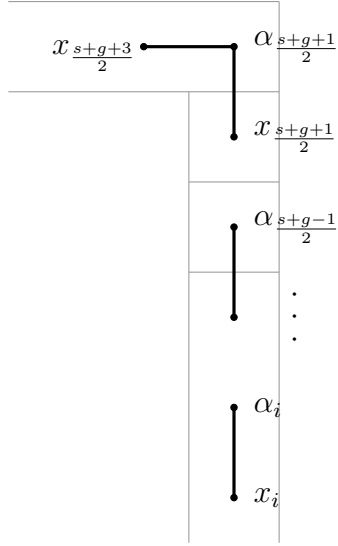
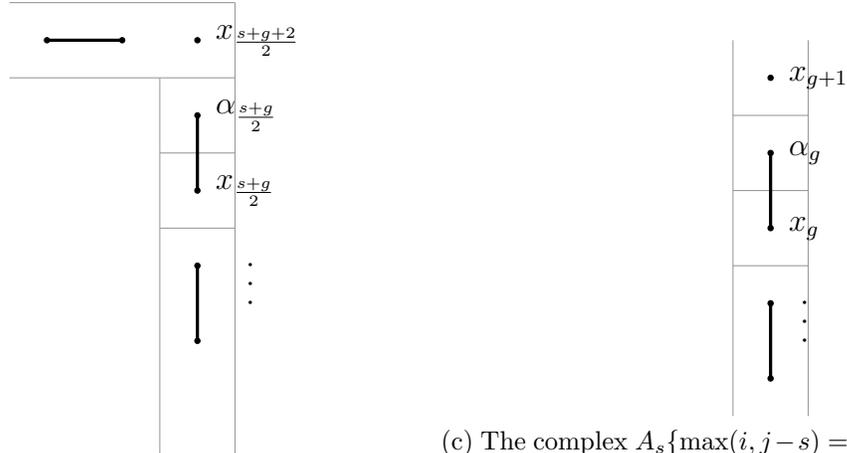

 In order to simplify the notation when talking about the filtration,   define 
 \begin{equation}\label{eq: def_f}
      f(n,s)=-\frac{(n-1)n}{2} + ns,
 \end{equation}
 so that the $\JJ$ filtration on the complex $A_s \{\text{max}(i,j-s)=0 \}$ ranges between $f(n,s)$ and $f(n,s)-n.$ When $s$ is even, for any $l\geq 0$  the only generator supported in the coordinate $(0,s-l)$  is $ x_{(s+g+2-l)/2}$ if $l$ is odd and $\alpha_{(s+g+1-l)/2}$ if $l$ is even. Therefore $\JJ(\alpha_i)=f(n,s)-n$ if and only if 
 \begin{align*}
     \begin{cases}
       i\leq (s+g-n)/2   & n     \text{ is odd}\\
        i\leq (s+g+1-n)/2    &  n    \text{ is even}.
     \end{cases}
 \end{align*}
In either case, the second half of the claim is verified. Thus we have obtained a reduced basis for the complex $X^\infty_n (T_{2,4k+3})$ as follows. The generators in $A^\infty_s$ are given by $(\alpha_i)_s, (x_i)_s$  for $s=-g+1,\cdots, g+n-1$, such that when $s\leq g,$  
\begin{gather*}
\begin{aligned}
  \frac{s+g+1}{2} \geq i \geq \text{max}\{1,\frac{s+g+1}{2}-\floor{\frac{n-1}{2}}\}  & \text{ if } s \text{ is even,} \\
  \frac{s+g+2}{2} \geq i \geq \text{max}\{1,\frac{s+g+2}{2}-\ceil{\frac{n-1}{2}}\}  & \text{ if } s \text{ is odd},
\end{aligned}  
\end{gather*}
  when $s\geq g$, 
\[
 g+1 \geq i\geq \text{max} \{1,\frac{g+3}{2} - \ceil{\frac{n-s}{2}}\};
\]
the generators in $B^\infty_s$ are $\beta_s$ for $s=-g+2,\cdots, g+n-1$ and the differentials are given by \eqref{eq: diff_a} and \eqref{eq: diff_x}. 

Before moving on, let us introduce some notational shorthands:
 denote by $x_s$ the ``top'' generator in each $A_s$ complex, i.e. the one with the largest $i$ index; for example, when $-g+1 \leq s \leq g$ and $s$ is even, $x_s=(x_{(s+g+1)/2})_s$. Let $i^{(t)}_{n,s}$ be this ``top'' $i$ index. Similarly, let  $i^{(b)}_{n,s}$  be the smallest integer that $i$ index can take in each case. Here letters $t$ and $b$ are chosen to indicate ``top'' and ``bottom'' respectively.

We want to take a look at the $(\II,\JJ)$ filtration shifts next. Clearly, the filtration shifts in the differential of each $(\alpha_i)_s$ are $(1,0)$ and $(0,1).$ So we focus on the filtration shifts in the differential of each $(x_i)_s$.  Suppose $U^c \beta_{s'}$ is a nontrivial term in $\partial (x_i)_s$, where $s'=s$ or $s+1.$ Define 
\begin{equation}
    \Delta_{\II,\JJ}((x_i)_s,\beta_{s'}) = (\II,\JJ)((x_i)_s) - (\II,\JJ)(U^c \beta_{s'})
\end{equation}
and similarly define $\Delta_\II$ and $\Delta_\JJ$. Note that we have  
\begin{equation}
    \Delta_{\II,\JJ}((x_{i-1})_s,\beta_{s'})=\Delta_{\II,\JJ}((x_i)_s,\beta_{s'}) + (1,-1)
\end{equation}
for $ i^{(t)}_{n,s} \geq i\geq i^{(b)}_{n,s}+1$, therefore  to completely understand the filtration shifts of all the generators in each complex $A_s$, we need only compute the filtration shift of the ``top'' generator $x_s$.

\begin{lemma}\label{le: del_ij}
When $-g+1\leq s \leq g,$ if $s$ is even,
\begin{align}
    \label{eq: del_sleqg_even_s} \Delta_{\II,\JJ}(x_s,\beta_s)&=\Big(\frac{-s+g+1}{2},n+\frac{-s+g-1}{2}\Big)\\
     \label{eq: del_sleqg_even_sp} \Delta_{\II,\JJ}(x_s,\beta_{s+1})&=\Big(\frac{s+g+1}{2},\frac{s+g-1}{2}\Big);\\
      \intertext{if $s$ is odd,}
     \label{eq: del_sleqg_odd_s} \Delta_{\II,\JJ}(x_s,\beta_s)&=\Big(\frac{-s+g}{2},n+\frac{-s+g}{2}\Big)\\
      \label{eq: del_sleqg_odd_sp} \Delta_{\II,\JJ}(x_s,\beta_{s+1})&=\Big(\frac{s+g}{2},\frac{s+g}{2}\Big);\\
       \intertext{when $g\leq s \leq g+n-1$,}
       \label{eq: del_sgeqg_s} \Delta_{\II,\JJ}(x_s,\beta_s)&=(0,n+g-s)\\
      \label{eq: del_sgeqg_sp} \Delta_{\II,\JJ}(x_s,\beta_{s+1})&=(s,g).
\end{align}
\end{lemma}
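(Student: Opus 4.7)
The plan is to carry out a direct computation, case by case, using the explicit filtration formulas \eqref{eq: filtration_s3_1}--\eqref{eq: filtration_s3_4} together with the differential \eqref{eq: diff_x}. The key preliminary observation is that under the identification $A_s^\infty \cong \CFKi(S^3,T_{2,4k+3})$, the generator $(x_i)_s = (b_i)_s$ sits at coordinates $(0,-g+2i-2)$, while $\beta_s = (b_{g+1})'_s \in B_s^\infty$ sits at $(0,g)$. Recalling the shorthand $f(n,s) = -\tfrac{n(n-1)}{2}+ns$ from \eqref{eq: def_f} and noting that $f(n,s+1) - f(n,s) = n$, one computes once and for all that
\[
\II(U^c\beta_{s'}) = -c, \qquad \JJ(U^c\beta_{s'}) = -c - n + f(n,s'),
\]
and that for the generic generator $(x_i)_s$ with coordinates $(0,-g+2i-2)$,
\[
\II((x_i)_s) = \max\{0, -g+2i-2-s\}, \qquad \JJ((x_i)_s) = \max\{-n, -g+2i-2-s\} + f(n,s).
\]

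With these formulas in hand, I would then plug in the three specific values of the ``top'' index $i_{n,s}^{(t)}$ identified in the paragraph above the lemma. For the case $-g+1 \le s \le g$ with $s$ even, one has $i = (s+g+1)/2$, giving $(x_s)$ the coordinates $(0,s-1)$, so $\II(x_s)=0$ and $\JJ(x_s) = -1+f(n,s)$; the exponents in \eqref{eq: diff_x} become $g+1-i = (g-s+1)/2$ and $s+g+1-i = (s+g+1)/2$, and subtracting immediately yields \eqref{eq: del_sleqg_even_s} and \eqref{eq: del_sleqg_even_sp}. The odd case $i = (s+g+2)/2$ gives $(x_s)$ coordinates $(0,s)$, so $\II(x_s) = \JJ(x_s) - f(n,s) = 0$, and the same subtraction produces \eqref{eq: del_sleqg_odd_s} and \eqref{eq: del_sleqg_odd_sp}. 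The large surgery range $g \le s \le g+n-1$ uses $i = g+1$, placing $x_s$ at coordinates $(0,g)$; here $\II(x_s) = \max\{0, g-s\} = 0$ and $\JJ(x_s) = \max\{-n, g-s\} + f(n,s) = (g-s) + f(n,s)$ (since $g-s \ge -n+1$ in this range), and the differential exponents $g+1-i = 0$ and $s+g+1-i = s$ give \eqref{eq: del_sgeqg_s} and \eqref{eq: del_sgeqg_sp}.

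There is no real obstacle beyond bookkeeping: the lemma is a straightforward unpacking of the definitions once one has correctly identified the lattice coordinates of $x_s$ in each regime. The only point requiring mild care is checking that the $\max$ in the $\JJ$ formula is realized by the correct argument in each case (e.g.\ that $-n$ does not dominate $g - s$ when $s \le g+n-1$), which is automatic from the range constraints on $s$ built into the three sub-cases. Since the three sub-cases are disjoint and exhaustive by construction of the reduced basis, this completes the proof.
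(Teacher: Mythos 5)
Your proof is correct and uses essentially the same approach as the paper: a direct computation of the $(\II,\JJ)$ filtration levels of $x_s$ and of the relevant $U$-powers of $\beta_s,\beta_{s+1}$ from the explicit filtration formulas, followed by subtraction, with the three cases distinguished by identifying the top index $i^{(t)}_{n,s}$. The only stylistic difference is that you first extract general closed formulas for $\II,\JJ$ of $U^c\beta_{s'}$ and of $(x_i)_s$ before specializing, whereas the paper writes each case's $U$-exponents and $\JJ(x_s)$ directly; the substance is identical.
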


\begin{proof}
For each case, $\JJ(\beta_s)=f(n,s)-n$ and $\JJ(\beta_{s+1})=f(n,s)$ by \eqref{eq: filtration_s3_4}. When 
$-g+1\leq s \leq g,$ and $s$ is even,
\begin{align*}
        \partial x_s &= U^{(-s+g+1)/2} \beta_s + U^{(s+g+1)/2} \beta_{s+1},\\
        \JJ(x_s)&=f(n,s)-1.
  \end{align*}
 Compute
  \begin{align*}
        \Delta_{\II,\JJ}(x_s,\beta_s)&= \Big(\frac{-s+g+1}{2},\frac{-s+g+1}{2}\Big) + (0,n-1)\\
  &=\Big(\frac{-s+g+1}{2},n+\frac{-s+g-1}{2}\Big)\\
  \Delta_{\II,\JJ}(x_s,\beta_{s+1})&= \Big(\frac{s+g+1}{2},\frac{s+g+1}{2}\Big) + (0,-1)\\
  &=\Big(\frac{s+g+1}{2},\frac{s+g-1}{2}\Big).
  \end{align*}
When $-g+1\leq s \leq g,$ and $s$ is odd, the computation is parallel. Suppose now $g\leq s \leq g+n-1$, then we have
\begin{align*}
        \partial x_s &= \beta_s + U^s \beta_{s+1},\\
        \JJ(x_s) &= f(n,s) - (s-g).
  \end{align*}
So we may compute
\begin{align*}
        \Delta_{\II,\JJ}(x_s,\beta_s)&= (0,n+g-s)\\
  \Delta_{\II,\JJ}(x_s,\beta_{s+1})&= (s,s) + (0,g-s)\\
  &=(s,g).
  \end{align*}

\end{proof}

 Next we claim that for the purpose of computing concordance invariants, we need only  look at a portion of the mapping cone.
Define  $X^\infty_n (K) \langle l \rangle =  X^\infty_{1,n,-l+n,l} (K)$ for $l\in \Z$, which is the filtered mapping cone
 \begin{align*}
    \bigoplus^{l}_{s=-l+n}A^\infty_s \xrightarrow{v^\infty_s+h^\infty_s} \bigoplus^{l}_{s=-l+n+1}B^\infty_s.
\end{align*}
Note that under this notation  $X^\infty_n (K) = X^\infty_n (K) \langle g+n-1 \rangle$. 
 \begin{lemma}\label{le: localequi}
 For any $k \geq 1$ and $n\geq 1$, the filtered complex $X^\infty_n (T_{2,4k+3}) \langle g+n-1 \rangle$  is isomorphic to $ X^\infty_n (T_{2,4k+3}) \langle n \rangle \oplus D $ up to a change of basis, where $H_*(D)=0$.
 \end{lemma}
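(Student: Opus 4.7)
The plan is to exhibit the generators of $X^\infty_n(T_{2,4k+3})\langle g+n-1\rangle$ that lie outside the ``middle'' range $[0,n]$ as an acyclic direct summand.  Write
\[
T = \bigoplus_{s=n+1}^{g+n-1}\bigl(A^\infty_s\oplus B^\infty_s\bigr), \qquad B' = \bigoplus_{s=-g+1}^{-1}A^\infty_s \;\oplus\; \bigoplus_{s=-g+2}^{0}B^\infty_s,
\]
so that, as ungraded modules,
\(
X^\infty_n(T_{2,4k+3})\langle g+n-1\rangle \cong X^\infty_n(T_{2,4k+3})\langle n\rangle \oplus T \oplus B'.
\)
The goal is to upgrade this to a (filtered) splitting of chain complexes and to verify that $D := T\oplus B'$ is acyclic.

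First I would show that $T$ is contractible.  Inside $T$, each diagonal map $v^\infty_s\colon A^\infty_s\to B^\infty_s$ is a chain isomorphism --- indeed, it is literally the identity on $\CFKi(S^3,T_{2,4k+3})$ --- while the off-diagonal map $h^\infty_{g+n-1}$ vanishes because $B^\infty_{g+n}$ lies outside the truncation.  Setting $H(b)=(v^\infty_s)^{-1}(b)$ for $b\in B^\infty_s$ gives $dH+Hd=\mathrm{id}+K$, where $K$ strictly shifts the index $s$ upward through the off-diagonal maps $h^\infty_s$; since the range of $s$ is finite and $K$ vanishes at the top, $K$ is nilpotent and $H\circ(\mathrm{id}+K+K^2+\cdots)$ is an honest chain contraction.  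A symmetric argument shows $B'$ is contractible: for the $L$--space knot $T_{2,4k+3}$ the flip map $\Psi^\infty$ is the reflection exchanging $i$ and $j$, so each $h^\infty_s\colon A^\infty_s\to B^\infty_{s+1}$ in $B'$ is a chain isomorphism, and the same staircase runs with the roles of $v^\infty$ and $h^\infty$ exchanged.

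Next I would exhibit $T\oplus B'$ as a genuine direct summand.  The only differentials from the middle copy $X^\infty_n(T_{2,4k+3})\langle n\rangle$ into $T\oplus B'$ are $h^\infty_n\colon A^\infty_n\to B^\infty_{n+1}$ and $v^\infty_0\colon A^\infty_0\to B^\infty_0$.  Reusing the staircase that contracted $T$, I would replace each $a\in A^\infty_n$ by
\[
\tilde a \;=\; a \;+\; \sum_{k\ge 0}\bigl((v^\infty_{n+k+1})^{-1}\circ h^\infty_{n+k}\bigr)\circ\cdots\circ\bigl((v^\infty_{n+1})^{-1}\circ h^\infty_n\bigr)(a),
\]
a finite sum (terminating at $A^\infty_{g+n-1}$, where $h^\infty=0$) chosen so that the entire $T$--component of $\partial\tilde a$ vanishes.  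A symmetric modification on $A^\infty_0$ using $(h^\infty)^{-1}\circ v^\infty$ kills the leakage $v^\infty_0$ into $B'$.  After these basis changes the decomposition becomes a splitting of chain complexes, and $D=T\oplus B'$ is acyclic by the preceding paragraph.

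The hard part will be checking that every step respects the bi-filtration $(\II,\JJ)$. Because the filtrations on the mapping cone are arranged so that $v^\infty_s$ and $h^\infty_s$ are filtered and because for the $L$--space knot $T_{2,4k+3}$ both $(v^\infty_s)^{-1}$ and $(h^\infty_s)^{-1}$ admit explicit descriptions (the identity and the reflection composed with a negative power of $U$, respectively), one can use Lemma \ref{le: del_ij} together with the formulas \eqref{eq: diff_a}--\eqref{eq: diff_x} to verify case by case that each summand of $\tilde a$ has $(\II,\JJ)$ no larger than $a$.  This bookkeeping splits into the three regimes $s\ge g$, $n<s<g$, and $s\le 0$, together with parity considerations on $s$, but the upshot is that the modifications stay within the allowed filtration window, so the splitting is a filtered isomorphism.
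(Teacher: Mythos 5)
Your proposal is correct, and it takes a genuinely different route from the paper. The paper argues inductively: for each $l$ from $g+n-1$ down to $n+1$ it peels off the single outermost acyclic pair $A^\infty_{-l+n}\oplus B^\infty_{-l+n+1}$ (and its mirror $A^\infty_l\oplus B^\infty_l$ at the top), with a change of basis that modifies the \emph{next-inner} column $(x_i)_{-l+n+1}$ by adding a single $U$-power of the outermost top generator; the filtration verification is then done case-by-case in the reduced basis using Lemma~\ref{le: del_ij} together with \eqref{eq: diff_a} and \eqref{eq: diff_x}. You instead split off all of $T$ and $B'$ at once via the closed-form staircase built from $v^{-1}\circ h$ and $h^{-1}\circ v$, modifying only the two edge columns $A^\infty_n$ and $A^\infty_0$. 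The conceptual advantage is that for an $L$-space knot $v^\infty_s=\mathrm{id}$ and $h^\infty_s=U^s\circ\mathrm{ref}$ are chain isomorphisms on the $\infty$ level, so your correction is completely explicit, $a^{(k)}=U^{\,kn+k(k-1)/2}\,\mathrm{ref}^k(a)\in A^\infty_{n+k}$, and the crucial filtration inequality $(\II,\JJ)(a^{(k)})\le(\II,\JJ)(a)$ reduces to one or two elementary estimates (the sum of the $U$-exponents along the staircase dominates $n$) together with $c_{s+1}-c_s=n$ for the constant shift in $\JJ$. One small correction to your sketch: I would carry out that filtration check directly in the unreduced picture from this explicit formula rather than through Lemma~\ref{le: del_ij} and \eqref{eq: diff_a}--\eqref{eq: diff_x}, since those are phrased for the reduced basis, where $v_s$ is no longer invertible and the staircase is harder to see. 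The paper's one-layer-at-a-time organization makes the bottom/top symmetry manifest and keeps each inductive step small, but your approach reaches the same place with less bookkeeping once the filtration inequality is written out.
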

 \begin{proof}
 It suffices to show for each $l=n+1,\cdots, g+n-1,$ the complex $X^\infty_n (T_{2,4k+3}) \langle l \rangle$ is isomorphic to $X^\infty_n (T_{2,4s+3}) \langle l-1 \rangle \oplus D$ up to a change of basis, where $H_*(D)=0$. For every such $l$, we will demonstrate a filtered change of basis such that the complex given by
 \begin{equation}\label{eq: summand1}
     A^\infty_{-l+n} \xrightarrow[]{h_{-l+n}}B^\infty_{-l+n+1}
 \end{equation}
 becomes a summand under the new basis. Thus following from the symmetry, there is also a filtered change of basis such that 
 \begin{equation}\label{eq: summand2}
 A^\infty_{l} \xrightarrow[]{\hspace{0.3em} v_{l}\hspace{0.3em}}B^\infty_{l}
 \end{equation}
 becomes a summand under the new basis as required.
 
 Let $s=-l+n+1$ in the following proof, and we have $s\leq 0.$ There are two cases to consider depending on the parity of $s.$
 \begin{itemize}
     \item When $s$ is even, perform the change of basis
     \begin{align*}
         (x_i)_{s} \xmapsto[\qquad\null]{}  &(x_i)_{s} + U^{\frac{-s+g+3}{2} -i} x_{{s}-1},  
          \quad \text{for} \hspace{0.5em}   i^{(t)}_{n,s} \geq i \geq i^{(b)}_{n,s}.
     \end{align*}
     By \eqref{eq: diff_a} and \eqref{eq: diff_x} it is straightforward to check that the complex given by \eqref{eq: summand1} is a summand under this basis. (Note that in complex $X^\infty_n (T_{2,4k+3}) \langle l \rangle$ the map $v_{-l+n}=v_{s-1}$ is the zero map.)  It remains to show that this change of basis is filtered. This in fact amounts to showing for $i^{(t)}_{n,s} \geq i \geq i^{(b)}_{n,s}$,
     \begin{align*}
      \Delta_{\II,\JJ}((x_i)_s, \beta_s ) \geq  \Delta_{\II,\JJ}(x_{s-1}, \beta_s).
     \end{align*}
        Thus we compute 
     \begin{align*}
       \Delta_{\II,\JJ}&((x_i)_s, \beta_s ) - \Delta_{\II,\JJ}(x_{s-1}, \beta_s)\\
        &= \Delta_{\II,\JJ}(x_s, \beta_s ) + 
       \left( \frac{s+g+1}{2} - i\right)(1,-1) - \Delta_{\II,\JJ}(x_{s-1}, \beta_s) \\
         &= \left( \frac{-s+g+1}{2} ,  n +  \frac{-s+g-1}{2} \right) + \left( \frac{s+g+1}{2} - i ,  -\frac{s+g+1}{2} + i \right) \\
         & \hspace{2em}- \left( \frac{s-1+g}{2} ,   \frac{s-1+g}{2} \right)\\
         &= \left( \frac{-s+g+3}{2} -i ,   n-1 + \frac{-3s-g+1}{2} + i \right).
     \end{align*}
     The $\II$ filtration shift is clearly positive for all $i$ since
     \begin{align*}
         \frac{-s+g+3}{2} -i &\geq \frac{-s+g+3}{2} - \frac{s+g+1}{2}\\
         &=-s+1\\
         &> 0,
     \end{align*}
     while for the  $\JJ$ filtration shift there are two possible cases depending on the value of $i^{(b)}_{n,s}$:
     \begin{enumerate}[label=\roman*)]
         \item Suppose
         \[
         1\geq \frac{s+g+1}{2} - \floor{\frac{n-1}{2}},
         \]
         then we have
         \begin{align*}
             n-1 + \frac{-3s-g+1}{2} + i &\geq  n + \frac{-3s-g+1}{2} \\
            & \geq 2\floor{\frac{n-1}{2}} +1 + \frac{-3s-g+1}{2}\\
            &\geq s+g + \frac{-3s-g+1}{2}\\
            &\geq \frac{-s+g+1}{2}\\
            &>0.
         \end{align*}
         \item If instead 
         \[
         1\leq \frac{s+g+1}{2} - \floor{\frac{n-1}{2}},
         \]
         this is when $n$ is relatively small, and we have   
         \begin{align*}
             n-1 + \frac{-3s-g+1}{2} + i &\geq  n-1 + \frac{-3s-g+1}{2} + \frac{s+g+1}{2} - \floor{\frac{n-1}{2}} \\
            & \geq -s+1 - \floor{\frac{n-1}{2}} +n -1 \\
            &\geq -s+1 + \frac{n-1}{2} \\
            &>0.
         \end{align*}
     \end{enumerate}
     \item When $s$ is odd, perform the change of basis
     \begin{align*}
         (x_i)_s \xmapsto[\qquad\null]{}  &(x_i)_s + U^{\frac{-s+g+2}{2} -i} x_{s-1},  \quad \text{for} \hspace{0.5em}  i^{(t)}_{n,s} \geq i \geq i^{(b)}_{n,s}.
     \end{align*}
     Again one can verify under this basis the complex given by \eqref{eq: summand1} is a summand.  The difference in $(\II,\JJ)$ filtration is
     \begin{align*}
      \Delta_{\II,\JJ}&((x_i)_s, \beta_s ) - \Delta_{\II,\JJ}(x_{s-1}, \beta_s)\\
        &= \Delta_{\II,\JJ}(x_s, \beta_s ) + 
       \left( \frac{s+g+2}{2} - i\right)(1,-1) - \Delta_{\II,\JJ}(x_{s-1}, \beta_s) \\
         &= \left( \frac{-s+g}{2} ,  n +  \frac{-s+g}{2} \right) + \left( \frac{s+g+2}{2} - i ,  -\frac{s+g+2}{2} + i \right) \\
         & \hspace{2em}- \left( \frac{s+g}{2} ,   \frac{s+g-2}{2} \right)\\
         &= \left( \frac{-s+g+2}{2} -i ,   n + \frac{-3s-g}{2} + i \right).
     \end{align*}
     The $\II$ filtration shift is clearly positive since
     \begin{align*}
         \frac{-s+g+2}{2} -i &\geq \frac{-s+g+2}{2} - \frac{s+g+2}{2}\\
         &=-s\\
         &> 0,
     \end{align*}
     while the $\JJ$ filtration has two possible cases depending on the value of $i^{(b)}_{n,s}$.
      \begin{enumerate}[label=\roman*)]
         \item Suppose that
         \[
         1\geq \frac{s+g+2}{2} - \ceil{\frac{n-1}{2}},
         \]
         then we have
         \begin{align*}
             n + \frac{-3s-g}{2} + i &\geq  n + \frac{-3s-g}{2} + 1 \\
            & \geq 2\ceil{\frac{n-1}{2}} +  \frac{-3s-g}{2} + 1\\
            &\geq s+g + \frac{-3s-g}{2} + 1\\
            &\geq \frac{-s+g}{2} + 1\\
            &>0.
         \end{align*}
         \item If instead  
         \[
         1\leq \frac{s+g+1}{2} - \floor{\frac{n-1}{2}},
         \]
          in this case we have   
         \begin{align*}
            n + \frac{-3s-g}{2} + i &\geq  n    + \frac{-3s-g}{2} + \frac{s+g+2}{2} - \ceil{\frac{n-1}{2}} \\
            & \geq -s + 1 + \ceil{\frac{n-1}{2}}  \\
            &>0
         \end{align*}
         as required.
     \end{enumerate}
  \end{itemize}

 \end{proof}

\begin{proof}[Proof of Proposition \ref{prop: phi}]
By Lemma \ref{le: localequi}, in order to compute concordance invariants  it suffices to consider the complex $X^\infty_n(T_{2,4k+3})\langle n \rangle$. We claim that over the ring $\X$ discussed in the previous subsection \ref{ssec: ring}, after a change of basis $X^\infty_n(T_{2,4k+3})\langle n \rangle$ is a standard complex.

Translating to the ring $\X$, the differential on the complex is given by
\begin{align*}
    \partial  (\alpha_i)_s &= (U_B + W_{T,0}) (x_{i+1})_s + (W_{B,0} + V_T) (x_i)_s  \\
     \partial {{(x_i)}}_0 &= \big( U^{\Delta_\II({(x_i)}_0, \beta_{1})}_B W^{\Delta_\JJ({(x_i)}_0, \beta_{1})}_{B,0} + V^{\Delta_\JJ({(x_i)}_0, \beta_{1})}_T W^{\Delta_\II({(x_i)}_0, \beta_{s+1})}_{T,0}     \big)\beta_1\\
     \partial {{(x_i)}}_n &= \big( U^{\Delta_\II({(x_i)}_n, \beta_n)}_B W^{\Delta_\JJ({(x_i)}_n, \beta_n)}_{B,0} + V^{\Delta_\JJ({(x_i)}_n, \beta_n)}_T W^{\Delta_\II({(x_i)}_n, \beta_n)}_{T,0}     \big)\beta_n\\
     \intertext{\hspace{1em}and if $n>s>0,$ then}
    \partial {{(x_i)}}_s &= \big( U^{\Delta_\II({(x_i)}_s, \beta_s)}_B W^{\Delta_\JJ({(x_i)}_s, \beta_s)}_{B,0} + V^{\Delta_\JJ({(x_i)}_s, \beta_s)}_T W^{\Delta_\II({(x_i)}_s, \beta_s)}_{T,0}     \big)\beta_s \\ 
    +\big( &U^{\Delta_\II({(x_i)}_s, \beta_{s+1})}_B W^{\Delta_\JJ({(x_i)}_s, \beta_{s+1})}_{B,0} + V^{\Delta_\JJ({(x_i)}_s, \beta_{s+1})}_T W^{\Delta_\II({(x_i)}_s, \beta_{s+1})}_{T,0}     \big)\beta_{s+1}.
\end{align*}
We first perform the change of basis
\begin{align*}
    (\widetilde{x}_i)_s &= \begin{cases}
      (x_i)_s + U^{-1}_B W_{B,0} (x_{i-1})_s  &\quad \text{if} \hspace{0.5em} i=i^{(t)}_{n,s}\\
     (x_i)_s + U^{-1}_B W_{B,0} (x_{i-1})_s + V^{-1}_T W_{T,0} (x_{i+1})_s &\quad \text{if} \hspace{0.5em}  i^{(b)}_{n,s}<i<i^{(t)}_{n,s}\\
     (x_i)_s + V^{-1}_T W_{T,0} (x_{i+1})_s  &\quad \text{if} \hspace{0.5em} i=i^{(b)}_{n,s},
    \end{cases}
\intertext{which simplifies the differentials considerably:}
    \partial  (\alpha_i)_s &= U_B  (\widetilde{x}_{i+1})_s +  V_T (\widetilde{x}_i)_s  \\
     \partial {(\widetilde{x}_i)}_0 &=  \begin{cases}
    V^{\Delta_\JJ({(x_i)}_0, \beta_{1})}_T W^{\Delta_\II({(x_i)}_0, \beta_{1})}_{T,0} \beta_{1}, \hspace{0.5em} i=i^{(t)}_{n,0} \\
     U^{\Delta_\II({(x_i)}_0, \beta_{1})}_B W^{\Delta_\JJ({(x_i)}_0, \beta_{1})}_{B,0}\beta_{1}, \hspace{0.5em} i=i^{(b)}_{n,0}\\
    0,   \hspace{10em}\text{ otherwise}
         \end{cases}\\
     \partial {(\widetilde{x}_i)}_n &=\begin{cases}
      V^{\Delta_\JJ({(x_i)}_n, \beta_n)}_T W^{\Delta_\II({(x_i)}_n, \beta_n)}_{T,0} \beta_n, \hspace{0.5em} i=i^{(t)}_{n,n} \\
     U^{\Delta_\II({(x_i)}_n, \beta_n)}_B W^{\Delta_\JJ({(x_i)}_n, \beta_n)}_{B,0}\beta_n, \hspace{0.5em} i=i^{(b)}_{n,n}\\
    0,   \hspace{10.4em}\text{ otherwise}
         \end{cases}
         \end{align*}
          and for $n>s>0,$
         \begin{align*}
    \partial {(\widetilde{x}_i)}_s = &\begin{cases}
      V^{\Delta_\JJ({(x_i)}_s, \beta_s)}_T W^{\Delta_\II({(x_i)}_s, \beta_s)}_{T,0} \beta_s 
    + V^{\Delta_\JJ({(x_i)}_s, \beta_{s+1})}_T W^{\Delta_\II({(x_i)}_s, \beta_{s+1})}_{T,0} \beta_{s+1}, \hspace{0.5em} i=i^{(t)}_{n,s} \\
     U^{\Delta_\II({(x_i)}_s, \beta_s)}_B W^{\Delta_\JJ({(x_i)}_s, \beta_s)}_{B,0}\beta_s 
    + U^{\Delta_\II({(x_i)}_s, \beta_{s+1})}_B W^{\Delta_\JJ({(x_i)}_s, \beta_{s+1})}_{B,0}\beta_{s+1}, \hspace{0.5em} i=i^{(b)}_{n,s}\\
    0,   \hspace{24.5em}\text{ otherwise.}
         \end{cases}
   \end{align*}
   Next observe that 
   \begin{align*}
       \Delta_\II(x_s,\beta_{s+1}) -  \Delta_\II(x_{s+1},\beta_{s+1}) = \begin{cases}
         s+1 \quad &\text{if} \hspace{0.5em} 0\leq s \leq g\\
         s &\text{otherwise,}
       \end{cases}
   \end{align*}
 by \eqref{eq: del_sleqg_even_s},   \eqref{eq: del_sleqg_even_sp}, \eqref{eq: del_sleqg_odd_s},   \eqref{eq: del_sleqg_odd_sp}, \eqref{eq: del_sgeqg_s} and \eqref{eq: del_sgeqg_sp}. In particular, for any $s$ with $0\leq s \leq n-1,$ we have  $\Delta_\II(x_s,\beta_{s+1}) -  \Delta_\II(x_{s+1},\beta_{s+1})>0.$ Similarly, one can show $\Delta_\JJ((x_{i^{(b)}_{n,s}})_s,\beta_s) -  \Delta_\JJ((x_{i^{(b)}_{n,s-1}})_{s-1},\beta_s)>0$  for $n\geq s \geq 1$.
   
Therefore we may further perform a change of basis 
\begin{align}
\label{eq: changebasis1}
    (\widetilde{x}_{i^{(t)}_{n,s}})_s &\xmapsto[\hspace{2em}]{} \sum_{n\geq s'\geq s}V^{\lambda_{s'}}_T W^{\mu_{s'}}_{T,0} (\widetilde{x}_{i^{(t)}_{n,s'}})_{s'} \quad \text{for all} \hspace{0.5em} n > s \geq 0,\\
    \label{eq: changebasis2}
      (\widetilde{x}_{i^{(b)}_{n,s}})_s &\xmapsto[\hspace{2em}]{} \sum_{0\leq s'\leq s} U^{\theta_{s'}}_B W^{\eta_{s'}}_{B,0}(\widetilde{x}_{i^{(b)}_{n,s'}})_{s'} \quad \text{for all} \hspace{0.5em} 0 < s \leq n,
\end{align}
where
\begin{align*}
 \lambda_{s'} &= \sum_{s'-1 \geq t\geq s} \big(  \Delta_\JJ(x_t,\beta_{t+1}) -  \Delta_\JJ(x_{t+1},\beta_{t+1}) \big)\\
    \mu_{s'} &= \sum_{s'-1 \geq t\geq s} \big(  \Delta_\II(x_t,\beta_{t+1}) -  \Delta_\II(x_{t+1},\beta_{t+1}) \big)  \hspace{5em},
\end{align*} 
and $\theta_{s'}, \eta_{s'}$  can be defined similarly. Note that the definition implies $\lambda_s=\mu_s = 0.$ When $s' > s$ (resp.~$s' < s$), $\mu_{s'}$ (resp.~ $\eta_{s'}$) is positive, while $\lambda_{s'}$ (resp. ~$\theta_{s'}$) needs not to be (in fact they are negative).  This change of basis results in a standard complex as required.

Let $(\widetilde{x})_s$ denote the image of $(\widetilde{x}_{i^{(t)}_{n,s}})_s$ under the above change of basis. Then we have $(\widetilde{x})_0 \in \operatorname{ker} \partial$, and starting from $(\widetilde{x})_0$ every odd number of edge is marked by elements in $\mathcal{R}_U$ and every even number of edge is marked by elements in $\mathcal{R}_V$. In order to compute $\varphi_{i,j},$ we need only consider the set of (say) $\mathcal{R}_V$ edges. Each $\mathcal{R}_V$ edge that is not  equal to $V_T$ is marked by the differential of $(\widetilde{x})_s$ for some $s=1,\cdots,n.$ For $1\leq s\leq n,$
\begin{align*}
    \partial(\widetilde{x})_s = V^{\Delta_\JJ(x_s,\beta_s)}_T W^{\Delta_\II(x_s,\beta_s)}_{T,0}\beta_s
\end{align*}
thus we simply need to look at the values of $\Delta_{\II,\JJ}(x_s,\beta_s).$

 For the knot $T_{2,4k+3}$, $g=2k+1.$ By \eqref{eq: del_sleqg_even_s}, \eqref{eq: del_sleqg_odd_s} and \eqref{eq: del_sgeqg_s},  when $n\geq g$, the sequence of $\Delta_{\II,\JJ}(x_s,\beta_s)$ for $s=1,\cdots,n$ is given by
\begin{align*}
    (k,n+k),(k,n+k-1),\cdots,(1,n+1),(1,n),(0,n),(0,n-1),\cdots,(0,2k+1);
\end{align*}
when $n\leq g$, the above sequence terminates at $(k+1-n/2,n/2+k)$ if $n$ is even and at $(k-(n-1)/2,(n+1)/2+k)$ if $n$ is odd.

Therefore we conclude that for the interested complex, the concordance invariant $\varphi_{i,j}=-1$ if and only if $(i,j)=(n+k,k),(n+k-1,k),\cdots$ and $\varphi_{i,j}=0$ if $(i,j)$ is not among those values and also $(i,j)\neq (1,0).$

\end{proof}

We now prove Proposition \ref{prop: intro_middle} with a more precise restatement. The result is stated for knots in $S^3$, but a similar result should hold for knots in rational homology spheres as well.
\begin{proposition}\label{prop: app_middle}
 For a knot $K\subset S^3,$ when $n\geq 2g,$ as a quotient complex of the filtered mapping cone (for any surgery), $A_{\ceil{n/2}}$ is filtered homotopy equivalent to $\CFKi(S^3,K).$
\end{proposition}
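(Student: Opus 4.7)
The plan is to verify directly that $A_{\lceil n/2 \rceil}^\infty$ appears as a filtered quotient complex of $X_n^\infty(K)$, and that on this quotient the filtrations $(\II,\JJ)$ collapse to the standard $(i,j)$ filtrations of $\CFKi(S^3,K)$ up to a constant shift. Since the underlying chain complex of each $A_s^\infty$ is, by definition, a copy of $\CFKi(S^3,K)$, the entire proposition reduces to these two assertions.

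First I would observe that
\[
W := \Bigl(\bigoplus_{s \neq \lceil n/2 \rceil} A_s^\infty\Bigr) \oplus \Bigl(\bigoplus_s B_s^\infty\Bigr)
\]
is a subcomplex of $X_n^\infty(K)$. Indeed, each internal differential on an $A_s^\infty$ or $B_s^\infty$ summand preserves that summand, while $v_s + h_s$ sends $A_s^\infty$ into $B_s^\infty \oplus B_{s+1}^\infty \subset W$. Hence the quotient $X_n^\infty(K)/W$ is canonically identified, as a filtered chain complex, with $A_{\lceil n/2 \rceil}^\infty$ equipped with the filtrations prescribed in Section \ref{ssec: statement}.

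Next I would compute these filtrations on generators. Writing $s = \lceil n/2 \rceil$, the formulas from Section \ref{ssec: statement} specialize to
\[
\II([\x,i,j]) = \max\{i,\, j-s\}, \qquad \JJ([\x,i,j]) = \max\{i-n,\, j-s\} + C_{n,s},
\]
for an explicit constant $C_{n,s}$ (in the $+1$--surgery case $C_{n,s} = ns - \tfrac{n(n-1)}{2}$, cf.\ \eqref{eq: filtration_s3_2} and \eqref{eq: filtration_s3_4}). The genus bound forces $|j-i| \le g$ for every generator of $\CFKi(S^3,K)$, and the hypothesis $n \ge 2g$ is exactly what guarantees $g \le s \le n-g$. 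Therefore, for every generator $[\x,i,j]$,
\[
j - s \;\le\; j - g \;\le\; i \qquad \text{and} \qquad i - n \;\le\; i - (s+g) \;\le\; j - s,
\]
so the maxima collapse to $\II = i$ and $\JJ = j - s + C_{n,s}$. This exhibits $(A_{\lceil n/2 \rceil}^\infty, \II,\JJ)$ as filtered isomorphic to $(\CFKi(S^3,K), i, j)$ up to an overall constant shift in the second filtration, which is the desired filtered chain homotopy equivalence.

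The hardest part is really just confirming that these two simplifying inequalities hold simultaneously: each imposes a one-sided bound on $s$, and the choice $s = \lceil n/2 \rceil$ together with $n \ge 2g$ is precisely the arithmetic configuration that squeezes $s$ into the interval $[g, n-g]$. Once this is noted, the conclusion is automatic, and the same argument generalizes to any surgery by instead selecting the $A_{s_l}^\infty$ summand whose Alexander-shift parameter $s_l$ lies in $[g, n-g]$.
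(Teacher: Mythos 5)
Your proposal is correct and follows essentially the same path as the paper's proof: it realizes $A_{\lceil n/2\rceil}$ as the quotient by the subcomplex spanned by the remaining $A_s$ and all the $B_s$, and uses the genus bound $|j-i|\le g$ together with $g\le \lceil n/2\rceil\le n-g$ to collapse the maxima in the filtration formulas to $\II=i$ and $\JJ=j-s+C$. You are in fact slightly more careful than the paper's terse argument, which records only $\lceil n/2\rceil\ge g$ (the companion bound $\lceil n/2\rceil\le n-g$ is also needed for the $\JJ$ collapse) and appears to contain a typo in stating the genus constraint ($-g\le i+j\le g$ should read $-g\le j-i\le g$).
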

\begin{proof}
When $n\geq 2g,$ $\ceil{n/2}\geq g.$ Note that all the generators of $\CFKi(S^3,K)$ are within the region of $-g\leq i+j \leq g.$ It is then straightforward to check using \eqref{eq: filtration_s3_1} and \eqref{eq: filtration_s3_2} that this region is filtered.
\end{proof}

Finally let us analyze the behavior of the complex $X^\infty_n(K)$ when $n\geq 2g.$ Comparing $X^\infty_n(K)$ and $X^\infty_{n+1}(K)$, we can identify $A_s$ in both complex for $s\leq \floor{n/2}.$  Under this identification, one can check that $\JJ(y_s) - \JJ(h_s(y_s))$ is constant while  $\JJ(y_s) - \JJ(v_s(y_s))$ increases as $n$ increases, and the $\II$ filtration shift is constant. At the same time, we can identify $A_s \subset X^\infty_n(K)$ with $A_{s+1} \subset X^\infty_{n+1}(K)$ for $s\geq \floor{n/2}+1 > g$, and under this identification, similarly the filtration shifts from the $v$ map is constant while the filtration shifts from the $h$ map increases as $n$ increases. Furthermore, by Proposition \ref{prop: app_middle}, the ``middle'' complex $A_{\ceil{(n+1)/2}}$ is simply a copy of $\CFKi(S^3,K)$ (with some differentials pointing to the rest of the complex).

Therefore, when $n\geq 2g$ and as $n$ increases by $1$, we conclude that there are two things happen to $X^\infty_n(K)$:
\begin{itemize}
    \item certain edges are extended;
    \item a copy of $\CFKi(S^3,K)$ is added to the ``middle'' of the complex.
\end{itemize}
Aside from these two changes, the complex remains the same ``shape'' as $n$ increases.  We name this phenomenon \emph{stabilization}. For an example, note that the whole family described by Proposition \ref{prop: t23_n1} can be seen as the result of continued stabilizations. In general, with slightly more careful analysis, one should be able to pin down the exact behavior of a given complex during the stabilization.

\bibliographystyle{amsalpha}
\bibliography{bibliography}

\end{document}